
\documentclass{amsart}
\usepackage{amssymb}
\usepackage{amsfonts}
\usepackage{geometry}

\setcounter{MaxMatrixCols}{10}

\newtheorem{theorem}{Theorem}
\theoremstyle{plain}
\newtheorem{acknowledgement}[theorem]{Acknowledgement}

\newtheorem{corollary}[theorem]{Corollary}

\newtheorem{definition}[theorem]{Definition}

\newtheorem{lemma}[theorem]{Lemma}

\newtheorem{proposition}[theorem]{Proposition}
\newtheorem{remark}[theorem]{Remark}

\numberwithin{equation}{section}
\input{tcilatex}
\geometry{left=1in,right=1in,top=1in,bottom=1in}

\begin{document}
\title[$T1$ theorem for comparable doubling weights]{A $T1$ theorem for
general Calder\'{o}n-Zygmund operators with comparable doubling weights, and
optimal cancellation conditions}
\author[E.T. Sawyer]{Eric T. Sawyer}
\address{ Department of Mathematics \& Statistics, McMaster University, 1280
Main Street West, Hamilton, Ontario, Canada L8S 4K1 }

\begin{abstract}
We begin an investigation into extending the $T1\,$theorem of David and Journ%
\'{e}, and the corresponding optimal cancellation conditions of Stein, to
more general pairs of distinct doubling weights. For example, when $0<\alpha
<n$, and $\sigma $ and $\omega $ are $A_{\infty }$ weights satisfying the
one-tailed Muckenhoupt conditions, and $K^{\alpha }$ is a smooth fractional
CZ kernel, we show there exists a bounded operator $T^{\alpha }:L^{2}\left(
\sigma \right) \rightarrow L^{2}\left( \omega \right) $ associated with $%
K^{\alpha }$ \emph{if and only if} there is a positive constant $\mathfrak{A}%
_{K^{\alpha }}\left( \sigma ,\omega \right) $ so that%
\begin{eqnarray*}
&&\int_{\left\Vert x-x_{0}\right\Vert <N}\left\vert \int_{\varepsilon
<\left\Vert x-y\right\Vert <N}K^{\alpha }\left( x,y\right) d\sigma \left(
y\right) \right\vert ^{2}d\omega \left( x\right) \leq \mathfrak{A}%
_{K^{\alpha }}\left( \sigma ,\omega \right) \ \int_{\left\Vert
x_{0}-y\right\Vert <N}d\sigma \left( y\right) , \\
&&\ \ \ \ \ \ \ \ \ \ \ \ \ \ \ \ \ \ \ \ \ \ \ \ \ \text{for all }%
0<\varepsilon <N\text{ and }x_{0}\in \mathbb{R}^{n},
\end{eqnarray*}%
where $\left\Vert y\right\Vert \equiv \max_{1\leq k\leq n}\left\vert
y_{k}\right\vert $, along with a dual inequality. More generally this holds
for measures $\sigma $ and $\omega $ comparable in the sense of Coifman and
Fefferman that satisfy a fractional $A_{\infty }^{\alpha }$ condition.

These results are deduced from the following theorem of $T1$ type, namely
that if $\sigma $ and $\omega $ are doubling measures, comparable in the
sense of Coifman and Fefferman, and satisfying one-tailed Muckenhoupt
conditions, then $T^{\alpha }:L^{2}\left( \sigma \right) \rightarrow
L^{2}\left( \omega \right) $ \emph{if and only if} the dual pair of testing
conditions hold, as well as a strong form of the weak boundedness propery,%
\begin{eqnarray*}
&&\left\vert \int_{F}\left( T^{\alpha }\mathbf{1}_{E}\right) d\omega
\right\vert \leq \mathcal{BICT}_{T^{\alpha }}\left( \sigma ,\omega \right) 
\sqrt{\left\vert Q_{\sigma }\right\vert _{\sigma }\left\vert Q_{\sigma
}\right\vert _{\omega }}, \\
&&\ \ \ \ \ \ \ \ \ \ \ \ \ \ \ \ \ \ \ \ \text{for all cubes }Q\subset 
\mathbb{R}^{n},
\end{eqnarray*}%
where $\mathcal{BICT}_{T^{\alpha }}\left( \sigma ,\omega \right) $ is a
positive constant called the bilinear cube/indicator testing constant. The
comparability of measures and the bilinear cube/indicator testing condition
can both be dropped\ if the stronger indicator/cube testing conditions are
assumed.
\end{abstract}

\dedicatory{In memory of Professor Elias M. Stein.}
\email{sawyer@mcmaster.ca}
\maketitle
\tableofcontents

\section{Introduction}

This paper includes the content of \cite{Saw2}, \cite{Saw3} and \cite{Saw4}
from the \texttt{arXiv}.

Given a Calder\'{o}n-Zygmund kernel $K\left( x,y\right) $ in Euclidean space 
$\mathbb{R}^{n}$, a classical problem for many decades was to identify
optimal cancellation conditions on $K$ so that there would exist an
associated singular integral operator $Tf\left( x\right) \sim \int K\left(
x,y\right) f\left( y\right) dy$ bounded on $L^{2}\left( \mathbb{R}%
^{n}\right) $. After a long history, involving contributions by many authors%
\footnote{%
see e.g. \cite[page 53]{Ste} for references to the earlier work in this
direction}, this effort culminated in the decisive $T1$ theorem of David and
Journ\'{e} \cite{DaJo}, in which boundedness of an operator $T$ on $%
L^{2}\left( \mathbb{R}^{n}\right) $ associated to $K$, was characterized by 
\begin{equation*}
T\mathbf{1},T^{\ast }\mathbf{1}\in BMO,
\end{equation*}%
together with a weak boundedness property for some $\eta >0$,%
\begin{eqnarray}
&&\left\vert \int_{Q}T\varphi \left( x\right) \ \psi \left( x\right)
dx\right\vert \lesssim \sqrt{\left\Vert \varphi \right\Vert _{\infty
}\left\vert Q\right\vert +\left\Vert \varphi \right\Vert _{\limfunc{Lip}\eta
}\left\vert Q\right\vert ^{1+\frac{\eta }{n}}}\sqrt{\left\Vert \psi
\right\Vert _{\infty }\left\vert Q\right\vert +\left\Vert \psi \right\Vert _{%
\limfunc{Lip}\eta }\left\vert Q\right\vert ^{1+\frac{\eta }{n}}},
\label{WBPDJ} \\
&&\text{for all }\varphi ,\psi \in \limfunc{Lip}\eta \text{ with }\func{Supp}%
\varphi ,\func{Supp}\psi \subset Q,\text{ and all cubes }Q\subset \mathbb{R}%
^{n};  \notag
\end{eqnarray}%
\emph{equivalently} by two testing conditions taken over indicators of cubes,%
\begin{equation*}
\int_{Q}\left\vert T\mathbf{1}_{Q}\left( x\right) \right\vert ^{2}dx\lesssim
\left\vert Q\right\vert \text{ and }\int_{Q}\left\vert T^{\ast }\mathbf{1}%
_{Q}\left( x\right) \right\vert ^{2}dx\lesssim \left\vert Q\right\vert ,\ \
\ \ \ \text{all cubes }Q\subset \mathbb{R}^{n}.
\end{equation*}%
The optimal cancellation conditions, which in the words of Stein were `a
rather direct consequence of' the $T1$ theorem, were given in \cite[Theorem
4, page 306]{Ste2}, involving integrals of the kernel over shells - see
Theorem \ref{Stein extension} below for an extension to certain more general
pairs of doubling weights and cubical shells.

An obvious next step is to replace Lebsegue measure with a fixed $A_{2}$
weight $w$,%
\begin{equation*}
\left( \frac{1}{\left\vert Q\right\vert }\int_{Q}w\left( x\right) dx\right)
\left( \frac{1}{\left\vert Q\right\vert }\int_{Q}\frac{1}{w\left( x\right) }%
dx\right) \lesssim 1,\ \ \ \ \ \text{all cubes }Q\subset \mathbb{R}^{n}\ ,
\end{equation*}%
and ask when $T$ is bounded on $L^{2}\left( w\right) $, i.e. satisfies the
one weight norm inequality. For elliptic Calder\'{o}n-Zygmund operators $T$,
this question is easily reduced to the David Journ\'{e} theorem using two
results from decades ago, namely the 1956 Stein-Weiss interpolation with
change of measures theorem \cite{StWe}, and the 1974 Coifman and Fefferman
extension \cite{CoFe} of the one weight Hilbert transform inequality of
Hunt, Muckenhoupt and Wheeden \cite{HuMuWh}, to a large class of general
Calder\'{o}n-Zygmund operators $T$\footnote{%
Indeed, if $T$ is bounded on $L^{2}\left( w\right) $, then by duality it is
also bounded on $L^{2}\left( \frac{1}{w}\right) $, and the Stein-Weiss
interpolation theorem with change of measure shows that $T$ is bounded on
unweighted $L^{2}\left( \mathbb{R}^{n}\right) $. Conversely, if $T$ is
bounded on unweighted $L^{2}\left( \mathbb{R}^{n}\right) $, the proof in 
\cite{CoFe} shows that $T$ is bounded on $L^{2}\left( w\right) $ using $w\in
A_{2}$.}.

However, for a pair of \emph{different} measures $\left( \sigma ,\omega
\right) $, the question is wide open in general, and we now turn to a brief
discussion of the problem of boundedness of a general Calder\'{o}n-Zygmund
operator $T$ from one general $L^{2}\left( \sigma \right) $ space to another 
$L^{2}\left( \omega \right) $ space. In the case of the Hilbert transform $H$
in dimension one, the two weight inequality was completely solved in the two
part paper \cite{LaSaShUr3};\cite{Lac}, where it was shown that $H$ is
bounded from $L^{2}\left( \sigma \right) $ to $L^{2}\left( \omega \right) $
if and only if the testing and one-tailed Muckenhoupt conditions hold, i.e.%
\begin{eqnarray*}
&&\int_{I}\left\vert H\left( \mathbf{1}_{I}\sigma \right) \right\vert
^{2}d\omega \lesssim \int_{I}d\sigma \text{ and }\int_{I}\left\vert H\left( 
\mathbf{1}_{I}\omega \right) \right\vert ^{2}d\sigma \lesssim
\int_{I}d\omega ,\ \ \ \ \ \text{all intervals }I\subset \mathbb{R}^{n}, \\
&&\left( \int_{\mathbb{R}}\frac{\left\vert I\right\vert }{\left\vert
I\right\vert ^{2}+\left\vert x-c_{I}\right\vert ^{2}}d\sigma \left( x\right)
\right) \left( \frac{1}{\left\vert I\right\vert }\int_{I}d\omega \right)
\lesssim 1,\ \text{and its dual},\ \ \ \ \ \text{all intervals }I\subset 
\mathbb{R}^{n},
\end{eqnarray*}%
and for fractional Riesz transforms in higher dimensions, it is known that
the two weight norm inequality \emph{with doubling measures} is equivalent
to the fractional one-tailed Muckenhoupt and $T1$ cube testing conditions,
see \cite[Theorem 1.4]{LaWi} and \cite[Theorem 2.11]{SaShUr9}. Here a
positive measure $\mu $ is doubling if%
\begin{equation*}
\int_{2Q}d\mu \lesssim \int_{Q}d\mu ,\ \ \ \ \ \text{all cubes }Q\subset 
\mathbb{R}^{n}.
\end{equation*}%
However, these results rely on certain `positivity' properties of the
gradient of the kernel (which for the Hilbert transform kernel $\frac{1}{y-x}
$ is simply $\frac{d}{dx}\frac{1}{y-x}>0$ for $x\neq y$), something that is
not available for general elliptic, or even strongly elliptic, fractional
Calder\'{o}n-Zygmund operators.

Our point of departure in this paper is the fact, easily proven below, that
for doubling weights, certain weak analogues of the pivotal conditions of
Nazarov, Treil and Volberg (often referred to now as NTV) \cite{NTV4} are
necessary, and this provides the framework for moving forward\footnote{%
We do not know if the usual pivotal conditions hold\ for doubling measures
that satisfy the Muckenhoupt conditions, and we thank Ignacio Uriarte-Tuero
for bringing this to our attention by pointing to an error in a previous
version of this paper.}. So we will assume that our weight pair $\left(
\sigma ,\omega \right) $ consists of doubling measures, and satisfies at
least the classical $A_{2}^{\alpha }$ condition of Muckenhoupt, and often
the one-tailed versions $\mathcal{A}_{2}^{\alpha }$ and $\mathcal{A}%
_{2}^{\alpha ,\ast }$ in \cite{SaShUr7}. The former condition is a necessary
consequence of boundedness of any elliptic Calder\'{o}n-Zygmund operator $T$%
, and the latter condition is necessary if $T$ is strongly elliptic, see 
\cite{SaShUr7}.

Finally, we will at times also require that the doubling measures $\sigma $
and $\omega $ are \emph{comparable} in the sense of Coifman and Fefferman 
\cite{CoFe}, which means that the measures are mutually absolutely
continuous, uniformly at all scales - i.e. there exist $0<\beta ,\gamma <1$
such that%
\begin{equation*}
\frac{\left\vert E\right\vert _{\sigma }}{\left\vert Q\right\vert _{\sigma }}%
<\beta \Longrightarrow \frac{\left\vert E\right\vert _{\omega }}{\left\vert
Q\right\vert _{\omega }}<\gamma \text{ for all Borel subsets }E\text{ of a
cube }Q.
\end{equation*}%
This condition is needed to prove the two weight bilinear Carleson Embedding
Theorem \ref{2 wt bil CET}\ below, and conversely we show that our bilinear
theorem implies it. The point is that if $\sigma $ and $\omega $ are
doubling and \emph{comparable}, then a collection of dyadic cubes $\mathcal{F%
}$ is $\sigma $-Carleson if and only if $\mathcal{F}$ is $\omega $-Carleson%
\footnote{%
We thank Alex Tkachman for pointing to an error in the proof of an earlier
incorrect version of Theorem \ref{2 wt bil CET}, where comparability was
needed but not assumed.} - see the next section for these definitions.

\begin{remark}
We do \textbf{not} assume in this paper that the weight pair $\left( \sigma
,\omega \right) $ satisfies the very strong \emph{energy} conditions,
something that is only necessary for boundedness of the Hilbert transform
and its perturbations on the real line (see \cite{SaShUr11} and \cite{Saw}),
nor the $k$\emph{-energy dispersed }conditions introduced in \cite{SaShUr10}%
, which only hold for perturbations of Riesz transforms in higher dimensions.
\end{remark}

The purpose of this paper is to consider measures $\sigma $ and $\omega $ in 
$\mathbb{R}^{n}$ that are

\begin{itemize}
\item \emph{doubling,}

\item and satisfy the \emph{one-tailed Muckenhoupt} conditions,
\end{itemize}

and then:

\begin{enumerate}
\item to characterize the \emph{two weight} norm inequality, for the class
of elliptic $\alpha $-fractional singular integral operators in $\mathbb{R}%
^{n}$, in terms of the $\mathcal{A}_{2}^{\alpha }$ conditions and the
Indicator/Cube Testing conditions - and if in addition the measures are
comparable in the sense of Coifman and Ferfferman, then in terms of $%
\mathcal{A}_{2}^{\alpha }$, the Cube Testing conditions, and the Bilinear
Indicator/Cube Testing property (often referred to now as the $\mathcal{BICT}
$ property), which plays a role analogous to the weak boundedness property (%
\ref{WBPDJ}) with $\gamma =0$. See Theorem \ref{pivotal theorem} below.

\item to eliminate the $\mathcal{BICT}$ property when the measures are both
doubling and comparable, with each of them satisfying either the $A_{\infty
}^{\alpha }$ condition, or the $C_{q}$ condition of Muckenhoupt for some $%
q>2 $, see \cite{Muc} and \cite{Saw1}\footnote{%
In \cite{Ler} Lerner has introduced a strong $SC_{2}$ condition on a weight $%
w$ which characterizes the related inequality $\left\Vert T_{\flat
}f\right\Vert _{L^{2,\infty }\left( w\right) }\lesssim \left\Vert
Mf\right\Vert _{L^{2}\left( w\right) }$.},

\begin{enumerate}
\item and furthermore, in the case when the measures are $A_{\infty }$
weights\footnote{%
In the case that both $\sigma $ and $\omega $ are $A_{\infty }$ weights
satisfying $A_{2}\left( \sigma ,\omega \right) <\infty $, C. Grigoriadis has
shown in \texttt{arXiv:2009.12091} that the classical pivotal conditions
hold, resulting in a $T1$ theorem for nonsmooth kernels.}, or more generally 
$C_{2+\varepsilon }$ weights or fractional $A_{\infty }^{\alpha }$ measures,
to give optimal cancellation conditions on a smooth Calder\'{o}n-Zygmund
kernel in order that there is an associated bounded operator from $%
L^{2}\left( \sigma \right) $ to $L^{2}\left( \omega \right) $, extending the
smooth part of Theorem 4 in \cite[Section 3 of Chapter VII]{Ste2}, see
Theorem \ref{Stein extension} below,
\end{enumerate}

\item and to give a function theoretic consequence, namely that strong type
is equivalent to weak type and dual weak type for elliptic operators, see
Corollary \ref{elliptic corollary} below. A one weight version of this
result, with optimal $A_{2}$ dependence, was obtained by Perez, Treil and
Volberg \cite[Theorem 2.1]{PeTrVo}.
\end{enumerate}

\subsection{Discussion of methodology}

Since the weaker pivotal conditions, that can be derived from doubling
measures, involve Poisson integrals whose tails have higher powers, we are
led naturally to the use of the weighted Alpert wavelets in \cite{RaSaWi},
instead of the traditional Haar wavelets, having correspondingly higher
order vanishing moments. In order to handle the global form associated with
the operator, it suffices to use testing over polynomials times indicators
of cubes. However, as pointed out in \cite{RaSaWi}, the weighted Alpert
wavelets, unlike the weighted Haar wavelets, do not behave well with respect
to the famous Paraproduct / Neighbour / Stopping (often referred to now as
P/N/S) form decomposition of NTV (because the extension of a nonconstant
polynomial\ from one cube to another is uncontrolled), and so we must divert
to an alternate fork in the proof path using the Parallel Corona, in which 
\emph{independent} stopping times are used for each function in a bilinear
form, in order to handle the local form. In the absense of a P/N/S
decomposition, this alternate fork then permits testing over polynomials
times indicators of cubes, coupled with testing a bilinear indicator/cube
testing property $\mathcal{BICT}$, taken over indicators of \emph{subsets}
of cubes on the left, rather than the cubes themselves,%
\begin{equation*}
\left\vert \int_{E}T^{\alpha }\left( \mathbf{1}_{F}\sigma \right) d\omega
\right\vert \lesssim \sqrt{\left\vert Q\right\vert _{\sigma }\left\vert
Q\right\vert _{\omega }},\ \ \ \ \ \text{Borel subsets }E,F\text{ of a cube }%
Q.
\end{equation*}%
On the real line, a stronger conjecture is made in \cite{RaSaWi} that the
norm inequality holds if testing over these polynomials times indicators of
intervals holds, in the presence of energy conditions, and that conjecture
remains open at this time.

Moreover, in the proof of our theorem, we will need to bound the $L^{\infty
} $ norm of $L^{2}\left( \mu \right) $-projections onto the space of
restrictions to $Q$ of polynomials of degree less than $\kappa $ (which is
trivial when $\kappa =1$), and for this we use the nondegeneracy conditions 
\begin{equation}
\frac{1}{\left\vert Q\right\vert _{\mu }}\int_{Q}\left\vert P\left( \frac{%
x-c_{Q}}{\ell \left( Q\right) }\right) \right\vert ^{2}d\mu \left( x\right)
\geq c>0,  \label{nondeg}
\end{equation}%
for all cubes $Q$ and normalized polynomials $P$ of degree less than $\kappa 
$, and with $\mu $ equal to either measure $\sigma ,\omega $. Such
conditions permit control of off-diagonal terms by a Calder\'{o}n-Zygmund
stopping time and corona decomposition. We will see that (\ref{nondeg}) is
implied by the doubling property for $\mu $, and provided $\kappa $ is large
enough\footnote{%
This restriction is removed in Sawyer and Uriarte-Tuero \cite{SaUr}.},
doubling is implied by (\ref{nondeg}), providing yet another instance of
poor behaviour of weighted Alpert wavelets, as compared to that for weighted
Haar wavelets. Thus doubling conditions on the weights permit a proof of NTV
type as in \cite{NTV4}, that both avoids the difficult control of functional
energy in \cite{LaSaShUr3} and \cite{SaShUr7}, and Lacey's deep breakthrough
in controlling the stopping form \cite{Lac}, of course at the expense of
including bilinear indicator/cube testing.

On the other hand, we are able to replace polynomial testing by the usual
Cube Testing in the setting of doubling weights, and if we make one
additional assumption, namely that the measures satisfy the fractional $%
A_{\infty }^{\alpha }$ condition, then we can do away with the $\mathcal{BICT%
}$ property as well. Our approach to these results will follow the series of
papers \cite{Saw2}, \cite{Saw3} and \cite{Saw4}:

\begin{enumerate}
\item First we prove that the two weight norm inequality, for a general
elliptic $\alpha $-fractional Calderon-Zygmund singular integral with
comparable doubling weights, is controlled by the \emph{classical} $%
A_{2}^{\alpha }$ condition of Muckenhoupt, the two dual \emph{Polynomial}%
/Cube Testing conditions (referred to now as $P/CT$), the Bilinear
Indicator/Cube Testing property, and a certain weak boundedness property -
this latter property is then removed using the doubling properties of the
measures together with the $\mathcal{A}_{2}^{\alpha }$ conditions.

\item Second, we replace the $P/CT$ conditions in the previous theorem with
the usual Cube Testing condition over indicators of cubes, assuming only
that the one-tailed Muckenhoupt constants $\mathcal{A}_{2}^{\alpha }$ and $%
\mathcal{A}_{2}^{\alpha ,\ast }$ are both finite.

\item Third, we eliminate the $\mathcal{BICT}$ from each of the previous two
theorems if one of the measures satisfies the fractional $A_{\infty
}^{\alpha }$ condition.

\item Finally, we use the previous result to extend Stein's cancellation
theorem to certain pairs of doubling measures satisfying the one-tailed $%
\mathcal{A}_{2}^{\alpha }$ conditions.
\end{enumerate}

\begin{acknowledgement}
We thank the referee for a number of corrections and simplifications of
arguments that greatly contribute to the readability of the paper.
\end{acknowledgement}

\section{Two weight $T1$ theorems and cancellation conditions for Calder\'{o}%
n-Zygmund operators}

Denote by $\mathcal{P}^{n}$ the collection of cubes in $\mathbb{R}^{n}$
having sides parallel to the coordinate axes. A positive locally finite
Borel measure $\mu $ on $\mathbb{R}^{n}$ is said to satisfy the\emph{\
doubling condition} if there is a pair of constants $\left( \beta ,\gamma
\right) \in \left( 0,1\right) ^{2}$, called doubling parameters, such that%
\begin{equation}
\left\vert \beta Q\right\vert _{\mu }\geq \gamma \left\vert Q\right\vert
_{\mu }\ ,\ \ \ \ \ \text{for all cubes }Q\in \mathcal{P}^{n}.
\label{def rev doub}
\end{equation}%
A familiar equivalent reformulation of (\ref{def rev doub}) is that there is
a positive constant $C_{\limfunc{doub}}$, called the doubling constant, such
that $\left\vert 2Q\right\vert _{\mu }\leq C_{\limfunc{doub}}\left\vert
Q\right\vert _{\mu }$ for all cubes $Q\in \mathcal{P}^{n}$.

\subsection{Conditions on measures and kernels}

We begin with various conditions on measures and measure pairs, with the
fractional $A_{\infty }^{\alpha }$ condition being new, and the others
classical. Then\ we recall ellipticity conditions for Calder\'{o}n-Zygmund
kernels.

\subsubsection{The $A_{\infty }$ and $C_{p}$ conditions}

The absolutely continuous measure $d\sigma \left( x\right) =s\left( x\right)
dx$ is said to be an $A_{\infty }$ weight if there are constants $%
0<\varepsilon ,\eta <1$, called $A_{\infty }$ parameters, such that%
\begin{equation*}
\frac{\left\vert E\right\vert _{\sigma }}{\left\vert Q\right\vert _{\sigma }}%
<\eta \text{ whenever }E\text{ compact }\subset Q\text{ a cube with }\frac{%
\left\vert E\right\vert }{\left\vert Q\right\vert }<\varepsilon .
\end{equation*}%
A useful reformulation given in \cite[Theorem III on page 244]{CoFe} is that
there is $C>0$ and an $A_{\infty }$ exponent $\varepsilon >0$ such that%
\begin{equation}
\frac{\left\vert E\right\vert _{\sigma }}{\left\vert Q\right\vert _{\sigma }}%
\leq C\left( \frac{\left\vert E\right\vert }{\left\vert Q\right\vert }%
\right) ^{\varepsilon }\text{ whenever }E\text{ compact }\subset Q\text{ a
cube}.  \label{reform}
\end{equation}%
Recall that there are doubling measures that are mutually singular with
respect to Lebesgue measure, for a nice exposition see\ e.g. \cite{GaKiSc},
and references given there.

Finally an absolutely continuous measure $d\sigma \left( x\right) =s\left(
x\right) dx$ is said to be a $C_{p}$ weight for $1<p<\infty $ if%
\begin{equation*}
\frac{\left\vert E\right\vert _{\sigma }}{\int_{\mathbb{R}^{n}}\left\vert M%
\mathbf{1}_{Q}\right\vert ^{p}d\sigma }\leq C\left( \frac{\left\vert
E\right\vert }{\left\vert Q\right\vert }\right) ^{\varepsilon }\text{
whenever }E\text{ compact }\subset Q\text{ a cube}.
\end{equation*}

\subsubsection{Comparability of measures}

A generalization of the $A_{\infty }$ property to more general pairs $\left(
\sigma ,\omega \right) $ of doubling measures was also given by Coifman and
Fefferman in \cite{CoFe}: A pair $\left( \sigma ,\omega \right) $ of
doubling measures is \emph{comparable} if there are constants $0<\varepsilon
,\eta <1$, called comparability parameters, such that%
\begin{equation}
\frac{\left\vert E\right\vert _{\sigma }}{\left\vert Q\right\vert _{\sigma }}%
<\eta \text{ whenever }E\text{ compact }\subset Q\text{ a cube with }\frac{%
\left\vert E\right\vert _{\omega }}{\left\vert Q\right\vert _{\omega }}%
<\varepsilon .  \label{comp param}
\end{equation}%
This condition is easily seen to be symmetric and the reformulation proved
in \cite{CoFe} is that there is $C>0$ and a comparability exponent $%
\varepsilon >0$ such that%
\begin{equation}
\frac{\left\vert E\right\vert _{\sigma }}{\left\vert Q\right\vert _{\sigma }}%
\leq C\left( \frac{\left\vert E\right\vert _{\omega }}{\left\vert
Q\right\vert _{\omega }}\right) ^{\varepsilon }\text{ whenever }E\text{
compact }\subset Q\text{ a cube}.  \label{comparable}
\end{equation}

A further reformulation is given in \cite[Theorem 3 (ii)]{Saw0} in terms of
Carleson conditions, namely that a pair $\left( \sigma ,\omega \right) $ of
doubling measures is comparable if and only if%
\begin{equation}
\left\Vert \mathcal{F}\right\Vert _{\limfunc{Car}\left( \sigma \right) }\leq
C\left\Vert \mathcal{F}\right\Vert _{\limfunc{Car}\left( \omega \right) }\
,\ \ \ \ \ \text{for all grids }\mathcal{F}\subset \mathcal{D},
\label{sigma omega}
\end{equation}%
where for a positive locally finite Borel measure $\mu $, we say that a grid 
$\mathcal{F}\subset \mathcal{D}$, is $\mu $-Carleson if 
\begin{equation*}
\sum_{Q^{\prime }\in \mathcal{F}:\ Q^{\prime }\subset Q}\left\vert Q^{\prime
}\right\vert _{\mu }\leq C\left\vert Q\right\vert _{\mu }\ ,\ \ \ \ \ \text{%
for all }Q\in \mathcal{F},
\end{equation*}%
and we define\ the `Carleson norm' $\left\Vert \mathcal{F}\right\Vert _{%
\limfunc{Car}\left( \mu \right) }$ of $\mathcal{F}$\ to be\ the infimum of
such constants $C$. We repeat the simple proof here for the sake of
completeness. Suppose (\ref{comparable}) holds.\ Given $Q\in \mathcal{F}$,
let $G_{k}\left( Q\right) \equiv \dbigcup\limits_{Q^{\prime }\in \mathfrak{C}%
_{\mathcal{F}}^{\left( k\right) }}Q^{\prime }$ be the union of the $k^{th}$%
-grandchildren of $Q$. Since the\ grid $\mathcal{F}$ satisfies a Carleson
condition with respect to $\omega $, there are positive constants $C,\delta
>0$ such that $\left\vert G_{k}\left( Q\right) \right\vert _{\omega }\leq
C2^{-k\delta }\left\vert Q\right\vert _{\omega }$ (this is a well-known
consequence of the Carleson condition dating back to a paper of Carleson).
Then we have for $Q\in \mathcal{F}$,%
\begin{equation*}
\sum_{Q^{\prime }\in \mathcal{F}:\ Q^{\prime }\subset Q}\left\vert Q^{\prime
}\right\vert _{\sigma }=\sum_{k=0}^{\infty }\left\vert G_{k}\left( Q\right)
\right\vert _{\sigma }\leq \sum_{k=0}^{\infty }\left\vert Q\right\vert
_{\sigma }C\left( \frac{\left\vert G_{k}\left( Q\right) \right\vert _{\omega
}}{\left\vert Q\right\vert _{\omega }}\right) ^{\varepsilon }\leq
\sum_{k=0}^{\infty }\left\vert Q\right\vert _{\sigma }C\left( C2^{-k\delta
}\right) ^{\varepsilon }\leq C\left\vert Q\right\vert _{\sigma }\ .
\end{equation*}%
Conversely, (\ref{sigma omega}) implies (\ref{comp param}) is easy.

Since comparability of doubling measures is symmetric,\ (\ref{sigma omega})
is equivalent to its dual%
\begin{equation}
\left\Vert \mathcal{F}\right\Vert _{\limfunc{Car}\left( \omega \right) }\leq
C\left\Vert \mathcal{F}\right\Vert _{\limfunc{Car}\left( \sigma \right) }\
,\ \ \ \ \ \text{for all grids }\mathcal{F}\subset \mathcal{D}.
\label{omega sigma}
\end{equation}%
This suggests the following extension of comparability to more general pairs
of measures.

\begin{definition}
\label{def comparable}A pair $\left( \sigma ,\omega \right) $ of positive
locally finite Borel measures is said to be comparable if both (\ref{sigma
omega}) and (\ref{omega sigma}) hold.
\end{definition}

Note that the equivalence of (\ref{sigma omega}) and (\ref{omega sigma}) for
pairs of doubling measures, doesn't carry over to more general pairs of
measures, which explains why we incorporate both conditions (\ref{sigma
omega}) and (\ref{omega sigma}) in the definition of comparability.

\subsubsection{The fractional $A_{\infty }^{\protect\alpha }$ condition}

In order to\ introduce the larger class of measures satisfying the
fractional $A_{\infty }^{\alpha }$ condition, we define a relative $\alpha $%
-capacity $\mathbf{Cap}_{\alpha }\left( E;Q\right) $ of a compact subset $E$
of a cube $Q$ by%
\begin{eqnarray*}
\mathbf{Cap}_{\alpha }\left( E;Q\right) &\equiv &\inf \left\{ \int h\left(
x\right) dx:h\geq 0,\func{Supp}h\subset 2Q\text{ and }I_{\alpha }h\geq
\left( \func{diam}2Q\right) ^{\alpha -n}\text{ on }E\right\} \\
&\approx &\inf \left\{ \left\vert 2Q\right\vert ^{\frac{\alpha }{n}%
-1}\int_{2Q}h\left( x\right) dx:h\geq 0,\func{Supp}h\subset 2Q\text{ and }%
I_{\alpha }h\geq 1\text{ on }E\right\} .
\end{eqnarray*}%
This relative capacity is\ closely related to the $\left( \alpha ,1\right) $%
-capacity as defined e.g. in Adams and Hedberg \cite{AdHe}, where numerous
properties of capacities are developed. We now use this relative capacity to
define a fractional $A_{\infty }^{\alpha }$ condition (different than the $%
A_{\infty }^{\alpha }$ condition appearing in \cite[page 818]{SaWh}).

\begin{definition}
A locally finite positive Borel measure $\omega $ is said to be an $%
A_{\infty }^{\alpha }$ measure if%
\begin{eqnarray*}
&&\frac{\left\vert E\right\vert _{\omega }}{\left\vert 2Q\right\vert
_{\omega }}\leq \eta \left( \mathbf{Cap}_{\alpha }\left( E;Q\right) \right)
,\ \ \ \ \ \ \text{for all compact subsets }E\text{ of a cube }Q, \\
&&\ \ \ \ \ \ \ \ \ \ \text{for some function }\eta :\left[ 0,1\right]
\rightarrow \left[ 0,1\right] \text{ with }\lim_{t\searrow 0}\eta \left(
t\right) =0.
\end{eqnarray*}
\end{definition}

Note that omitting the factor $2$ in $\left\vert 2Q\right\vert _{\omega }$
above makes the condition more restrictive in general, but remains
equivalent for doubling measures. We let $A_{\infty }^{0}$ be the class of $%
A_{\infty }$ weights, so that statements involving both $A_{\infty }$ and $%
A_{\infty }^{\alpha }$ can be given together. Note the inequalities%
\begin{equation*}
c\left( \frac{\left\vert E\right\vert }{\left\vert Q\right\vert }\right) ^{1-%
\frac{\alpha }{n}}\leq \mathbf{Cap}_{\alpha }\left( E;Q\right) \mathbf{\leq
Cap}_{\beta }\left( E;Q\right) ,\ \ \ \ \ 0<\alpha <\beta <n,
\end{equation*}%
which hold since 
\begin{equation*}
\left( \func{diam}2Q\right) ^{\alpha -n}\left\vert E\right\vert ^{1-\frac{%
\alpha }{n}}\leq \left( \func{diam}2Q\right) ^{\alpha -n}\left\vert \left\{
I_{\alpha }h\geq \left( \func{diam}2Q\right) ^{\alpha -n}\right\}
\right\vert ^{1-\frac{\alpha }{n}}\leq \left\Vert I_{\alpha }\right\Vert
_{L^{1}\rightarrow L^{\frac{n}{n-\alpha },\infty }}\int h
\end{equation*}%
implies $\mathbf{Cap}_{\alpha }\left( E;Q\right) \geq \frac{\left( \func{diam%
}2Q\right) ^{\alpha -n}\left\vert E\right\vert ^{1-\frac{\alpha }{n}}}{%
\left\Vert I_{\alpha }\right\Vert _{L^{1}\rightarrow L^{\frac{n}{n-\alpha }%
,\infty }}}\int h=C\left( \frac{\left\vert E\right\vert }{\left\vert
Q\right\vert }\right) ^{1-\frac{\alpha }{n}}$, and since $\frac{I_{\alpha
}h\left( x\right) }{\left( \func{diam}2Q\right) ^{\alpha -n}}%
=\int_{2Q}\left( \frac{\left\vert x-y\right\vert }{\func{diam}2Q}\right)
^{\alpha -n}dy$ is decreasing in $\alpha $. It follows that%
\begin{equation*}
A_{\infty }\subset A_{\infty }^{\alpha }\subset A_{\infty }^{\beta }\text{\
\ \ \ \ for }0<\alpha <\beta <n.
\end{equation*}

\subsubsection{The Muckenhoupt conditions}

\begin{definition}
Let $\sigma $ and $\omega $ be locally finite positive Borel measures on $%
\mathbb{R}^{n}$, and denote by $\mathcal{P}^{n}$ the collection of all cubes
in $\mathbb{R}^{n}$ with sides parallel to the coordinate axes. For $0\leq
\alpha <n$, the classical $\alpha $-fractional Muckenhoupt condition for the
weight pair $\left( \sigma ,\omega \right) $ is given by%
\begin{equation}
A_{2}^{\alpha }\left( \sigma ,\omega \right) \equiv \sup_{Q\in \mathcal{P}%
^{n}}\frac{\left\vert Q\right\vert _{\sigma }}{\left\vert Q\right\vert ^{1-%
\frac{\alpha }{n}}}\frac{\left\vert Q\right\vert _{\omega }}{\left\vert
Q\right\vert ^{1-\frac{\alpha }{n}}}<\infty ,  \label{frac Muck}
\end{equation}%
and the corresponding one-tailed conditions by%
\begin{eqnarray}
\mathcal{A}_{2}^{\alpha }\left( \sigma ,\omega \right) &\equiv &\sup_{Q\in 
\mathcal{Q}^{n}}\mathcal{P}^{\alpha }\left( Q,\sigma \right) \frac{%
\left\vert Q\right\vert _{\omega }}{\left\vert Q\right\vert ^{1-\frac{\alpha 
}{n}}}<\infty ,  \label{one-sided} \\
\mathcal{A}_{2}^{\alpha ,\ast }\left( \sigma ,\omega \right) &\equiv
&\sup_{Q\in \mathcal{Q}^{n}}\frac{\left\vert Q\right\vert _{\sigma }}{%
\left\vert Q\right\vert ^{1-\frac{\alpha }{n}}}\mathcal{P}^{\alpha }\left(
Q,\omega \right) <\infty ,  \notag
\end{eqnarray}%
where the reproducing Poisson integral $\mathcal{P}^{\alpha }$ is given by%
\begin{equation*}
\mathcal{P}^{\alpha }\left( Q,\mu \right) \equiv \int_{\mathbb{R}^{n}}\left( 
\frac{\left\vert Q\right\vert ^{\frac{1}{n}}}{\left( \left\vert Q\right\vert
^{\frac{1}{n}}+\left\vert x-x_{Q}\right\vert \right) ^{2}}\right) ^{n-\alpha
}d\mu \left( x\right) .
\end{equation*}
\end{definition}

\subsubsection{Ellipticity of kernels}

Finally, as in \cite{SaShUr7}, an $\alpha $-fractional vector Calder\'{o}%
n-Zygmund kernel $K^{\alpha }=\left( K_{j}^{\alpha }\right) $ is said to be 
\emph{elliptic} if there is $c>0$ such that for each unit vector $\mathbf{u}%
\in \mathbb{R}^{n}$ there is $j$ satisfying%
\begin{equation*}
\left\vert K_{j}^{\alpha }\left( x,x+tu\right) \right\vert \geq ct^{\alpha
-n},\ \ \ \ \ \text{for all }t>0;
\end{equation*}%
and $K^{\alpha }=\left( K_{j}^{\alpha }\right) $ is said to be \emph{%
strongly elliptic} if for each $m\in \left\{ 1,-1\right\} ^{n}$, there is a
sequence of coefficients $\left\{ \lambda _{j}^{m}\right\} _{j=1}^{J}$ such
that%
\begin{equation}
\left\vert \sum_{j=1}^{J}\lambda _{j}^{m}K_{j}^{\alpha }\left( x,x+t\mathbf{u%
}\right) \right\vert \geq ct^{\alpha -n},\ \ \ \ \ t\in \mathbb{R}.
\label{Ktalpha strong}
\end{equation}%
holds for \emph{all} unit vectors $\mathbf{u}$ in the $n$-ant 
\begin{equation*}
V_{m}=\left\{ x\in \mathbb{R}^{n}:m_{i}x_{i}>0\text{ for }1\leq i\leq
n\right\} ,\ \ \ \ \ m\in \left\{ 1,-1\right\} ^{n}.
\end{equation*}%
For example, the vector Riesz transform kernel is strongly elliptic (\cite%
{SaShUr7}).

\subsection{Standard fractional singular integrals, the norm inequality and
testing conditions}

Let $0\leq \alpha <n$ and $\kappa _{1},\kappa _{2}\in \mathbb{N}$. We define
a standard $\left( \kappa _{1}+\delta ,\kappa _{2}+\delta \right) $-smooth $%
\alpha $-fractional Calder\'{o}n-Zygmund kernel $K^{\alpha }(x,y)$ to be a
function $K^{\alpha }:\mathbb{R}^{n}\times \mathbb{R}^{n}\rightarrow \mathbb{%
R}$ satisfying the following fractional size and smoothness conditions: For $%
x\neq y$, and with $\nabla _{1}$ denoting gradient in the first variable,
and $\nabla _{2}$ denoting gradient in the second variable, 
\begin{eqnarray}
&&\left\vert \nabla _{1}^{j}K^{\alpha }\left( x,y\right) \right\vert \leq
C_{CZ}\left\vert x-y\right\vert ^{\alpha -j-n-1},\ \ \ \ \ 0\leq j\leq
\kappa _{1},  \label{sizeandsmoothness'} \\
&&\left\vert \nabla _{1}^{\kappa }K^{\alpha }\left( x,y\right) -\nabla
_{1}^{\kappa }K^{\alpha }\left( x^{\prime },y\right) \right\vert \leq
C_{CZ}\left( \frac{\left\vert x-x^{\prime }\right\vert }{\left\vert
x-y\right\vert }\right) ^{\delta }\left\vert x-y\right\vert ^{\alpha -\kappa
_{1}-n-1},\ \ \ \ \ \frac{\left\vert x-x^{\prime }\right\vert }{\left\vert
x-y\right\vert }\leq \frac{1}{2},  \notag
\end{eqnarray}%
and where the same inequalities hold for the adjoint kernel $K^{\alpha ,\ast
}\left( x,y\right) \equiv K^{\alpha }\left( y,x\right) $, in which $x$ and $%
y $ are interchanged, and where $\kappa _{1}$ is replaced by $\kappa _{2}$,
and $\nabla _{1}$\ by $\nabla _{2}$.

\subsubsection{Defining the norm inequality\label{Subsubsection norm}}

We now turn to a precise definition of the weighted norm inequality%
\begin{equation}
\left\Vert T_{\sigma }^{\alpha }f\right\Vert _{L^{2}\left( \omega \right)
}\leq \mathfrak{N}_{T^{\alpha }}\left\Vert f\right\Vert _{L^{2}\left( \sigma
\right) },\ \ \ \ \ f\in L^{2}\left( \sigma \right) ,  \label{two weight'}
\end{equation}%
where of course $L^{2}\left( \sigma \right) $ is the Hilbert space
consisting of those functions $f:\mathbb{R}^{n}\rightarrow \mathbb{R}$ for
which 
\begin{equation*}
\left\Vert f\right\Vert _{L^{2}\left( \sigma \right) }\equiv \sqrt{\int_{%
\mathbb{R}^{n}}\left\vert f\left( x\right) \right\vert ^{2}d\sigma \left(
x\right) }<\infty ,
\end{equation*}%
and equipped with the usual inner product. A similar definition holds for $%
L^{2}\left( \omega \right) $. For a precise definition of (\ref{two weight'}%
), it is possible to proceed with the notion of associating operators and
kernels through the identity (\ref{identify}), and more simply by using the
notion of restricted boundedness introduced by Liaw and Treil in \cite[see
Theorem 3.4]{LiTr}. However, we choose to follow the approach in \cite[see
page 314]{SaShUr9}. So we suppose that $K^{\alpha }$ is a standard $\left(
\kappa _{1}+\delta ,\kappa _{2}+\delta \right) $-smooth $\alpha $-fractional
Calder\'{o}n-Zygmund kernel, and we introduce a family $\left\{ \eta
_{\delta ,R}^{\alpha }\right\} _{0<\delta <R<\infty }$ of nonnegative
functions on $\left[ 0,\infty \right) $ so that the truncated kernels $%
K_{\delta ,R}^{\alpha }\left( x,y\right) =\eta _{\delta ,R}^{\alpha }\left(
\left\vert x-y\right\vert \right) K^{\alpha }\left( x,y\right) $ are bounded
with compact support for fixed $x$ or $y$, and uniformly satisfy (\ref%
{sizeandsmoothness'}). Then the truncated operators 
\begin{equation*}
T_{\sigma ,\delta ,R}^{\alpha }f\left( x\right) \equiv \int_{\mathbb{R}%
^{n}}K_{\delta ,R}^{\alpha }\left( x,y\right) f\left( y\right) d\sigma
\left( y\right) ,\ \ \ \ \ x\in \mathbb{R}^{n},
\end{equation*}%
are pointwise well-defined, and we will refer to the pair $\left( K^{\alpha
},\left\{ \eta _{\delta ,R}^{\alpha }\right\} _{0<\delta <R<\infty }\right) $
as an $\alpha $-fractional singular integral operator, which we typically
denote by $T^{\alpha }$, suppressing the dependence on the truncations. We
also consider vector kernels $K^{\alpha }=\left( K_{j}^{\alpha }\right) $
where each $K_{j}^{\alpha }$ is as above, often without explicit mention.
This includes for example the vector Riesz transform in higher dimensions.

\begin{definition}
We say that an $\alpha $-fractional singular integral operator $T^{\alpha
}=\left( K^{\alpha },\left\{ \eta _{\delta ,R}^{\alpha }\right\} _{0<\delta
<R<\infty }\right) $ satisfies the norm inequality (\ref{two weight'})
provided%
\begin{equation*}
\left\Vert T_{\sigma ,\delta ,R}^{\alpha }f\right\Vert _{L^{2}\left( \omega
\right) }\leq \mathfrak{N}_{T^{\alpha }}\left( \sigma ,\omega \right)
\left\Vert f\right\Vert _{L^{2}\left( \sigma \right) },\ \ \ \ \ f\in
L^{2}\left( \sigma \right) ,0<\delta <R<\infty .
\end{equation*}
\end{definition}

\begin{description}
\item[Independence of Truncations] \label{independence}In the presence of
the classical Muckenhoupt condition $A_{2}^{\alpha }$, the norm inequality (%
\ref{two weight'}) is essentially independent of the choice of truncations
used, including \emph{nonsmooth} truncations as well - see \cite{LaSaShUr3}.
However, in dealing with the Monotonicity Lemma \ref{mono} below, where $%
\kappa ^{th}$ order Taylor approximations are made on the truncated kernels,
it is necessary to use sufficiently smooth truncations. Similar comments
apply to the Bilinear Indicator/Cube Testing property (\ref{def ind WBP})
and the Indicator/Cube Testing conditions (\ref{def kth order testing}), as
well as to the $\kappa $-cube testing conditions (\ref{kappa testing}) used
later in the proof.
\end{description}

The weak type norms $\mathfrak{N}_{\limfunc{weak}T^{\alpha }}\left( \sigma
,\omega \right) $ and $\mathfrak{N}_{\limfunc{weak}T^{\alpha ,\ast }}\left(
\omega ,\sigma \right) $ are the best constants in the inequalities%
\begin{equation}
\left\Vert T_{\sigma }^{\alpha }f\right\Vert _{L^{2,\infty }\left( \omega
\right) }\leq \mathfrak{N}_{\limfunc{weak}T^{\alpha }}\left( \sigma ,\omega
\right) \left\Vert f\right\Vert _{L^{2}\left( \sigma \right) }\text{ and }%
\left\Vert T_{\omega }^{\alpha ,\ast }f\right\Vert _{L^{2,\infty }\left(
\sigma \right) }\leq \mathfrak{N}_{\limfunc{weak}T^{\alpha ,\ast }}\left(
\omega ,\sigma \right) \left\Vert f\right\Vert _{L^{2}\left( \omega \right)
}\ .  \label{two weight weak'}
\end{equation}

\subsection{Cube Testing}

The $\kappa $\emph{-cube testing conditions} associated with an $\alpha $%
-fractional singular integral operator $T^{\alpha }$ introduced in \cite%
{RaSaWi} are given by%
\begin{eqnarray}
\left( \mathfrak{T}_{T^{\alpha }}^{\left( \kappa \right) }\left( \sigma
,\omega \right) \right) ^{2} &\equiv &\sup_{Q\in \mathcal{P}^{n}}\max_{0\leq
\left\vert \beta \right\vert <\kappa }\frac{1}{\left\vert Q\right\vert
_{\sigma }}\int_{Q}\left\vert T_{\sigma }^{\alpha }\left( \mathbf{1}%
_{Q}m_{Q}^{\beta }\right) \right\vert ^{2}\omega <\infty ,
\label{def Kappa polynomial} \\
\left( \mathfrak{T}_{\left( T^{\alpha }\right) ^{\ast }}^{\left( \kappa
\right) }\left( \omega ,\sigma \right) \right) ^{2} &\equiv &\sup_{Q\in 
\mathcal{P}^{n}}\max_{0\leq \left\vert \beta \right\vert <\kappa }\frac{1}{%
\left\vert Q\right\vert _{\omega }}\int_{Q}\left\vert \left( T_{\sigma
}^{\alpha }\right) ^{\ast }\left( \mathbf{1}_{Q}m_{Q}^{\beta }\right)
\right\vert ^{2}\sigma <\infty ,  \notag
\end{eqnarray}%
with $m_{Q}^{\beta }\left( x\right) \equiv \left( \frac{x-c_{Q}}{\ell \left(
Q\right) }\right) ^{\beta }$ for any cube $Q$ and multiindex $\beta $, where 
$c_{Q}$ is the center of the cube $Q$, and where we interpret the right hand
sides as holding uniformly over all sufficiently smooth truncations of $%
T^{\alpha }$. Equivalently, in the presence of $A_{2}^{\alpha }$, we can
take a single suitable truncation, see Independence of Truncations in
Subsubsection \ref{independence} above. We will also use the larger \emph{%
full }$\kappa $\emph{-cube testing conditions} in which the integrals over $%
Q $ are extended to the whole space $\mathbb{R}^{n}$:%
\begin{eqnarray*}
\left( \mathfrak{FT}_{T^{\alpha }}^{\left( \kappa \right) }\left( \sigma
,\omega \right) \right) ^{2} &\equiv &\sup_{Q\in \mathcal{P}^{n}}\max_{0\leq
\left\vert \beta \right\vert <\kappa }\frac{1}{\left\vert Q\right\vert
_{\sigma }}\int_{\mathbb{R}^{n}}\left\vert T_{\sigma }^{\alpha }\left( 
\mathbf{1}_{Q}m_{Q}^{\beta }\right) \right\vert ^{2}\omega <\infty , \\
\left( \mathfrak{FT}_{\left( T^{\alpha }\right) ^{\ast }}^{\left( \kappa
\right) }\left( \omega ,\sigma \right) \right) ^{2} &\equiv &\sup_{Q\in 
\mathcal{P}^{n}}\max_{0\leq \left\vert \beta \right\vert <\kappa }\frac{1}{%
\left\vert Q\right\vert _{\omega }}\int_{\mathbb{R}^{n}}\left\vert \left(
T^{\alpha }\right) _{\omega }^{\ast }\left( \mathbf{1}_{Q}m_{Q}^{\beta
}\right) \right\vert ^{2}\sigma <\infty .
\end{eqnarray*}
We only use the case $\kappa =1$\ in the statements of the four theorems in
the next section, and so we will drop the superscript $\left( \kappa \right) 
$ when $\kappa =1$, e.g. $\mathfrak{T}_{T^{\alpha }}=\mathfrak{T}_{T^{\alpha
}}^{\left( 1\right) }$ and $\mathfrak{T}_{\left( T^{\alpha }\right) ^{\ast
}}=\mathfrak{T}_{\left( T^{\alpha }\right) ^{\ast }}^{\left( 1\right) }$.

Finally, we define the \emph{Indicator/Cube Testing constants} by%
\begin{eqnarray}
\left( \mathfrak{T}_{T^{\alpha }}^{IC}\left( \sigma ,\omega \right) \right)
^{2} &\equiv &\sup_{E\subset Q\in \mathcal{P}^{n}}\frac{1}{\left\vert
Q\right\vert _{\sigma }}\int_{Q}\left\vert T^{\alpha }\left( \mathbf{1}%
_{E}\sigma \right) \right\vert ^{2}\omega <\infty ,
\label{def kth order testing} \\
\left( \mathfrak{T}_{\left( T^{\alpha }\right) ^{\ast }}^{IC}\left( \omega
,\sigma \right) \right) ^{2} &\equiv &\sup_{E\subset Q\in \mathcal{P}^{n}}%
\frac{1}{\left\vert Q\right\vert _{\omega }}\int_{Q}\left\vert \left(
T^{\alpha }\right) ^{\ast }\left( \mathbf{1}_{E}\omega \right) \right\vert
^{2}\sigma <\infty ,  \notag
\end{eqnarray}%
which are larger than the $\kappa $-cube testing conditions.

\subsection{Bilinear Indicator/Cube Testing}

Here we introduce a variant of the weak boundedness property of David and
Journe in (\ref{WBPDJ}), but stronger because we take $\eta =0$ in (\ref%
{WBPDJ}). The Bilinear Indicator/Cube Testing property is%
\begin{equation}
\mathcal{BICT}_{T^{\alpha }}\left( \sigma ,\omega \right) \equiv \sup_{Q\in 
\mathcal{P}^{n}}\sup_{E,F\subset Q}\frac{1}{\sqrt{\left\vert Q\right\vert
_{\sigma }\left\vert Q\right\vert _{\omega }}}\left\vert \int_{F}T_{\sigma
}^{\alpha }\left( \mathbf{1}_{E}\right) \omega \right\vert <\infty ,
\label{def ind WBP}
\end{equation}%
where the second supremum is taken over all compact sets $E$ and $F$
contained in a cube $Q$. Note in particular that the bilinear indicator/cube
testing property $\mathcal{BICT}_{T^{\alpha }}\left( \sigma ,\omega \right)
<\infty $ is restricted to considering the \textbf{same} cube $Q$ for each
measure $\sigma $ and $\omega $ - in contrast to the weak boundedness
property $\mathcal{WBP}_{T^{\alpha }}^{\left( \kappa _{1},\kappa _{2}\right)
}<\infty $ in (\ref{kappa WBP}) below, that takes the supremum of the inner
product over pairs of nearby disjoint cubes $Q,Q^{\prime }$. However, in the
setting of doubling measures, the latter constant $\mathcal{WBP}_{T^{\alpha
}}^{\left( \kappa _{1},\kappa _{2}\right) }$ can be controlled by $\kappa
^{th}$-order testing and the one-tailed Muckenhoupt condition $\mathcal{A}%
_{2}^{\alpha }$ since the cube pairs are disjoint, and hence $\mathcal{WBP}%
_{T^{\alpha }}^{\left( \kappa _{1},\kappa _{2}\right) }$ is removable. On
the other hand, the former constant $\mathcal{BICT}_{T^{\alpha }}$ cannot be
controlled in the same way since the cubes coincide, and we are only able to
remove $\mathcal{BICT}_{T^{\alpha }}$ if one of the measures is an $%
A_{\infty }^{\alpha }$ or $C_{2+\varepsilon }$ measure.

\section{The four main theorems}

Here is our general $T1$ theorem for doubling measures, first with
indicator/cube testing, then with cube testing and a bilinear indicator/cube
testing property when in the addition the measures are comparable. See Lemma %
\ref{doub exp} below for the definition of the \emph{doubling exponent},
whose only role here is to determine the degree of smoothness required of
the kernel $K^{\alpha }$ below.

\begin{theorem}
\label{pivotal theorem}Suppose $0\leq \alpha <n$, and $\kappa _{1},\kappa
_{2}\in \mathbb{N}$ and $0<\delta <1$. Let $T^{\alpha }$ be an $\alpha $%
-fractional Calder\'{o}n-Zygmund singular integral operator on $\mathbb{R}%
^{n}$ with a standard $\left( \kappa _{1}+\delta ,\kappa _{2}+\delta \right) 
$-smooth $\alpha $-fractional kernel $K^{\alpha }$. Assume that $\sigma $
and $\omega $ are doubling measures on $\mathbb{R}^{n}$ with doubling
exponents $\theta _{1}$ and $\theta _{2}$ respectively satisfying 
\begin{equation*}
\kappa _{1}>\theta _{1}+\alpha -n\text{ and }\kappa _{2}>\theta _{2}+\alpha
-n.
\end{equation*}%
Set 
\begin{equation*}
T_{\sigma }^{\alpha }f=T^{\alpha }\left( f\sigma \right)
\end{equation*}%
for any smooth truncation of $T^{\alpha }$.\newline
Then%
\begin{equation}
\mathfrak{N}_{T^{\alpha }}\left( \sigma ,\omega \right) \leq C_{\alpha
,n}\left( \sqrt{\mathcal{A}_{2}^{\alpha }\left( \sigma ,\omega \right) +%
\mathcal{A}_{2}^{\alpha }\left( \omega ,\sigma \right) }+\mathfrak{T}%
_{T^{\alpha }}^{IC}\left( \sigma ,\omega \right) +\mathfrak{T}_{\left(
T^{\alpha }\right) ^{\ast }}^{IC}\left( \omega ,\sigma \right) \right) ,
\label{NIC}
\end{equation}%
and if in addition $\sigma $ and $\omega $ are \emph{comparable} doubling
measures on $\mathbb{R}^{n}$ in the sense of Coifman and Fefferman, then%
\begin{equation}
\mathfrak{N}_{T^{\alpha }}\left( \sigma ,\omega \right) \leq C_{\alpha
,n}^{\prime }\left( \sqrt{\mathcal{A}_{2}^{\alpha }\left( \sigma ,\omega
\right) +\mathcal{A}_{2}^{\alpha }\left( \omega ,\sigma \right) }+\mathfrak{T%
}_{T^{\alpha }}\left( \sigma ,\omega \right) +\mathfrak{T}_{\left( T^{\alpha
}\right) ^{\ast }}\left( \omega ,\sigma \right) +\mathcal{BICT}_{T^{\alpha
}}\left( \sigma ,\omega \right) \right) ,  \label{NBICT}
\end{equation}%
where the constant $C_{\alpha ,n}$ depends also on $C_{CZ}$ in (\ref%
{sizeandsmoothness'}) and the doubling exponents $\theta _{1}$ and $\theta
_{2}$, and where $C_{\alpha ,n}^{\prime }$ also depends on the comparability
constants in (\ref{comp param}).
\end{theorem}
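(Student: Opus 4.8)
The plan is to follow the NTV-style argument adapted to weighted Alpert wavelets, as outlined in the methodology discussion. First I would fix a smooth truncation of $T^{\alpha}$ and, after the standard random grids / good-bad cube decomposition of Nazarov--Treil--Volberg, reduce the bilinear form $\langle T_{\sigma}^{\alpha}f,g\rangle_{\omega}$ to a sum over good cubes, expanding $f$ and $g$ in the weighted Alpert bases $\{\triangle_{Q;\kappa_1}^{\sigma}\}$ and $\{\triangle_{Q;\kappa_2}^{\omega}\}$ of \cite{RaSaWi}, with $\kappa_i$ chosen so that $\kappa_i>\theta_i+\alpha-n$ guarantees both the requisite vanishing moments and the nondegeneracy estimate \eqref{nondeg} (which, per the excerpt, follows from doubling once $\kappa$ is large enough). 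The first key point is that for doubling measures the \emph{weak} pivotal conditions hold — i.e. the Poisson-type testing quantities with higher powers in the tail are dominated by $\sqrt{\mathcal{A}_2^{\alpha}(\sigma,\omega)}+\mathcal{A}_2^{\alpha,\ast}(\omega,\sigma)$ — and I would prove this first, since it is the structural input that replaces the full pivotal condition of NTV.

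Next I would split the bilinear form into a global (far) part and a local (near, same-grid) part. For the global part, the higher-order Taylor expansion of the truncated kernel (the Monotonicity Lemma referenced as Lemma \ref{mono}) together with the weak pivotal bound controls all the long-range interactions; here testing is needed only against polynomials times indicators of cubes, which in the presence of $A_2^{\alpha}$ is equivalent to the cube testing $\mathfrak{T}_{T^{\alpha}}$, $\mathfrak{T}_{(T^{\alpha})^{\ast}}$ (and to $\mathfrak{T}^{IC}$, which dominates it). For the local part I would invoke the \emph{Parallel Corona}: build independent Calder\'on--Zygmund stopping trees for $f$ with respect to $\sigma$ and for $g$ with respect to $\omega$. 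This is exactly where comparability enters: since $\sigma$ and $\omega$ are doubling and comparable, a grid is $\sigma$-Carleson iff it is $\omega$-Carleson by \eqref{comparable}/\eqref{sigma omega}, so the two stopping trees can be organized compatibly, and the two-weight \emph{bilinear} Carleson Embedding Theorem \ref{2 wt bil CET} applies to the resulting corona decomposition. Because Alpert wavelets do not admit a paraproduct/neighbour/stopping decomposition (extension of nonconstant polynomials across cubes is uncontrolled), the diagonal/stopping terms that would normally be handled by Lacey's stopping-form estimate are instead absorbed by the $\mathcal{BICT}_{T^{\alpha}}$ hypothesis applied to indicators of the corona blocks, after reducing a corona block to an indicator of a subset of a stopping cube.

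Assembling: the global estimate contributes $\sqrt{\mathcal{A}_2^{\alpha}(\sigma,\omega)+\mathcal{A}_2^{\alpha}(\omega,\sigma)}+\mathfrak{T}_{T^{\alpha}}(\sigma,\omega)+\mathfrak{T}_{(T^{\alpha})^{\ast}}(\omega,\sigma)$, the Parallel Corona bookkeeping contributes the same $\mathcal{A}_2^{\alpha}$ terms via the bilinear Carleson embedding, and the local stopping/diagonal pieces contribute $\mathcal{BICT}_{T^{\alpha}}(\sigma,\omega)$, with all dependence on $C_{CZ}$, the doubling exponents $\theta_1,\theta_2$, and the comparability constants tracked through the constants; taking a supremum over truncations yields \eqref{NBICT}. (The companion bound \eqref{NIC} follows the same scheme without comparability, paying for it by using the larger indicator/cube testing constants in place of both cube testing and $\mathcal{BICT}$ throughout the local analysis — but that is the first assertion of the theorem, not the one requested here.) The main obstacle I anticipate is the local/stopping-form analysis within the Parallel Corona: without a P/N/S decomposition one must carefully control the interaction of an Alpert projection on a child cube with the corona structure of the \emph{other} function, and it is precisely the organization of these terms so that each residual piece is of pure indicator type — hence estimable by $\mathcal{BICT}$ and the $A_2^{\alpha}$ tail bounds rather than by a genuine stopping-form inequality — that is the delicate heart of the proof.
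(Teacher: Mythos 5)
Your overall architecture is exactly the paper's: good-cube reduction and Alpert expansion, $\kappa^{th}$-order pivotal conditions derived from doubling and controlled by $A_{2}^{\alpha }$ as in (\ref{piv control}), the Monotonicity Lemma plus an Intertwining Proposition for the far/global form, and the Parallel Corona with the bilinear Carleson Embedding Theorem \ref{2 wt bil CET} (which is precisely where comparability enters) together with $\mathcal{BICT}$ for the diagonal corona terms. However, two steps you treat as routine are in fact where a substantial part of the work lies, and one of them is false as you state it.

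First, you assert that testing over polynomials times indicators of cubes is, ``in the presence of $A_{2}^{\alpha }$, equivalent to the cube testing $\mathfrak{T}_{T^{\alpha }}$.'' In dimension one this follows from Hyt\"{o}nen's theorem, but in $\mathbb{R}^{n}$ with $n\geq 2$ no such off-the-shelf equivalence is available. The paper only proves
$\mathfrak{FT}_{T}^{\left( \kappa \right) }\left( \sigma ,\omega \right) \leq C_{\kappa ,\varepsilon }\left[ \mathfrak{T}_{T}\left( \sigma ,\omega \right) +\mathcal{A}_{2}^{\alpha }\left( \sigma ,\omega \right) \right] +\varepsilon \mathfrak{N}_{T}\left( \sigma ,\omega \right) $
(Theorem \ref{Tp control by T1}, Proposition \ref{full control for doub}, Corollary \ref{full Tp control}), by decomposing a rectangle into dyadic cubes plus a thin slab whose $\sigma $-mass is small on average over the truncation parameter, and the residual $\varepsilon \mathfrak{N}_{T}$ must be absorbed into the left-hand side of the final norm inequality --- which in turn requires working with truncations whose norms are a priori finite. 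Second, the NTV diagonal estimate for comparable-scale pairs (Lemma \ref{standard delta}) necessarily produces the $\kappa ^{th}$-order weak boundedness constant $\mathcal{WBP}_{T^{\alpha }}^{\left( \kappa _{1},\kappa _{2}\right) }$, taken over nearby \emph{disjoint} cubes and hence not covered by $\mathcal{BICT}$; its elimination again goes through the same $\varepsilon \mathfrak{N}_{T}$ absorption as in (\ref{unif bound'}), using doubling and the one-tailed Muckenhoupt condition. Neither of these appears in your outline, and without them the passage from polynomial testing to ordinary cube testing, and the removal of the weak boundedness property, remain unjustified.
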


Now $\mathcal{A}_{2}^{\alpha }$ is necessary for boundedness of a strongly
elliptic operator as defined in \cite{SaShUr7}, see also Liaw and Triel \cite%
[Theorem 5.1]{LiTr}. Thus we obtain the following corollary.

\begin{corollary}
\label{elliptic corollary}If in addition to the hypotheses of Theorem \ref%
{pivotal theorem}, we assume the operator $T^{\alpha }$ is strongly
elliptic, then we can reverse the inequalities in both (\ref{NIC}) and (\ref%
{NBICT}), i.e.%
\begin{equation*}
\mathfrak{N}_{T^{\alpha }}\left( \sigma ,\omega \right) \approx \sqrt{%
\mathcal{A}_{2}^{\alpha }\left( \sigma ,\omega \right) +\mathcal{A}%
_{2}^{\alpha }\left( \omega ,\sigma \right) }+\mathfrak{T}_{T^{\alpha
}}^{IC}\left( \sigma ,\omega \right) +\mathfrak{T}_{\left( T^{\alpha
}\right) ^{\ast }}^{IC}\left( \omega ,\sigma \right) ,
\end{equation*}%
and if in addition $\sigma $ and $\omega $ are \emph{comparable} doubling
measures on $\mathbb{R}^{n}$ in the sense of Coifman and Fefferman,%
\begin{equation}
\mathfrak{N}_{T^{\alpha }}\left( \sigma ,\omega \right) \approx \sqrt{%
\mathcal{A}_{2}^{\alpha }\left( \sigma ,\omega \right) +\mathcal{A}%
_{2}^{\alpha }\left( \omega ,\sigma \right) }+\mathfrak{T}_{T^{\alpha
}}\left( \sigma ,\omega \right) +\mathfrak{T}_{\left( T^{\alpha }\right)
^{\ast }}\left( \omega ,\sigma \right) +\mathcal{BICT}_{T^{\alpha }}\left(
\sigma ,\omega \right) .  \label{character}
\end{equation}%
In particular, assuming just doubling without comparability of the measures,
we have the equivalence of the strong type inequality (\ref{two weight'})
with both weak type inequalities\ (\ref{two weight weak'}), i.e. 
\begin{equation*}
\mathfrak{N}_{T^{\alpha }}\left( \sigma ,\omega \right) \approx \mathfrak{N}%
_{\limfunc{weak}T^{\alpha }}\left( \sigma ,\omega \right) +\mathfrak{N}_{%
\limfunc{weak}T^{\alpha ,\ast }}\left( \omega ,\sigma \right) .
\end{equation*}
\end{corollary}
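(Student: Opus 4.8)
The plan is to deduce Corollary \ref{elliptic corollary} from Theorem \ref{pivotal theorem} by showing that, under strong ellipticity, every term on the right-hand side of (\ref{NIC}) and of (\ref{NBICT}) is controlled by $\mathfrak{N}_{T^{\alpha }}\left( \sigma ,\omega \right) $ — so the inequalities there reverse to genuine equivalences — and then, separately and assuming only doubling, that the right-hand side of (\ref{NIC}) is controlled by the sum of the two weak type norms. Since $\mathcal{A}_{2}^{\alpha }$ is known to be necessary for strongly elliptic operators (see \cite{SaShUr7} and \cite[Theorem 5.1]{LiTr}), the work is essentially just to combine that fact with a few elementary domination bounds.

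For the reversal I would proceed term by term, all estimates understood uniformly over the admissible smooth truncations. The testing constants are immediate since testing is a restriction of the norm inequality: for $E\subset Q$ we have $\int_{Q}\left\vert T_{\sigma }^{\alpha }\mathbf{1}_{E}\right\vert ^{2}\omega \leq \mathfrak{N}_{T^{\alpha }}\left( \sigma ,\omega \right) ^{2}\left\vert E\right\vert _{\sigma }\leq \mathfrak{N}_{T^{\alpha }}\left( \sigma ,\omega \right) ^{2}\left\vert Q\right\vert _{\sigma }$, whence $\mathfrak{T}_{T^{\alpha }}^{IC}\left( \sigma ,\omega \right) \leq \mathfrak{N}_{T^{\alpha }}\left( \sigma ,\omega \right) $; duality of the norm inequality, $\mathfrak{N}_{\left( T^{\alpha }\right) ^{\ast }}\left( \omega ,\sigma \right) =\mathfrak{N}_{T^{\alpha }}\left( \sigma ,\omega \right) $, gives the dual bound, and $\mathfrak{T}_{T^{\alpha }}\leq \mathfrak{T}_{T^{\alpha }}^{IC}$ because cube testing is dominated by indicator/cube testing. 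A Cauchy--Schwarz on the $F$-integral gives $\left\vert \int_{F}T_{\sigma }^{\alpha }\mathbf{1}_{E}\,\omega \right\vert \leq \mathfrak{N}_{T^{\alpha }}\left( \sigma ,\omega \right) \sqrt{\left\vert E\right\vert _{\sigma }\left\vert F\right\vert _{\omega }}$, so $\mathcal{BICT}_{T^{\alpha }}\left( \sigma ,\omega \right) \leq \mathfrak{N}_{T^{\alpha }}\left( \sigma ,\omega \right) $. The only non-formal input is $\sqrt{\mathcal{A}_{2}^{\alpha }\left( \sigma ,\omega \right) +\mathcal{A}_{2}^{\alpha }\left( \omega ,\sigma \right) }\lesssim \mathfrak{N}_{T^{\alpha }}\left( \sigma ,\omega \right) $, which is exactly the necessity of the one-tailed Muckenhoupt conditions for a strongly elliptic operator. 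Adding the four bounds and combining with the upper bounds in Theorem \ref{pivotal theorem} gives both displayed equivalences, the second one inheriting the comparability hypothesis from (\ref{NBICT}).

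For the strong/weak equivalence the trivial direction is $\mathfrak{N}_{\limfunc{weak}T^{\alpha }}\left( \sigma ,\omega \right) +\mathfrak{N}_{\limfunc{weak}T^{\alpha ,\ast }}\left( \omega ,\sigma \right) \leq 2\mathfrak{N}_{T^{\alpha }}\left( \sigma ,\omega \right) $, and for the converse I would feed (\ref{NIC}) — which needs no comparability — with weak type estimates. The necessity arguments for the one-tailed $\mathcal{A}_{2}^{\alpha }$ conditions for a strongly elliptic $T^{\alpha }$ use only the weak type inequalities, so they already give $\sqrt{\mathcal{A}_{2}^{\alpha }\left( \sigma ,\omega \right) +\mathcal{A}_{2}^{\alpha }\left( \omega ,\sigma \right) }\lesssim \mathfrak{N}_{\limfunc{weak}T^{\alpha }}\left( \sigma ,\omega \right) +\mathfrak{N}_{\limfunc{weak}T^{\alpha ,\ast }}\left( \omega ,\sigma \right) $. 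For the testing constants the key is a weak-to-strong upgrade: fixing $E\subset Q$ and dualizing, $\left\Vert \mathbf{1}_{Q}T_{\sigma }^{\alpha }\mathbf{1}_{E}\right\Vert _{L^{2}\left( \omega \right) }=\sup _{\left\Vert g\right\Vert _{L^{2}\left( \omega \right) }\leq 1}\left\vert \int_{E}\left( T^{\alpha }\right) _{\omega }^{\ast }\left( \mathbf{1}_{Q}g\right) d\sigma \right\vert $; with $h=\left( T^{\alpha }\right) _{\omega }^{\ast }\left( \mathbf{1}_{Q}g\right) $ one has $\left\Vert h\right\Vert _{L^{2,\infty }\left( \sigma \right) }\leq \mathfrak{N}_{\limfunc{weak}T^{\alpha ,\ast }}\left( \omega ,\sigma \right) $, and the elementary bound $\int_{E}\left\vert h\right\vert d\sigma \leq 2\left\Vert h\right\Vert _{L^{2,\infty }\left( \sigma \right) }\left\vert E\right\vert _{\sigma }^{1/2}$ (optimize the layer-cake estimate $\left\vert \{x\in E:\left\vert h\right\vert >\lambda \}\right\vert _{\sigma }\leq \min \{\left\vert E\right\vert _{\sigma },\left\Vert h\right\Vert _{L^{2,\infty }\left( \sigma \right) }^{2}\lambda ^{-2}\}$) then yields $\mathfrak{T}_{T^{\alpha }}^{IC}\left( \sigma ,\omega \right) \leq 2\mathfrak{N}_{\limfunc{weak}T^{\alpha ,\ast }}\left( \omega ,\sigma \right) $ and, symmetrically, $\mathfrak{T}_{\left( T^{\alpha }\right) ^{\ast }}^{IC}\left( \omega ,\sigma \right) \leq 2\mathfrak{N}_{\limfunc{weak}T^{\alpha }}\left( \sigma ,\omega \right) $. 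Substituting the three bounds into (\ref{NIC}) closes the loop.

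I do not expect a genuine obstacle here: Theorem \ref{pivotal theorem} carries all the weight, and this corollary only harvests it. The two points that need actual care are (i) checking that the necessity of the one-tailed $\mathcal{A}_{2}^{\alpha }$ conditions for a strongly elliptic $T^{\alpha }$ — which tests $T^{\alpha }$ against indicators of cubes and reads off a Poisson-tail lower bound from a suitable linear combination of the components $K_{j}^{\alpha }$ over the $n$-ants — indeed runs on the weak type hypothesis alone, and (ii) the weak-to-strong upgrade of the testing constants, which rests only on the standard $L^{2,\infty }\rightarrow L^{1}$ estimate on sets of finite measure together with $L^{2}$ duality.
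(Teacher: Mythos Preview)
Your proposal is correct and follows the approach the paper indicates: the paper offers no explicit proof for this corollary beyond the single sentence preceding it, namely that $\mathcal{A}_{2}^{\alpha }$ is necessary for boundedness of a strongly elliptic operator (citing \cite{SaShUr7} and \cite[Theorem 5.1]{LiTr}), leaving the remaining term-by-term trivial bounds and the weak/strong equivalence to the reader. Your write-up fills in exactly those details --- in particular the $L^{2,\infty }\!\to L^{1}$ duality argument showing $\mathfrak{T}_{T^{\alpha }}^{IC}\left( \sigma ,\omega \right) \lesssim \mathfrak{N}_{\limfunc{weak}T^{\alpha ,\ast }}\left( \omega ,\sigma \right) $ --- and is in line with what the paper intends.
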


\begin{remark}
As mentioned earlier, for operators with a partial reversal of energy, it is
already known that, for doubling measures, the norm inequalities are
characterized by one-tailed Muckenhoupt conditions and the usual $T1$
testing conditions taken over indicators of cubes, see \cite{LaWi} and \cite%
{SaShUr9}. However, energy reversal fails spectacularly for elliptic
operators in general, see \cite{SaShUr4}, and even the weaker energy
condition itself fails to be necessary for boundedness of the fractional
Riesz transforms with respect to general measures \cite{Saw}.
\end{remark}

\subsection{Elimination of the $\mathcal{BICT}$}

The following $T1$ theorem provides a Cube Testing extension of the $T1$
theorem of David and Journ\'{e} \cite{DaJo} to a pair of comparable doubling
measures when one of them, hence each of them\footnote{%
since if $\omega $ is comparable to an $A_{\infty }^{\alpha }$ measure $%
\sigma $, then $\frac{\left\vert E\right\vert _{\omega }}{\left\vert
Q\right\vert _{\omega }}\leq C\left( \frac{\left\vert E\right\vert _{\sigma }%
}{\left\vert Q\right\vert _{\sigma }}\right) ^{\varepsilon }\leq C\eta
\left( \mathbf{Cap}_{\alpha }\left( E;Q\right) \right) ^{\varepsilon }$
shows that $\omega \in A_{\infty }^{\alpha }$.}, satisfies the $A_{\infty }$
or more generally the $A_{\infty }^{\alpha }$ condition (and provided the
operator is bounded on unweighted $L^{2}\left( \mathbb{R}^{n}\right) $ when $%
\alpha =0$).

\begin{theorem}
\label{A infinity theorem}Suppose $0\leq \alpha <n$, and $\kappa _{1},\kappa
_{2}\in \mathbb{N}$ and $0<\delta <1$. Let $T^{\alpha }$ be an $\alpha $%
-fractional Calder\'{o}n-Zygmund singular integral operator on $\mathbb{R}%
^{n}$ with a standard $\left( \kappa _{1}+\delta ,\kappa _{2}+\delta \right) 
$-smooth $\alpha $-fractional kernel $K^{\alpha }$, and when $\alpha =0$,
suppose that $T^{0}$ is bounded on unweighted $L^{2}\left( \mathbb{R}%
^{n}\right) $. Assume that $\sigma $ and $\omega $ are \emph{comparable
doubling} measures on $\mathbb{R}^{n}$ that satisfy the one-tailed
Muckenhoupt conditions, and with doubling exponents $\theta _{1}$ and $%
\theta _{2}$ respectively satisfying 
\begin{equation*}
\kappa _{1}>\theta _{1}+\alpha -n\text{ and }\kappa _{2}>\theta _{2}+\alpha
-n.
\end{equation*}%
Furthermore suppose that the measures are $A_{\infty }$ weights or more
generally,%
\begin{eqnarray*}
&&\text{\textbf{either} at least one, and hence each, of }\sigma ,\omega 
\text{ satisfies the }A_{\infty }^{\alpha }\text{ condition,} \\
&&\text{\textbf{or} at least one of }\sigma ,\omega \text{ is a }C_{q}\text{
weight, for some }q>2\text{.}
\end{eqnarray*}%
Set 
\begin{equation*}
T_{\sigma }^{\alpha }f=T^{\alpha }\left( f\sigma \right)
\end{equation*}%
for any smooth truncation of $T^{\alpha }$.\newline
Then%
\begin{equation}
\mathfrak{N}_{T^{\alpha }}\left( \sigma ,\omega \right) \leq C\left( \sqrt{%
\mathcal{A}_{2}^{\alpha }\left( \sigma ,\omega \right) +\mathcal{A}%
_{2}^{\alpha }\left( \omega ,\sigma \right) }+\mathfrak{T}_{T^{\alpha
}}\left( \sigma ,\omega \right) +\mathfrak{T}_{\left( T^{\alpha }\right)
^{\ast }}\left( \omega ,\sigma \right) \right) ,  \label{more'}
\end{equation}%
where the constant $C$ depends on $C_{CZ}$ in (\ref{sizeandsmoothness'}),
and the appropriate doubling, comparability, $A_{\infty }^{\alpha }$, and $%
C_{q}$ constants. If $T^{\alpha }$ is elliptic, and also strongly elliptic
if $\frac{n}{2}\leq \alpha <n$, the inequality can be reversed.
\end{theorem}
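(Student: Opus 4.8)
The plan is to bootstrap from Theorem \ref{pivotal theorem}. By the comparability hypothesis and the footnote observation, if one of $\sigma,\omega$ lies in $A_\infty^\alpha$ then so does the other, and likewise the $C_q$ case propagates across comparable measures; so without loss of generality both measures enjoy the extra regularity. Theorem \ref{pivotal theorem} already gives
\begin{equation*}
\mathfrak{N}_{T^{\alpha }}\left( \sigma ,\omega \right) \leq C_{\alpha ,n}^{\prime }\left( \sqrt{\mathcal{A}_{2}^{\alpha }\left( \sigma ,\omega \right) +\mathcal{A}_{2}^{\alpha }\left( \omega ,\sigma \right) }+\mathfrak{T}_{T^{\alpha }}\left( \sigma ,\omega \right) +\mathfrak{T}_{\left( T^{\alpha }\right) ^{\ast }}\left( \omega ,\sigma \right) +\mathcal{BICT}_{T^{\alpha }}\left( \sigma ,\omega \right) \right),
\end{equation*}
so the entire content of the new theorem is the single estimate
\begin{equation*}
\mathcal{BICT}_{T^{\alpha }}\left( \sigma ,\omega \right) \lesssim \sqrt{\mathcal{A}_{2}^{\alpha }\left( \sigma ,\omega \right) +\mathcal{A}_{2}^{\alpha }\left( \omega ,\sigma \right) }+\mathfrak{T}_{T^{\alpha }}\left( \sigma ,\omega \right) +\mathfrak{T}_{\left( T^{\alpha }\right) ^{\ast }}\left( \omega ,\sigma \right),
\end{equation*}
with the implied constant depending on the doubling, comparability and $A_\infty^\alpha$ (or $C_q$) constants. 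Once this is in hand, the reversal when $T^\alpha$ is (strongly) elliptic follows verbatim from the discussion preceding Corollary \ref{elliptic corollary}, i.e. from the necessity of $\mathcal{A}_2^\alpha$ and of cube testing for a bounded (strongly) elliptic operator.

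To bound $\mathcal{BICT}$, fix a cube $Q$ and compact sets $E,F\subset Q$; I must control $\left\vert \int_{F}T_{\sigma }^{\alpha }\left( \mathbf{1}_{E}\right) d\omega \right\vert$ by $\sqrt{\left\vert Q\right\vert _{\sigma }\left\vert Q\right\vert _{\omega }}$ up to the stated constants. The first move is a Whitney/Calderón--Zygmund decomposition of $E$ inside $Q$ at a threshold chosen so that on the bulk we may bring in cube testing, while the exceptional set $\Omega=\{M\mathbf 1_E>\lambda\}$ has small \emph{Lebesgue} density; here is where $A_\infty^\alpha$ (or $C_q$) is decisive — it upgrades smallness of Lebesgue density of $E$ relative to $Q$ into smallness of $\left\vert E\right\vert_\sigma/\left\vert Q\right\vert_\sigma$ and, via comparability, smallness of $\left\vert\Omega\cap Q\right\vert_\omega/\left\vert Q\right\vert_\omega$. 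On the ``good'' part one writes $T_\sigma^\alpha(\mathbf 1_E)$ as a sum over Whitney cubes $Q_j$ of $T_\sigma^\alpha(\mathbf 1_{E\cap Q_j})$, splits each into a local piece (integrated over $F\cap Q_j$, handled by the cube testing constant $\mathfrak T_{T^\alpha}$ after replacing $\mathbf 1_{E\cap Q_j}$ by $\mathbf 1_{Q_j}$ at the cost of a further stopping step) and a tail piece; the tails, summed against $\mathbf 1_F$, are estimated by the standard kernel bound (\ref{sizeandsmoothness'}) and the Poisson term, producing the $\mathcal A_2^\alpha$ and $\mathcal A_2^{\alpha,\ast}$ contributions just as in the classical $T1$ argument. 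The exceptional part contributes $\left\vert\int_{F\cap\Omega}T_\sigma^\alpha(\mathbf 1_{E})d\omega\right\vert$; here one iterates: on $\Omega$ the set $F$ meets only a small $\omega$-fraction of $Q$, and recursing on the Whitney cubes of $\Omega$ (whose $\sigma$- and $\omega$-masses are geometrically decaying by the $A_\infty^\alpha$/comparability gain) yields a convergent geometric series summing to $\lesssim\sqrt{\left\vert Q\right\vert_\sigma\left\vert Q\right\vert_\omega}$.

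The main obstacle is the last step — making the recursion on the exceptional set actually close. The difficulty is that $T_\sigma^\alpha(\mathbf 1_E)$ is not a positive operator, so one cannot simply discard the part of $E$ outside the Whitney cubes; one must keep the full kernel $\mathbf 1_E$ inside each recursive call, and the bookkeeping has to show that the ``new'' cube $Q_j$ inherits a strictly smaller comparability/density parameter so that the geometric decay rate $2^{-k\delta\varepsilon}$ (with $\varepsilon$ the $A_\infty^\alpha$ or comparability exponent) beats the doubling growth $C_{\mathrm{doub}}^k$ of the masses. This is precisely the place where the quantitative reformulations (\ref{reform}), (\ref{comparable}) of $A_\infty^\alpha$ and comparability, together with the geometric decay $\left\vert G_k(Q)\right\vert_\mu\le C2^{-k\delta}\left\vert Q\right\vert_\mu$ of grandchildren established in the excerpt, are indispensable; the $C_q$ case is handled analogously using the $C_q$-defining inequality in place of (\ref{reform}), with the maximal function $M\mathbf 1_Q$ absorbed into the $L^q(\sigma)$ norm and thence, by $q>2$, into an $L^2$ bound. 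A secondary technical point is the reduction from $\mathbf 1_{E\cap Q_j}$ to $\mathbf 1_{Q_j}$ in the local terms: this costs an error $T_\sigma^\alpha(\mathbf 1_{Q_j\setminus E})$ whose contribution must again be folded into the recursion, and one should arrange the Whitney threshold $\lambda$ large enough (depending only on the $A_\infty^\alpha$ parameters) that $\left\vert Q_j\setminus E\right\vert/\left\vert Q_j\right\vert$ is small and the same geometric-series mechanism applies.
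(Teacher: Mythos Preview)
Your reduction to bounding $\mathcal{BICT}_{T^\alpha}(\sigma,\omega)$ is correct, but the paper's route to that bound is entirely different and much shorter. The paper does not attempt any Whitney decomposition of $E$ or recursive argument; instead it invokes Theorem~\ref{restricted weak type}, which gives
\[
\mathcal{BICT}_{T^\alpha}(\sigma,\omega)\ \le\ \mathfrak{N}_{T^\alpha}^{\limfunc{restricted}\limfunc{weak}}(\sigma,\omega)\ \lesssim\ \sqrt{A_2^\alpha(\sigma,\omega)}
\]
directly, with no testing constants on the right at all. The mechanism there is the classical good-$\lambda$ inequality of Coifman--Fefferman / Muckenhoupt--Wheeden type (Lemma~\ref{good Ialpha Malpha} and its $\alpha=0$ analogue): the $A_\infty^\alpha$ or $C_q$ hypothesis on $\omega$ yields $\|T_\flat^\alpha(f\sigma)\|_{L^{2,\infty}(\omega)}\lesssim\|M^\alpha(f\sigma)\|_{L^{2,\infty}(\omega)}$ (using unweighted $L^2$ boundedness of $T^0$ when $\alpha=0$ to get weak $(1,1)$ for $T_\flat$, or domination by $I_\alpha$ when $\alpha>0$), and the weak type of $M^\alpha$ is equivalent to $A_2^\alpha$. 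Combining this with Theorem~\ref{pivotal theorem} finishes the proof in one line.

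Your direct scheme, by contrast, has a genuine gap at exactly the place you flag as ``the main obstacle.'' When you replace $\mathbf{1}_{E\cap Q_j}$ by $\mathbf{1}_{Q_j}$ in order to invoke cube testing, the error term is $\int_{F\cap Q_j}T_\sigma^\alpha(\mathbf{1}_{Q_j\setminus E})\,d\omega$, which is again a $\mathcal{BICT}$-type quantity on the cube $Q_j$ but now with the set $E'=Q_j\setminus E$ in place of $E$. Your stopping rule makes $|E'|/|Q_j|$ small, hence $|E'|_\sigma/|Q_j|_\sigma$ small via $A_\infty^\alpha$; but when you recurse on $E'$ inside $Q_j$ you will stop on cubes $Q_{j,k}$ where the density of $E'$ is again high, and the next error set is $Q_{j,k}\setminus E'=Q_{j,k}\cap E$ --- you have simply swapped $E$ and its complement. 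There is no monotone quantity forcing geometric decay across iterations, and the claimed rate $2^{-k\delta\varepsilon}$ is not justified by the argument sketched. The paper's good-$\lambda$ approach avoids this entirely by never needing to write $T_\sigma^\alpha(\mathbf{1}_E)$ as a sum of local cube-testing pieces.
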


\subsection{Optimal cancellation conditions for Calder\'{o}n-Zygmund kernels}

In the \emph{two weight} setting\ of comparable doubling measures, we give
an `optimal cancellation' analogue of the $T1$ theorem for \emph{smooth}
kernels in the context of singular integrals as defined in \cite{DaJo} or 
\cite[Section 3 of Chapter VII]{Ste2}. We now briefly recall that setup.

For $0\leq \alpha <n$, let $T^{\alpha }$ be a continuous linear map from
rapidly decreasing smooth test functions $\mathcal{S}$ to tempered
distributions in $\mathcal{S}^{\prime }$, to which is associated a kernel $%
K^{\alpha }\left( x,y\right) $, defined when $x\neq y$, that satisfies the
inequalities (more restrictive than those in (\ref{sizeandsmoothness'})
above),%
\begin{equation}
\left\vert \partial _{x}^{\beta }\partial _{y}^{\gamma }K^{\alpha }\left(
x,y\right) \right\vert \leq A_{\alpha ,\beta ,\gamma ,n}\left\vert
x-y\right\vert ^{\alpha -n-\left\vert \beta \right\vert -\left\vert \gamma
\right\vert },\ \ \ \ \ \text{for all multiindices }\beta ,\gamma ;
\label{diff ineq}
\end{equation}%
such kernels are called \emph{smooth} $\alpha $-fractional Calder\'{o}%
n-Zygmund kernels on $\mathbb{R}^{n}$. Here we say that an operator $%
T^{\alpha }$ is \emph{associated} with a kernel $K^{\alpha }$ if, whenever $%
f\in \mathcal{S}$ has compact support, the tempered distribution $T^{\alpha
}f$ can be identified, in the complement of the support, with the function
obtained by integration with respect to the kernel, i.e.%
\begin{equation}
T^{\alpha }f\left( x\right) \equiv \int K^{\alpha }\left( x,y\right) f\left(
y\right) d\sigma \left( y\right) ,\ \ \ \ \ \text{for }x\in \mathbb{R}%
^{n}\setminus \func{Supp}f.  \label{identify}
\end{equation}

The characterization in terms of (\ref{can cond}) in the next theorem is
identical to that in Stein \cite[Theorem 4 on page 306]{Ste2}, except that
the doubling measures $\sigma $ and $\omega $ appear here in place of
Legesgue measure in \cite{Ste2}, and the Euclidean distance function is
replaced by the maximum distance function $\left\Vert y\right\Vert \equiv
\max_{1\leq k\leq n}\left\vert y_{k}\right\vert $, whose associated balls
are cubes.

\begin{theorem}
\label{Stein extension}Let $0\leq \alpha <n$. Suppose that $\sigma $ and $%
\omega $ are comparable doubling measures on $\mathbb{R}^{n}$ that satisfy
the one-tailed Muckenhoupt conditions. Suppose also that the measure pair $%
\left( \sigma ,\omega \right) $ satisfies the one-tailed conditions in (\ref%
{one-sided}). Furthermore suppose that the measures are $A_{\infty }$
weights or more generally,%
\begin{eqnarray*}
&&\text{\textbf{either} at least one, and hence each, of }\sigma ,\omega 
\text{ satisfies the }A_{\infty }^{\alpha }\text{ condition,} \\
&&\text{\textbf{or} at least one of }\sigma \text{, }\omega \text{ is a }%
C_{q}\text{ weight, for some }q>2\text{.}
\end{eqnarray*}%
Suppose finally that $K^{\alpha }\left( x,y\right) $ is a smooth $\alpha $%
-fractional Calder\'{o}n-Zygmund kernel on $\mathbb{R}^{n}$. In the case $%
\alpha =0$, we also assume there is $T^{0}$ associated with the kernel $K^{0}
$ that is bounded on unweighted $L^{2}\left( \mathbb{R}^{n}\right) $.\newline
Then there exists a bounded operator $T^{\alpha }:L^{2}\left( \sigma \right)
\rightarrow L^{2}\left( \omega \right) $, that is associated with the kernel 
$K^{\alpha }$ in the sense that (\ref{identify}) holds, \emph{if and only if}
there is a positive constant $\mathfrak{A}_{K^{\alpha }}\left( \sigma
,\omega \right) $ so that%
\begin{eqnarray}
&&\int_{\left\Vert x-x_{0}\right\Vert <N}\left\vert \int_{\varepsilon
<\left\Vert x-y\right\Vert <N}K^{\alpha }\left( x,y\right) d\sigma \left(
y\right) \right\vert ^{2}d\omega \left( x\right) \leq \mathfrak{A}%
_{K^{\alpha }}\left( \sigma ,\omega \right) \ \int_{\left\Vert
x_{0}-y\right\Vert <N}d\sigma \left( y\right) ,  \label{can cond} \\
&&\ \ \ \ \ \ \ \ \ \ \ \ \ \ \ \ \ \ \ \ \ \ \ \ \ \text{for all }%
0<\varepsilon <N\text{ and }x_{0}\in \mathbb{R}^{n},  \notag
\end{eqnarray}%
along with a similar inequality with constant $\mathfrak{A}_{K^{\alpha ,\ast
}}\left( \omega ,\sigma \right) $, in which the measures $\sigma $ and $%
\omega $ are interchanged and $K^{\alpha }\left( x,y\right) $ is replaced by 
$K^{\alpha ,\ast }\left( x,y\right) =K^{\alpha }\left( y,x\right) $.
Moreover, if such $T^{\alpha }$ has minimal norm, then 
\begin{equation}
\left\Vert T^{\alpha }\right\Vert _{L^{2}\left( \sigma \right) \rightarrow
L^{2}\left( \omega \right) }\lesssim \mathfrak{A}_{K^{\alpha }}\left( \sigma
,\omega \right) +\mathfrak{A}_{K^{\alpha ,\ast }}\left( \omega ,\sigma
\right) +\sqrt{\mathcal{A}_{2}^{\alpha }\left( \sigma ,\omega \right) +%
\mathcal{A}_{2}^{\alpha }\left( \omega ,\sigma \right) },  \label{can char}
\end{equation}%
with implied constant depending on $C_{CZ}$ in (\ref{sizeandsmoothness'}),
and the appropriate doubling, $A_{\infty }$, comparability, and $C_{q}$
constants. If $T^{\alpha }$ is strongly elliptic, the inequality can be
reversed.
\end{theorem}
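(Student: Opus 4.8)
The plan is to deduce this optimal cancellation theorem from the $T1$ theorem with cube testing, namely Theorem \ref{A infinity theorem}, by showing that the single shell-integral cancellation condition \eqref{can cond} is equivalent, modulo the Muckenhoupt constants $\mathcal{A}_2^\alpha$, to the $1$-cube testing condition $\mathfrak{T}_{T^\alpha}(\sigma,\omega)<\infty$ (and the dual condition to the dual testing condition). The necessity direction is the easy half: if $T^\alpha:L^2(\sigma)\to L^2(\omega)$ is bounded and associated with $K^\alpha$, then for $f=\mathbf 1_{\{|x_0-y|<N\}}$ the truncated operator $T^\alpha_{\sigma,\varepsilon,N}f$ agrees pointwise on $\{|x-x_0|<N\}$ (after a harmless adjustment of the truncation region) with $\int_{\varepsilon<|x-y|<N}K^\alpha(x,y)\,d\sigma(y)$, so \eqref{can cond} follows from $\|T^\alpha_{\sigma,\varepsilon,N}f\|_{L^2(\omega)}^2\le \mathfrak N_{T^\alpha}^2\,\|f\|_{L^2(\sigma)}^2 = \mathfrak N_{T^\alpha}^2\,|\{|x_0-y|<N\}|_\sigma$, using the independence of truncations in the presence of $A_2^\alpha$. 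The dual inequality is symmetric.

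For the sufficiency direction, the core is to construct, from a kernel $K^\alpha$ satisfying \eqref{can cond} and its dual, a bounded operator $T^\alpha:L^2(\sigma)\to L^2(\omega)$ associated with $K^\alpha$. I would first use the smoothness \eqref{diff ineq} together with \eqref{can cond} to control the cube testing functional: for a cube $Q$, write $T^\alpha_{\sigma,\varepsilon,R}\mathbf 1_Q$ on $Q$ as the sum of the ``annular'' piece over $\varepsilon<|x-y|<\ell(Q)$ (controlled directly by \eqref{can cond} with $x_0=c_Q$, $N\approx \ell(Q)$, together with a doubling comparison of the balls and cubes, and the $A_\infty$-type or $C_q$ hypothesis only entering later) and a tail piece $\int_{|x-y|>\ell(Q)}K^\alpha(x,y)\mathbf 1_Q(y)\,d\sigma(y)$, whose $L^2(\omega)$-norm over $Q$ is dominated by $\mathcal P^\alpha(Q,\sigma)^{1/2}|Q|_\omega^{1/2}$-type expressions via the size estimate and hence by $\sqrt{\mathcal A_2^\alpha(\sigma,\omega)}\,|Q|_\sigma^{1/2}$. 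This yields $\mathfrak T_{T^\alpha}(\sigma,\omega)\lesssim \mathfrak A_{K^\alpha}(\sigma,\omega)+\sqrt{\mathcal A_2^\alpha(\sigma,\omega)+\mathcal A_2^\alpha(\omega,\sigma)}$ for each truncation, uniformly; the dual testing constant is handled symmetrically using the dual cancellation condition. The hypotheses of Theorem \ref{A infinity theorem} (comparable doubling, one-tailed Muckenhoupt, and the $A_\infty^\alpha$ or $C_q$ alternative) are exactly the running assumptions here, so that theorem applies to the truncated operators and gives a uniform bound $\mathfrak N_{T^\alpha}(\sigma,\omega)\lesssim \mathfrak A_{K^\alpha}(\sigma,\omega)+\mathfrak A_{K^{\alpha,\ast}}(\omega,\sigma)+\sqrt{\mathcal A_2^\alpha(\sigma,\omega)+\mathcal A_2^\alpha(\omega,\sigma)}$, which is \eqref{can char}. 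A weak-$\ast$ limit of the truncated operators (extracting a subsequence, using the uniform operator bound) then produces the bounded operator $T^\alpha:L^2(\sigma)\to L^2(\omega)$, and one checks it is associated with $K^\alpha$ in the sense of \eqref{identify} by testing against $f\in\mathcal S$ with compact support and $x$ outside $\func{Supp}f$, where the truncated kernels converge to $K^\alpha(x,\cdot)$ dominated by an integrable function. Finally, the reversal when $T^\alpha$ is strongly elliptic follows from Corollary \ref{elliptic corollary} (or the reversal clause of Theorem \ref{A infinity theorem}), since then $\mathcal A_2^\alpha(\sigma,\omega)+\mathcal A_2^\alpha(\omega,\sigma)\lesssim \mathfrak N_{T^\alpha}^2$ and the necessity of \eqref{can cond} was already established, giving $\mathfrak A_{K^\alpha}(\sigma,\omega)+\mathfrak A_{K^{\alpha,\ast}}(\omega,\sigma)\lesssim \mathfrak N_{T^\alpha}(\sigma,\omega)$.

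The main obstacle I anticipate is the passage from the shell-cancellation condition \eqref{can cond}, which is phrased over Euclidean balls centered at an arbitrary point $x_0$ with a sharp radius $N$, to the cube-testing condition $\mathfrak T_{T^\alpha}(\sigma,\omega)$, which involves integrals of $T^\alpha_\sigma\mathbf 1_Q$ over the cube $Q$ itself; one must absorb the mismatch between balls and cubes and between the various radii using doubling, while keeping the annular error terms summable — in effect a Whitney-type or dyadic-annular decomposition of $Q$ into shells to which \eqref{can cond} is applied at the appropriate scale, and then summing a geometric-type series whose convergence uses the doubling exponents $\theta_1,\theta_2$ against the decay $\alpha-n<0$ in the size estimate \eqref{diff ineq}. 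This is precisely the smooth analogue of Stein's derivation of his Theorem 4 from the $T1$ theorem, and the new feature requiring care is that the ``trivial'' reference measure has been replaced by the doubling measures $\sigma,\omega$, so the elementary volume comparisons in Stein's argument must be replaced by doubling and one-tailed $\mathcal A_2^\alpha$ estimates throughout.
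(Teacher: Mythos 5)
Your overall architecture matches the paper's: prove necessity by comparing the shell integral with a uniformly bounded truncation applied to an indicator, prove sufficiency by extracting testing conditions from \eqref{can cond} and invoking Theorem \ref{A infinity theorem}, then pass to a weak limit and use strong ellipticity for the reversal. The necessity half is essentially right, provided you make the ``harmless adjustment'' precise the way the paper does: one must first restrict the outer integral to the concentric half-ball $B(x_{0},N/2)$ (and recover the full ball by a bounded covering plus doubling of $\sigma$), because only then is the symmetric difference between $\{ \varepsilon <\left\vert x-y\right\vert <N\} $ and $B(x_{0},N)\setminus B(x,\varepsilon )$ contained in the annulus $B(x,\tfrac{3N}{2})\setminus B(x,\tfrac{N}{2})$, where the size estimate and $A_{2}^{\alpha }$ apply.

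The genuine gap is in your sufficiency step. Your splitting of $T_{\sigma ,\varepsilon ,R}\mathbf{1}_{Q}(x)$ into an ``annular piece over $\varepsilon <\left\vert x-y\right\vert <\ell (Q)$ controlled directly by \eqref{can cond}'' plus a size-estimated tail does not close: the condition \eqref{can cond} controls $\int_{\varepsilon <\left\vert x-y\right\vert <N}K^{\alpha }(x,y)\,d\sigma (y)$ with \emph{no} indicator of $Q$ inside, whereas testing requires the inner integral restricted to $y\in Q$. For $x$ near $\partial Q$ the discrepancy region $\{ \varepsilon <\left\vert x-y\right\vert <\ell (Q)\} \setminus Q$ reaches all the way down to $\left\vert x-y\right\vert =\varepsilon $, and the size estimate there only yields $\sum_{\varepsilon \leq 2^{k}\leq \ell (Q)}2^{k(\alpha -n)}\left\vert B(x,2^{k})\right\vert _{\sigma }\lesssim \log (\ell (Q)/\varepsilon )\cdot \sup_{r}r^{\alpha -n}\left\vert B(x,r)\right\vert _{\sigma }$, which is not uniform in $\varepsilon $; only cancellation can save the near-diagonal, and your Whitney/dyadic-annular remark addresses the far-field geometric series, not this. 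The paper circumvents the problem by working with \emph{polynomial} testing over balls: it introduces the polynomial cancellation constant $\mathfrak{A}_{K^{\alpha }}^{(\kappa )}$ of \eqref{can cond''}, performs a Taylor subtraction of order $\kappa >n-\alpha $ so that the residual near-diagonal integrand gains a factor $(\left\vert x-y\right\vert /R)^{\kappa }$ and becomes absolutely convergent, obtains $\mathcal{FT}_{T}^{(\kappa )}\lesssim \sqrt{A_{2}^{\alpha }}+\mathfrak{A}_{K}^{(\kappa )}$, and then reduces $\mathfrak{A}_{K}^{(\kappa )}\leq C_{\kappa ,\delta }\mathfrak{A}_{K}+\delta \mathfrak{N}_{T}$ by a variant of Theorem \ref{Tp control by T1} and Corollary \ref{full Tp control}, so that an absorption of $\delta \mathfrak{N}_{T}$ is required at the end before Theorem \ref{A infinity theorem} yields \eqref{can char}. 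Your proposal is missing both this polynomial/Taylor detour and the absorption step, and without them the passage from \eqref{can cond} to $\mathfrak{T}_{T^{\alpha }}<\infty $ does not go through.
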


It should be noted that (\ref{can cond}) is \textbf{not} simply the testing
condition for a truncation of $T$ over subsets of a cube, but instead has
the historical form of bounding in some average sense, integrals of the
kernel over annuli (of cubes here rather than balls). Nevertheless, this
theorem is still a rather direct consequence of Theorem \ref{pivotal theorem}%
, with both doubling and $A_{2}^{\alpha }$ playing key roles. The reader can
check that a more complicated form of Theorem \ref{Stein extension} holds
that involves bilinear indicator/cube testing if the $A_{\infty }^{\alpha }$
conditions on $\alpha $ and $\omega $ are dropped.

\subsection{The restricted weak type theorem with an $A_{\infty }^{\protect%
\alpha }$ measure}

Here we eliminate the $BICT$ from Theorem \ref{pivotal theorem} when one of
the measures satisfies either the fractional $A_{\infty }^{\alpha }$
condition or the $C_{2+\varepsilon }$ condition\footnote{%
Recently the $BICT$ has also been eliminated \ from Theorem \ref{pivotal
theorem} when the product measure $\sigma \times \omega $ has an appropriate
reverse doubling exponent. See \cite{SaUr}.}. Note that we do \textbf{not}
assume comparability of measures here, and so conditions imposed on one
measure no longer transfer automatically to the other measure. Let $%
T_{\sigma }^{\alpha }f=T^{\alpha }\left( f\sigma \right) $. We say that an $%
\alpha $-fractional singular integral operator $T^{\alpha }$ satisfies the
restricted weak type inequality relative to the measure pair $\left( \sigma
,\omega \right) $ provided $T^{\alpha }:L^{2,1}\left( \sigma \right)
\rightarrow L^{2,\infty }\left( \omega \right) $ where $L^{2,1}\left( \sigma
\right) $ and $L^{2,\infty }\left( \omega \right) $ are the Lorentz spaces
as defined e.g. in \cite[page 188]{StWe}. As shown in \cite[see Theorem 3.13]%
{StWe}, this is equivalent to 
\begin{eqnarray}
&&\mathfrak{N}_{T^{\alpha }}^{\limfunc{restricted}\limfunc{weak}}\left(
\sigma ,\omega \right) \equiv \sup_{Q\in \mathcal{P}^{n}}\sup_{E,F\subset Q}%
\frac{1}{\sqrt{\left\vert E\right\vert _{\sigma }\left\vert F\right\vert
_{\omega }}}\left\vert \int_{F}T_{\sigma }^{\alpha }\left( \mathbf{1}%
_{E}\right) \omega \right\vert <\infty ,  \label{RWT} \\
&&\text{where the second sup is taken over all compact subsets }E,F\ \text{%
of the cube }Q,  \notag \\
&&\text{and where }0<\delta <R<\infty .  \notag
\end{eqnarray}%
Thus we see that the $\mathcal{BICT}$ condition (\ref{def ind WBP}), having $%
\left\vert Q\right\vert _{\sigma }\left\vert Q\right\vert _{\omega }$ in the
denominator, is implied by the restricted weak type condition (\ref{RWT}),
having the smaller $\left\vert E\right\vert _{\sigma }\left\vert
F\right\vert _{\omega }$ in the denominator. In the presence of the
classical Muckenhoupt condition $A_{2}^{\alpha }$, the restricted weak type
inequality in (\ref{RWT}) is essentially independent of the choice of
truncations used - see \cite{LaSaShUr3}.

\begin{remark}
In the special case $\alpha =0$, we will make the additional assumption that 
$T^{0}$ is bounded on \emph{unweighted} $L^{2}\left( \mathbb{R}^{n}\right) $%
. This is done in order to be able to use the weak type $\left( 1,1\right) $
result on Lebesgue measure for maximal truncations of such operators, that
follows from standard Calder\'{o}n-Zygmund theory as in \cite[Corollary 2 on
page 36]{Ste2}.
\end{remark}

\begin{theorem}
\label{restricted weak type}Let $0\leq \alpha <n$. Suppose that $\sigma $
and $\omega $ are locally finite positive Borel measures on $\mathbb{R}^{n}$
such that%
\begin{eqnarray*}
&&\text{\textbf{either} at least one of }\sigma \text{, }\omega \ \text{%
satisfies the }A_{\infty }^{\alpha }\text{ condition,} \\
&&\text{\textbf{or} at least one of }\sigma \text{, }\omega \text{ is a }%
C_{q}\text{ weight, for some }q>2\text{.}
\end{eqnarray*}%
Suppose also that $T^{\alpha }$ is a standard $\alpha $-fractional Calder%
\'{o}n-Zygmund singular integral in $\mathbb{R}^{n}$, and that when $\alpha
=0$ the operator $T^{0}$ is bounded on unweighted $L^{2}\left( \mathbb{R}%
^{n}\right) $. Then the two weight restricted weak type inequality for $%
T^{\alpha }$ relative to the measure pair $\left( \sigma ,\omega \right) $
holds \emph{if }the classical fractional Muckenhoupt constant $A_{2}^{\alpha
}$ in (\ref{frac Muck}) is finite. Moreover,%
\begin{equation*}
\mathcal{BICT}_{T^{\alpha }}\left( \sigma ,\omega \right) \leq \mathfrak{N}%
_{T^{\alpha }}^{\limfunc{restricted}\limfunc{weak}}\left( \sigma ,\omega
\right) \lesssim \sqrt{A_{2}^{\alpha }},
\end{equation*}%
and provided $T^{\alpha }$ is elliptic, 
\begin{equation*}
\mathcal{BICT}_{T^{\alpha }}\left( \sigma ,\omega \right) \approx \mathfrak{N%
}_{T^{\alpha }}^{\limfunc{restricted}\limfunc{weak}}\left( \sigma ,\omega
\right) \approx \sqrt{A_{2}^{\alpha }\left( \sigma ,\omega \right) },
\end{equation*}%
where the implied constants depend on the Calder\'{o}n-Zygmund norm $C_{CZ}$
in (\ref{sizeandsmoothness'}) and the $A_{\infty }^{\alpha }$ or $%
C_{2+\varepsilon }$ parameters of one of the measures.
\end{theorem}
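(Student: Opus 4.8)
The plan is to peel the restricted weak type inequality down to a single two‑weight weak‑type bound for a positive (sparse) dyadic operator, and then to prove that bound from the Muckenhoupt condition $A_2^{\alpha }$ together with the regularity that $A_{\infty }^{\alpha }$ (or $C_{q}$) forces on one of the measures. First note that $\int_{F}T_{\sigma }^{\alpha }(\mathbf{1}_{E})\,\omega =\int_{E}(T^{\alpha })_{\omega }^{\ast }(\mathbf{1}_{F})\,\sigma $, and the adjoint kernel $K^{\alpha ,\ast }(x,y)=K^{\alpha }(y,x)$ is again a standard $\alpha $‑fractional Calder\'{o}n--Zygmund kernel, with $T^{0,\ast }$ bounded on unweighted $L^{2}(\mathbb{R}^{n})$ exactly when $T^{0}$ is. Hence $\mathfrak{N}_{T^{\alpha }}^{\limfunc{restricted}\limfunc{weak}}(\sigma ,\omega )$ is unchanged under interchanging $(\sigma ,\omega ,T^{\alpha })\leftrightarrow (\omega ,\sigma ,T^{\alpha ,\ast })$, and I may assume throughout that $\omega $ is the measure carrying the $A_{\infty }^{\alpha }$ (resp.\ $C_{q}$) hypothesis. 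By the layer‑cake formula and optimisation in the height $t$, the estimate $\mathfrak{N}_{T^{\alpha }}^{\limfunc{restricted}\limfunc{weak}}(\sigma ,\omega )\lesssim \sqrt{A_{2}^{\alpha }}$ — which contains the claimed bound for $\mathcal{BICT}_{T^{\alpha }}$, since $|Q|_{\sigma }|Q|_{\omega }\geq |E|_{\sigma }|F|_{\omega }$ when $E,F\subset Q$ — reduces to the dual weak‑type inequality
\begin{equation*}
\big|\{x:\ |T^{\alpha }(\mathbf{1}_{E}\sigma )(x)|>t\}\big|_{\omega }\ \leq \ C\,A_{2}^{\alpha }\,\frac{|E|_{\sigma }}{t^{2}}\,,\qquad E\subset Q\in \mathcal{P}^{n},\ \ t>0,
\end{equation*}
holding uniformly over the admissible smooth truncations of $T^{\alpha }$; indeed $\int_{F}|T^{\alpha }(\mathbf{1}_{E}\sigma )|\,d\omega \leq \int_{0}^{\infty }\min \{|F|_{\omega },\,CA_{2}^{\alpha }|E|_{\sigma }t^{-2}\}\,dt=2\sqrt{CA_{2}^{\alpha }|E|_{\sigma }|F|_{\omega }}$.

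The second step is a sparse Calder\'{o}n--Zygmund domination. Running the classical stopping‑time decomposition on the finite measure $\nu =\mathbf{1}_{E}\sigma $ produces a family $\mathcal{S}=\mathcal{S}(\nu ,t)$ of dyadic cubes that is \emph{sparse with respect to Lebesgue measure} — each $P\in \mathcal{S}$ carries a pairwise disjoint $E_{P}\subset P$ with $|E_{P}|\geq \tfrac{1}{2}|P|$ — such that, off an exceptional set of Lebesgue measure $\lesssim |E|_{\sigma }/t$,
\begin{equation*}
\big|T_{\delta ,R}^{\alpha }\nu (x)\big|\ \lesssim \ \mathcal{A}_{\mathcal{S}}^{\alpha }\nu (x)\ :=\ \sum_{P\in \mathcal{S}}\frac{|E\cap P|_{\sigma }}{|P|^{\,1-\frac{\alpha }{n}}}\,\mathbf{1}_{P}(x),
\end{equation*}
with constants independent of $(\delta ,R)$. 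This is routine, the two points requiring care being that the input is a measure rather than an $L^{1}$ function (harmless) and that uniformity in the truncations must be tracked: the smoothness/H\"{o}rmander estimates in (\ref{sizeandsmoothness'}) are used to discard the off‑diagonal parts, while the exceptional set is controlled by the weak type $(1,1)$ bound on Lebesgue measure for the maximal truncation when $\alpha =0$ (valid because $T^{0}$ is bounded on unweighted $L^{2}$) and by the Lebesgue weak type $L^{1}\rightarrow L^{n/(n-\alpha ),\infty }$ of the fractional integral dominating the kernel when $\alpha >0$.

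The decisive step is the weighted estimate $\Vert \mathcal{A}_{\mathcal{S}}^{\alpha }(\mathbf{1}_{E}\sigma )\Vert _{L^{2,\infty }(\omega )}\lesssim \sqrt{A_{2}^{\alpha }}\,|E|_{\sigma }^{1/2}$. The key structural fact — and the precise point where the hypothesis on $\omega $ is indispensable — is that a Lebesgue‑sparse family is automatically \emph{$\omega $‑Carleson}: iterating the sparseness, the $k$‑th generation of descendants in $\mathcal{S}$ of any $P\in \mathcal{S}$ has Lebesgue measure $\leq 2^{-k}|P|$, hence $\omega $‑measure $\leq C2^{-k\varepsilon }|P|_{\omega }$ by the $A_{\infty }$ reformulation (\ref{reform}) — valid for \emph{any} $A_{\infty }$ exponent $\varepsilon >0$, which is why the possibly tiny exponent causes no loss — and summing the geometric series gives $\sum_{P^{\prime }\in \mathcal{S}:\,P^{\prime }\subset P}|P^{\prime }|_{\omega }\leq C|P|_{\omega }$; for $\alpha >0$ one argues identically using the capacitary reformulation of the $A_{\infty }^{\alpha }$ condition together with $\mathbf{Cap}_{\alpha }(E;Q)\gtrsim (|E|/|Q|)^{1-\alpha /n}$, and for $\omega \in C_{q}$ with $q>2$ the family is Carleson relative to $(M\mathbf{1}_{P})^{q}\,d\omega $, the surplus $q-2>0$ absorbing the loss below. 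Now pair $\mathcal{A}_{\mathcal{S}}^{\alpha }(\mathbf{1}_{E}\sigma )$ against $g=\mathbf{1}_{\{\mathcal{A}_{\mathcal{S}}^{\alpha }(\mathbf{1}_{E}\sigma )>t\}}$, insert the pointwise bound $|P|^{\frac{\alpha }{n}-1}|P|_{\sigma }^{1/2}|P|_{\omega }^{1/2}\leq \sqrt{A_{2}^{\alpha }}$, apply Cauchy--Schwarz over $P\in \mathcal{S}$, and use the Carleson embedding theorem on each factor — on the $\omega $‑side via the $\omega $‑Carleson property just established, and on the $\sigma $‑side after passing to the $\sigma $‑Carleson sub‑family of stopping cubes for $\mathbf{1}_{E}$, which is legitimate since $\mathbf{1}_{E}$ is an indicator so the sparse domination is only enlarged. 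This gives $t\,|\{\mathcal{A}_{\mathcal{S}}^{\alpha }(\mathbf{1}_{E}\sigma )>t\}|_{\omega }^{1/2}\lesssim \sqrt{A_{2}^{\alpha }}\,|E|_{\sigma }^{1/2}$, the required weak‑type bound. The reverse inequalities when $T^{\alpha }$ is elliptic are immediate: an elliptic kernel applied to the indicator of a ball $B$ is bounded below on a fixed fraction of $B$, so $\sqrt{A_{2}^{\alpha }(\sigma ,\omega )}$ is controlled by $\mathcal{BICT}_{T^{\alpha }}(\sigma ,\omega )$ and hence by $\mathfrak{N}_{T^{\alpha }}^{\limfunc{restricted}\limfunc{weak}}(\sigma ,\omega )$, closing the chain of equivalences.

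The main obstacle I expect is Step 2 together with the $\sigma $‑side of Step 3: carrying out the sparse/stopping‑time domination of the truncated operators $T_{\delta ,R}^{\alpha }$ with constants uniform in $(\delta ,R)$ when the input is merely a measure, and then controlling the \textbf{irregular} measure $\sigma $ — for which no $A_{\infty }$ is available — purely through the $A_{2}^{\alpha }$ condition and the corona bookkeeping. The transfer ``Lebesgue‑sparse $\Rightarrow $ $\omega $‑Carleson'' in the genuinely fractional range $0<\alpha <n$, where one must use the capacitary formulation of $A_{\infty }^{\alpha }$ rather than a plain reverse‑H\"{o}lder inequality, is the other delicate point.
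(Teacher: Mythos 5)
Your route via sparse domination and two‑sided Carleson embedding differs in a genuine way from the paper's, which instead proves (and, for $\alpha >0$ and $A_{\infty }^{\alpha }$, establishes for the first time via a Whitney/capacity argument) a good‑$\lambda $ inequality between $T_{\flat }^{\alpha }$ and the fractional maximal function $M^{\alpha }$, and then invokes the elementary two‑weight weak‑type bound $\mathfrak{N}_{M^{\alpha }}^{\limfunc{weak}}(\sigma ,\omega )\approx \sqrt{A_{2}^{\alpha }}$ obtained from the dyadic covering lemma. The reductions you make at the outset (duality to place the regularity on $\omega $, layer‑cake to pass from $\mathcal{BICT}$ to restricted weak type, ellipticity for the reverse inequality) are all sound and mirror the paper.

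However, there is a genuine gap at the $\sigma $‑side of your decisive step. After Cauchy--Schwarz you need
\begin{equation*}
\sum_{P\in \mathcal{S}}\frac{\left\vert E\cap P\right\vert _{\sigma }^{2}}{\left\vert P\right\vert _{\sigma }}\ \lesssim \ \left\vert E\right\vert _{\sigma }\ ,
\end{equation*}
which by the classical Carleson Embedding Theorem requires the sparse family $\mathcal{S}$ to be $\sigma $\emph{-Carleson}. But $\mathcal{S}$ is Lebesgue‑sparse; that yields $\omega $‑Carleson via $A_{\infty }^{\alpha }$ (or $C_{q}$) as you say, but says nothing about $\sigma $, which in Theorem~\ref{restricted weak type} is an arbitrary locally finite measure with no regularity at all. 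Your proposed fix — ``passing to the $\sigma $‑Carleson sub‑family of stopping cubes for $\mathbf{1}_{E}$, \dots{} the sparse domination is only enlarged'' — does not close the gap: a sub‑family makes $\mathcal{A}_{\mathcal{S}}^{\alpha }\nu $ pointwise \emph{smaller}, so the domination of $T^{\alpha }\nu $ is lost, while a corona regrouping over the $\sigma $‑Calder\'{o}n--Zygmund stopping cubes of $\mathbf{1}_{E}$ reintroduces the sum $\sum_{P\in \mathcal{S}\cap \mathcal{C}_{\mathcal{F}}(F)}\left\vert P\right\vert _{\sigma }$ inside each corona, which again needs a $\sigma $‑Carleson bound you do not have. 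The paper circumvents exactly this obstruction by running the good‑$\lambda $ argument against $M^{\alpha }$, a \emph{supremum} rather than a sum of averages, for which the weak type $L^{2}(\sigma )\rightarrow L^{2,\infty }(\omega )$ bound by $\sqrt{A_{2}^{\alpha }}$ follows directly from the Vitali‑type covering selection without any Carleson condition on $\sigma $. You would need to either establish the $\sigma $‑Carleson property of the sparse family used (false in general), or replace the two‑sided Carleson embedding by an argument that exploits the disjointness inherent in a maximal cube selection on the $\sigma $‑side.
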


\begin{remark}
The proof of the theorem shows a bit more, namely that the restricted weak
type norms of $T^{\alpha }$ and its maximal trunction operator $T_{\flat
}^{\alpha }$ (see below) are equivalent under the hypotheses of the theorem,
and including the fractional integral $I^{\alpha }$ (see below) when $%
0<\alpha <n$.
\end{remark}

\section{Preliminaries}

Here we introduce the $\kappa ^{th}$-order pivotal conditions, recall the
weighted Alpert wavelets from \cite{RaSaWi}, and establish some connections
with doubling weights.

\subsection{Necessity of the $\protect\kappa ^{th}$ order Pivotal Condition
for doubling weights}

The smaller fractional Poisson integrals $\mathrm{P}_{\kappa }^{\alpha
}\left( Q,\mu \right) $ used here, in \cite{RaSaWi} and elsewhere, are given
by 
\begin{equation}
\mathrm{P}_{\kappa }^{\alpha }\left( Q,\mu \right) =\int_{\mathbb{R}^{n}}%
\frac{\ell \left( Q\right) ^{\kappa }}{\left( \ell \left( Q\right)
+\left\vert y-c_{Q}\right\vert \right) ^{n+\kappa -\alpha }}d\mu \left(
y\right) ,\ \ \ \ \ \kappa \geq 1,  \label{def kappa Poisson}
\end{equation}%
and the $\kappa ^{th}$-order fractional Pi\textbf{V}otal Conditions $%
\mathcal{V}_{2}^{\alpha ,\kappa },\mathcal{V}_{2}^{\alpha ,\kappa ,\ast
}<\infty $, $\kappa \geq 1$, are given by%
\begin{eqnarray}
\left( \mathcal{V}_{2}^{\alpha ,\kappa }\right) ^{2} &=&\sup_{Q\supset \dot{%
\cup}Q_{r}}\frac{1}{\left\vert Q\right\vert _{\sigma }}\sum_{r=1}^{\infty }%
\mathrm{P}_{\kappa }^{\alpha }\left( Q_{r},\mathbf{1}_{Q}\sigma \right)
^{2}\left\vert Q_{r}\right\vert _{\omega }\ ,  \label{both pivotal k} \\
\left( \mathcal{V}_{2}^{\alpha ,\kappa ,\ast }\right) ^{2} &=&\sup_{Q\supset 
\dot{\cup}Q_{r}}\frac{1}{\left\vert Q\right\vert _{\omega }}%
\sum_{r=1}^{\infty }\mathrm{P}_{\kappa }^{\alpha }\left( Q_{r},\mathbf{1}%
_{Q}\omega \right) ^{2}\left\vert Q_{r}\right\vert _{\sigma }\ ,  \notag
\end{eqnarray}%
where the supremum is taken over all subdecompositions of a cube $Q\in 
\mathcal{P}^{n}$ into pairwise disjoint subcubes $Q_{r}$.

We begin with the elementary derivation of $\kappa ^{th}$ order pivotal
conditions from doubling assumptions. From Lemma \ref{doubling} below, a
doubling measure $\omega $ with doubling parameters $0<\beta ,\gamma <1$ as
in (\ref{def rev doub}), has a `doubling exponent' $\theta >0$ and a
positive constant $c$ depending on $\beta ,\gamma $ that satisfy the
condition,%
\begin{equation*}
\left\vert 2^{-j}Q\right\vert _{\omega }\geq c2^{-j\theta }\left\vert
Q\right\vert _{\omega }\ ,\ \ \ \ \ \text{for all }j\in \mathbb{N}.
\end{equation*}%
We can then exploit the doubling exponents $\theta =\theta \left( \beta
,\gamma \right) $ of the doubling measures $\sigma $ and $\omega $ in order
to derive certain $\kappa ^{th}$ order pivotal conditions $\mathcal{V}%
_{2}^{\alpha ,\kappa },\mathcal{V}_{2}^{\alpha ,\kappa ,\ast }<\infty $.
Indeed, if $\omega $ has doubling exponent $\theta $ and $\kappa >\theta
+\alpha -n$, we have%
\begin{eqnarray}
&&\int_{\mathbb{R}^{n}\setminus I}\frac{\ell \left( I\right) ^{\kappa }}{%
\left( \ell \left( I\right) +\left\vert x-c_{I}\right\vert \right)
^{n+\kappa -\alpha }}d\omega \left( x\right) =\sum_{j=1}^{\infty }\ell
\left( I\right) ^{\alpha -n}\int_{2^{j}I\setminus 2^{j-1}I}\frac{1}{\left( 1+%
\frac{\left\vert x-c_{I}\right\vert }{\ell \left( I\right) }\right)
^{n+\kappa -\alpha }}d\omega \left( x\right)  \label{kappa large} \\
&\lesssim &\left\vert I\right\vert ^{\frac{\alpha }{n}-1}\sum_{j=1}^{\infty
}2^{-j\left( n+\kappa -\alpha \right) }\left\vert 2^{j}I\right\vert _{\omega
}\lesssim \left\vert I\right\vert ^{\frac{\alpha }{n}-1}\sum_{j=1}^{\infty
}2^{-j\left( n+\kappa -\alpha \right) }\frac{1}{c2^{-j\theta }}\left\vert
I\right\vert _{\omega }\leq C_{n,\kappa ,\alpha ,\left( \beta ,\gamma
\right) }\left\vert I\right\vert ^{\frac{\alpha }{n}-1}\left\vert
I\right\vert _{\omega }\ ,  \notag
\end{eqnarray}%
provided $n+\kappa -\alpha -\theta >0$, i.e. $\kappa >\theta +\alpha -n$. It
follows that if $I\supset \overset{\cdot }{\dbigcup }_{r=1}^{\infty }I_{r}$
is a subdecomposition of $I$ into pairwise disjoint cubes $I_{r}$, and $%
\kappa >\theta +\alpha -n$, then 
\begin{equation*}
\sum_{r=1}^{\infty }\mathrm{P}_{\kappa }^{\alpha }\left( I_{r},\omega
\right) ^{2}\left\vert I_{r}\right\vert _{\sigma }\lesssim
\sum_{r=1}^{\infty }\left( \left\vert I_{r}\right\vert ^{\frac{\alpha }{n}%
-1}\left\vert I_{r}\right\vert _{\omega }\right) ^{2}\left\vert
I_{r}\right\vert _{\sigma }=\sum_{r=1}^{\infty }\frac{\left\vert
I_{r}\right\vert _{\omega }\left\vert I_{r}\right\vert _{\sigma }}{%
\left\vert I\right\vert ^{2\left( 1-\frac{\alpha }{n}\right) }}\left\vert
I_{r}\right\vert _{\omega }\lesssim A_{2}^{\alpha }\sum_{r=1}^{\infty
}\left\vert I_{r}\right\vert _{\omega }=A_{2}^{\alpha }\left\vert
I\right\vert _{\omega }\ ,
\end{equation*}%
which gives 
\begin{equation}
\mathcal{V}_{2}^{\alpha ,\kappa ,\ast }\leq C_{\kappa ,\left( \beta ,\gamma
\right) }A_{2}^{\alpha }\ ,\ \ \ \ \ \kappa >\theta +\alpha -n,
\label{piv control}
\end{equation}%
where the constant $C_{\kappa ,\left( \beta ,\gamma \right) }$ depends on
the doubling parameters $\left( \beta ,\gamma \right) $ and on $\kappa $.
Thus the dual $\kappa ^{th}$ order pivotal condition is controlled by $%
A_{2}^{\alpha }$ provided $\kappa +n-\alpha $ exceeds the doubling exponent
of the measure $\omega $. A similar result holds for $\mathcal{V}%
_{2}^{\alpha ,\kappa }$ if $\kappa +n-\alpha $ exceeds the doubling exponent
of $\sigma $.

\begin{remark}
The integers $\kappa $ may have to be taken quite large depending on the
doubling exponent of the doubling measures. In fact, the proof of Lemma \ref%
{doubling} shows that we may take $\theta =\frac{\log _{2}\frac{1}{\gamma }}{%
\log _{2}\frac{1}{\beta }}$, and so we need $\kappa >\frac{\log _{2}\frac{1}{%
\gamma }}{\log _{2}\frac{1}{\beta }}+\alpha -n$, where $\beta $ and $\gamma $
are the doubling parameters for the measure. Since $C_{\limfunc{doub}}=\frac{%
1}{\gamma }$ when $\beta =\frac{1}{2}$, we can equivalently write $\kappa
>\log _{2}C_{\limfunc{doub}}+\alpha -n$, where $C_{\limfunc{doub}}$ can be
thought of as the `upper dimension' of the doubling measure. Indeed, in the
case $\alpha =0$ and $d\sigma \left( x\right) =d\omega \left( x\right) =dx$
on $\mathbb{R}^{n}$, we have $\left\vert \beta Q\right\vert =\beta
^{n}\left\vert Q\right\vert $ implies $\theta =\frac{n\log _{2}\frac{1}{%
\beta }}{\log _{2}\frac{1}{\beta }}=n$.
\end{remark}

\subsection{Weighted Alpert bases for $L^{2}\left( \protect\mu \right) $ and 
$L^{\infty }$ control of projections\label{Subsection Haar}}

The proof of Theorem \ref{pivotal theorem} will require weighted wavelets
with higher vanishing moments in order to accommodate the Poisson integrals
with smaller tails. We now briefly recall the construction of weighted
Alpert wavelets in \cite{RaSaWi}. Let $\mu $ be a locally finite positive
Borel measure on $\mathbb{R}^{n}$, and fix $\kappa \in \mathbb{N}$. For $%
Q\in \mathcal{P}^{n}$, the collection of cubes with sides parallel to the
coordinate axes, denote by $L_{Q;k}^{2}\left( \mu \right) $ the finite
dimensional subspace of $L^{2}\left( \mu \right) $ that consists of linear
combinations of the indicators of\ the children $\mathfrak{C}\left( Q\right) 
$ of $Q$ multiplied by polynomials of degree less than $\kappa $, and such
that the linear combinations have vanishing $\mu $-moments on the cube $Q$
up to order $\kappa -1$:%
\begin{equation*}
L_{Q;\kappa }^{2}\left( \mu \right) \equiv \left\{ f=\dsum\limits_{Q^{\prime
}\in \mathfrak{C}\left( Q\right) }\mathbf{1}_{Q^{\prime }}p_{Q^{\prime
};\kappa }\left( x\right) :\int_{Q}f\left( x\right) x^{\beta }d\mu \left(
x\right) =0,\ \ \ \text{for }0\leq \left\vert \beta \right\vert <\kappa
\right\} ,
\end{equation*}%
where $p_{Q^{\prime };\kappa }\left( x\right) =\sum_{\beta \in \mathbb{Z}%
_{+}^{n}:\left\vert \beta \right\vert \leq \kappa -1\ }a_{Q^{\prime };\alpha
}x^{\beta }$ is a polynomial in $\mathbb{R}^{n}$ of degree $\left\vert \beta
\right\vert =\beta _{1}+...+\beta _{n}$ less than $\kappa $. Here $x^{\beta
}=x_{1}^{\beta _{1}}x_{2}^{\beta _{2}}...x_{n}^{\beta _{n}}$. Let $%
d_{Q;\kappa }\equiv \dim L_{Q;\kappa }^{2}\left( \mu \right) $ be the
dimension of the finite dimensional linear space $L_{Q;\kappa }^{2}\left(
\mu \right) $. Now define%
\begin{eqnarray*}
&&\mathcal{F}_{\infty }^{\kappa }\left( \mu \right) \equiv \left\{ \beta \in 
\mathbb{Z}_{+}^{n}:\left\vert \beta \right\vert \leq \kappa -1:x^{\beta }\in
L^{2}\left( \mu \right) \right\} \ , \\
&&\ \ \ \ \ \ \ \ \ \ \ \ \ \ \ \text{and }\mathcal{P}_{\mathbb{R}%
^{n};\kappa }^{n}\left( \mu \right) \equiv \limfunc{Span}\left\{ x^{\beta
}\right\} _{\beta \in \mathcal{F}_{\infty }^{\kappa }}\ .
\end{eqnarray*}

Let $\bigtriangleup _{Q;\kappa }^{\mu }$ denote orthogonal projection onto
the finite dimensional subspace $L_{Q;\kappa }^{2}\left( \mu \right) $, let $%
\mathbb{E}_{Q;\kappa }^{\mu }$ denote orthogonal projection onto the finite
dimensional subspace%
\begin{equation*}
\mathcal{P}_{Q;\kappa }^{n}\left( \sigma \right) \equiv \mathnormal{\limfunc{%
Span}}\{\mathbf{1}_{Q}x^{\beta }:0\leq \left\vert \beta \right\vert <\kappa
\},
\end{equation*}%
and let $\bigtriangleup _{\mathbb{R}^{n};\kappa }^{\mu }$ denote orthogonal
projection onto $\mathcal{P}_{\mathbb{R}^{n};\kappa }^{n}\left( \mu \right) $%
.

The following theorem was proved in \cite{RaSaWi}, which establishes the
existence of Alpert wavelets, for $L^{2}\left( \mu \right) $ in all
dimensions, having the three important properties of orthogonality,
telescoping and moment vanishing.

\begin{theorem}[Weighted Alpert Bases]
\label{main1}Let $\mu $ be a locally finite positive Borel measure on $%
\mathbb{R}^{n}$, fix $\kappa \in \mathbb{N}$, and fix a dyadic grid $%
\mathcal{D}$ in $\mathbb{R}^{n}$.

\begin{enumerate}
\item Then $\left\{ \bigtriangleup _{\mathbb{R}^{n};\kappa }^{\mu }\right\}
\cup \left\{ \bigtriangleup _{Q;\kappa }^{\mu }\right\} _{Q\in \mathcal{D}}$
is a complete set of orthogonal projections in $L_{\mathbb{R}^{n}}^{2}\left(
\mu \right) $ and%
\begin{eqnarray*}
f &=&\bigtriangleup _{\mathbb{R}^{n};\kappa }^{\mu }f+\sum_{Q\in \mathcal{D}%
}\bigtriangleup _{Q;\kappa }^{\mu }f,\ \ \ \ \ f\in L_{\mathbb{R}%
^{n}}^{2}\left( \mu \right) , \\
&&\left\langle \bigtriangleup _{\mathbb{R}^{n};\kappa }^{\mu
}f,\bigtriangleup _{Q;\kappa }^{\mu }f\right\rangle =\left\langle
\bigtriangleup _{P;\kappa }^{\mu }f,\bigtriangleup _{Q;\kappa }^{\mu
}f\right\rangle =0\text{ for }P\neq Q,
\end{eqnarray*}%
where convergence in the first line holds both in $L_{\mathbb{R}%
^{n}}^{2}\left( \mu \right) $ norm and pointwise $\mu $-almost everywhere.

\item Moreover we have the telescoping identities%
\begin{equation}
\mathbf{1}_{Q}\sum_{Q\subsetneqq I\subset P}\bigtriangleup _{I;\kappa }^{\mu
}=\mathbb{E}_{Q;\kappa }^{\mu }-\mathbb{E}_{P;\kappa }^{\mu }\ \text{ \ for }%
P,Q\in \mathcal{D}\text{ with }Q\subsetneqq P,  \label{telescoping}
\end{equation}

\item and the moment vanishing conditions%
\begin{equation}
\int_{\mathbb{R}^{n}}\bigtriangleup _{Q;\kappa }^{\mu }f\left( x\right) \
x^{\beta }d\mu \left( x\right) =0,\ \ \ \text{for }Q\in \mathcal{D},\text{ }%
\beta \in \mathbb{Z}_{+}^{n},\ 0\leq \left\vert \beta \right\vert <\kappa \ .
\label{mom con}
\end{equation}
\end{enumerate}
\end{theorem}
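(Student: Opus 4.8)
The plan is to realize the system $\left\{ \bigtriangleup _{\mathbb{R}^{n};\kappa }^{\mu }\right\} \cup \left\{ \bigtriangleup _{Q;\kappa }^{\mu }\right\} _{Q\in \mathcal{D}}$ directly through the ``conditional polynomial expectations'' $\mathbb{E}_{Q;\kappa }^{\mu }$ and a martingale-type telescoping, rather than constructing an explicit orthonormal basis cube by cube. The one structural input is the \emph{refinement relation}: the restriction to $Q$ of a polynomial of degree $<\kappa $ is, a fortiori, a function that is polynomial of degree $<\kappa $ on each child of $Q$, so
\[
\mathcal{P}_{Q;\kappa }^{n}\left( \mu \right) \subset \bigoplus_{Q^{\prime }\in \mathfrak{C}\left( Q\right) }\mathcal{P}_{Q^{\prime };\kappa }^{n}\left( \mu \right)
\]
as closed subspaces of $L^{2}\left( \mu \right) $; no assumption on $\mu $ is needed for $\mathbb{E}_{Q;\kappa }^{\mu }$ to be well defined as orthogonal projection onto $\mathcal{P}_{Q;\kappa }^{n}\left( \mu \right) $, and $d_{Q;\kappa }$ may be anything from $0$ up to the number of monomials of degree $<\kappa $. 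Next, the moment conditions defining $L_{Q;\kappa }^{2}\left( \mu \right) $ say exactly that a piecewise polynomial $f$ on the children of $Q$ is orthogonal in $L^{2}\left( \mu \right) $ to every $\mathbf{1}_{Q}x^{\beta }$ with $\left\vert \beta \right\vert <\kappa $; hence $L_{Q;\kappa }^{2}\left( \mu \right) $ is precisely the orthogonal complement of $\mathcal{P}_{Q;\kappa }^{n}\left( \mu \right) $ inside $\bigoplus_{Q^{\prime }\in \mathfrak{C}\left( Q\right) }\mathcal{P}_{Q^{\prime };\kappa }^{n}\left( \mu \right) $, and
\[
\bigtriangleup _{Q;\kappa }^{\mu }=\Big( \sum_{Q^{\prime }\in \mathfrak{C}\left( Q\right) }\mathbb{E}_{Q^{\prime };\kappa }^{\mu }\Big) -\mathbb{E}_{Q;\kappa }^{\mu }
\]
is the orthogonal projection onto it. The moment vanishing conclusion (\ref{mom con}) is then immediate from the definition of $L_{Q;\kappa }^{2}\left( \mu \right) $.

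For the telescoping identity (\ref{telescoping}), fix $Q\subsetneqq P$ in $\mathcal{D}$ and let $Q=P_{0}\subsetneqq P_{1}\subsetneqq \cdots \subsetneqq P_{N}=P$ be the intervening tower. For each $j\geq 1$ exactly one child of $P_{j}$, namely $P_{j-1}$, contains $Q$, and the other children are disjoint from $Q$, so $\mathbf{1}_{Q}\sum_{Q^{\prime }\in \mathfrak{C}\left( P_{j}\right) }\mathbb{E}_{Q^{\prime };\kappa }^{\mu }=\mathbf{1}_{Q}\mathbb{E}_{P_{j-1};\kappa }^{\mu }$, whence $\mathbf{1}_{Q}\bigtriangleup _{P_{j};\kappa }^{\mu }=\mathbf{1}_{Q}\left( \mathbb{E}_{P_{j-1};\kappa }^{\mu }-\mathbb{E}_{P_{j};\kappa }^{\mu }\right) $; summing over $1\leq j\leq N$ telescopes to $\mathbb{E}_{Q;\kappa }^{\mu }-\mathbb{E}_{P;\kappa }^{\mu }$ on $Q$, which is (\ref{telescoping}). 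Orthogonality of distinct projections splits into two cases. If the cubes are disjoint, the corresponding subspaces have disjoint supports. If $Q\subsetneqq R$, then any $g\in L_{R;\kappa }^{2}\left( \mu \right) $ agrees on $Q$ with its polynomial piece on the child of $R$ containing $Q$, a polynomial of degree $<\kappa $, so $\left\langle \bigtriangleup _{Q;\kappa }^{\mu }f,\bigtriangleup _{R;\kappa }^{\mu }g\right\rangle =\int_{Q}\bigtriangleup _{Q;\kappa }^{\mu }f\cdot \left( \text{polynomial of degree }<\kappa \right) d\mu =0$ by (\ref{mom con}); the identical argument handles $\left\langle \bigtriangleup _{\mathbb{R}^{n};\kappa }^{\mu }f,\bigtriangleup _{Q;\kappa }^{\mu }g\right\rangle $ since $\bigtriangleup _{\mathbb{R}^{n};\kappa }^{\mu }f\in \mathcal{P}_{\mathbb{R}^{n};\kappa }^{n}\left( \mu \right) $ is globally a polynomial of degree $<\kappa $.

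Completeness and both modes of convergence come from the increasing filtration built from the refinement relation. For $k\in \mathbb{Z}$ let $\mathcal{D}_{k}$ be the cubes of $\mathcal{D}$ of side $2^{-k}$ and $V_{k}\equiv \bigoplus_{Q\in \mathcal{D}_{k}}\mathcal{P}_{Q;\kappa }^{n}\left( \mu \right) $, with orthogonal projection $\Pi _{k}=\sum_{Q\in \mathcal{D}_{k}}\mathbb{E}_{Q;\kappa }^{\mu }$ (an orthogonal sum, its summands supported on disjoint cubes). The refinement relation gives $V_{k}\subset V_{k+1}$ and $\sum_{Q\in \mathcal{D}_{k}}\bigtriangleup _{Q;\kappa }^{\mu }=\Pi _{k+1}-\Pi _{k}$. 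Since $V_{k}$ contains every function constant on the cubes of $\mathcal{D}_{k}$, and dyadic step functions are dense in $L^{2}\left( \mu \right) $ for any locally finite Borel $\mu $, we get $\Pi _{k}\rightarrow I$ strongly as $k\rightarrow +\infty $; and as $k\rightarrow -\infty $, $\Pi _{k}$ decreases strongly to the projection onto $\bigcap_{k}V_{k}$, which one identifies with $\mathcal{P}_{\mathbb{R}^{n};\kappa }^{n}\left( \mu \right) $ by a short consistency argument using local finiteness of $\mu $ (a function that is polynomial of degree $<\kappa $ on arbitrarily large cubes and lies in $L^{2}\left( \mu \right) $ is a single global polynomial), i.e. $\Pi _{k}\rightarrow \bigtriangleup _{\mathbb{R}^{n};\kappa }^{\mu }$. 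Telescoping over $k$ yields $f=\bigtriangleup _{\mathbb{R}^{n};\kappa }^{\mu }f+\sum_{Q\in \mathcal{D}}\bigtriangleup _{Q;\kappa }^{\mu }f$ in $L^{2}\left( \mu \right) $. For the pointwise statement, restricting the telescoped partial sum to a point $x$ gives $\mathbb{E}_{Q^{\left( N\right) };\kappa }^{\mu }f\left( x\right) -\mathbb{E}_{Q^{\left( -N\right) };\kappa }^{\mu }f\left( x\right) $ with $Q^{\left( j\right) }$ the cube of $\mathcal{D}_{j}$ containing $x$, and it remains to show $\mathbb{E}_{Q^{\left( N\right) };\kappa }^{\mu }f\rightarrow f$ $\mu $-a.e.

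The step I expect to be the real work is exactly this last one: $\mu $-a.e. convergence of the higher-order expectations $\mathbb{E}_{Q;\kappa }^{\mu }f$ as $\ell \left( Q\right) \searrow 0$. For $\kappa =1$ these are ordinary martingale conditional expectations and convergence is classical, but for $\kappa >1$ the operators are not positivity preserving, so martingale convergence does not apply directly. The plan is to prove a weak-type maximal inequality for $f\mapsto \sup_{Q\in \mathcal{D}}\mathbf{1}_{Q}\left\vert \mathbb{E}_{Q;\kappa }^{\mu }f\right\vert $, using that $\mathbb{E}_{Q;\kappa }^{\mu }$ has norm $1$ on $L^{2}\left( Q,\mu \right) $ together with the polynomial averaging structure, then to deduce a.e. convergence first on the dense class $\bigcup _{k}V_{k}$ and pass to the limit in the usual way. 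All of this is carried out in \cite{RaSaWi}. I emphasize that no doubling or other regularity of $\mu $ is needed for Theorem \ref{main1} itself; doubling enters only afterwards, to control the $L^{\infty }$ norms of these projections as in (\ref{nondeg}).
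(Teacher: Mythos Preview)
The paper does not prove Theorem~\ref{main1}; it is quoted from \cite{RaSaWi} with the preface ``The following theorem was proved in \cite{RaSaWi}''. Your sketch is sound and follows the natural martingale-style route one expects there: identify $\bigtriangleup_{Q;\kappa}^{\mu}$ as $\big(\sum_{Q'\in\mathfrak{C}(Q)}\mathbb{E}_{Q';\kappa}^{\mu}\big)-\mathbb{E}_{Q;\kappa}^{\mu}$ via the refinement relation, telescope for (\ref{telescoping}), deduce orthogonality from (\ref{mom con}), and obtain completeness from the increasing filtration $\{V_k\}$. You correctly isolate the $\mu$-a.e.\ convergence of $\mathbb{E}_{Q;\kappa}^{\mu}f$ for $\kappa>1$ as the step requiring genuine work (loss of positivity, hence no direct martingale theorem), and you already point to \cite{RaSaWi} for that. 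One small caution on your identification of $\bigcap_k V_k$ with $\mathcal{P}_{\mathbb{R}^n;\kappa}^{n}(\mu)$: for the standard dyadic grid on $\mathbb{R}^n$ the $2^n$ quadrants never merge as $k\to-\infty$, so a function in $\bigcap_k V_k$ is a priori only piecewise polynomial on those quadrants; reconciling this with the paper's definition of $\mathcal{P}_{\mathbb{R}^n;\kappa}^{n}(\mu)$ as global $L^2(\mu)$ polynomials is a grid/measure technicality that \cite{RaSaWi} handles.
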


We can fix\ an orthonormal basis $\left\{ h_{Q;\kappa }^{\mu ,a}\right\}
_{a\in \Gamma _{Q,n,\kappa }}$ of $L_{Q;\kappa }^{2}\left( \mu \right) $
where $\Gamma _{Q,n,\kappa }$ is a convenient finite index set. Then 
\begin{equation*}
\left\{ h_{Q;\kappa }^{\mu ,a}\right\} _{a\in \Gamma _{Q,n,\kappa }\text{
and }Q\in \mathcal{D}}
\end{equation*}%
is an orthonormal basis for $L^{2}\left( \mu \right) $, with the
understanding that we add an orthonormal basis of $\mathcal{P}_{\mathbb{R}%
^{n}}^{\kappa }\left( \mu \right) $ if it is nontrivial. In particular we
have from the theorem above that (at least when $\mathcal{P}_{\mathbb{R}%
^{n}}^{\kappa }\left( \mu \right) =\left\{ 0\right\} $), 
\begin{eqnarray*}
\left\Vert f\right\Vert _{L^{2}\left( \mu \right) }^{2} &=&\sum_{Q\in 
\mathcal{D}}\left\Vert \bigtriangleup _{Q}^{\mu }f\right\Vert _{L^{2}\left(
\mu \right) }^{2}=\sum_{Q\in \mathcal{D}}\left\vert \widehat{f}\left(
Q\right) \right\vert ^{2}, \\
\left\vert \widehat{f}\left( Q\right) \right\vert ^{2} &\equiv &\sum_{a\in
\Gamma _{Q,n,\kappa }\text{ }}\left\vert \left\langle f,h_{Q}^{\mu
,a}\right\rangle _{\mu }\right\vert ^{2}.
\end{eqnarray*}%
In the case $\kappa =1$, this construction reduces to the familiar Haar
wavelets, where with $\mathbb{E}_{I}^{\mu }=\mathbb{E}_{I}^{\mu ,1}$ we have
the following useful bound,%
\begin{equation*}
\left\Vert \mathbb{E}_{I}^{\mu }f\right\Vert _{L_{I}^{\infty }\left( \mu
\right) }=\left\Vert \left\langle f,\frac{1}{\sqrt{\left\vert I\right\vert
_{\mu }}}\mathbf{1}_{I}\right\rangle \frac{1}{\sqrt{\left\vert I\right\vert
_{\mu }}}\mathbf{1}_{I}\right\Vert _{L_{I}^{\infty }\left( \mu \right)
}=\left\vert E_{I}^{\mu }f\right\vert \leq E_{I}^{\mu }\left\vert
f\right\vert .
\end{equation*}

We will consider below an analogous bound for the Alpert projections $%
\mathbb{E}_{I;\kappa }^{\mu }$ when $\kappa >1$, that is of the form 
\begin{equation}
\left\Vert \mathbb{E}_{I}^{\mu ,\kappa }f\right\Vert _{L_{I}^{\infty }\left(
\mu \right) }\lesssim E_{I}^{\mu }\left\vert f\right\vert ,\ \ \ \ \ \text{%
for all }f\in L_{\limfunc{loc}}^{1}\left( \mu \right) .  \label{analogue}
\end{equation}%
This will require certain energy nondegeneracy conditions to be imposed on $%
\mu $, which turn out to be essentially equivalent to doubling conditions
(thus limiting our application of Alpert wavelets to doubling measures in
this paper).

\subsubsection{Doubling and energy nondegeneracy conditions}

We will need the following relation between energy nondegeneracy and
doubling conditions. We say that a polynomial $P\left( y\right) =\sum_{0\leq
\left\vert \beta \right\vert <\kappa }c_{\beta }y^{\beta }$ of degree less
than $\kappa $ is \emph{normalized} if 
\begin{equation*}
\sup_{y\in Q_{0}}\left\vert P\left( y\right) \right\vert =1,\ \ \ \ \ \text{%
where }Q_{0}\equiv \dprod\limits_{i=1}^{n}\left[ -\frac{1}{2},\frac{1}{2}%
\right) .
\end{equation*}

\begin{remark}
\label{normalized}Since all norms on a finite dimensional vector space are
equivalent, we have 
\begin{equation}
\left\Vert P\right\Vert _{L^{\infty }\left( Q_{0}\right) }\approx \left\vert
P\left( 0\right) \right\vert +\left\Vert \nabla P\right\Vert _{L^{\infty
}\left( Q_{0}\right) },\ \ \ \ \ \deg P<\kappa ,  \label{fin equiv}
\end{equation}%
with implicit constants depending only on $n$ and $\kappa $, and so a
compactness argument shows there is $\varepsilon _{\kappa }>0$ such that for
every \emph{normalized} polynomial $P$ of degree less than $\kappa $, there
is a ball $B\left( y,\varepsilon _{\kappa }\right) \subset Q_{0}\ $on which $%
P$ is nonvanishing.
\end{remark}

\begin{definition}
\label{def Q norm}Denote by $c_{Q}$ the center of the cube $Q$, and by $\ell
\left( Q\right) $ its side length, and for any polynomial $P$ set 
\begin{equation*}
P^{Q}\left( y\right) \equiv P\left( c_{Q}+\ell \left( Q\right) y\right) .
\end{equation*}%
We say that $P\left( x\right) $ is $Q$\emph{-normalized} if $P^{Q}$ is
normalized. Denote by $\left( \mathcal{P}_{\kappa }^{Q}\right) _{\limfunc{%
norm}}$ the set of $Q$-normalized polynomials of degree less than $\kappa $.
\end{definition}

Thus a $Q$-normalized polynomial has its supremum norm on $Q$ equal to $1$.
Recall from (\ref{def rev doub}) that a locally finite positive Borel
measure $\mu $ on $\mathbb{R}^{n}$ is \emph{doubling} if there exist
constants $0<\beta ,\gamma <1$ such that%
\begin{equation}
\left\vert \beta Q\right\vert _{\mu }\geq \gamma \left\vert Q\right\vert
_{\mu },\ \ \ \ \ \text{for all cubes }Q\text{ in }\mathbb{R}^{n}.
\label{doub}
\end{equation}%
Note that $\sup_{y\in Q_{0}}\left\vert P\left( y\right) \right\vert
=\left\Vert \mathbf{1}_{Q_{0}}P\right\Vert _{L^{\infty }\left( \mu \right) }$
for any cube $Q_{0}$, polynomial $P$, and doubling measure $\mu $. The
following lemma on doubling measures is well known.

\begin{lemma}
\label{doub exp}Let $\mu $ be a locally finite positive Borel measure on $%
\mathbb{R}^{n}$. Then $\mu $ is doubling if and only if there exists a
positive constant $\theta $, called the doubling exponent, such that%
\begin{equation*}
\left\vert 2^{-k}Q\right\vert _{\mu }\geq 2^{-\theta k}\left\vert
Q\right\vert _{\mu },\ \ \ \ \ \text{for all cubes }Q\text{ in }\mathbb{R}%
^{n}\text{ and }k\in \mathbb{N}.
\end{equation*}
\end{lemma}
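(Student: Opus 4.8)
The plan is to prove the two implications separately, the forward direction ($\mu$ doubling $\Rightarrow$ power bound) being the substantive one, the reverse being essentially immediate. For the forward direction, suppose $\mu$ is doubling with parameters $0<\beta,\gamma<1$ as in (\ref{doub}). The first step is to reduce from a general contraction factor $\beta$ to the dyadic factor $\tfrac12$; a single application of (\ref{doub}) to a cube $Q$ gives $|\beta Q|_\mu\ge\gamma|Q|_\mu$, and iterating $m$ times on nested shrinkings gives $|\beta^m Q|_\mu\ge\gamma^m|Q|_\mu$. Choosing $m=m(\beta)$ so that $\beta^m\le\tfrac12$, and using monotonicity of $|\cdot|_\mu$ under inclusion of cubes (so that $|\tfrac12 Q|_\mu\ge|\beta^m Q|_\mu$ once $\beta^m\le\tfrac12$), we obtain a clean halving estimate
\begin{equation*}
\left\vert \tfrac12 Q\right\vert_\mu\ \ge\ \gamma^{m(\beta)}\left\vert Q\right\vert_\mu\ ,\qquad\text{for all cubes }Q\subset\mathbb{R}^n.
\end{equation*}
Here one must be a little careful that the concentric shrinkings $\beta^j Q$ are all contained in the dilate comparable to $\tfrac12 Q$; since $\beta^j\le\tfrac12$ for $j\ge m$, and the cubes are concentric, inclusion holds, so the reduction goes through.

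The second step is a straightforward iteration of the halving estimate: writing $c\equiv\gamma^{m(\beta)}\in(0,1)$, induction on $k$ gives $|2^{-k}Q|_\mu\ge c^k|Q|_\mu$ for all $k\in\mathbb{N}$, since $2^{-k}Q=\tfrac12(2^{-(k+1)}\!\cdot 2\,Q)$ — more directly, $2^{-(k+1)}Q=\tfrac12(2^{-k}Q)$, so applying the halving estimate to the cube $2^{-k}Q$ yields $|2^{-(k+1)}Q|_\mu\ge c\,|2^{-k}Q|_\mu$, and the induction closes. Finally, set $\theta\equiv\log_2\tfrac1c=m(\beta)\log_2\tfrac1\gamma>0$, so that $c^k=2^{-\theta k}$ and the claimed inequality $|2^{-k}Q|_\mu\ge 2^{-\theta k}|Q|_\mu$ holds for all cubes $Q$ and all $k\in\mathbb{N}$. (This matches the value of $\theta$ flagged in the Remark following (\ref{piv control}), modulo the choice of $m$; one can take $m=\lceil\log_2(1/\beta)^{-1}\rceil$-type bounds to make $\theta$ explicit, but the existence of some $\theta>0$ is all that is needed here.)

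For the reverse implication, suppose the power bound $|2^{-k}Q|_\mu\ge 2^{-\theta k}|Q|_\mu$ holds. Fix any $k$ with $2^{-k}\le\tfrac12$, e.g. $k=1$, apply the bound with $Q$ replaced by $2Q$ to get $|Q|_\mu=|2^{-1}(2Q)|_\mu\ge 2^{-\theta}|2Q|_\mu$, equivalently $|2Q|_\mu\le 2^{\theta}|Q|_\mu$; this is exactly the doubling condition in the $C_{\mathrm{doub}}$ form with $C_{\mathrm{doub}}=2^\theta$, which by the equivalence recorded just after (\ref{def rev doub}) is the same as (\ref{doub}). Hence $\mu$ is doubling.

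I expect no genuine obstacle here — the only point requiring care is the geometric reduction in the first step, making sure the concentric contractions $\beta^j Q$ sit inside the half-cube once $\beta^j\le\tfrac12$, and tracking that the monotonicity $E\subset F\Rightarrow|E|_\mu\le|F|_\mu$ is being used legitimately for concentric cubes. Everything else is bookkeeping with exponents; the lemma is stated as "well known" precisely because the argument is this short.
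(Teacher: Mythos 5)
Your proof is correct and follows essentially the same route as the paper's: iterate the reverse-doubling inequality $|\beta Q|_\mu\ge\gamma|Q|_\mu$ and convert the contraction base $\beta$ to base $\tfrac12$ (you do this by first extracting an integer-power halving estimate, the paper by interpolating $\beta\le 2^{-t}<2\beta$, but both yield a valid exponent $\theta\approx\log_2(1/\gamma)/\log_2(1/\beta)$), with the converse obtained by taking $k=1$. No gaps.
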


\begin{proof}
Suppose there are $0<\beta ,\gamma <1$ such that $\left\vert \beta
Q\right\vert _{\mu }\geq \gamma \left\vert Q\right\vert _{\mu }\ $for all
cubes $Q$ in $\mathbb{R}^{n}$. Iteration of this inequality leads to $%
\left\vert \beta ^{j}Q\right\vert _{\mu }\geq \gamma ^{j}\left\vert
Q\right\vert _{\mu }$. Now choose $t>0$ so that $\beta \leq 2^{-t}<2\beta $,
which then gives 
\begin{eqnarray*}
\left\vert 2^{-k}Q\right\vert _{\sigma } &=&\left\vert \left( 2^{-t}\right)
^{\frac{k}{t}}Q\right\vert _{\sigma }\geq \left\vert \beta ^{\frac{k}{t}%
}Q\right\vert _{\sigma }\geq \left\vert \beta ^{\left[ \frac{k}{t}\right]
}Q\right\vert _{\sigma } \\
&\geq &\gamma ^{\left[ \frac{k}{t}\right] }\left\vert Q\right\vert _{\sigma
}=2^{-\left[ \frac{k}{t}\right] \log _{2}\frac{1}{\gamma }}\left\vert
Q\right\vert _{\sigma }\geq 2^{-\frac{k}{t}\log _{2}\frac{1}{\gamma }%
}\left\vert Q\right\vert _{\sigma }=2^{-\theta k}\left\vert Q\right\vert
_{\sigma }
\end{eqnarray*}%
with $\theta =\frac{\log _{2}\frac{1}{\gamma }}{t}\geq \frac{\log _{2}\frac{1%
}{\gamma }}{\log _{2}\frac{1}{\beta }}>0$. The converse statement is trivial
with $\beta =\frac{1}{2}$ and $\gamma =2^{-\theta }=\frac{1}{C_{\limfunc{doub%
}}}$.
\end{proof}

The doubling exponent $\theta =\log _{2}C_{\limfunc{doub}}$ can be thought
of as the upper dimension of $\mu $. Here now is the connection between
doubling measures and energy degeneracy. We thank Ignacio Uriarte-Tuero for
pointing to a gap in the proof of part (2) in the first version of this
paper.

\begin{lemma}
\label{doubling}Let $\mu $ be a locally finite positive Borel measure on $%
\mathbb{R}^{n}$.

\begin{enumerate}
\item If $\mu $ is doubling on $\mathbb{R}^{n}$, then for every $\kappa \in 
\mathbb{N}$ there exists a positive constant $C_{\kappa }$ such that%
\begin{eqnarray}
\left\vert Q\right\vert _{\mu } &\leq &C_{\kappa }\int_{Q}\left\vert P\left(
x\right) \right\vert ^{2}d\mu \left( x\right) ,\ \ \ \ \ \text{for all cubes 
}Q\text{ in }\mathbb{R}^{n}\text{,}  \label{energy nondeg} \\
&&\text{and for all }Q\text{-normalized polynomials }P\text{ of degree less
than }\kappa .  \notag
\end{eqnarray}

\item Conversely, if (\ref{energy nondeg}) holds for some positive integer $%
\kappa >2n$, then $\mu $ is doubling.
\end{enumerate}
\end{lemma}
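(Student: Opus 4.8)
The plan is to prove the two implications by the same device: compactness of the finite‑dimensional set of polynomials of degree $<\kappa$ normalized on $Q_{0}$, transported to an arbitrary cube $Q$ by the dilation $P\mapsto P^{Q}$ of Definition \ref{def Q norm}. For (1), fix a cube $Q$ and a $Q$‑normalized polynomial $P$ of degree $<\kappa$, so that $P^{Q}$ is normalized. Sharpening Remark \ref{normalized} by compactness, there are constants $\varepsilon_{\kappa},c_{\kappa}>0$ depending only on $n,\kappa$ such that every normalized polynomial of degree $<\kappa$ has modulus $\ge c_{\kappa}$ on some ball $B(y,\varepsilon_{\kappa})\subset Q_{0}$; undoing the dilation, $\vert P\vert\ge c_{\kappa}$ on a cube $Q^{\ast}\subset Q$ with $\ell(Q^{\ast})\gtrsim_{n,\kappa}\ell(Q)$. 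Since $\mu$ is doubling, $Q$ lies in a bounded dilate of $Q^{\ast}$, so $\vert Q\vert_{\mu}\le C\vert Q^{\ast}\vert_{\mu}$ with $C=C(n,\kappa,\beta,\gamma)$, and therefore
\begin{equation*}
\vert Q\vert_{\mu}\le C\vert Q^{\ast}\vert_{\mu}\le\frac{C}{c_{\kappa}^{2}}\int_{Q^{\ast}}\vert P\vert^{2}\,d\mu\le\frac{C}{c_{\kappa}^{2}}\int_{Q}\vert P\vert^{2}\,d\mu ,
\end{equation*}
which is (\ref{energy nondeg}) with $C_{\kappa}=C/c_{\kappa}^{2}$.

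For (2) I would first make a combinatorial reduction: it suffices to produce a constant $C$ with $\vert Q\vert_{\mu}\le C\vert Q'\vert_{\mu}$ for every cube $Q$ and every corner sub‑cube $Q'$ of $Q$ of side $\ell(Q)/2$. Indeed, iterating this down a chain of corner‑halvings gives $\vert Q\vert_{\mu}\le C^{k}\vert Q_{g}\vert_{\mu}$ for every $2^{-k}$‑dyadic sub‑cube $Q_{g}$ of $Q$; specializing to $Q=2R$ and $k=2$ — so that $R$ is a union of $2^{n}$ of those sub‑cubes of $2R$ — yields $\vert 2R\vert_{\mu}\le C^{2}\vert R\vert_{\mu}$ for every cube $R$, which is the doubling condition.

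To establish the corner estimate, normalize so that $Q=Q_{0}=[-\tfrac12,\tfrac12)^{n}$ and, after a coordinate reflection, $Q'=[0,\tfrac12)^{n}$, and take $P(x)=\prod_{i=1}^{n}\bigl(\tfrac12+x_{i}\bigr)^{m}$, a polynomial of degree $mn$. Then $P$ is $Q_{0}$‑normalized, while $\vert P\vert\le 2^{-m}$ on $Q_{0}\setminus Q'$ (where some coordinate is negative), so (\ref{energy nondeg}) applied to $Q_{0}$ gives
\begin{equation*}
\vert Q_{0}\vert_{\mu}\le C_{\kappa}\int_{Q_{0}}\vert P\vert^{2}\,d\mu\le C_{\kappa}\bigl(\vert Q'\vert_{\mu}+2^{-2m}\vert Q_{0}\vert_{\mu}\bigr),
\end{equation*}
whence $\vert Q'\vert_{\mu}\ge C_{\kappa}^{-1}\bigl(1-C_{\kappa}2^{-2m}\bigr)\vert Q_{0}\vert_{\mu}$, which is the corner estimate provided $m$ is chosen with $mn<\kappa$ and $C_{\kappa}2^{-2m}\le\tfrac12$.

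The one genuinely delicate point is this choice of $m$: the product polynomial has degree $mn$, so to make $2^{-2m}$ small enough to absorb $C_{\kappa}$ while keeping the degree below $\kappa$ one needs $\kappa$ comfortably larger than $\tfrac{n}{2}\log_{2}C_{\kappa}$. Matching this against the bare hypothesis $\kappa>2n$ — whether via a sharper, genuinely $n$‑variable polynomial of degree $\le 2n$ better localized off a corner cube, or via a bootstrap at many scales — is the step I expect to carry the real work; it is presumably connected to the paper's remark that the restriction on $\kappa$ can be relaxed elsewhere. Implication (1), by contrast, is routine once the uniform compactness constant $c_{\kappa}$ is in hand.
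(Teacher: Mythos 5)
Your part (1) is correct and is in essence the paper's own argument in a slightly streamlined form: the paper locates a dyadic subcube of $Q$ at a fixed depth $L(\kappa)$ whose triple avoids the zero set $Z_{P}$ and then uses a lower bound of the form $\left\vert P\left( x\right) \right\vert \gtrsim \left( \limfunc{dist}\left( x,Z_{P}\right) /\ell \left( Q\right) \right) ^{\kappa }$, whereas you use the uniform lower bound $\left\vert P\right\vert \geq c_{\kappa }$ on a ball of radius $\varepsilon _{\kappa }$, which indeed follows from (\ref{fin equiv}) by compactness (or directly from the uniform gradient bound). Both arguments then climb from the small subcube back up to $\left\vert Q\right\vert _{\mu }$ using doubling, so this half is fine.

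Part (2) has a genuine gap, precisely where you flagged it, and it cannot be closed within your scheme. The hypothesis $\kappa >2n$ only permits $m\leq 2$ in $P\left( x\right) =\prod_{i=1}^{n}\left( \frac{1}{2}+x_{i}\right) ^{m}$, so your error term is $C_{\kappa }2^{-4}\left\vert Q_{0}\right\vert _{\mu }$ and the absorption fails as soon as $C_{\kappa }>8$; since $C_{\kappa }$ is given by the hypothesis and may be arbitrarily large, the corner estimate is not established. The paper resolves this by obtaining smallness of $\left\vert P\right\vert ^{2}$ on the exceptional set through \emph{shrinking the exceptional set}, rather than through raising the power of the polynomial. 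Take the degree-$2n$ polynomial $P\left( x\right) =\prod_{i=1}^{n}\left[ 1-\left( \frac{2\left( x_{i}-\left( c_{Q}\right) _{i}\right) }{\ell \left( Q\right) }\right) ^{2}\right] $, which is $Q$-normalized, equals $1$ at $c_{Q}$, and vanishes on $\partial Q$. Splitting $\int_{Q}\left\vert P\right\vert ^{2}d\mu $ over $\beta Q$ and $Q\setminus \beta Q$, and choosing $\beta <1$ so close to $1$ (depending on $C_{\kappa }$ and $n$ but, by scale invariance, not on $Q$) that $C_{\kappa }\left\vert P\right\vert ^{2}\leq \frac{1}{2}$ on $Q\setminus \beta Q$, one obtains $\left\vert Q\right\vert _{\mu }\leq \frac{1}{2}\left\vert Q\right\vert _{\mu }+C_{\kappa }\left\vert \beta Q\right\vert _{\mu }$, hence $\left\vert Q\right\vert _{\mu }\leq 2C_{\kappa }\left\vert \beta Q\right\vert _{\mu }$, which is (\ref{def rev doub}) directly, with no corner iteration required. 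Your first suggested repair (a better localized polynomial of degree $\leq 2n$) is close to the right idea, but the essential realization is that the good set must be allowed to depend on $C_{\kappa }$, expanding toward all of $Q$ as $C_{\kappa }$ grows; this is only possible because $P$ vanishes on $\partial Q$, so that a fixed degree buys arbitrary smallness on a sufficiently thin boundary shell.
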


\begin{proof}
Fix a cube $Q$ and a positive integer $\kappa \in \mathbb{N}$. By Remark \ref%
{normalized}, there is a positive integer $L=L\left( \kappa \right) \in 
\mathbb{N}$ with the property that for every $Q$-normalized polynomial $P$
of degree less than $\kappa $ on $\mathbb{R}^{n}$, at least one of the
dyadic children $K\in \mathfrak{C}^{\left( L\right) }\left( Q\right) $ at
level $L$ beneath $Q$ satisfies $3K\subset Q\setminus Z_{P}$, where $Z_{P}$
is the zero set of\ the polynomial $P$. Furthermore, if $P$ is a $Q$%
-normalized polynomial of degree less than $\kappa $, then $P^{Q}\left(
y\right) \equiv P\left( c_{Q}+\ell \left( Q\right) y\right) $ is normalized
and $P\left( x\right) =P^{Q}\left( \frac{x-c_{Q}}{\ell \left( Q\right) }%
\right) $, and so we have from (\ref{fin equiv}) the inequality%
\begin{equation*}
\left\vert P\left( x\right) \right\vert =\left\vert P^{Q}\left( \frac{x-c_{Q}%
}{\ell \left( Q\right) }\right) \right\vert \geq c\left( \limfunc{dist}%
\left( \frac{x-c_{Q}}{\ell \left( Q\right) },Z_{P^{Q}}\right) \right)
^{\kappa }=c\left( \frac{\limfunc{dist}\left( x,Z_{P}\right) }{\ell \left(
Q\right) }\right) ^{\kappa },\ \ \ \ \ x\in Q.
\end{equation*}%
Moreover, $Q\subset 2^{L+1}K$, and hence we have the lower bound%
\begin{eqnarray*}
\int_{Q}\left\vert P\left( x\right) \right\vert ^{2}d\sigma \left( x\right)
&\geq &c^{2}\int_{K}\left( \frac{\limfunc{dist}\left( x,Z_{P}\right) }{\ell
\left( Q\right) }\right) ^{2\kappa }d\sigma \left( x\right) \geq
c^{2}\int_{K}\left( \frac{\ell \left( K\right) }{\ell \left( Q\right) }%
\right) ^{2\kappa }d\sigma \left( x\right) \\
&=&c^{2}2^{-2\kappa L}\left\vert K\right\vert _{\sigma }\geq
c^{2}2^{-2\kappa L}2^{-\left( L+1\right) \theta }\left\vert
2^{L+1}K\right\vert _{\sigma }\geq c_{\kappa }\left\vert Q\right\vert
_{\sigma }\ ,
\end{eqnarray*}%
where $c_{\kappa }=c^{2}2^{-2\kappa L}2^{-\left( L+1\right) \theta }$. Thus (%
\ref{energy nondeg}) holds with $C_{\kappa }=\frac{1}{c_{\kappa }}$.

Conversely, assume that (\ref{energy nondeg}) holds for some $\kappa >2n$.
Momentarily fix a cube $Q$. Then the polynomial 
\begin{equation*}
P\left( x\right) \equiv \dprod\limits_{i=1}^{n}\left[ 1-\left( \frac{%
x_{i}-\left( c_{Q}\right) _{i}}{\ell \left( Q\right) }\right) ^{2}\right]
\end{equation*}%
is $Q$-normalized of degree less than $\kappa $, vanishes on the boundary of 
$Q$, and is $1$ at the center $c_{Q}$ of $Q$. Now using that $2n<\kappa $ in
(\ref{energy nondeg}), there is $\beta <1$, sufficiently close to $1$, and 
\emph{independent} of the cube $Q$, so that%
\begin{eqnarray*}
\left\vert Q\right\vert _{\mu } &\leq &C_{\kappa }\int_{Q}\left\vert
P\right\vert ^{2}d\mu =C_{\kappa }\left\{ \int_{Q\setminus \beta
Q}\left\vert P\right\vert ^{2}d\mu +\int_{\beta Q}\left\vert P\right\vert
^{2}d\mu \right\} \\
&\leq &\frac{1}{2}\left\vert Q\setminus \beta Q\right\vert _{\mu }+C_{\kappa
}\left\vert \beta Q\right\vert _{\mu }\leq \frac{1}{2}\left\vert
Q\right\vert _{\mu }+C_{\kappa }\left\vert \beta Q\right\vert _{\mu }\ .
\end{eqnarray*}%
Thus we have%
\begin{equation*}
\left\vert Q\right\vert _{\mu }\leq 2C_{\kappa }\left\vert \beta
Q\right\vert _{\mu }\ ,
\end{equation*}%
which is (\ref{doub}) with $\gamma =\frac{1}{2C_{\kappa }}$.
\end{proof}

\subsubsection{Control of Alpert projections}

For $n,\kappa \in \mathbb{N}$, let $\mathcal{P}_{\kappa }^{n}$ denote the
finite dimensional vector space of real polynomials $P\left( x\right) $ on $%
\mathbb{R}^{n}$ with degree less than $\kappa $, i.e. $P\left( x\right)
=\sum_{0\leq \left\vert \beta \right\vert <\kappa }c_{\beta }x^{\beta }$
where $\beta =\left( \beta _{i}\right) _{i=}^{n}\in \mathbb{Z}_{+}^{n}$ and $%
\left\vert \beta \right\vert =\sum_{i=1}^{n}\beta _{i}$. Then denote by $%
\mathcal{P}_{I;\kappa }^{n}$ the space of restrictions of polynomials in $%
\mathcal{P}_{\kappa }^{n}$ to the interval $I$, also denoted $\mathcal{P}%
_{I;\kappa }^{n}\left( \mu \right) $ when we wish to emphasize the
underlying measure. Now let $\left\{ b_{I;\kappa }^{j}\right\} _{j=1}^{N}$
be an orthonormal basis for $\mathcal{P}_{I;\kappa }^{n}$ with the inner
product of $L^{2}\left( \mu \right) $. If we assume that $\mu $ is doubling,
and define the polynomial $P_{j}$ by $P_{j}\left( x\right) =\frac{1}{%
\left\Vert b_{I;\kappa }^{j}\right\Vert _{L_{I}^{\infty }\left( \mu \right) }%
}b_{I;\kappa }^{j}\left( x\right) $, then $P_{j}\in \left( \mathcal{P}%
_{\kappa }^{I}\right) _{\limfunc{norm}}$ is $I$-normalized, and so part (1)
of Lemma \ref{doubling} shows that 
\begin{equation*}
\frac{1}{\left\Vert b_{I;\kappa }^{j}\right\Vert _{L_{I}^{\infty }\left( \mu
\right) }^{2}}=\int_{I}\left\vert \frac{1}{\left\Vert b_{I;\kappa
}^{j}\right\Vert _{L_{I}^{\infty }\left( \mu \right) }}b_{I;\kappa
}^{j}\left( x\right) \right\vert ^{2}d\mu \left( x\right)
=\int_{I}\left\vert P_{j}\left( x\right) \right\vert ^{2}d\mu \left(
x\right) \approx \left\vert I\right\vert _{\mu }.
\end{equation*}%
This then gives (\ref{analogue}):%
\begin{eqnarray*}
\left\Vert \mathbb{E}_{I}^{\mu ,\kappa }f\right\Vert _{L_{I}^{\infty }\left(
\mu \right) } &=&\left\Vert \sum_{j=1}^{N}\left\langle f,b_{I;\kappa
}^{j}\right\rangle b_{I;\kappa }^{j}\right\Vert _{L_{I}^{\infty }\left( \mu
\right) }\leq \sum_{j=1}^{N}\left\vert \left\langle f,P_{j}\right\rangle
\right\vert \left\Vert b_{I;\kappa }^{j}\right\Vert _{L_{I}^{\infty }\left(
\mu \right) }\left\Vert b_{I;\kappa }^{j}\right\Vert _{L_{I}^{\infty }\left(
\mu \right) } \\
&\leq &\sum_{j=1}^{N}\left( \int_{I}\left\vert f\right\vert d\mu \right)
\left\Vert b_{I;\kappa }^{j}\right\Vert _{L_{I}^{\infty }\left( \mu \right)
}^{2}\lesssim \sum_{j=1}^{N}\frac{1}{\left\vert I\right\vert _{\mu }}%
\int_{I}\left\vert f\right\vert d\mu =N\ E_{I}^{\mu }\left\vert f\right\vert
.
\end{eqnarray*}

We also record the following additional consequence of (\ref{energy nondeg}):%
\begin{equation}
\left\Vert \mathbb{E}_{I}^{\mu ,\kappa }f\right\Vert _{L_{I}^{\infty }\left(
\mu \right) }^{2}\left\vert I\right\vert _{\mu }\lesssim \left\Vert \mathbb{E%
}_{I}^{\mu ,\kappa }f\right\Vert _{L_{I}^{2}\left( \mu \right) }^{2}\ ,
\label{add con}
\end{equation}%
which follows from%
\begin{eqnarray*}
\left\Vert \mathbb{E}_{I}^{\mu ,\kappa }f\right\Vert _{L_{I}^{\infty }\left(
\mu \right) }^{2}\left\vert I\right\vert _{\mu } &\lesssim &\left(
\sum_{j=1}^{N}\left\vert \left\langle f,b_{I;\kappa }^{j}\right\rangle
\right\vert \right) ^{2}\left( \max_{1\leq j\leq N}\left\Vert b_{I;\kappa
}^{j}\right\Vert _{L_{I}^{\infty }\left( \mu \right) }\right) ^{2}\left\vert
I\right\vert _{\mu } \\
&\lesssim &N\ \sum_{j=1}^{N}\left\vert \left\langle f,b_{I;\kappa
}^{j}\right\rangle \right\vert ^{2}=N\ \left\Vert \mathbb{E}_{I}^{\mu
,\kappa }f\right\Vert _{L_{I}^{2}\left( \mu \right) }^{2}\ .
\end{eqnarray*}

\subsection{A two weight bilinear Carleson Embedding Theorem}

The classical Carleson Embedding Theorem \cite{NTV4} states that for any
dyadic grid $\mathcal{D}$, and any sequence $\left\{ c_{I}\right\} _{I\in 
\mathcal{D}}$ of nonnegative numbers indexed by $\mathcal{D}$, 
\begin{equation}
\sum_{I\in \mathcal{D}}c_{I}\left( \frac{1}{\left\vert I\right\vert _{\sigma
}}\int_{I}fd\sigma \right) ^{2}\leq C\left\Vert f\right\Vert _{L^{2}\left(
\sigma \right) }^{2}  \label{class Car}
\end{equation}%
for all nonnegative $f\in L^{2}\left( \sigma \right) $,\emph{\ if and only if%
} the sequence $\left\{ c_{I}\right\} _{I\in \mathcal{D}}$ satisfies a
Carleson condition%
\begin{equation}
\sum_{I\in \mathcal{D}:\ I\subset J}c_{I}\leq C^{\prime }\left\vert
J\right\vert _{\sigma },\ \ \ \ \ \text{for all }J\in \mathcal{D}.
\label{class Car cond}
\end{equation}%
Moreover, $C^{\prime }\leq C\leq 4C^{\prime }$. The two weight bilinear
analogue of (\ref{class Car}) is the inequality%
\begin{equation}
\sum_{I\in \mathcal{D}}a_{I}\left( \frac{1}{\left\vert I\right\vert _{\sigma
}}\int_{I}fd\sigma \right) \left( \frac{1}{\left\vert I\right\vert _{\omega }%
}\int_{I}gd\omega \right) \leq C\left\Vert f\right\Vert _{L^{2}\left( \sigma
\right) }\left\Vert g\right\Vert _{L^{2}\left( \omega \right) }\ ,
\label{bil Car}
\end{equation}%
which is equivalent to the pair of Carleson-type conditions,%
\begin{equation}
\sum_{I^{\prime },I\in \mathcal{D}:\ I^{\prime }\subset I\subset K}\frac{%
a_{I^{\prime }}a_{I}}{\left\vert I\right\vert _{\sigma }}\leq C^{\prime
}\left\vert K\right\vert _{\omega }\text{ and }\sum_{I^{\prime },I\in 
\mathcal{D}:\ I^{\prime }\subset I\subset K}\frac{a_{I^{\prime }}a_{I}}{%
\left\vert I\right\vert _{\omega }}\leq C^{\prime }\left\vert K\right\vert
_{\sigma }\ ,\ \ \ \ \ \text{for all cubes }K\in \mathcal{D}.  \label{pair}
\end{equation}%
Indeed, (\ref{bil Car}) is equivalent to 
\begin{equation*}
\int \left\vert \sum_{I\in \mathcal{D}}\frac{a_{I}}{\left\vert I\right\vert
_{\sigma }}\left( \frac{1}{\left\vert I\right\vert _{\omega }}%
\int_{I}gd\omega \right) \mathbf{1}_{I}\left( y\right) \right\vert
^{2}d\sigma \left( y\right) \leq C^{2}\left\Vert g\right\Vert _{L^{2}\left(
\omega \right) }^{2},
\end{equation*}%
which by \cite{NTV} and \cite{LaSaUr2} is equivalent to the pair of testing
conditions%
\begin{eqnarray}
\int_{K}\left\vert \sum_{I\in \mathcal{D}:\ I\subset K}\frac{a_{I}}{%
\left\vert I\right\vert _{\sigma }}\mathbf{1}_{I}\left( y\right) \right\vert
^{2}d\sigma \left( y\right) &\leq &C^{2}\left\vert K\right\vert _{\omega }\
,\ \ \ \ \ \text{for all cubes }K\in \mathcal{D},  \label{pair test} \\
\int_{K}\left\vert \sum_{I\in \mathcal{D}:\ I\subset K}\frac{a_{I}}{%
\left\vert I\right\vert _{\omega }}\mathbf{1}_{I}\left( y\right) \right\vert
^{2}d\omega \left( y\right) &\leq &C^{2}\left\vert K\right\vert _{\sigma }\
,\ \ \ \ \ \text{for all cubes }K\in \mathcal{D}.  \notag
\end{eqnarray}

However, the Carleson-type conditions in (\ref{pair}) are too strong for our
purposes in this paper, and instead, we prove a bilinear extension of the
Carleson Embedding Theorem (related to the Bilinear Imbedding Theorem of
Nazarov, Treil and Volberg in \cite[page 915]{NTV}) which uses the more
familiar bilinear Carleson condition in (\ref{bil Car cond}) below - at the
expense of assuming comparability of the measure pair as in Definition \ref%
{def comparable} above.

Given any subset $\mathcal{A}$ of the dyadic grid $\mathcal{D}$, we view $%
\mathcal{A}$ as a subtree of $\mathcal{D}$, and denote by $\mathfrak{C}_{%
\mathcal{A}}\left( A\right) $ the set of $\mathcal{A}$-children of $A$ in
the tree $\mathcal{A}$, and\ by $\mathcal{C}_{\mathcal{A}}\left( A\right) $
the $\mathcal{A}$-corona of $A$ in the tree $\mathcal{A}$, so that%
\begin{equation*}
\mathcal{C}_{\mathcal{A}}\left( A\right) =\dbigcup\limits_{A^{\prime }\in 
\mathfrak{C}_{\mathcal{A}}\left( A\right) }\left\{ I\in \mathcal{D}%
:A^{\prime }\subsetneqq I\subset A\right\} .
\end{equation*}

\begin{theorem}[Two weight bilinear Carleson Embedding Theorem, c.f. 
\protect\cite{NTV}]
\label{2 wt bil CET}Suppose $\sigma $ and $\omega $ are locally finite
positive Borel measures on $\mathbb{R}^{n}$, and that $\mathcal{D}$ is a
dyadic grid.

\begin{enumerate}
\item Suppose further that $\left\{ a_{I}\right\} _{I\in \mathcal{D}}$ is a
sequence of nonnegative real numbers indexed by $\mathcal{D}$. If in
addition $\sigma $ and $\omega $ are comparable in the sense of Definition %
\ref{def comparable},\ then%
\begin{equation}
\sum_{I\in \mathcal{D}}a_{I}\left( \sup_{K\in \mathcal{D}:\ K\supset I}\frac{%
1}{\left\vert K\right\vert _{\sigma }}\int_{K}fd\sigma \right) \left(
\sup_{L\in \mathcal{D}:\ L\supset I}\frac{1}{\left\vert L\right\vert
_{\omega }}\int_{L}gd\omega \right) \leq C\left\Vert f\right\Vert
_{L^{2}\left( \sigma \right) }\left\Vert g\right\Vert _{L^{2}\left( \omega
\right) }  \label{bil Car Embed}
\end{equation}%
for all nonnegative $f\in L^{2}\left( \sigma \right) $ and nonnegative $g\in
L^{2}\left( \omega \right) $,\emph{\ if and only if} the sequence $\left\{
a_{I}\right\} _{I\in \mathcal{D}}$ satisfies the bilinear Carleson condition,%
\begin{equation}
\sum_{I\in \mathcal{D}:\ I\subset J}a_{I}\leq C^{\prime }\sqrt{\left\vert
J\right\vert _{\sigma }\left\vert J\right\vert _{\omega }},\ \ \ \ \ \text{%
for all }J\in \mathcal{D}\ ,  \label{bil Car cond}
\end{equation}%
where $C^{\prime }\leq C\lesssim C^{\prime }$.

\item The inequality%
\begin{equation}
\sum_{I\in \mathcal{D}}a_{I}\left( \frac{1}{\left\vert I\right\vert _{\sigma
}}\int_{I}fd\sigma \right) \left( \frac{1}{\left\vert I\right\vert _{\omega }%
}\int_{I}gd\omega \right) \leq C\left\{ \frac{1}{\sqrt{\left\vert
J\right\vert _{\sigma }\left\vert J\right\vert _{\omega }}}\sum_{I\in 
\mathcal{D}:\ I\subset J}a_{I}\right\} \left\Vert f\right\Vert _{L^{2}\left(
\sigma \right) }\left\Vert g\right\Vert _{L^{2}\left( \omega \right) }
\label{big}
\end{equation}%
holds \emph{if and only if} $\sigma $ and $\omega $ are comparable in the
sense of Definition \ref{def comparable}.
\end{enumerate}
\end{theorem}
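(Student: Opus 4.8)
The plan is to prove the two implications of the equivalence separately, with almost all of the work concentrated in the necessity of comparability.

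\emph{Sufficiency} (comparability $\Longrightarrow$ (\ref{big})). This follows quickly from part~(1). For any nonnegative $h$, any locally finite positive Borel measure $\mu$, and any $I\in\mathcal{D}$ one trivially has $\frac{1}{|I|_{\mu}}\int_{I}h\,d\mu\leq\sup_{K\in\mathcal{D}:\,K\supset I}\frac{1}{|K|_{\mu}}\int_{K}h\,d\mu$, so the left side of (\ref{big}) is dominated term by term by the left side of (\ref{bil Car Embed}); and the bracketed factor in (\ref{big}), after taking the supremum over $J\in\mathcal{D}$, is exactly the best constant $C^{\prime}$ in the bilinear Carleson condition (\ref{bil Car cond}). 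Hence, assuming $\sigma$ and $\omega$ are comparable, the implication (\ref{bil Car cond})$\Longrightarrow$(\ref{bil Car Embed}) of part~(1) — which is the direction that invokes comparability, via the two weight bilinear Carleson Embedding Theorem — yields (\ref{big}) with a constant comparable to the one there.

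\emph{Necessity} ((\ref{big}) $\Longrightarrow$ comparability). Here one must extract both Carleson-norm inequalities of Definition~\ref{def comparable} from the validity of (\ref{big}). By the symmetry of (\ref{big}) under interchanging $(\sigma,f)\leftrightarrow(\omega,g)$ it is enough to prove, say, (\ref{sigma omega}): $\Vert\mathcal{F}\Vert_{\limfunc{Car}(\sigma)}\lesssim\Vert\mathcal{F}\Vert_{\limfunc{Car}(\omega)}$ for every grid $\mathcal{F}\subset\mathcal{D}$. Fix such an $\mathcal{F}$, set $N_{\omega}=\Vert\mathcal{F}\Vert_{\limfunc{Car}(\omega)}$, fix a maximal cube $Q\in\mathcal{F}$, and let $\mathcal{F}_{Q}=\{I\in\mathcal{F}:I\subset Q\}$; the target is $\sum_{I\in\mathcal{F}_{Q}}|I|_{\sigma}\leq C\,N_{\omega}\,|Q|_{\sigma}$. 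The method is to feed (\ref{big}) a test triple $(\{a_{I}\},f,g)$ with $\{a_{I}\}$ supported on $\mathcal{F}_{Q}$, so chosen that its left side reproduces $\sum_{I\in\mathcal{F}_{Q}}|I|_{\sigma}$ up to absolute constants while its right side does not exceed $C\,N_{\omega}\,|Q|_{\sigma}$. Two obvious attempts are blocked by the mixed normalization $\sqrt{|J|_{\sigma}|J|_{\omega}}$: a $\sigma$-weighted sequence $a_{I}=|I|_{\sigma}\mathbf{1}_{\mathcal{F}_{Q}}(I)$ typically has infinite bilinear Carleson norm, and the symmetric choice $f=g=\mathbf{1}_{Q}$ makes (\ref{big}) a tautology. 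One is therefore forced to work with a \emph{balanced} sequence of the rough form $a_{I}\approx\sqrt{|I|_{\sigma}|I|_{\omega}}\,\mathbf{1}_{\mathcal{F}_{Q}}(I)$ — whose bilinear Carleson norm is $\lesssim\sqrt{\Vert\mathcal{F}\Vert_{\limfunc{Car}(\sigma)}\,N_{\omega}}$ by Cauchy--Schwarz — together with genuinely \emph{distinct} test functions $f$ and $g$ adapted to the $\sigma$-heavy part of the stopping tree of $\mathcal{F}$, so that (\ref{big}) ceases to be self-referential; rearranging the resulting inequality and absorbing the factor $\Vert\mathcal{F}\Vert_{\limfunc{Car}(\sigma)}$ on the right into the quantity being bounded on the left yields (\ref{sigma omega}). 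An alternative organization passes through the two weight testing reduction recalled just before (\ref{pair test}): (\ref{big}) for all sequences is equivalent to the pair (\ref{pair}) of Carleson-type conditions holding, for every nonnegative $\{a_{I}\}$, with constant bounded by the square of the bilinear Carleson norm of $\{a_{I}\}$; specializing to sequences built from $\mathcal{F}$ and summing by parts again delivers the comparison of Carleson norms.

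\emph{Main obstacle.} The difficulty is entirely in the necessity, and specifically in designing test data that actually detects comparability. Every configuration in which $f$ and $g$ are indicators of one and the same cube, or in which $\{a_{I}\}$ is supported at a single cube, makes (\ref{big}) hold for \emph{arbitrary} measures, so a failure of comparability can only surface through a genuinely multiscale family. The task is thus to start from a grid $\mathcal{F}$ whose $\sigma$-Carleson norm vastly exceeds its $\omega$-Carleson norm — the situation being excluded — and to convert this imbalance into a violation of (\ref{big}) while holding the bilinear Carleson norm of the test sequence in check despite the measure-mixing normalization $\sqrt{|J|_{\sigma}|J|_{\omega}}$; controlling the Carleson norms of the localized families $\mathcal{F}_{Q}$ throughout this process is the technical core.
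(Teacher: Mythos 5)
Your treatment of Part (2) isolates the right test sequence, $a_{I}=\sqrt{\left\vert I\right\vert _{\sigma }\left\vert I\right\vert _{\omega }}\,\mathbf{1}_{\mathcal{F}}\left( I\right) $, and your ``alternative organization'' through the testing conditions (\ref{pair test}) is indeed the mechanism that closes the argument (expand the square, discard the nonnegative cross terms, and read off $\sum_{I\in \mathcal{F}:\ I\subset K}\left\vert I\right\vert _{\sigma }\leq C^{2}\left\vert K\right\vert _{\sigma }$). But the one quantitative step that makes this work is missing, and your substitute for it fails. You bound the bilinear Carleson norm of the test sequence by $\sqrt{\Vert \mathcal{F}\Vert _{\limfunc{Car}\left( \sigma \right) }N_{\omega }}$ via Cauchy--Schwarz, where $N_{\omega }=\Vert \mathcal{F}\Vert _{\limfunc{Car}\left( \omega \right) }$, and propose to absorb the unknown $\Vert \mathcal{F}\Vert _{\limfunc{Car}\left( \sigma \right) }$ at the end. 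This cannot close: the embedding constant enters the testing condition \emph{squared}, so the resulting chain reads $\Vert \mathcal{F}\Vert _{\limfunc{Car}\left( \sigma \right) }\lesssim \left( \sqrt{\Vert \mathcal{F}\Vert _{\limfunc{Car}\left( \sigma \right) }N_{\omega }}\right) ^{2}=\Vert \mathcal{F}\Vert _{\limfunc{Car}\left( \sigma \right) }N_{\omega }$, which is vacuous -- the unknown appears to the same power on both sides and there is no small parameter with which to absorb it (even after truncating $\mathcal{F}$ to finitely many generations to guarantee a priori finiteness). What is needed is a bound $\sum_{I\in \mathcal{F}:\ I\subset J}\sqrt{\left\vert I\right\vert _{\sigma }\left\vert I\right\vert _{\omega }}\leq C\sqrt{\left\vert J\right\vert _{\sigma }\left\vert J\right\vert _{\omega }}$ with $C$ depending on $N_{\omega }$ \emph{only}. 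The paper obtains this by a weighted Cauchy--Schwarz across generations: organizing the sum as $\sum_{k}\sum_{I\in \mathfrak{C}_{\mathcal{F}}^{\left( k\right) }\left( M_{i}\right) }$ over the maximal cubes $M_{i}$ of $\mathcal{F}$ in $J$, and inserting the factors $2^{-k\varepsilon }$ and $2^{+k\varepsilon }$ with $0<\varepsilon <\delta $, the $\sigma $-factor requires only the trivial disjointness bound $\sum_{I\in \mathfrak{C}_{\mathcal{F}}^{\left( k\right) }\left( M_{i}\right) }\left\vert I\right\vert _{\sigma }\leq \left\vert M_{i}\right\vert _{\sigma }$ at each fixed generation, while the $\omega $-factor is summable by the geometric decay $\left\vert G_{k}\left( M_{i}\right) \right\vert _{\omega }\leq C2^{-k\delta }\left\vert M_{i}\right\vert _{\omega }$ furnished by the $\omega $-Carleson hypothesis. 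No information about $\Vert \mathcal{F}\Vert _{\limfunc{Car}\left( \sigma \right) }$ is used anywhere.

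Separately, Part (1) -- which your sufficiency argument for Part (2) invokes as a black box -- is never proved, and it carries most of the theorem's content. The paper's argument takes Calder\'{o}n--Zygmund stopping collections $\mathcal{A}$ for $f$ and $\mathcal{B}$ for $g$, bounds the sum over each intersection of coronas using (\ref{bil Car cond}) in the form $\min \left\{ \sqrt{\left\vert A\right\vert _{\sigma }\left\vert A\right\vert _{\omega }},\sqrt{\left\vert B\right\vert _{\sigma }\left\vert B\right\vert _{\omega }}\right\} $, and is then left needing $\sum_{B\in \mathcal{B}:\ B\in \mathcal{C}_{\mathcal{A}}\left( A\right) }\left\vert B\right\vert _{\sigma }\lesssim \left\vert A\right\vert _{\sigma }$; it is precisely there that comparability enters, converting the $\omega $-Carleson property of $\mathcal{B}$ into a $\sigma $-Carleson property, after which the classical Carleson Embedding Theorem applied to both stopping families finishes the estimate. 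As written, your proposal establishes neither direction of Part (1) nor, consequently, the sufficiency half of Part (2).
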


\begin{proof}
\textbf{Part (1)}: The necessity of the bilinear Carleson condition follows
upon setting $f=g=\mathbf{1}_{J}$ in the bilinear inequality, since then for 
$I\subset J$ we have%
\begin{equation*}
\sup_{K\in \mathcal{D}:\ K\supset I}\frac{1}{\left\vert K\right\vert
_{\sigma }}\int_{K}fd\sigma \geq \frac{1}{\left\vert I\right\vert _{\sigma }}%
\int_{I}\mathbf{1}_{J}d\sigma =1\text{ and similarly }\sup_{L\in \mathcal{D}%
:\ L\supset I}\frac{1}{\left\vert L\right\vert _{\omega }}\int_{L}gd\omega
\geq 1,
\end{equation*}%
which gives%
\begin{equation*}
\sum_{I\in \mathcal{D}:\ I\subset J}a_{I}\leq C\left\Vert f\right\Vert
_{L^{2}\left( \sigma \right) }\left\Vert g\right\Vert _{L^{2}\left( \omega
\right) }=C\sqrt{\left\vert J\right\vert _{\sigma }\left\vert J\right\vert
_{\omega }}.
\end{equation*}

For the converse assertion, fix $\Gamma \geq 4$, and let $\mathcal{A}$ be a
collection of $\Gamma $-Calder\'{o}n-Zygmund stopping cubes for $f\in
L^{2}\left( \sigma \right) $, and let $\mathcal{B}$ be a collection of $%
\Gamma $-Calder\'{o}n-Zygmund stopping cubes for $g\in L^{2}\left( \omega
\right) $. Then we have%
\begin{eqnarray}
\frac{1}{\left\vert A^{\prime }\right\vert _{\sigma }}\int_{A^{\prime
}}fd\sigma &>&\Gamma \frac{1}{\left\vert A\right\vert _{\sigma }}%
\int_{A}fd\sigma ,\ \ \ \ \ A^{\prime }\in \mathfrak{C}_{\mathcal{A}}\left(
A\right) \ ,  \label{sigma CZ} \\
\frac{1}{\left\vert I\right\vert _{\sigma }}\int_{I}fd\sigma &\leq &\Gamma 
\frac{1}{\left\vert A\right\vert _{\sigma }}\int_{A}fd\sigma ,\ \ \ \ \ I\in 
\mathcal{C}_{\mathcal{A}}\left( A\right) \ ,  \notag \\
\sum_{A^{\prime }\in \mathcal{A}:\ A^{\prime }\subset A}\left\vert A^{\prime
}\right\vert _{\sigma } &\leq &C_{\Gamma }\left\vert A\right\vert _{\sigma
}\ ,  \notag
\end{eqnarray}%
and similarly%
\begin{eqnarray*}
\frac{1}{\left\vert B^{\prime }\right\vert _{\omega }}\int_{B^{\prime
}}gd\omega &>&\Gamma \frac{1}{\left\vert B\right\vert _{\omega }}%
\int_{B}gd\omega \ ,\ \ \ \ \ B^{\prime }\in \mathfrak{C}_{\mathcal{B}%
}\left( B\right) , \\
\frac{1}{\left\vert J\right\vert _{\omega }}\int_{J}gd\omega &\leq &\Gamma 
\frac{1}{\left\vert B\right\vert _{\omega }}\int_{B}gd\omega \ ,\ \ \ \ \
J\in \mathcal{C}_{\mathcal{B}}\left( B\right) , \\
\sum_{B^{\prime }\in \mathcal{B}:\ B^{\prime }\subset B}\left\vert B^{\prime
}\right\vert _{\omega } &\leq &C_{\Gamma }\left\vert B\right\vert _{\omega
}\ .
\end{eqnarray*}%
Now we estimate the left hand side of (\ref{bil Car Embed}),%
\begin{eqnarray*}
&&\sum_{I\in \mathcal{D}}a_{I}\left( \sup_{K\in \mathcal{D}:\ K\supset I}%
\frac{1}{\left\vert K\right\vert _{\sigma }}\int_{K}fd\sigma \right) \left(
\sup_{L\in \mathcal{D}:\ L\supset I}\frac{1}{\left\vert L\right\vert
_{\omega }}\int_{L}gd\omega \right) \\
&=&\sum_{A\in \mathcal{A}}\sum_{B\in \mathcal{B}}\sum_{I\in \mathcal{D}:\
I\in \mathcal{C}_{\mathcal{A}}\left( A\right) \cap \mathcal{C}_{\mathcal{B}%
}\left( B\right) }a_{I}\left( \sup_{K\in \mathcal{D}:\ K\supset I}\frac{1}{%
\left\vert K\right\vert _{\sigma }}\int_{K}fd\sigma \right) \left(
\sup_{L\in \mathcal{D}:\ L\supset I}\frac{1}{\left\vert L\right\vert
_{\omega }}\int_{L}gd\omega \right) \\
&\leq &\Gamma ^{2}\sum_{A\in \mathcal{A}}\sum_{B\in \mathcal{B}}\left\{
\sum_{I\in \mathcal{D}:\ I\in \mathcal{C}_{\mathcal{A}}\left( A\right) \cap 
\mathcal{C}_{\mathcal{B}}\left( B\right) }a_{I}\right\} \left( \frac{1}{%
\left\vert A\right\vert _{\sigma }}\int_{A}fd\sigma \right) \left( \frac{1}{%
\left\vert B\right\vert _{\omega }}\int_{B}gd\omega \right) .
\end{eqnarray*}%
Since (\ref{bil Car cond}) implies%
\begin{equation*}
\sum_{I\in \mathcal{D}:\ I\in \mathcal{C}_{\mathcal{A}}\left( A\right) \cap 
\mathcal{C}_{\mathcal{B}}\left( B\right) }a_{I}\leq \left\{ 
\begin{array}{ccc}
C^{\prime }\min \left\{ \sqrt{\left\vert A\right\vert _{\sigma }\left\vert
A\right\vert _{\omega }},\sqrt{\left\vert B\right\vert _{\sigma }\left\vert
B\right\vert _{\omega }}\right\} & \text{ if } & A\cap B\neq \emptyset \\ 
0 & \text{ if } & A\cap B=\emptyset%
\end{array}%
\right. ,
\end{equation*}%
we conclude that the left hand side of (\ref{bil Car Embed}) is at most%
\begin{eqnarray*}
&&C^{\prime }\Gamma ^{2}\sum_{A\in \mathcal{A}}\sum_{B\in \mathcal{B}:\ B\in 
\mathcal{C}_{\mathcal{A}}\left( A\right) }\sqrt{\left\vert B\right\vert
_{\sigma }\left\vert B\right\vert _{\omega }}\left( \frac{1}{\left\vert
A\right\vert _{\sigma }}\int_{A}fd\sigma \right) \left( \frac{1}{\left\vert
B\right\vert _{\omega }}\int_{B}gd\omega \right) \\
&&+C^{\prime }\Gamma ^{2}\sum_{B\in \mathcal{B}}\sum_{A\in \mathcal{A}:\
A\in \mathcal{C}_{\mathcal{B}}\left( B\right) }\sqrt{\left\vert A\right\vert
_{\sigma }\left\vert A\right\vert _{\omega }}\left( \frac{1}{\left\vert
A\right\vert _{\sigma }}\int_{A}fd\sigma \right) \left( \frac{1}{\left\vert
B\right\vert _{\omega }}\int_{B}gd\omega \right) \\
&\equiv &S_{1}+S_{2}.
\end{eqnarray*}

By symmetry it suffices to bound the first sum $S_{1}$. By Cauchy-Schwarz,
we have%
\begin{equation*}
\sum_{B\in \mathcal{B}:\ B\in \mathcal{C}_{\mathcal{A}}\left( A\right) }%
\sqrt{\left\vert B\right\vert _{\sigma }\left\vert B\right\vert _{\omega }}%
\left( \frac{1}{\left\vert B\right\vert _{\omega }}\int_{B}gd\omega \right)
\leq \sqrt{\sum_{B\in \mathcal{B}:\ B\in \mathcal{C}_{\mathcal{A}}\left(
A\right) }\left\vert B\right\vert _{\sigma }}\sqrt{\sum_{B\in \mathcal{B}:\
B\in \mathcal{C}_{\mathcal{A}}\left( A\right) }\left\vert B\right\vert
_{\omega }\left( \frac{1}{\left\vert B\right\vert _{\omega }}%
\int_{B}gd\omega \right) ^{2}}.
\end{equation*}%
We now invoke the comparability assumption on the measures $\sigma $ and $%
\omega $, which implies that the grid $\mathcal{B}$ is also $\sigma $%
-Carleson, hence $\sum_{B\in \mathcal{B}:\ B\in \mathcal{C}_{\mathcal{A}%
}\left( A\right) }\left\vert B\right\vert _{\sigma }\leq C\left\vert
A\right\vert _{\sigma }$. Thus we conclude%
\begin{eqnarray*}
S_{1} &\leq &C^{\prime }\Gamma ^{2}\sum_{A\in \mathcal{A}}\left( \frac{1}{%
\left\vert A\right\vert _{\sigma }}\int_{A}fd\sigma \right) \sqrt{\left\vert
A\right\vert _{\sigma }}\sqrt{\sum_{B\in \mathcal{B}:\ B\in \mathcal{C}_{%
\mathcal{A}}\left( A\right) }\left\vert B\right\vert _{\omega }\left( \frac{1%
}{\left\vert B\right\vert _{\omega }}\int_{B}gd\omega \right) ^{2}} \\
&\leq &C^{\prime }\Gamma ^{2}\sqrt{\sum_{A\in \mathcal{A}}\left\vert
A\right\vert _{\sigma }\left( \frac{1}{\left\vert A\right\vert _{\sigma }}%
\int_{A}fd\sigma \right) ^{2}}\sqrt{\sum_{A\in \mathcal{A}}\sum_{B\in 
\mathcal{B}:\ B\in \mathcal{C}_{\mathcal{A}}\left( A\right) }\left\vert
B\right\vert _{\omega }\left( \frac{1}{\left\vert B\right\vert _{\omega }}%
\int_{B}gd\omega \right) ^{2}} \\
&\leq &C\left\Vert f\right\Vert _{L^{2}\left( \sigma \right) }\left\Vert
g\right\Vert _{L^{2}\left( \omega \right) }\ ,
\end{eqnarray*}%
with $C$ depending on $C^{\prime }$ and $\Gamma $, upon applying the usual
Carleson Embedding Theorem to both stopping collections $\mathcal{A}$ and $%
\mathcal{B}$. Indeed, we take $c_{I}\equiv \left\{ 
\begin{array}{ccc}
\left\vert I\right\vert _{\sigma } & \text{ if } & I\in \mathcal{A} \\ 
0 & \text{ if } & I\not\in \mathcal{A}%
\end{array}%
\right. $ in (\ref{class Car}), note that $\left\{ c_{I}\right\} _{I\in 
\mathcal{D}}$ satisfies the Carleson condition (\ref{class Car cond}) with $%
C^{\prime }=C_{\Gamma }$ by the third line in (\ref{sigma CZ}), and it then
follows from (\ref{class Car}) that 
\begin{equation*}
\sum_{A\in \mathcal{A}}\left\vert A\right\vert _{\sigma }\left( \frac{1}{%
\left\vert A\right\vert _{\sigma }}\int_{A}fd\sigma \right) ^{2}\leq
C_{\Gamma }\left\Vert f\right\Vert _{L^{2}\left( \sigma \right) }^{2}.
\end{equation*}%
Similarly, we obtain 
\begin{equation*}
\sum_{A\in \mathcal{A}}\sum_{B\in \mathcal{B}:\ B\in \mathcal{C}_{\mathcal{A}%
}\left( A\right) }\left\vert B\right\vert _{\omega }\left( \frac{1}{%
\left\vert B\right\vert _{\omega }}\int_{B}gd\omega \right) ^{2}\leq
C\left\Vert g\right\Vert _{L^{2}\left( \omega \right) }^{2}\ .
\end{equation*}

\textbf{Part (2)}: It remains to show that if (\ref{big}) holds, then $%
\sigma $ and $\omega $ are comparable in the sense of Definition \ref{def
comparable}. So let the dyadic grid $\mathcal{F}$ be $\omega $-Carleson, and
let $C,\delta >0$ be such that $\left\vert G_{k}\left( Q\right) \right\vert
_{\omega }\leq C2^{-k\delta }\left\vert Q\right\vert _{\omega }$ for all $%
Q\in \mathcal{F}$. Define%
\begin{equation*}
a_{I}\equiv \left\{ 
\begin{array}{ccc}
\sqrt{\left\vert I\right\vert _{\sigma }\left\vert I\right\vert _{\omega }}
& \text{ if } & I\in \mathcal{F} \\ 
0 & \text{ if } & I\not\in \mathcal{F}%
\end{array}%
\right. ,
\end{equation*}%
and note that if $\left\{ M_{i}\right\} _{i=1}^{\infty }$ are the maximal
cubes in $\mathcal{F}$ that are contained in $J$, then 
\begin{eqnarray*}
\sum_{I\in \mathcal{D}:\ I\subset J}a_{I} &=&\sum_{i=1}^{\infty }\sum_{I\in 
\mathcal{F}:\ I\subset M_{i}}\sqrt{\left\vert I\right\vert _{\sigma
}\left\vert I\right\vert _{\omega }}=\sum_{i=1}^{\infty }\sum_{k=0}^{\infty
}\sum_{I\in \mathfrak{C}_{\mathcal{F}}^{\left( k\right) }\left( M_{i}\right)
}\sqrt{\left\vert I\right\vert _{\sigma }\left\vert I\right\vert _{\omega }}
\\
&\leq &\sum_{i=1}^{\infty }\sqrt{\sum_{k=0}^{\infty }2^{-k\varepsilon
}\sum_{I\in \mathfrak{C}_{\mathcal{F}}^{\left( k\right) }\left( M_{i}\right)
}\left\vert I\right\vert _{\sigma }}\sqrt{\sum_{k=0}^{\infty
}2^{k\varepsilon }\sum_{I\in \mathfrak{C}_{\mathcal{F}}^{\left( k\right)
}\left( M_{i}\right) }\left\vert I\right\vert _{\omega }} \\
&\leq &\sum_{i=1}^{\infty }\sqrt{\sum_{k=0}^{\infty }2^{-k\varepsilon
}\left\vert M_{i}\right\vert _{\sigma }}\sqrt{\sum_{k=0}^{\infty
}2^{k\varepsilon }C2^{-k\delta }\left\vert M_{i}\right\vert _{\omega }} \\
&\leq &C\sum_{i=1}^{\infty }\sqrt{\left\vert M_{i}\right\vert _{\sigma
}\left\vert M_{i}\right\vert _{\omega }}\leq C\sqrt{\sum_{i=1}^{\infty
}\left\vert M_{i}\right\vert _{\sigma }}\sqrt{\sum_{i=1}^{\infty }\left\vert
M_{i}\right\vert _{\omega }}\leq C\sqrt{\left\vert J\right\vert _{\sigma }}%
\sqrt{\left\vert J\right\vert _{\omega }}\ .
\end{eqnarray*}%
Thus from (\ref{big}) we obtain%
\begin{equation*}
\sum_{I\in \mathcal{F}}\sqrt{\left\vert I\right\vert _{\sigma }\left\vert
I\right\vert _{\omega }}\left( \frac{1}{\left\vert I\right\vert _{\sigma }}%
\int_{I}fd\sigma \right) \left( \frac{1}{\left\vert I\right\vert _{\omega }}%
\int_{I}gd\omega \right) \leq C\left\Vert f\right\Vert _{L^{2}\left( \sigma
\right) }\left\Vert g\right\Vert _{L^{2}\left( \omega \right) }\ ,
\end{equation*}%
and then from (\ref{pair test}) we conclude%
\begin{eqnarray*}
\sum_{I\in \mathcal{F}:\ I\subset K}\left\vert I\right\vert _{\sigma }
&=&\sum_{I\in \mathcal{F}:\ I\subset K}\frac{\left\vert I\right\vert
_{\omega }\left\vert I\right\vert _{\sigma }}{\left\vert I\right\vert
_{\omega }^{2}}\left\vert I\right\vert _{\omega }=\int_{K}\sum_{I\in 
\mathcal{F}:\ I\subset K}\left\vert \frac{\sqrt{\left\vert I\right\vert
_{\omega }\left\vert I\right\vert _{\sigma }}}{\left\vert I\right\vert
_{\omega }}\mathbf{1}_{I}\left( y\right) \right\vert ^{2}d\omega \left(
y\right) \\
&\leq &\int_{K}\left\vert \sum_{I\in \mathcal{F}:\ I\subset K}\frac{\sqrt{%
\left\vert I\right\vert _{\omega }\left\vert I\right\vert _{\sigma }}}{%
\left\vert I\right\vert _{\omega }}\mathbf{1}_{I}\left( y\right) \right\vert
^{2}d\omega \left( y\right) \leq C^{2}\left\vert K\right\vert _{\sigma }\ ,\
\ \ \ \ \text{for all cubes }K\in \mathcal{F},
\end{eqnarray*}%
which is $\left\Vert \mathcal{F}\right\Vert _{\limfunc{Car}\left( \sigma
\right) }\leq C\left\Vert \mathcal{F}\right\Vert _{\limfunc{Car}\left(
\omega \right) }$. A dual argument gives $\left\Vert \mathcal{F}\right\Vert
_{\limfunc{Car}\left( \omega \right) }\leq C\left\Vert \mathcal{F}%
\right\Vert _{\limfunc{Car}\left( \sigma \right) }$, and so $\sigma $ and $%
\omega $ are comparable in the sense of Definition \ref{def comparable}.
\end{proof}

\section{Controlling polynomial testing conditions by $T1$ and $\mathcal{A}%
_{2}$}

Here we show that the familiar $T1$ testing conditions over indicators of
cubes, imply the $Tp$ testing conditions over polynomials times indicators
of cubes. To highlight the main idea, we begin with the simpler case of
dimension $n=1$. We start with the elementary formula for recovering a
linear function, restricted to an interval, from indicators of intervals:%
\begin{equation}
\mathbf{1}_{\left[ a,b\right) }\left( y\right) \left( \frac{y-a}{b-a}\right)
=\int_{a}^{b}\mathbf{1}_{\left[ r,b\right) }\left( y\right) \frac{dr}{b-a},\
\ \ \ \ \text{for all }y\in \mathbb{R}\ .  \label{elem form}
\end{equation}%
We conclude that for any locally finite positive Borel measure $\sigma $,
and any operator $T$ bounded from $L^{2}\left( \sigma \right) $ to $%
L^{2}\left( \omega \right) $,%
\begin{equation*}
T_{\sigma }\left( \mathbf{1}_{\left[ a,b\right) }\left( y\right) \left( 
\frac{y-a}{b-a}\right) \right) \left( x\right) =T_{\sigma }\left(
\int_{a}^{b}\mathbf{1}_{\left[ r,b\right) }\left( y\right) \frac{dr}{b-a}%
\right) \left( x\right) =\int_{a}^{b}\left( T_{\sigma }\mathbf{1}_{\left[
r,b\right) }\right) \left( x\right) \frac{dr}{b-a},
\end{equation*}
where $T_{\sigma }$ has moved inside the integral since truncations of
fractional Calder\'{o}n-Zygmund operators have bounded compactly supported
kernels. We then use the testing estimate $\left\Vert T_{\sigma }\mathbf{1}_{%
\left[ r,b\right) }\right\Vert _{L^{2}\left( \omega \right) }^{2}\leq \left( 
\mathfrak{FT}_{T}\right) ^{2}\left\vert \left[ r,b\right) \right\vert
_{\sigma }$, together with Minkowski's inequality $\left\Vert \int
f\right\Vert \leq \int \left\Vert f\right\Vert $, to obtain 
\begin{eqnarray*}
&&\left\Vert T_{\sigma }\left[ \mathbf{1}_{\left[ a,b\right) }\left(
y\right) \left( \frac{y-a}{b-a}\right) \right] \right\Vert _{L^{2}\left(
\omega \right) }=\left\Vert T_{\sigma }\left[ \int_{a}^{b}\mathbf{1}_{\left[
r,b\right) }\left( y\right) \frac{dr}{b-a}\right] \right\Vert _{L^{2}\left(
\omega \right) } \\
&\leq &\int_{a}^{b}\left\Vert T_{\sigma }\left[ \mathbf{1}_{\left[
r,b\right) }\left( y\right) \right] \right\Vert _{L^{2}\left( \omega \right)
}\frac{dr}{b-a}\leq \int_{a}^{b}\mathfrak{FT}_{T}\sqrt{\left\vert \left[
r,b\right) \right\vert _{\sigma }}\frac{dr}{b-a} \\
&\leq &\mathfrak{FT}_{T}\sqrt{\int_{a}^{b}\left\vert \left[ r,b\right)
\right\vert _{\sigma }\frac{dr}{b-a}}=\mathfrak{FT}_{T}\sqrt{%
\int_{a}^{b}\left( \int_{\left[ r,b\right) }d\sigma \left( y\right) \right) 
\frac{dr}{b-a}} \\
&=&\mathfrak{FT}_{T}\sqrt{\int_{\left[ a,b\right) }\left( \int_{a}^{y}\frac{%
dr}{b-a}\right) d\sigma \left( y\right) }=\mathfrak{FT}_{T}\sqrt{\int_{\left[
a,b\right) }\frac{y-a}{b-a}d\sigma \left( y\right) }\leq \mathfrak{FT}_{T}%
\sqrt{\left\vert \left[ a,b\right) \right\vert _{\sigma }}\ ,
\end{eqnarray*}%
and hence $\mathfrak{FT}_{T}^{\left( 1\right) }\leq \mathfrak{FT}%
_{T}^{\left( 0\right) }\equiv \mathfrak{FT}_{T}$. Similarly, the identity%
\begin{equation*}
\mathbf{1}_{\left[ a,b\right) }\left( y\right) \left( \frac{y-a}{b-a}\right)
^{2}=\int_{a}^{b}1_{\left[ r,b\right) }\left( y\right) 2\left( \frac{y-r}{b-a%
}\right) \frac{dr}{b-a},\ \ \ \ \ \text{for all }y\in \mathbb{R},
\end{equation*}%
shows that%
\begin{eqnarray*}
&&\left\Vert T\left[ \mathbf{1}_{\left[ a,b\right) }\left( y\right) \left( 
\frac{y-a}{b-a}\right) ^{2}\right] \right\Vert _{L^{2}\left( \omega \right)
}=\left\Vert T\left[ \int_{a}^{b}\mathbf{1}_{\left[ r,b\right) }\left(
y\right) 2\left( \frac{y-r}{b-a}\right) dr\right] \right\Vert _{L^{2}\left(
\omega \right) } \\
&\leq &2\int_{a}^{b}\left\Vert T\left[ \mathbf{1}_{\left[ r,b\right) }\left(
y\right) \left( \frac{y-r}{b-a}\right) \right] \right\Vert _{L^{2}\left(
\omega \right) }\frac{dr}{b-a}\leq 2\mathfrak{FT}_{T}^{\left( 1\right) }%
\sqrt{\left\vert \left[ a,b\right) \right\vert _{\sigma }},
\end{eqnarray*}%
and hence $\mathfrak{FT}_{T}^{\left( 2\right) }\leq 2\mathfrak{FT}%
_{T}^{\left( 1\right) }$. Continuing in this manner we obtain%
\begin{equation*}
\mathfrak{FT}_{T}^{\left( \kappa \right) }\leq \kappa \ \mathfrak{FT}%
_{T}^{\left( \kappa -1\right) },\ \ \ \ \ \text{for all }\kappa \geq 1,
\end{equation*}%
which when iterated gives%
\begin{equation*}
\mathfrak{FT}_{T}^{\left( \kappa \right) }\left( \sigma ,\omega \right) \leq
\kappa !\mathfrak{FT}_{T}\left( \sigma ,\omega \right) .
\end{equation*}

By a result of Hyt\"{o}nen \cite{Hyt2}, see also \cite{SaShUr12} for the
straightforward extension to fractional singular integrals, the full testing
constant $\mathfrak{FT}_{T}\left( \sigma ,\omega \right) $ in dimension $n=1$%
, is controlled by the usual testing constant $\mathfrak{T}_{T}\left( \sigma
,\omega \right) $ and the one-tailed Muckenhoupt condition $\mathcal{A}%
_{2}^{\alpha }$ . Thus we have proved the following lemma for the case when $%
T=T^{\alpha }$ is a fractional Calder\'{o}n-Zygmund operator in dimension $%
n=1$.

\begin{lemma}
Suppose that $\sigma $ and $\omega $ are locally finite positive Borel
measures on $\mathbb{R}$ and $\kappa \in \mathbb{N}$. If $T^{\alpha }$ is a
bounded $\alpha $-fractional Calder\'{o}n-Zygmund operator from $L^{2}\left(
\sigma \right) $ to $L^{2}\left( \omega \right) $, then we have 
\begin{equation*}
\mathfrak{T}_{T^{\alpha }}^{\left( \kappa \right) }\left( \sigma ,\omega
\right) \leq \kappa !\mathfrak{T}_{T^{\alpha }}\left( \sigma ,\omega \right)
+C_{\kappa }\mathcal{A}_{2}^{\alpha }\left( \sigma ,\omega \right) \ ,\ \ \
\ \ \kappa \geq 1,
\end{equation*}%
where the constant $C_{\kappa }$ depends on the kernel constant $C_{CZ}$ in (%
\ref{sizeandsmoothness'}), but is independent of the operator norm $%
\mathfrak{N}_{T^{\alpha }}\left( \sigma ,\omega \right) $.
\end{lemma}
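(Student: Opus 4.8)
The plan is to route the argument through the \emph{full} $\kappa$-cube testing constants $\mathfrak{FT}_{T^{\alpha }}^{(\kappa )}$, use the monomial recovery identity to reduce $\mathfrak{FT}_{T^{\alpha }}^{(\kappa )}$ to $\mathfrak{FT}_{T^{\alpha }}$, and then invoke the $n=1$ theorem of Hyt\"{o}nen (with its fractional extension) to control $\mathfrak{FT}_{T^{\alpha }}$ by the ordinary testing constant $\mathfrak{T}_{T^{\alpha }}$ and $\mathcal{A}_{2}^{\alpha }$. We may assume $\mathcal{A}_{2}^{\alpha }\left( \sigma ,\omega \right) <\infty $, since otherwise the asserted inequality is trivial: its right side is infinite, while its left side is finite because $T^{\alpha }$ is bounded.

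First I would record the trivial domination $\mathfrak{T}_{T^{\alpha }}^{(\kappa )}\left( \sigma ,\omega \right) \leq \mathfrak{FT}_{T^{\alpha }}^{(\kappa )}\left( \sigma ,\omega \right) $, since the integral over $Q$ in \eqref{def Kappa polynomial} is at most the integral over $\mathbb{R}^{n}$. It then suffices to bound $\mathfrak{FT}_{T^{\alpha }}^{(\kappa )}$, and for this I would run exactly the recovery computation carried out in the discussion preceding the lemma. Starting from \eqref{elem form} and its degree-$j$ analogues $\mathbf{1}_{[a,b)}(y)\left( \tfrac{y-a}{b-a}\right) ^{j}=j\int_{a}^{b}\mathbf{1}_{[r,b)}(y)\left( \tfrac{y-r}{b-a}\right) ^{j-1}\tfrac{dr}{b-a}$, one moves a fixed smooth compactly supported truncation $T_{\sigma ,\delta ,R}^{\alpha }$ inside the $dr$-integral (legitimate since the truncated kernels are bounded with compact support), applies Minkowski's integral inequality, invokes the full testing estimate for the degree-$(j-1)$ integrand (whose supremum norm on $[r,b)$ is $\leq 1$), and finishes with Cauchy--Schwarz and a Fubini swap recognizing $\int_{a}^{b}\left\vert [r,b)\right\vert _{\sigma }\tfrac{dr}{b-a}=\int_{[a,b)}\tfrac{y-a}{b-a}\,d\sigma (y)\leq \left\vert [a,b)\right\vert _{\sigma }$. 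Each iteration drops the degree by one at the cost of a factor at most $\kappa $, so that $\mathfrak{FT}_{T^{\alpha }}^{(\kappa )}\leq \kappa \,\mathfrak{FT}_{T^{\alpha }}^{(\kappa -1)}$, and iterating down to $\kappa =1$, where $\mathfrak{FT}_{T^{\alpha }}^{(1)}=\mathfrak{FT}_{T^{\alpha }}$, yields $\mathfrak{FT}_{T^{\alpha }}^{(\kappa )}\leq \kappa !\,\mathfrak{FT}_{T^{\alpha }}$. Since a center-anchored monomial of degree $<\kappa $ on an interval is a bounded, interval-independent linear combination of the left-anchored ones, the two normalizations of $\mathfrak{FT}_{T^{\alpha }}^{(\kappa )}$ agree up to a $\kappa $-dependent factor, which is harmless.

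Second, I would apply Hyt\"{o}nen's theorem \cite{Hyt2}, in the fractional form of \cite{SaShUr12}: in dimension $n=1$ one has $\mathfrak{FT}_{T^{\alpha }}\left( \sigma ,\omega \right) \lesssim \mathfrak{T}_{T^{\alpha }}\left( \sigma ,\omega \right) +\mathcal{A}_{2}^{\alpha }\left( \sigma ,\omega \right) $, with implied constant depending only on the kernel constant $C_{CZ}$ in \eqref{sizeandsmoothness'} and not on $\mathfrak{N}_{T^{\alpha }}\left( \sigma ,\omega \right) $. Chaining the three estimates gives
\[
\mathfrak{T}_{T^{\alpha }}^{(\kappa )}\left( \sigma ,\omega \right) \leq \mathfrak{FT}_{T^{\alpha }}^{(\kappa )}\left( \sigma ,\omega \right) \leq \kappa !\,\mathfrak{FT}_{T^{\alpha }}\left( \sigma ,\omega \right) \leq \kappa !\,\mathfrak{T}_{T^{\alpha }}\left( \sigma ,\omega \right) +C_{\kappa }\,\mathcal{A}_{2}^{\alpha }\left( \sigma ,\omega \right) ,
\]
which is the lemma.

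The only delicate points are bookkeeping: (i) justifying the interchange of $T_{\sigma ,\delta ,R}^{\alpha }$ with the $dr$-integral — this uses boundedness and compact support of the truncated kernels, together with the truncation-independence of the testing constants in the presence of $\mathcal{A}_{2}^{\alpha }$ noted in Independence of Truncations; and (ii) tracking the polynomial-normalization constants when passing between left- and center-anchored monomial bases. Neither is a serious obstacle. The substantive input is Hyt\"{o}nen's reverse estimate, which is cited rather than reproved, so I anticipate no real difficulty in the lemma itself beyond assembling these pieces.
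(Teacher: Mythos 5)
Your proposal is correct and follows essentially the same route as the paper: the same monomial recovery identity, the same Minkowski/Fubini iteration yielding $\mathfrak{FT}_{T^{\alpha }}^{(\kappa )}\leq \kappa !\,\mathfrak{FT}_{T^{\alpha }}$, and the same appeal to Hyt\"{o}nen's one-dimensional result (in the fractional form of \cite{SaShUr12}) to convert $\mathfrak{FT}_{T^{\alpha }}$ into $\mathfrak{T}_{T^{\alpha }}+\mathcal{A}_{2}^{\alpha }$. Your added remarks on the left-anchored versus center-anchored normalizations and the triviality when $\mathcal{A}_{2}^{\alpha }=\infty $ are sensible housekeeping that the paper leaves implicit, but they do not change the argument.
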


\subsection{The higher dimensional case}

The higher dimensional version of this lemma will include a small multiple
of the operator norm $\mathfrak{N}_{T}\left( \sigma ,\omega \right) $ in
place of the one-tailed Muckenhoupt constant $\mathcal{A}_{2}^{\alpha
}\left( \sigma ,\omega \right) $ on the right hand side, since we no longer
have available an analogue of Hyt\"{o}nen's result. Nevertheless, we show
below that for doubling measures, the two testing conditions are equivalent
in the presence of one-tailed Muckenhoupt conditions (\ref{one-sided}) in
all dimensions, and so we will be able to prove a $T1$ theorem in higher
dimensions in certain cases.

\begin{theorem}
\label{Tp control by T1}Suppose that $\sigma $ and $\omega $ are locally
finite positive Borel measures on $\mathbb{R}^{n}$, and let $\kappa \in 
\mathbb{N}$. If $T$ is a bounded operator from $L^{2}\left( \sigma \right) $
to $L^{2}\left( \omega \right) $, then\ for every $0<\varepsilon <1$, there
is a positive constant $C\left( \kappa ,\varepsilon \right) $ such that 
\begin{equation*}
\mathfrak{FT}_{T}^{\left( \kappa \right) }\left( \sigma ,\omega \right) \leq
C\left( \kappa ,\varepsilon \right) \mathfrak{FT}_{T}\left( \sigma ,\omega
\right) +\varepsilon \mathfrak{N}_{T}\left( \sigma ,\omega \right) \ ,\ \ \
\ \ \kappa \geq 1,
\end{equation*}%
and where the constants $C\left( \kappa ,\varepsilon \right) $ depend only
on $\kappa $ and $\varepsilon $, and not on the operator norm $\mathfrak{N}%
_{T}\left( \sigma ,\omega \right) $.
\end{theorem}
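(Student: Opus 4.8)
The plan is to follow the spirit of the one-dimensional lemma, but with one essential modification: in $\mathbb{R}^{n}$ with $n>1$, cutting a cube along coordinate hyperplanes produces rectangular boxes, not cubes, so the exact integral-recovery identity (\ref{elem form}) is unavailable. Instead I will approximate $\mathbf{1}_{Q}m_{Q}^{\beta}$ in $L^{2}(\sigma)$ by a step function that is a finite linear combination of indicators of \emph{sub}cubes of $Q$, and absorb the approximation error into a small multiple of $\mathfrak{N}_{T}(\sigma,\omega)$. Fix a cube $Q\in\mathcal{P}^{n}$ and a multiindex $\beta$ with $0\le|\beta|<\kappa$, and for $N\in\mathbb{N}$ let $\mathcal{G}_{N}(Q)$ be the collection of $2^{Nn}$ dyadic subcubes of $Q$ of side length $2^{-N}\ell(Q)$, so that $\mathbf{1}_{Q}=\sum_{Q'\in\mathcal{G}_{N}(Q)}\mathbf{1}_{Q'}$ exactly and $\sum_{Q'\in\mathcal{G}_{N}(Q)}|Q'|_{\sigma}=|Q|_{\sigma}$. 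Write
\begin{equation*}
\mathbf{1}_{Q}\,m_{Q}^{\beta}=g_{N}+e_{N},\qquad g_{N}\equiv\sum_{Q'\in\mathcal{G}_{N}(Q)}m_{Q}^{\beta}(c_{Q'})\,\mathbf{1}_{Q'},\quad e_{N}\equiv\sum_{Q'\in\mathcal{G}_{N}(Q)}\bigl(m_{Q}^{\beta}-m_{Q}^{\beta}(c_{Q'})\bigr)\mathbf{1}_{Q'}.
\end{equation*}
Since $|(x-c_{Q})/\ell(Q)|\le\sqrt{n}/2$ on $Q$, we have $|m_{Q}^{\beta}(x)|\le(\sqrt{n}/2)^{\kappa-1}$ and $|\nabla m_{Q}^{\beta}(x)|\le C(\kappa,n)\,\ell(Q)^{-1}$ there; hence the mean value theorem gives $|e_{N}(x)|\le C(\kappa,n)\,2^{-N}$ for every $x\in Q$, so that $\|e_{N}\|_{L^{2}(\sigma)}^{2}\le C(\kappa,n)^{2}\,2^{-2N}|Q|_{\sigma}$.

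Now apply $T_{\sigma}$ and use the triangle inequality together with two elementary bounds: the full cube testing inequality $\|T_{\sigma}\mathbf{1}_{Q'}\|_{L^{2}(\omega)}\le\mathfrak{FT}_{T}(\sigma,\omega)\sqrt{|Q'|_{\sigma}}$ for each $Q'\in\mathcal{G}_{N}(Q)$, and the norm inequality $\|T_{\sigma}e_{N}\|_{L^{2}(\omega)}\le\mathfrak{N}_{T}(\sigma,\omega)\,\|e_{N}\|_{L^{2}(\sigma)}$. Together with Cauchy--Schwarz in the sum over $\mathcal{G}_{N}(Q)$, which gives $\sum_{Q'\in\mathcal{G}_{N}(Q)}\sqrt{|Q'|_{\sigma}}\le 2^{Nn/2}\bigl(\sum_{Q'}|Q'|_{\sigma}\bigr)^{1/2}=2^{Nn/2}\sqrt{|Q|_{\sigma}}$, this yields
\begin{equation*}
\frac{1}{\sqrt{|Q|_{\sigma}}}\bigl\|T_{\sigma}\bigl(\mathbf{1}_{Q}m_{Q}^{\beta}\bigr)\bigr\|_{L^{2}(\omega)}\le C(\kappa,n)\,2^{Nn/2}\,\mathfrak{FT}_{T}(\sigma,\omega)+C(\kappa,n)\,2^{-N}\,\mathfrak{N}_{T}(\sigma,\omega).
\end{equation*}
Given $0<\varepsilon<1$, choose $N=N(\kappa,\varepsilon,n)$ minimal with $C(\kappa,n)\,2^{-N}\le\varepsilon$, set $C(\kappa,\varepsilon)\equiv C(\kappa,n)\,2^{Nn/2}$, take the supremum over $Q\in\mathcal{P}^{n}$ and the maximum over $0\le|\beta|<\kappa$, and recall the definition of $\mathfrak{FT}_{T}^{(\kappa)}$; this is exactly the asserted inequality. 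The same computation with the roles of $\sigma$ and $\omega$ interchanged handles the adjoint full testing constant, and when $T$ is presented through a family of smooth truncations one runs the argument for each truncation, all bounds being uniform and kernel-free.

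The argument is short precisely because no kernel information is used — only linearity, $L^{2}(\sigma)\to L^{2}(\omega)$ boundedness, and the cube testing hypothesis — which is also why $C(\kappa,\varepsilon)$ carries no dependence on $C_{CZ}$, in contrast with the one-dimensional lemma. The only point needing care, and the reason the $\varepsilon\,\mathfrak{N}_{T}$ term cannot be removed by this method, is that refining the subgrid enough to make the step-function approximation of the polynomial accurate inflates the coefficient of $\mathfrak{FT}_{T}$ by the factor $2^{Nn/2}$, the square root of the number of subcubes; one must check that this inflation lands only on the $\mathfrak{FT}_{T}$ term and not on the $\mathfrak{N}_{T}$ term, so that $N$ can be taken large enough to make the $\mathfrak{N}_{T}$ coefficient smaller than $\varepsilon$ while keeping $C(\kappa,\varepsilon)$ finite. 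In dimension one this trade-off is unnecessary, because subintervals of an interval are themselves intervals, so (\ref{elem form}) recovers $m_{Q}^{\beta}$ exactly from indicators of cubes and Hyt\"onen's theorem can then be invoked to replace $\mathfrak{N}_{T}$ by $\mathcal{A}_{2}^{\alpha}$; in higher dimensions no such exact recovery by cubes exists, and the loss of a small multiple of the operator norm is the price paid.
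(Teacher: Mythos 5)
Your proof is correct, and it takes a genuinely different and more elementary route than the paper's. The paper proves the theorem by reducing the polynomial degree one step at a time: it writes $\mathbf{1}_{[0,1)^{n}}(y)\,y^{\beta}$ as an exact integral over $r$ of indicators of rectangles $[0,1)^{n-1}\times[r,1)$ times a polynomial of degree one lower, decomposes each such rectangle into a union of dyadic cubes plus a slab of thickness $<\varepsilon$ (via the binary expansion of $r^{\ast}$), controls the cubes by the lower-order full testing constant, and controls the slab by averaging its $\sigma$-measure over the slicing parameter $r$ (the ``probability argument''); iterating $\kappa$ times produces the chain $\mathfrak{FT}_{T}^{(\kappa)}\leq C_{m,\kappa-1}\mathfrak{FT}_{T}^{(\kappa-1)}+\varepsilon\,\mathfrak{N}_{T}$. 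You instead approximate the full polynomial in one step by a function constant on the $2^{Nn}$ subcubes of a uniform subgrid, hit the main term with full testing over indicators directly, and absorb the uniformly small error $e_{N}$ with a single application of the norm inequality — no iteration, no rectangle-into-cubes decomposition, and no averaging in $r$. Each step of your argument checks out: the gradient bound on the $Q$-normalized monomial gives $\|e_{N}\|_{L^{2}(\sigma)}\leq C(\kappa,n)2^{-N}\sqrt{|Q|_{\sigma}}$, Cauchy--Schwarz over the subgrid gives the factor $2^{Nn/2}$ on the testing term, and both the left-hand side and the subcube testing use the \emph{full} (integrated over $\mathbb{R}^{n}$) constants consistently. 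The trade-off you flag is real in both proofs — the paper's constant also blows up polynomially in $1/\varepsilon$ through the cube count $B\leq 2^{nm-n-m+2}$ with $m\approx\log_{2}(1/\varepsilon)$ — and since the theorem only requires $C(\kappa,\varepsilon)$ to be finite and independent of $\mathfrak{N}_{T}$, neither argument is quantitatively preferable for the application. Your version is shorter, and your closing observation about why the $\varepsilon\,\mathfrak{N}_{T}$ term is unavoidable in $n>1$ (no exact recovery of polynomials from indicators of cubes, hence no access to Hyt\"onen's one-dimensional result) matches the paper's own explanation.
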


\begin{proof}
We begin with the following geometric observation, similar to a construction
used in the recursive control of the nearby form in \cite{SaShUr12}. Let $R=%
\left[ 0,1\right) ^{n-1}\times \left[ 0,t\right) $ be a rectangle in $%
\mathbb{R}^{n}$ with $0<t<1$. Then given $0<\varepsilon <1$, there is a
positive integer $m\in \mathbb{N}$ and a dyadic number $t^{\ast }\equiv 
\frac{b}{2^{m}}$ with $0\leq b<2^{m}$, so that%
\begin{eqnarray}
R &=&E\overset{\cdot }{\cup }\left\{ \overset{\cdot }{\dbigcup }%
_{i=1}^{B}K_{i}\right\} ;  \label{decomp} \\
E &=&\left[ 0,1\right) ^{n-1}\times \left[ t^{\ast },t\right) \text{ with }%
\left\vert t-t^{\ast }\right\vert <\varepsilon ,  \notag \\
B &\leq &2^{nm-n-m+2},  \notag
\end{eqnarray}%
and where the $K_{i}$ are pairwise disjoint cubes inside $R$. To see (\ref%
{decomp}) we choose $m\in \mathbb{N}$ so that $\frac{1}{2^{m}}<\varepsilon $
and then let $b\in \mathbb{N}$ satisfy $2^{m}t-1\leq b<2^{m}t$. Then with $%
t^{\ast }=\frac{b}{2^{m}}$ we have $\left\vert t-t^{\ast }\right\vert <\frac{%
1}{2^{m}}<\varepsilon $. Now expand $t^{\ast }$ in binary form,%
\begin{equation*}
t^{\ast }=b_{1}\frac{1}{2}+b_{2}\frac{1}{4}+...+b_{m-1}\frac{1}{2^{m-1}},\ \
\ \ \ b_{k}\in \left\{ 0,1\right\} .
\end{equation*}%
Then for each $k$ with $b_{k}=1$ we decompose the rectangle 
\begin{equation*}
R_{k}\equiv \left[ 0,1\right) ^{n-1}\times \left[ b_{1}\frac{1}{2}+b_{2}%
\frac{1}{4}+...+b_{k-1}\frac{1}{2^{k-1}},b_{1}\frac{1}{2}+b_{2}\frac{1}{4}%
+...+b_{k-1}\frac{1}{2^{k-1}}+\frac{1}{2^{k}}\right)
\end{equation*}%
into $2^{\left( n-1\right) k}$ pairwise disjoint dyadic cubes of side length 
$\frac{1}{2^{k}}$. Then we take the collection of all such cubes, noting
that the number $B$ of such cubes is at most 
\begin{equation*}
\sum_{k=1}^{m-1}2^{\left( n-1\right) k}\leq 2\cdot 2^{\left( n-1\right)
\left( m-1\right) }=2^{nm-n-m+2},
\end{equation*}%
and label them as $\left\{ K_{i}\right\} _{i=1}^{B}$ with $B\leq
2^{nm-n-m+2} $. Finally we note that 
\begin{equation*}
\overset{\cdot }{\dbigcup }_{i=1}^{B}K_{i}=\overset{\cdot }{\dbigcup }_{k:\
b_{k}=1}R_{k}=\left[ 0,1\right) ^{n-1}\times \left[ 0,t^{\ast }\right) .
\end{equation*}%
This completes the proof of (\ref{decomp}). Note that we may arrange to have 
$m\approx \ln \frac{1}{\varepsilon }$.

We also have the same result for the complementary rectangle $R=\left[
0,1\right) ^{n-1}\times \left[ r,1\right) $ by simply reflecting about the
plane $y_{n}=\frac{1}{2}$ and taking $r=1-t$. It is in this complementary
form that we will use (\ref{decomp}).

Again we start by considering the full testing condition $\mathfrak{FT}%
_{T}^{1}$ over linear functions, and we begin by estimating%
\begin{equation*}
\left\Vert T_{\sigma }\left[ \mathbf{1}_{Q}\left( y\right) y_{j}\right]
\right\Vert _{L^{2}\left( \omega \right) }^{2},\ \ \ \ \ Q\in \mathcal{P}%
^{n},1\leq j\leq n.
\end{equation*}%
In order to reduce notational clutter in appealing to the complementary form
of the geometric observation above, we will suppose - without loss of
generality - that $Q=\left[ 0,1\right) ^{n}$ is the unit cube in $\mathbb{R}%
^{n}$, and that $j=n$. Then we have%
\begin{equation*}
\mathbf{1}_{\left[ 0,1\right) ^{n}}\left( y\right) y_{n}=\int_{0}^{1}\mathbf{%
1}_{\left[ 0,1\right) ^{n-1}\times \left[ r,1\right) }\left( y\right) dr,\ \
\ \ \ \text{for all }y\in \mathbb{R}^{n},
\end{equation*}%
and%
\begin{equation*}
T_{\sigma }\left( \mathbf{1}_{\left[ 0,1\right) ^{n}}\left( y\right)
y_{n}\right) \left( x\right) =T_{\sigma }\left( \int_{0}^{1}\mathbf{1}_{%
\left[ 0,1\right) ^{n-1}\times \left[ r,1\right) }\left( y\right) dr\right)
\left( x\right) =\int_{0}^{1}\left( T_{\sigma }\mathbf{1}_{\left[ 0,1\right)
^{n-1}\times \left[ r,1\right) }\right) \left( x\right) dr.
\end{equation*}%
The norm estimate is complicated by the lack of Hyt\"{o}nen's result in
higher dimensions, and we compensate by using the complementary form of the
geometric observation (\ref{decomp}), together with a simple probability
argument. Let $\left[ r,1\right) =\left[ r,r^{\ast }\right) \overset{\cdot }{%
\cup }\left[ r^{\ast },1\right) $ and write%
\begin{eqnarray*}
&&\left\Vert T_{\sigma }\mathbf{1}_{\left[ 0,1\right) ^{n-1}\times \left[
r,1\right) }\right\Vert _{L^{2}\left( \omega \right) }^{2}=\int \left\vert
T_{\sigma }\left\{ \mathbf{1}_{\left[ 0,1\right) ^{n-1}\times \left[
r,r^{\ast }\right) }+\mathbf{1}_{\left[ 0,1\right) ^{n-1}\times \left[
r^{\ast },1\right) }\right\} \left( x\right) \right\vert ^{2}d\omega \left(
x\right) \\
&=&\int \left\vert T_{\sigma }\left\{ \mathbf{1}_{\left[ 0,1\right)
^{n-1}\times \left[ r,r^{\ast }\right) }+\sum_{i=1}^{B}\mathbf{1}%
_{K_{i}}\right\} \left( x\right) \right\vert ^{2}d\omega \left( x\right) \\
&\lesssim &\int \left\vert T_{\sigma }\mathbf{1}_{\left[ 0,1\right)
^{n-1}\times \left[ r,r^{\ast }\right) }\left( x\right) \right\vert
^{2}d\omega \left( x\right) +\sum_{i=1}^{B}\int \left\vert T_{\sigma }%
\mathbf{1}_{K_{i}}\left( x\right) \right\vert ^{2}d\omega \left( x\right) \\
&\leq &\int \left\vert T_{\sigma }\mathbf{1}_{\left[ 0,1\right) ^{n-1}\times %
\left[ r,r^{\ast }\right) }\left( x\right) \right\vert ^{2}d\omega \left(
x\right) +\left( \mathfrak{FT}_{T}\right) ^{2}\sum_{i=1}^{B}\left\vert
K_{i}\right\vert _{\sigma }.
\end{eqnarray*}%
First, we apply a simple probability argument to the integral over $r$ of
the last integral above by pigeonholing the values taken by $r^{\ast }\in
\left\{ \frac{b}{2^{m}}\right\} _{0\leq b<2^{m}}$: 
\begin{eqnarray*}
&&\int_{0}^{1}\int \left\vert T_{\sigma }\mathbf{1}_{\left[ 0,1\right)
^{n-1}\times \left[ r,r^{\ast }\right) }\left( x\right) \right\vert
^{2}d\omega \left( x\right) dr\leq \mathfrak{N}_{T}\left( \sigma ,\omega
\right) ^{2}\int_{0}^{1}\left\{ \int_{\left[ 0,1\right) ^{n-1}\times \left[
r,r^{\ast }\right) }d\sigma \right\} dr \\
&=&\mathfrak{N}_{T}\left( \sigma ,\omega \right) ^{2}\sum_{0<b\leq
2^{m}}\int_{\frac{b-1}{2^{m}}}^{\frac{b}{2^{m}}}\left\{ \int_{\left[
0,1\right) ^{n-1}\times \left[ r,\frac{b}{2^{m}}\right) }d\sigma \right\}
dr\leq \mathfrak{N}_{T}\left( \sigma ,\omega \right) ^{2}\int_{\left[
0,1\right) ^{n}}\left\{ \int_{y_{n}-\varepsilon }^{y_{n}}dr\right\} d\sigma
\left( y_{1},...,y_{n}\right) \\
&\leq &\mathfrak{N}_{T}\left( \sigma ,\omega \right) ^{2}\int_{\left[
0,1\right) ^{n}}\varepsilon d\sigma \left( y_{1},...,y_{n}\right)
=\varepsilon \mathfrak{N}_{T}\left( \sigma ,\omega \right) ^{2}\left\vert %
\left[ 0,1\right) ^{n}\right\vert _{\sigma }\ ,
\end{eqnarray*}%
since $\frac{b-1}{2^{m}}\leq r\leq y_{n}<\frac{b}{2^{m}}$ implies $%
y_{n}-\varepsilon <y_{n}-\frac{1}{2^{m}}\leq r\leq y_{n}$.

Combining estimates, and setting $R_{r}\equiv \left[ 0,1\right) ^{n-1}\times %
\left[ r,1\right) $ for convenience, we obtain%
\begin{eqnarray*}
&&\left\Vert T_{\sigma }\left[ \mathbf{1}_{R_{r}}\left( y\right) y_{n}\right]
\right\Vert _{L^{2}\left( \omega \right) }=\left\Vert T_{\sigma }\left[
\int_{0}^{1}\mathbf{1}_{R_{r}}\left( y\right) dr\right] \right\Vert
_{L^{2}\left( \omega \right) } \\
&\leq &\int_{0}^{1}\left\Vert T_{\sigma }\left[ \mathbf{1}_{R_{r}}\left(
y\right) \right] \right\Vert _{L^{2}\left( \omega \right) }dr\leq \mathfrak{%
FT}_{T}\left( \sigma ,\omega \right) \int_{0}^{1}\sqrt{\left\vert
R_{r}\right\vert _{\sigma }}dr+\varepsilon \mathfrak{N}_{T}\left( \sigma
,\omega \right) \left\vert \left[ 0,1\right) ^{n}\right\vert _{\sigma }\ ,
\end{eqnarray*}%
where%
\begin{eqnarray*}
\int_{0}^{1}\sqrt{\left\vert R_{r}\right\vert _{\sigma }}dr &\leq &\sqrt{%
\int_{0}^{1}\left\vert \left[ r,b\right) \right\vert _{\sigma }\frac{dr}{b-a}%
}=\sqrt{\int_{0}^{1}\int_{\left[ 0,1\right) ^{n-1}\times \left[ r,1\right)
}d\sigma \left( y\right) dr} \\
&=&\sqrt{\int_{\left[ 0,1\right) ^{n}}\int_{\left[ 0,y_{n}\right) }drd\sigma
\left( y\right) }=\sqrt{\int_{\left[ 0,1\right) ^{n}}y_{n}d\sigma \left(
y\right) }\ .
\end{eqnarray*}%
Noting that $\sqrt{\int_{\left[ 0,1\right) ^{n}}y_{n}d\sigma \left( y\right) 
}\leq \left\vert \left[ 0,1\right) ^{n}\right\vert _{\sigma }$, that the
same estimates hold for $y_{j}$ in place of $y_{n}$, and finally that there
are appropriate analogues of these estimates for all cubes $Q\in \mathcal{P}%
^{n}$ in place of $\left[ 0,1\right) ^{n}$, we see that 
\begin{equation*}
\mathfrak{FT}_{T}^{\left( 1\right) }\left( \sigma ,\omega \right) \leq
C_{m,0}\mathfrak{FT}\left( \sigma ,\omega \right) +\varepsilon \mathfrak{N}%
_{T}\left( \sigma ,\omega \right) \mathfrak{.}
\end{equation*}

Similarly, for each $i<n$ we can consider the monomial $y_{i}y_{n}$, and
obtain from the above argument with $y_{i}$ included in the integrand, that%
\begin{equation}
\left\Vert T_{\sigma }\left[ \mathbf{1}_{R_{r}}\left( y\right) y_{i}y_{n}%
\right] \right\Vert _{L^{2}\left( \omega \right) }\lesssim \sqrt{\int
\left\vert T_{\sigma }\left( \mathbf{1}_{\left[ 0,1\right) ^{n-1}\times %
\left[ r,r^{\ast }\right) }\left( y\right) y_{i}\right) \left( x\right)
\right\vert ^{2}d\omega \left( x\right) }+\mathfrak{FT}_{T}^{\left( 1\right)
}\ \left\vert \left[ 0,1\right) ^{n}\right\vert _{\sigma }\ .
\label{i less than n}
\end{equation}%
For the monomial $y_{n}^{2}$ we use the identity 
\begin{equation*}
\mathbf{1}_{\left[ 0,1\right) ^{n}}\left( y\right) y_{n}^{2}=\int_{0}^{1}%
\mathbf{1}_{\left[ 0,1\right) ^{n-1}\times \left[ r,1\right) }\left(
y\right) 2\left( y_{n}-r\right) dr,\ \ \ \ \ \text{for all }y\in \mathbb{R}%
^{n},
\end{equation*}%
to obtain%
\begin{equation*}
\left\Vert T_{\sigma }\left[ \mathbf{1}_{R_{r}}\left( y\right) y_{n}^{2}%
\right] \right\Vert _{L^{2}\left( \omega \right) }\lesssim \sqrt{\int
\left\vert T_{\sigma }\left( \mathbf{1}_{\left[ 0,1\right) ^{n-1}\times %
\left[ r,r^{\ast }\right) }\left( y\right) \left( y_{n}-r\right) \right)
\left( x\right) \right\vert ^{2}d\omega \left( x\right) }+\mathfrak{FT}%
_{T}^{\left( 1\right) }\ \left\vert \left[ 0,1\right) ^{n}\right\vert
_{\sigma }\ .
\end{equation*}%
Then in either case, integrating in $r$, using the simple probability
argument above, and finally using the appropriate analogues of these
estimates for all cubes $Q\in \mathcal{P}^{n}$ in place of $\left[
0,1\right) ^{n}$, we obtain 
\begin{equation*}
\mathfrak{FT}_{T}^{\left( 2\right) }\left( \sigma ,\omega \right) \leq
C_{m,1}\mathfrak{FT}_{T}^{\left( 1\right) }\left( \sigma ,\omega \right)
+\varepsilon \mathfrak{N}_{T}\left( \sigma ,\omega \right) \mathfrak{.}
\end{equation*}

Continuing in this way, using the identity%
\begin{equation*}
\mathbf{1}_{\left[ 0,1\right) ^{n}}\left( y\right) y^{\beta }=\int_{0}^{1}%
\mathbf{1}_{\left[ 0,1\right) ^{n-1}\times \left[ r,1\right) }\left(
y\right) \left( y_{1}^{\beta _{1}}...y_{n-1}^{\beta _{n-1}}\right) \left(
2\beta _{n}\left( y_{n}-r\right) ^{\beta _{n}-1}\right) dr,\ \ \ \ \ \text{%
for all }y\in \mathbb{R}^{n},
\end{equation*}%
yields the inequality%
\begin{equation*}
\mathfrak{FT}_{T}^{\left( \kappa \right) }\left( \sigma ,\omega \right) \leq
C_{m,\kappa -1}\mathfrak{FT}_{T}^{\left( \kappa -1\right) }\left( \sigma
,\omega \right) +\varepsilon \mathfrak{N}_{T}\left( \sigma ,\omega \right)
,\ \ \ \ \ \kappa \in \mathbb{N}.
\end{equation*}%
Iteration then gives%
\begin{eqnarray*}
\mathfrak{FT}_{T}^{\left( \kappa \right) }\left( \sigma ,\omega \right)
&\leq &\varepsilon \mathfrak{N}_{T}\left( \sigma ,\omega \right)
+C_{m,\kappa -1}\mathfrak{FT}_{T}^{\left( \kappa -1\right) }\left( \sigma
,\omega \right) \\
&\leq &\varepsilon \mathfrak{N}_{T}\left( \sigma ,\omega \right)
+C_{m,\kappa -1}\left\{ \varepsilon \mathfrak{N}_{T}\left( \sigma ,\omega
\right) +C_{m,\kappa -2}\mathfrak{FT}_{T}^{\left( \kappa -2\right) }\left(
\sigma ,\omega \right) \right\} \\
&&\vdots \\
&\leq &\varepsilon \left\{ 1+C_{m,\kappa -1}+C_{m,\kappa -1}C_{m,\kappa
-2}+...+C_{m,\kappa -1}C_{m,\kappa -2}...C_{m,0}\right\} \mathfrak{N}%
_{T}\left( \sigma ,\omega \right) \\
&&+\left\{ C_{m,\kappa -1}C_{m,\kappa -2}+...+C_{m,\kappa -1}C_{m,\kappa
-2}...C_{0m,}\right\} \mathfrak{FT}_{T}\left( \sigma ,\omega \right) \\
&=&\varepsilon A\left( \kappa ,\varepsilon \right) \mathfrak{N}_{T}\left(
\sigma ,\omega \right) +B\left( \kappa ,\varepsilon \right) \mathfrak{FT}%
_{T}\left( \sigma ,\omega \right) ,
\end{eqnarray*}%
where the constants $A\left( \kappa ,\varepsilon \right) $ and $B\left(
\kappa ,\varepsilon \right) $ are independent of the operator norm $%
\mathfrak{N}_{T}\left( \sigma ,\omega \right) $. Here we have taken $%
m\approx \log _{2}\frac{1}{\varepsilon }$. This completes the proof of
Theorem \ref{Tp control by T1}.
\end{proof}

We have already pointed out in dimension $n=1$, the equivalence of full
testing with the usual $1$-testing in the presence of one-tailed Muckenhoupt
conditions. In higher dimensions the same is true for at least doubling
measures. For this we use a quantitative expression of the fact that
doubling measures don't charge the boundaries of cubes \cite[see e.g. 8.6
(b) on page 40]{Ste2}.

\begin{lemma}
\label{boundary doubling}Suppose $\sigma $ is a doubling measure on $\mathbb{%
R}^{n}$ and that $Q\in \mathcal{P}^{n}$. Then for $0<\delta <1$ we have%
\begin{equation*}
\left\vert Q\setminus \left( 1-\delta \right) Q\right\vert _{\sigma }\leq 
\frac{C}{\ln \frac{1}{\delta }}\left\vert Q\right\vert _{\sigma }\ .
\end{equation*}
\end{lemma}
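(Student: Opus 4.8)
The statement to prove is Lemma \ref{boundary doubling}: for a doubling measure $\sigma$ on $\mathbb{R}^n$, a cube $Q\in\mathcal{P}^n$, and $0<\delta<1$,
$$
\left\vert Q\setminus(1-\delta)Q\right\vert_\sigma\leq\frac{C}{\ln\frac{1}{\delta}}\left\vert Q\right\vert_\sigma.
$$
The plan is to decompose the annular region $Q\setminus(1-\delta)Q$ into dyadically many shells $(1-2^{-j}\delta')Q\setminus(1-2^{-(j-1)}\delta')Q$ (for a suitable dyadic $\delta'\approx\delta$, say the largest dyadic number $\leq\delta$), and then cover each shell by a bounded number of cubes whose dilates fit inside the gap between consecutive shells. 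The key quantitative input is Lemma \ref{doub exp}: since $\sigma$ is doubling with exponent $\theta$, the reverse-doubling inequality $\left\vert 2^{-k}Q'\right\vert_\sigma\geq 2^{-\theta k}\left\vert Q'\right\vert_\sigma$ holds for all cubes $Q'$.

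First I would set $N\equiv\lfloor\log_2\frac{1}{\delta}\rfloor$, so that $2^{-N-1}<\delta\leq 2^{-N}$, and write
$$
Q\setminus(1-2^{-N})Q\subset\bigcup_{j=1}^{N}S_j,\qquad S_j\equiv(1-2^{-j})Q\setminus(1-2^{-(j-1)})Q.
$$
Since $\delta\leq 2^{-N}$ we have $(1-\delta)Q\supset(1-2^{-N})Q$, so $Q\setminus(1-\delta)Q\subset\bigcup_{j=1}^N S_j$. The $j$-th shell $S_j$ is contained in a union of $O(2^{(n-1)j})$ cubes of side length $\approx 2^{-j}\ell(Q)$ running along the faces of $Q$; more precisely, partition $\partial Q$ into roughly $2^{(n-1)j}$ boundary squares of side $2^{-j}\ell(Q)$, and for each one take the cube of that side length sitting just inside $Q$ — these cover $S_j$. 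Each such cube $K$ has the property that $4K\subset Q$ (for $j\geq 2$, say, handling small $j$ trivially), and more importantly, starting from $K$ and dilating up by factor $2^j$ lands inside a fixed dilate of $Q$, i.e. $2^j K\subset 3Q$. Then by Lemma \ref{doub exp} applied in reverse, $\left\vert K\right\vert_\sigma\leq 2^{\theta j}\left\vert 2^j K\right\vert_\sigma\cdot(\text{const})$... wait — that goes the wrong way. Let me instead use: $\left\vert K\right\vert_\sigma\leq\left\vert 2^j K\right\vert_\sigma$ is trivial and useless; I actually need an \emph{upper} bound on $\left\vert K\right\vert_\sigma$ in terms of $2^{-\theta j}\left\vert Q\right\vert_\sigma$. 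So the correct move is: Lemma \ref{doub exp} gives $\left\vert 2^{-j}(3Q)\right\vert_\sigma\geq 2^{-\theta j}\left\vert 3Q\right\vert_\sigma\geq 2^{-\theta j}\left\vert Q\right\vert_\sigma$ — still a lower bound on a small cube. The right tool is rather the forward doubling bound $\left\vert 2^j K\right\vert_\sigma\leq C_{\mathrm{doub}}^{\,j}\left\vert K\right\vert_\sigma$, which is again the wrong direction for an upper bound on $\left\vert K\right\vert_\sigma$.

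The resolution — and the step I expect to be the main obstacle — is to not bound $S_j$ crudely, but to use that the $\approx 2^{(n-1)j}$ cubes covering $S_j$ all sit inside $Q$ and are pairwise almost-disjoint, together with a telescoping reverse-doubling estimate on the \emph{nested} rectangular slabs. Concretely, fix one face, say $\{y_n=\ell(Q)\}$ (taking $Q=[0,\ell)^n$), and consider the slabs $T_j\equiv[0,\ell)^{n-1}\times[(1-2^{-j})\ell,\ell)$. Then $T_j$ is covered by $2^{(n-1)j}$ cubes of side $2^{-j}\ell$, each congruent to a $2^{-j}$-dilate of a translate of $Q$, and summing the doubling bound $\left\vert 2^{-j}(\text{translate of }Q)\right\vert_\sigma$ over these $2^{(n-1)j}$ cubes and using that they tile $T_j\subset Q$ shows nothing directly; instead one argues that $\left\vert T_j\right\vert_\sigma\leq\left\vert T_{j-1}\right\vert_\sigma$ and that $\left\vert T_{j-1}\setminus T_j\right\vert_\sigma\geq c\,\left\vert T_{j-1}\right\vert_\sigma$ for a dimensional constant $c=c(C_{\mathrm{doub}},n)>0$, because $T_{j-1}\setminus T_j$ is a slab of comparable size to $T_{j-1}$ and doubling gives a reverse inequality for the halving in the $y_n$-direction (applied face-cube by face-cube). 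This yields the geometric decay $\left\vert T_j\right\vert_\sigma\leq(1-c)^j\left\vert Q\right\vert_\sigma$. Since $(1-\delta)Q\supset Q\cap\bigcap_{\text{faces}}(\text{complement of }T_N)$ up to the $2n$ faces, we get $\left\vert Q\setminus(1-\delta)Q\right\vert_\sigma\leq\sum_{\text{faces}}\left\vert T_N\right\vert_\sigma\leq 2n(1-c)^N\left\vert Q\right\vert_\sigma$, and finally $(1-c)^N=e^{-N\ln\frac{1}{1-c}}\leq e^{-c'\log_2\frac1\delta}=\delta^{c'/\ln 2}$, which is $\leq\frac{C}{\ln\frac1\delta}$ after absorbing; more simply, $(1-c)^N\leq\frac{1}{c'N}\leq\frac{C}{\ln\frac1\delta}$ by the elementary inequality $(1-c)^N\ln\frac1\delta\lesssim 1$. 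The delicate point throughout is verifying the \emph{uniform} reverse-doubling lower bound $\left\vert T_{j-1}\setminus T_j\right\vert_\sigma\geq c\left\vert T_{j-1}\right\vert_\sigma$ — this is exactly where the doubling hypothesis (not just reverse doubling of a single cube, but doubling applied to the many small face-cubes) is used, and where I would spend the bulk of the write-up, reducing it to: each face-cube $K$ of $T_{j-1}$ satisfies $\left\vert K\setminus K'\right\vert_\sigma\geq c\left\vert K\right\vert_\sigma$ where $K'$ is the "inner half" of $K$ in the $y_n$-direction, which follows from $\left\vert K'\right\vert_\sigma\leq\left\vert\frac12 K\right\vert_\sigma^{\text{(centered)}}$ being bounded by $(1-c)\left\vert K\right\vert_\sigma$ via doubling.
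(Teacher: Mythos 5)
Your final argument (the nested face-slab argument; the first half of your write-up with the shells $S_j$ and the reverse-doubling false starts is abandoned scaffolding) is correct, but it is genuinely different from the paper's proof, and in fact proves more. The paper fixes $\delta=2^{-m}$ and builds $m-1$ \emph{pairwise disjoint} families $\mathfrak{H}^{(k)}(Q)$, $k=2,\dots,m$, of dyadic subcubes of side $2^{-k}\ell(Q)$ whose triples $3I$ cover the boundary layer $\mathfrak{G}^{(k)}(Q)\supset Q\setminus(1-\delta)Q$; a single application of doubling, $|3I|_\sigma\leq D|I|_\sigma$, gives $|\mathfrak{H}^{(k)}(Q)|_\sigma\geq\frac1D|Q\setminus(1-\delta)Q|_\sigma$ for each $k$, and summing the $m-1$ disjoint families against $|Q|_\sigma$ yields the stated $\frac{C}{m-1}\approx\frac{C}{\ln\frac1\delta}$ bound by pigeonhole. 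Your route instead establishes the multiplicative decay $|T_j|_\sigma\leq(1-c)|T_{j-1}|_\sigma$ for the slabs $T_j$ of width $2^{-j}\ell(Q)$ along a face, whence $|Q\setminus(1-\delta)Q|_\sigma\lesssim\delta^{\eta}|Q|_\sigma$ — a strictly stronger, power-type estimate of which the lemma is a trivial consequence. Both proofs use doubling in the same local way (a cube versus a comparably sized neighbour); they differ only in the global bookkeeping (additive pigeonhole versus geometric iteration). One small repair is needed at the very end of your sketch: the outer half $K'=K\cap T_j$ of a face-cube $K$ is \emph{not} contained in the centered dilate $\frac12K$, so the inequality $|K'|_\sigma\leq|\frac12K|_\sigma$ as you state it is false. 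The correct elementary step is that the \emph{inner} half $K\setminus K'$ contains a cube $\tilde K$ of side $\frac12\ell(K)$ with $K\subset 5\tilde K$, so doubling gives $|K\setminus K'|_\sigma\geq|\tilde K|_\sigma\geq c|K|_\sigma$ and hence $|K'|_\sigma\leq(1-c)|K|_\sigma$; summing over the face-cubes tiling $T_{j-1}$ gives the claimed decay.
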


\begin{proof}
Let $\delta =2^{-m}$. Denote by $\mathfrak{C}^{\left( m\right) }\left(
Q\right) $ the set of $m^{th}$ generation dyadic children of $Q$ , so that
each $I\in \mathfrak{C}^{\left( m\right) }\left( Q\right) $ has side length $%
\ell \left( I\right) =2^{-m}\ell \left( Q\right) $, and define the
collections%
\begin{eqnarray*}
\mathfrak{G}^{\left( m\right) }\left( Q\right) &\equiv &\left\{ I\in 
\mathfrak{C}^{\left( m\right) }\left( Q\right) :I\subset Q\text{ and }%
\partial I\cap \partial Q\neq \emptyset \right\} , \\
\mathfrak{H}^{\left( m\right) }\left( Q\right) &\equiv &\left\{ I\in 
\mathfrak{C}^{\left( m\right) }\left( Q\right) :3I\subset Q\text{ and }%
\partial \left( 3I\right) \cap \partial Q\neq \emptyset \right\} .
\end{eqnarray*}%
Then 
\begin{equation*}
Q\setminus \left( 1-\delta \right) Q=\mathfrak{G}^{\left( m\right) }\left(
Q\right) \text{ and }\left( 1-\delta \right) Q=\overset{\cdot }{\dbigcup }%
_{k=2}^{m}\mathfrak{H}^{\left( k\right) }\left( Q\right) .
\end{equation*}%
From the doubling condition we have $\left\vert 3I\right\vert _{\sigma }\leq
D\left\vert I\right\vert _{\sigma }$ for all cubes $I$, and so 
\begin{eqnarray*}
\left\vert \mathfrak{H}^{\left( k\right) }\left( Q\right) \right\vert
_{\sigma } &=&\sum_{I\in \mathfrak{H}^{\left( k\right) }\left( Q\right)
}\left\vert I\right\vert _{\sigma }\geq \sum_{I\in \mathfrak{H}^{\left(
k\right) }\left( Q\right) }\frac{1}{D}\left\vert 3I\right\vert _{\sigma }=%
\frac{1}{D}\int \left( \sum_{I\in \mathfrak{H}^{\left( k\right) }\left(
Q\right) }\mathbf{1}_{3I}\right) d\sigma \\
&\geq &\frac{1}{D}\int \left( \sum_{I\in \mathfrak{G}^{\left( k\right)
}\left( Q\right) }\mathbf{1}_{I}\right) d\sigma =\frac{1}{D}\left\vert 
\mathfrak{G}^{\left( k\right) }\left( Q\right) \right\vert _{\sigma }\geq 
\frac{1}{D}\left\vert \mathfrak{G}^{\left( m\right) }\left( Q\right)
\right\vert _{\sigma }=\frac{1}{D}\left\vert Q\setminus \left( 1-\delta
\right) Q\right\vert _{\sigma }\ .
\end{eqnarray*}%
Thus we have%
\begin{equation*}
\left\vert Q\right\vert _{\sigma }\geq \sum_{k=2}^{m}\left\vert \mathfrak{H}%
^{\left( k\right) }\left( Q\right) \right\vert _{\sigma }\geq \frac{m-1}{D}%
\left\vert Q\setminus \left( 1-\delta \right) Q\right\vert _{\sigma }\ ,
\end{equation*}%
which proves the lemma.
\end{proof}

\begin{proposition}
\label{full control for doub}Suppose that $\sigma $ and $\omega $ are
locally finite positive Borel measures on $\mathbb{R}^{n}$, and that $\sigma 
$ is doubling. Then for $0<\varepsilon <1$ there is a positive constant $%
C\left( \varepsilon \right) $ such that%
\begin{equation*}
\mathfrak{FT}_{T}\left( \sigma ,\omega \right) \leq \mathfrak{T}_{T}\left(
\sigma ,\omega \right) +C\left( \varepsilon \right) \mathcal{A}_{2}^{\alpha
}\left( \sigma ,\omega \right) +\varepsilon \mathfrak{N}_{T}\left( \sigma
,\omega \right) \ .
\end{equation*}
\end{proposition}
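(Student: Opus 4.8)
The plan is to estimate, for a fixed cube $Q\in\mathcal{P}^{n}$ and a fixed sufficiently smooth truncation, the quantity $\left\Vert T_{\sigma}\mathbf{1}_{Q}\right\Vert_{L^{2}(\omega)}^{2}$ by splitting the integration over $\mathbb{R}^{n}$ into a \emph{bulk} part over a slight enlargement $(1+\delta)Q$ and an \emph{exterior} part over $\mathbb{R}^{n}\setminus(1+\delta)Q$, where $\delta=\delta(\varepsilon)\in(0,1)$ is chosen small at the very end. The bulk part will yield the testing term $\mathfrak{T}_{T}$ plus a small multiple of $\mathfrak{N}_{T}$, while the exterior part will yield only an $A_{2}^{\alpha}$-controlled term. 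Since the bounds hold uniformly in the truncation, they pass to $\mathfrak{FT}_{T}$. Two features of the doubling hypothesis on $\sigma$ are used: (i) thin collars of cubes carry small $\sigma$-mass (Lemma \ref{boundary doubling}); and (ii) $\sigma$ is reverse doubling, i.e.\ $\left\vert 2^{k}Q\right\vert_{\sigma}\geq c^{k}\left\vert Q\right\vert_{\sigma}$ for some $c>1$ depending only on $C_{\limfunc{doub}}$ and $n$ (a standard consequence of doubling). I will also use the trivial bounds $\mathfrak{T}_{T}\leq\mathfrak{N}_{T}$ and $\left\vert(1+\delta)Q\right\vert_{\sigma}\leq\left\vert 2Q\right\vert_{\sigma}\leq C_{\limfunc{doub}}\left\vert Q\right\vert_{\sigma}$.

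For the bulk part I write $\mathbf{1}_{Q}=\mathbf{1}_{(1+\delta)Q}-\mathbf{1}_{(1+\delta)Q\setminus Q}$ and use the elementary inequality $(a-b)^{2}\leq(1+\varepsilon)a^{2}+(1+\varepsilon^{-1})b^{2}$ to obtain
\[
\int_{(1+\delta)Q}\left\vert T_{\sigma}\mathbf{1}_{Q}\right\vert^{2}\omega\leq(1+\varepsilon)\int_{(1+\delta)Q}\left\vert T_{\sigma}\mathbf{1}_{(1+\delta)Q}\right\vert^{2}\omega+(1+\varepsilon^{-1})\int_{\mathbb{R}^{n}}\left\vert T_{\sigma}\mathbf{1}_{(1+\delta)Q\setminus Q}\right\vert^{2}\omega .
\]
Since $(1+\delta)Q\in\mathcal{P}^{n}$, the first integral is at most $\mathfrak{T}_{T}^{2}\left\vert(1+\delta)Q\right\vert_{\sigma}\leq\mathfrak{T}_{T}^{2}(\left\vert Q\right\vert_{\sigma}+\left\vert(1+\delta)Q\setminus Q\right\vert_{\sigma})$, and the second is at most $\mathfrak{N}_{T}^{2}\left\vert(1+\delta)Q\setminus Q\right\vert_{\sigma}$. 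Applying Lemma \ref{boundary doubling} to the cube $(1+\delta)Q$ (using $(1+\delta)Q\setminus Q=(1+\delta)Q\setminus(1-\frac{\delta}{1+\delta})(1+\delta)Q$) together with $\left\vert(1+\delta)Q\right\vert_{\sigma}\leq C_{\limfunc{doub}}\left\vert Q\right\vert_{\sigma}$ gives $\left\vert(1+\delta)Q\setminus Q\right\vert_{\sigma}\leq\frac{C}{\ln(1/\delta)}\left\vert Q\right\vert_{\sigma}$. Using $\mathfrak{T}_{T}\leq\mathfrak{N}_{T}$ and choosing $\delta$ so small (depending only on $\varepsilon$ and $C_{\limfunc{doub}}$) that $(2+\varepsilon+\varepsilon^{-1})\frac{C}{\ln(1/\delta)}\leq\varepsilon^{2}$, this bounds the bulk part by $(1+\varepsilon)\mathfrak{T}_{T}^{2}\left\vert Q\right\vert_{\sigma}+\varepsilon^{2}\mathfrak{N}_{T}^{2}\left\vert Q\right\vert_{\sigma}$, and it already subsumes $\int_{Q}\left\vert T_{\sigma}\mathbf{1}_{Q}\right\vert^{2}\omega$.

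For the exterior part I use the kernel size bound $\left\vert K^{\alpha}(x,y)\right\vert\lesssim C_{CZ}\left\vert x-y\right\vert^{\alpha-n}$ from (\ref{sizeandsmoothness'}), which holds with the same constant for every smooth truncation, so that $\left\vert T_{\sigma}\mathbf{1}_{Q}(x)\right\vert\lesssim C_{CZ}\int_{Q}\left\vert x-y\right\vert^{\alpha-n}d\sigma(y)$ whenever $x\notin Q$. I decompose $\mathbb{R}^{n}\setminus(1+\delta)Q$ into the collar-adjacent shell $2Q\setminus(1+\delta)Q$ and the dyadic annuli $2^{j+1}Q\setminus 2^{j}Q$, $j\geq1$. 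On the shell every $y\in Q$ satisfies $\left\vert x-y\right\vert\geq\frac{\delta}{2}\ell(Q)$, so $\left\vert T_{\sigma}\mathbf{1}_{Q}(x)\right\vert\lesssim C_{CZ}(\delta\ell(Q))^{\alpha-n}\left\vert Q\right\vert_{\sigma}$, and integrating against $\omega$ over $2Q$ and using $\ell(Q)^{2(\alpha-n)}\left\vert Q\right\vert_{\sigma}\left\vert 2Q\right\vert_{\omega}\lesssim A_{2}^{\alpha}(\sigma,\omega)$ (from $\left\vert Q\right\vert_{\sigma}\leq\left\vert 2Q\right\vert_{\sigma}$ and the definition of $A_{2}^{\alpha}$ at the cube $2Q$) gives a contribution $\lesssim C_{CZ}^{2}\delta^{2(\alpha-n)}A_{2}^{\alpha}(\sigma,\omega)\left\vert Q\right\vert_{\sigma}=C(\varepsilon)C_{CZ}^{2}A_{2}^{\alpha}(\sigma,\omega)\left\vert Q\right\vert_{\sigma}$. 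On the annulus $2^{j+1}Q\setminus 2^{j}Q$ one has $\left\vert x-y\right\vert\approx 2^{j}\ell(Q)$ for $y\in Q$, hence $\left\vert T_{\sigma}\mathbf{1}_{Q}(x)\right\vert\lesssim C_{CZ}(2^{j}\ell(Q))^{\alpha-n}\left\vert Q\right\vert_{\sigma}$ and the contribution of this annulus is $\lesssim C_{CZ}^{2}(2^{j}\ell(Q))^{2(\alpha-n)}\left\vert Q\right\vert_{\sigma}^{2}\left\vert 2^{j+1}Q\right\vert_{\omega}$; applying the definition of $A_{2}^{\alpha}$ at $2^{j+1}Q$ bounds this by $\lesssim C_{CZ}^{2}\frac{\left\vert Q\right\vert_{\sigma}}{\left\vert 2^{j+1}Q\right\vert_{\sigma}}A_{2}^{\alpha}(\sigma,\omega)\left\vert Q\right\vert_{\sigma}$, and reverse doubling of $\sigma$ makes $\sum_{j\geq1}\frac{\left\vert Q\right\vert_{\sigma}}{\left\vert 2^{j+1}Q\right\vert_{\sigma}}\leq\sum_{j\geq1}c^{-(j+1)}<\infty$, so the full annular sum is $\lesssim C_{CZ}^{2}A_{2}^{\alpha}(\sigma,\omega)\left\vert Q\right\vert_{\sigma}$ with a constant depending only on $C_{\limfunc{doub}},n,\alpha$. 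Adding the bulk and exterior bounds, dividing by $\left\vert Q\right\vert_{\sigma}$, taking the supremum over $Q$ and over truncations, and taking square roots yields $\mathfrak{FT}_{T}(\sigma,\omega)\leq\sqrt{1+\varepsilon}\,\mathfrak{T}_{T}(\sigma,\omega)+\varepsilon\mathfrak{N}_{T}(\sigma,\omega)+C(\varepsilon)C_{CZ}\sqrt{A_{2}^{\alpha}(\sigma,\omega)}$; absorbing $(\sqrt{1+\varepsilon}-1)\mathfrak{T}_{T}\leq\varepsilon\mathfrak{N}_{T}$, relabeling $\varepsilon$, and using $A_{2}^{\alpha}(\sigma,\omega)\leq C_{n}\mathcal{A}_{2}^{\alpha}(\sigma,\omega)$ gives the asserted inequality.

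The main obstacle is the collar $(1+\delta)Q\setminus Q$. On it the kernel is too singular to be controlled by its size estimate, yet the indicator $\mathbf{1}_{(1+\delta)Q\setminus Q}$ is not (the indicator of) a cube, so only the full operator norm $\mathfrak{N}_{T}$---never the testing constant---is available there. The argument is rescued solely by the doubling of $\sigma$: Lemma \ref{boundary doubling} forces this shell to carry $\sigma$-mass $\leq\frac{C}{\ln(1/\delta)}\left\vert Q\right\vert_{\sigma}$, which can be made $\leq\varepsilon^{2}\left\vert Q\right\vert_{\sigma}$, and this is exactly what turns the a priori useless $\mathfrak{N}_{T}$ contribution into an absorbable $\varepsilon\mathfrak{N}_{T}$. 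Doubling enters a second, more routine time, via reverse doubling, to guarantee convergence of the exterior dyadic series; without it the tails need not sum. Apart from this, every step is a standard Calder\'{o}n--Zygmund tail computation once $\delta$ is fixed in terms of $\varepsilon$.
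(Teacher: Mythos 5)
Your proof is correct and is essentially the paper's argument: the decisive step in both is to peel off a thin collar of $Q$ whose $\sigma$-mass is at most $C/\ln \frac{1}{\delta }$ times $\left\vert Q\right\vert _{\sigma }$ by Lemma \ref{boundary doubling}, so that the operator-norm contribution there becomes an absorbable $\varepsilon \mathfrak{N}_{T}$, while cube testing handles the dilated cube and a Muckenhoupt condition handles the separated tail. The only differences are cosmetic: the paper shrinks $Q$ to $\left( 1-\delta \right) Q$ and invokes the one-tailed $\mathcal{A}_{2}^{\alpha }$ condition for the exterior tail in one stroke, whereas you enlarge $Q$ to $\left( 1+\delta \right) Q$ and sum dyadic annuli using the classical $A_{2}^{\alpha }$ constant together with reverse doubling of $\sigma $ --- a slightly more self-contained route to the same bound.
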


\begin{proof}
Let $\delta >0$ be defined by the equation $\varepsilon =\frac{C}{\ln \frac{1%
}{\delta }}$, i.e. $\delta =e^{-\frac{C}{\varepsilon }}$. Then we write%
\begin{eqnarray*}
\int_{\mathbb{R}^{n}}\left\vert T_{\sigma }\mathbf{1}_{Q}\right\vert
^{2}d\omega &=&\int_{Q}\left\vert T_{\sigma }\mathbf{1}_{Q}\right\vert
^{2}d\omega +\int_{\mathbb{R}^{n}\setminus Q}\left\vert T_{\sigma }\mathbf{1}%
_{\left( 1-\delta \right) Q}+T_{\sigma }\mathbf{1}_{Q\setminus \left(
1-\delta \right) Q}\right\vert ^{2}d\omega \\
&\leq &\mathfrak{T}_{T}\left( \sigma ,\omega \right) ^{2}\left\vert
Q\right\vert _{\sigma }+2\int_{\mathbb{R}^{n}\setminus Q}\left\vert
T_{\sigma }\mathbf{1}_{\left( 1-\delta \right) Q}\right\vert ^{2}d\omega
+2\int_{\mathbb{R}^{n}\setminus Q}\left\vert T_{\sigma }\mathbf{1}%
_{Q\setminus \left( 1-\delta \right) Q}\right\vert ^{2}d\omega \\
&\leq &\mathfrak{T}_{T}\left( \sigma ,\omega \right) ^{2}\left\vert
Q\right\vert _{\sigma }+C\frac{1}{\delta }\mathcal{A}_{2}^{\alpha }\left(
\sigma ,\omega \right) \left\vert Q\right\vert _{\sigma }+2\mathfrak{N}%
_{T}^{2}\left( \sigma ,\omega \right) \left\vert Q\setminus \left( 1-\delta
\right) Q\right\vert _{\sigma }\ .
\end{eqnarray*}%
Now invoke Lemma \ref{boundary doubling} to obtain%
\begin{equation*}
\int_{\mathbb{R}^{n}}\left\vert T_{\sigma }\mathbf{1}_{Q}\right\vert
^{2}d\omega \leq \mathfrak{T}_{T}\left( \sigma ,\omega \right)
^{2}\left\vert Q\right\vert _{\sigma }+C\frac{1}{\delta }\mathcal{A}%
_{2}^{\alpha }\left( \sigma ,\omega \right) \left\vert Q\right\vert _{\sigma
}+\varepsilon \mathfrak{N}_{T}^{2}\left( \sigma ,\omega \right) \left\vert
Q\right\vert _{\sigma }\ ,
\end{equation*}%
with $\varepsilon =\frac{2C}{\ln \frac{1}{\delta }}$.
\end{proof}

In the sequel we will want to combine Theorem \ref{Tp control by T1} and
Proposition \ref{full control for doub} into the following single estimate.

\begin{corollary}
\label{full Tp control}Suppose that $\sigma $ and $\omega $ are locally
finite positive Borel measures on $\mathbb{R}^{n}$, and that $\sigma $ is
doubling. Then for $\kappa \in \mathbb{N}$ and $0<\varepsilon <1$, there is
a positive constant $C_{\kappa ,\varepsilon }$ such that%
\begin{equation*}
\mathfrak{FT}_{T}^{\left( \kappa \right) }\left( \sigma ,\omega \right) \leq
C_{\kappa ,\varepsilon }\left[ \mathfrak{T}_{T}\left( \sigma ,\omega \right)
+\mathcal{A}_{2}^{\alpha }\left( \sigma ,\omega \right) \right] +\varepsilon 
\mathfrak{N}_{T}\left( \sigma ,\omega \right) \ .
\end{equation*}
\end{corollary}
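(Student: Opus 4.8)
The plan is to deduce this directly by composing Theorem \ref{Tp control by T1} with Proposition \ref{full control for doub}, the only real task being to organize the auxiliary $\varepsilon $-parameters so that the coefficient multiplying the operator norm $\mathfrak{N}_{T}\left( \sigma ,\omega \right) $ comes out to be exactly $\varepsilon $. Since $\mathfrak{N}_{T}\left( \sigma ,\omega \right) =\infty $ whenever $T$ fails to be bounded from $L^{2}\left( \sigma \right) $ to $L^{2}\left( \omega \right) $, I may assume $T$ is bounded, so that both cited results are available.

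First I would apply Theorem \ref{Tp control by T1} with the half-parameter $\frac{\varepsilon }{2}$, producing a constant $C\left( \kappa ,\frac{\varepsilon }{2}\right) $ that depends only on $\kappa $ and $\varepsilon $ (and not on $\mathfrak{N}_{T}\left( \sigma ,\omega \right) $) with
\begin{equation*}
\mathfrak{FT}_{T}^{\left( \kappa \right) }\left( \sigma ,\omega \right) \leq C\left( \kappa ,\tfrac{\varepsilon }{2}\right) \,\mathfrak{FT}_{T}\left( \sigma ,\omega \right) +\tfrac{\varepsilon }{2}\,\mathfrak{N}_{T}\left( \sigma ,\omega \right) .
\end{equation*}
Then I would set $\eta \equiv \dfrac{\varepsilon }{2\,C\left( \kappa ,\frac{\varepsilon }{2}\right) }\in \left( 0,1\right) $ --- a quantity still determined solely by $\kappa $ and $\varepsilon $ --- and, invoking the hypothesis that $\sigma $ is doubling, apply Proposition \ref{full control for doub} with parameter $\eta $ to obtain a constant $C\left( \eta \right) $, again depending only on $\kappa $ and $\varepsilon $, with
\begin{equation*}
\mathfrak{FT}_{T}\left( \sigma ,\omega \right) \leq \mathfrak{T}_{T}\left( \sigma ,\omega \right) +C\left( \eta \right) \,\mathcal{A}_{2}^{\alpha }\left( \sigma ,\omega \right) +\eta \,\mathfrak{N}_{T}\left( \sigma ,\omega \right) .
\end{equation*}
Substituting the second bound into the first and using $C\left( \kappa ,\frac{\varepsilon }{2}\right) \eta =\frac{\varepsilon }{2}$ collapses the two operator-norm contributions, $\frac{\varepsilon }{2}$ and $C\left( \kappa ,\frac{\varepsilon }{2}\right) \eta $, into $\varepsilon \,\mathfrak{N}_{T}\left( \sigma ,\omega \right) $; choosing $C_{\kappa ,\varepsilon }\equiv C\left( \kappa ,\frac{\varepsilon }{2}\right) \max \left\{ 1,C\left( \eta \right) \right\} $ then yields exactly the asserted inequality.

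I do not expect a genuine obstacle --- the content is entirely the bookkeeping of the $\varepsilon $'s --- but the one point that must not be reversed is the order of the two reductions: the constant $C\left( \eta \right) $ coming out of Proposition \ref{full control for doub} blows up as $\eta \searrow 0$, so $\eta $ has to be fixed \emph{after} the constant $C\left( \kappa ,\frac{\varepsilon }{2}\right) $ of Theorem \ref{Tp control by T1} is known, via $\eta =\varepsilon /\bigl( 2C\left( \kappa ,\frac{\varepsilon }{2}\right) \bigr) $. With that choice all remaining constants are absorbed into $C_{\kappa ,\varepsilon }$, which, being built only from $C\left( \kappa ,\frac{\varepsilon }{2}\right) $ and $C\left( \eta \right) $, depends only on $\kappa $ and $\varepsilon $ and is in particular independent of $\mathfrak{N}_{T}\left( \sigma ,\omega \right) $, as required.
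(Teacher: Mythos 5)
Your proposal is correct and matches the paper's intent exactly: the paper states Corollary \ref{full Tp control} as an immediate combination of Theorem \ref{Tp control by T1} and Proposition \ref{full control for doub} without writing out the proof, and your composition of the two estimates, with $\eta$ chosen after $C\left( \kappa ,\frac{\varepsilon }{2}\right) $ so that the two operator-norm contributions sum to $\varepsilon \mathfrak{N}_{T}\left( \sigma ,\omega \right) $, is precisely the intended bookkeeping.
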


\section{Proof of the $Tp$ theorem with $BICT$ and doubling weights}

We will prove Theorem \ref{pivotal theorem} by adapting the beautiful \emph{%
pivotal argument} of Nazarov, Treil and Volberg in \cite{NTV4}, that uses
weighted Haar wavelets and random grids, to a weaker $\kappa ^{th}$-order
pivotal condition with Alpert wavelets and the Parallel Corona
decomposition, the latter being used to circumvent difficulties in
establishing the paraproduct decomposition using weighted Alpert wavelets.
More precisely, we will work in the one-grid world, where the Alpert wavelet
expansions for $f$ and $g$ in $L^{2}\left( \sigma \right) $ and $L^{2}\left(
\omega \right) $ respectively are taken with respect to a common grid $%
\mathcal{D}$, and follow the standard NTV argument for $T1$-type theorems
already in the literature (see e.g. \cite{NTV4}, the two part paper \cite%
{LaSaShUr3}\cite{Lac}, \cite{Hyt2} and \cite{SaShUr7}), i.e. using NTV
random grids $\mathcal{D}$ and goodness, but using pivotal conditions when
possible to avoid functional energy, and using the Parallel Corona and $%
\kappa $-Cube Testing and Bilinear Indicator/Cube Testing to avoid
paraproduct terms, which as observed earlier behave poorly with respect to
weighted Alpert wavelets of order greater than $1$. But first we extend the
scope of the Indicator/Cube Testing condition and the Bilinear
Indicator/Cube Testing property.

\subsection{Extending indicators to bounded functions}

It was observed in \cite{LaSaUr1} that the supremum over $\mathbf{1}_{E}$ in
the Indicator/Cube testing condition (\ref{def kth order testing}) can be
replaced with the logically larger supremum over an arbitrary function $h$
with $\left\vert h\right\vert \leq 1$. Here we extend the analogue of this
observation to hold for the Bilinear Indicator/Cube Testing constant $%
\mathcal{BICT}_{T^{\alpha }}\left( \sigma ,\omega \right) $.

\begin{lemma}
Let $\sigma $ and $\omega $ be positive locally finite Borel measures on $%
\mathbb{R}^{n}$, and let $T^{\alpha }$ be a standard $\alpha $-fractional
singular integral operator on $\mathbb{R}^{n}$. Then%
\begin{equation*}
\mathcal{BICT}_{T^{\alpha }}\left( \sigma ,\omega \right) \leq \sup_{Q\in 
\mathcal{P}^{n}}\sup_{\substack{ \left\Vert f\right\Vert _{L^{\infty }\left(
\sigma \right) }\leq 1  \\ \left\Vert g\right\Vert _{L^{\infty }\left(
\omega \right) }\leq 1}}\frac{1}{\sqrt{\left\vert Q\right\vert _{\sigma
}\left\vert Q\right\vert _{\omega }}}\left\vert \int_{Q}T_{\sigma }^{\alpha
}\left( \mathbf{1}_{Q}f\right) g\omega \right\vert \leq 4\ \mathcal{BICT}%
_{T^{\alpha }}\left( \sigma ,\omega \right) .
\end{equation*}
\end{lemma}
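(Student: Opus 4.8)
The first inequality is immediate: for compact $E,F\subset Q$ the functions $f=\mathbf{1}_{E}$ and $g=\mathbf{1}_{F}$ satisfy $\left\Vert f\right\Vert _{L^{\infty }\left( \sigma \right) },\left\Vert g\right\Vert _{L^{\infty }\left( \omega \right) }\leq 1$ and $\mathbf{1}_{Q}f=\mathbf{1}_{E}$, so the middle supremum majorizes the defining supremum in (\ref{def ind WBP}). For the second inequality, the plan is the usual reduction to nonnegative pieces followed by a layer-cake representation. Fix a cube $Q$ and real-valued $f,g$ with $\left\Vert f\right\Vert _{L^{\infty }\left( \sigma \right) },\left\Vert g\right\Vert _{L^{\infty }\left( \omega \right) }\leq 1$, and write $f=f^{+}-f^{-}$, $g=g^{+}-g^{-}$ with $0\leq f^{\pm },g^{\pm }\leq 1$. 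By bilinearity, $\int_{Q}T_{\sigma }^{\alpha }\left( \mathbf{1}_{Q}f\right) g\,\omega $ is a signed sum of the four terms $\int_{Q}T_{\sigma }^{\alpha }\left( \mathbf{1}_{Q}\varphi \right) \psi \,\omega $ with $\varphi \in \left\{ f^{+},f^{-}\right\} $ and $\psi \in \left\{ g^{+},g^{-}\right\} $, so it suffices to bound each such term by $\mathcal{BICT}_{T^{\alpha }}\left( \sigma ,\omega \right) \sqrt{\left\vert Q\right\vert _{\sigma }\left\vert Q\right\vert _{\omega }}$.

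For one such term, with $0\leq \varphi ,\psi \leq 1$, I would use the layer-cake identities $\mathbf{1}_{Q}\varphi =\int_{0}^{1}\mathbf{1}_{E_{t}}\,dt$ and $\mathbf{1}_{Q}\psi =\int_{0}^{1}\mathbf{1}_{F_{s}}\,ds$, where $E_{t}\equiv \left\{ y\in Q:\varphi \left( y\right) >t\right\} $ and $F_{s}\equiv \left\{ x\in Q:\psi \left( x\right) >s\right\} $ are Borel subsets of $Q$. Since we work throughout with a fixed sufficiently smooth truncation $T_{\sigma ,\delta ,R}^{\alpha }$, whose kernel $K_{\delta ,R}^{\alpha }$ is bounded with compact support, $T_{\sigma }^{\alpha }\left( \mathbf{1}_{Q}\varphi \right) $ is given by an absolutely convergent integral, and Fubini--Tonelli permits moving $T_{\sigma }^{\alpha }$ and the integration over $Q$ past the $t$- and $s$-integrals:
\[
\int_{Q}T_{\sigma }^{\alpha }\left( \mathbf{1}_{Q}\varphi \right) \psi \,\omega =\int_{0}^{1}\int_{0}^{1}\left( \int_{F_{s}}T_{\sigma }^{\alpha }\left( \mathbf{1}_{E_{t}}\right) \omega \right) dt\,ds\ .
\]

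It then remains to know that $\left\vert \int_{F}T_{\sigma }^{\alpha }\left( \mathbf{1}_{E}\right) \omega \right\vert \leq \mathcal{BICT}_{T^{\alpha }}\left( \sigma ,\omega \right) \sqrt{\left\vert Q\right\vert _{\sigma }\left\vert Q\right\vert _{\omega }}$ not merely for compact $E,F\subset Q$ but for arbitrary Borel $E,F\subset Q$; this follows from inner regularity of $\sigma $ and $\omega $, approximating $\mathbf{1}_{E}$ in $L^{2}\left( \sigma \right) $ and $\mathbf{1}_{F}$ in $L^{2}\left( \omega \right) $ by indicators of compact subsets and again using that $T_{\sigma ,\delta ,R}^{\alpha }$ has a bounded compactly supported kernel. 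Inserting this bound into the displayed identity and integrating the constant over $\left( s,t\right) \in \left( 0,1\right) ^{2}$ bounds each of the four terms by $\mathcal{BICT}_{T^{\alpha }}\left( \sigma ,\omega \right) \sqrt{\left\vert Q\right\vert _{\sigma }\left\vert Q\right\vert _{\omega }}$; summing them, dividing by $\sqrt{\left\vert Q\right\vert _{\sigma }\left\vert Q\right\vert _{\omega }}$, and taking the supremum over $Q$, $f$ and $g$ yields the constant $4$. The only delicate points — and the closest the argument comes to an obstacle — are the Fubini justification and this passage from compact to Borel sets in the $\mathcal{BICT}$ supremum, both of which are routine given that truncated fractional Calder\'{o}n--Zygmund kernels are bounded with compact support.
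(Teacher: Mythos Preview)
Your proof is correct but takes a different route from the paper. The paper does not split $f$ and $g$ into positive and negative parts and then invoke the layer-cake representation; instead it replaces $g$ by the sign function $h_{Q}[f]=\mathbf{1}_{F_{+}[f]}-\mathbf{1}_{F_{-}[f]}$ of $T_{\sigma }^{\alpha }(\mathbf{1}_{Q}f)$, thereby converting the inner product into the $L^{1}(\omega )$ norm of $T_{\sigma }^{\alpha }(\mathbf{1}_{Q}f)$, then dualizes to $\int_{Q} f\, T_{\omega }^{\alpha ,\ast }(\mathbf{1}_{F_{+}}-\mathbf{1}_{F_{-}})\,d\sigma $, uses $|f|\le 1$, and applies the sign trick a second time with $k_{Q}[f]=\mathbf{1}_{E_{+}}-\mathbf{1}_{E_{-}}$. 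Expanding the resulting bilinear expression yields four $\int_{E}T_{\omega }^{\alpha ,\ast }(\mathbf{1}_{F})\,d\sigma $ terms and the constant $4$. Your approach is more mechanical and arguably more transparent, but it needs the Fubini justification and the passage from compact to Borel $E,F$ that you correctly flag; the paper's sign-function argument avoids the parameter integrals entirely (though it too implicitly works with Borel level sets rather than compact ones, a point you address and the paper leaves unstated).
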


\begin{proof}
Given a cube $Q$ and a bounded function $f\in L^{\infty }\left( \sigma
\right) $, define%
\begin{eqnarray*}
h_{Q}\left[ f\right] \left( x\right) &\equiv &\left\{ 
\begin{array}{ccc}
\frac{\left\vert T_{\sigma }^{\alpha }\left( \mathbf{1}_{Q}f\right) \left(
x\right) \right\vert }{T_{\sigma }^{\alpha }\left( \mathbf{1}_{Q}f\right)
\left( x\right) } & \text{ if } & T_{\sigma }^{\alpha }\left( \mathbf{1}%
_{Q}f\right) \left( x\right) \neq 0 \\ 
0 & \text{ if } & T_{\sigma }^{\alpha }\left( \mathbf{1}_{Q}f\right) \left(
x\right) =0%
\end{array}%
\right. \\
&=&\mathbf{1}_{F_{+}\left[ f\right] }\left( x\right) -\mathbf{1}_{F_{-}\left[
f\right] }\left( x\right) ,
\end{eqnarray*}%
where the sets%
\begin{eqnarray*}
F_{+}\left[ f\right] &\equiv &\left\{ x\in Q:T_{\sigma }^{\alpha }\left( 
\mathbf{1}_{Q}f\right) \left( x\right) >0\right\} , \\
F_{-}\left[ f\right] &\equiv &\left\{ x\in Q:T_{\sigma }^{\alpha }\left( 
\mathbf{1}_{Q}f\right) \left( x\right) <0\right\} ,
\end{eqnarray*}%
both depend on $f$. Then we have%
\begin{eqnarray*}
&&\sup_{Q\in \mathcal{P}^{n}}\sup_{\substack{ \left\Vert f\right\Vert
_{L^{\infty }\left( \sigma \right) }\leq 1  \\ \left\Vert g\right\Vert
_{L^{\infty }\left( \omega \right) }\leq 1}}\frac{1}{\sqrt{\left\vert
Q\right\vert _{\sigma }\left\vert Q\right\vert _{\omega }}}\left\vert
\int_{Q}T_{\sigma }^{\alpha }\left( \mathbf{1}_{Q}f\right) gd\omega
\right\vert =\sup_{Q\in \mathcal{P}^{n}}\sup_{\left\Vert f\right\Vert
_{L^{\infty }\left( \sigma \right) }\leq 1}\frac{1}{\sqrt{\left\vert
Q\right\vert _{\sigma }\left\vert Q\right\vert _{\omega }}}%
\int_{Q}\left\vert T_{\sigma }^{\alpha }\left( \mathbf{1}_{Q}f\right)
\right\vert d\omega \\
&=&\sup_{Q\in \mathcal{P}^{n}}\sup_{\left\Vert f\right\Vert _{L^{\infty
}\left( \sigma \right) }\leq 1}\frac{1}{\sqrt{\left\vert Q\right\vert
_{\sigma }\left\vert Q\right\vert _{\omega }}}\int_{Q}T_{\sigma }^{\alpha
}\left( \mathbf{1}_{Q}f\right) h_{Q}\left[ f\right] d\omega \\
&=&\sup_{Q\in \mathcal{P}^{n}}\sup_{\left\Vert f\right\Vert _{L^{\infty
}\left( \sigma \right) }\leq 1}\frac{1}{\sqrt{\left\vert Q\right\vert
_{\sigma }\left\vert Q\right\vert _{\omega }}}\int_{Q}f\left( T_{\omega
}^{\alpha ,\ast }\left( \mathbf{1}_{F_{+}\left[ f\right] }-\mathbf{1}_{F_{-}%
\left[ f\right] }\right) \right) d\sigma \\
&\leq &\sup_{Q\in \mathcal{P}^{n}}\frac{1}{\sqrt{\left\vert Q\right\vert
_{\sigma }\left\vert Q\right\vert _{\omega }}}\sup_{\left\Vert f\right\Vert
_{L^{\infty }\left( \sigma \right) }\leq 1}\int_{Q}\left\vert T_{\omega
}^{\alpha ,\ast }\left( \mathbf{1}_{F_{+}\left[ f\right] }-\mathbf{1}_{F_{-}%
\left[ f\right] }\right) \right\vert d\sigma .
\end{eqnarray*}%
But now%
\begin{equation*}
\sup_{\left\Vert f\right\Vert _{L^{\infty }\left( \sigma \right) }\leq
1}\int_{Q}\left\vert T_{\omega }^{\alpha ,\ast }\left( \mathbf{1}_{F_{+}%
\left[ f\right] }-\mathbf{1}_{F_{-}\left[ f\right] }\right) \right\vert
d\sigma =\int_{Q}T_{\omega }^{\alpha ,\ast }\left( \mathbf{1}_{F_{+}\left[ f%
\right] }-\mathbf{1}_{F_{-}\left[ f\right] }\right) k_{Q}\left[ f\right]
d\sigma
\end{equation*}%
where%
\begin{eqnarray*}
k_{Q}\left[ f\right] \left( y\right) &\equiv &\left\{ 
\begin{array}{ccc}
\frac{\left\vert T_{\omega }^{\alpha ,\ast }\left( \mathbf{1}_{F_{+}\left[ f%
\right] }-\mathbf{1}_{F_{-}\left[ f\right] }\right) \left( y\right)
\right\vert }{T_{\omega }^{\alpha ,\ast }\left( \mathbf{1}_{F_{+}\left[ f%
\right] }-\mathbf{1}_{F_{-}\left[ f\right] }\right) \left( y\right) } & 
\text{ if } & T_{\omega }^{\alpha ,\ast }\left( \mathbf{1}_{F_{+}\left[ f%
\right] }-\mathbf{1}_{F_{-}\left[ f\right] }\right) \left( y\right) \neq 0
\\ 
0 & \text{ if } & T_{\omega }^{\alpha ,\ast }\left( \mathbf{1}_{F_{+}\left[ f%
\right] }-\mathbf{1}_{F_{-}\left[ f\right] }\right) \left( y\right) =0%
\end{array}%
\right. \\
&=&\mathbf{1}_{E_{+}\left[ f\right] }\left( y\right) -\mathbf{1}_{E_{-}\left[
f\right] }\left( y\right) ,
\end{eqnarray*}%
where the sets 
\begin{eqnarray*}
E_{+}\left[ f\right] &\equiv &\left\{ y\in Q:T_{\omega }^{\alpha ,\ast
}\left( \mathbf{1}_{F_{+}\left[ f\right] }-\mathbf{1}_{F_{-}\left[ f\right]
}\right) \left( y\right) >0\right\} , \\
E_{-}\left[ f\right] &\equiv &\left\{ y\in Q:T_{\omega }^{\alpha ,\ast
}\left( \mathbf{1}_{F_{+}\left[ f\right] }-\mathbf{1}_{F_{-}\left[ f\right]
}\right) \left( y\right) <0\right\} ,
\end{eqnarray*}%
also both depend on $f$. Thus we have shown that%
\begin{eqnarray*}
&&\sup_{Q\in \mathcal{P}^{n}}\sup_{\substack{ \left\Vert f\right\Vert
_{L^{\infty }\left( \sigma \right) }\leq 1  \\ \left\Vert g\right\Vert
_{L^{\infty }\left( \omega \right) }\leq 1}}\frac{1}{\sqrt{\left\vert
Q\right\vert _{\sigma }\left\vert Q\right\vert _{\omega }}}\left\vert
\int_{Q}T_{\sigma }^{\alpha }\left( \mathbf{1}_{Q}f\right) gd\omega
\right\vert \\
&\leq &\sup_{Q\in \mathcal{P}^{n}}\sup_{\left\Vert f\right\Vert _{L^{\infty
}\left( \sigma \right) }\leq 1}\frac{1}{\sqrt{\left\vert Q\right\vert
_{\sigma }\left\vert Q\right\vert _{\omega }}}\int_{Q}\left( \mathbf{1}%
_{E_{+}\left[ f\right] }-\mathbf{1}_{E_{-}\left[ f\right] }\right) T_{\omega
}^{\alpha ,\ast }\left( \mathbf{1}_{F_{+}\left[ f\right] }-\mathbf{1}_{F_{-}%
\left[ f\right] }\right) d\sigma \\
&\leq &4\sup_{Q\in \mathcal{P}^{n}}\sup_{E,F\subset Q}\frac{1}{\sqrt{%
\left\vert Q\right\vert _{\sigma }\left\vert Q\right\vert _{\omega }}}%
\left\vert \int_{E}T_{\omega }^{\alpha ,\ast }\left( \mathbf{1}_{F}\right)
d\sigma \right\vert =4\mathcal{BICT}_{T^{\alpha }}\left( \sigma ,\omega
\right) .
\end{eqnarray*}%
The converse inequality%
\begin{equation*}
\mathcal{BICT}_{T^{\alpha }}\left( \sigma ,\omega \right) \leq \sup_{Q\in 
\mathcal{P}^{n}}\sup_{\substack{ \left\Vert f\right\Vert _{L^{\infty }\left(
\sigma \right) }\leq 1  \\ \left\Vert g\right\Vert _{L^{\infty }\left(
\omega \right) }\leq 1}}\frac{1}{\sqrt{\left\vert Q\right\vert _{\sigma
}\left\vert Q\right\vert _{\omega }}}\left\vert \int_{Q}T_{\sigma }^{\alpha
}\left( \mathbf{1}_{Q}f\right) gd\omega \right\vert
\end{equation*}%
is trivial.
\end{proof}

\subsection{Initial steps}

The first step in the proof of Theorem \ref{pivotal theorem} is to expand an
inner product $\left\langle T_{\sigma }^{\alpha }f,g\right\rangle
_{L^{2}\left( \omega \right) }$ in weighted Alpert projections $%
\bigtriangleup _{I;\kappa _{1}}^{\sigma }f$ and $\bigtriangleup _{J;\kappa
_{2}}^{\omega }g$ asssociated with a fixed dyadic grid $\mathcal{D}$:%
\begin{equation}
\left\langle T_{\sigma }^{\alpha }f,g\right\rangle _{L^{2}\left( \omega
\right) }=\sum_{I,J\in \mathcal{D}}\left\langle T_{\sigma }^{\alpha
}\bigtriangleup _{I;\kappa _{1}}^{\sigma }f,\bigtriangleup _{J;\kappa
_{2}}^{\omega }g\right\rangle _{L^{2}\left( \omega \right) }\ .
\label{inner pdt sum}
\end{equation}%
We next wish to reduce the above sum to $\left( I,J\right) \in \mathcal{D}%
\times \mathcal{D}$ such that $I\subset I_{0}$ and $J\subset J_{0}$ where $%
I_{0}$ and $J_{0}$ are large cubes in $\mathcal{D}$, and for this we will
use, in a standard way, the testing conditions over polynomials of degree
less than $\kappa $. This reduced sum is then decomposed into many separate
sums according to the relative sizes of Calder\'{o}n-Zygmund stopping cubes,
i.e. first into the Parallel Corona decomposition, then further into $\func{%
Near}$, $\func{Disjoint}$ and $\func{Far}$ forms, and then finally according
to the locations and goodness of the intervals $I$ and $J$. Each of the
resulting forms are then controlled using widely different techniques.

A crucial tool from \cite{RaSaWi} is the estimate for $L^{2}\left( \omega
\right) $ norms of Alpert projections $\left\Vert \bigtriangleup _{J;\kappa
}^{\omega }T^{\alpha }\mu \right\Vert _{L^{2}\left( \omega \right) }^{2}$,
called the Monotonicity Lemma below (see \cite{LaWi} and also \cite{SaShUr7}%
), and which is improved by the extra vanishing moments of Alpert wavelets
to the following NTV type estimate, 
\begin{equation*}
\left\Vert \bigtriangleup _{J;\kappa }^{\omega }T^{\alpha }\mu \right\Vert
_{L^{2}\left( \omega \right) }^{2}\lesssim \left( \frac{\mathrm{P}_{\kappa
}^{\alpha }\left( J,\mu \right) }{\ell \left( J\right) ^{\kappa }}\right)
^{2}\sum_{\left\vert \beta \right\vert =\kappa -1}\left\Vert \left(
x-m_{J}^{\kappa }\right) ^{\beta }\right\Vert _{L^{2}\left( \mathbf{1}%
_{J}\omega \right) }^{2}\ ,
\end{equation*}%
which in turn can then be controlled by a $\kappa ^{th}$-order pivotal
condition, \textbf{weaker} than the usual pivotal condition with $\kappa =1$%
. The telescoping identities (\ref{telescoping}) reduce sums of consecutive
Alpert projections $\bigtriangleup _{I;\kappa }^{\mu }$ to\ differences of
projections $\mathbb{E}_{Q;\kappa }^{\mu }$ onto spaces of polynomials of
degree at most $\kappa -1$. Since by (\ref{analogue}), the sup norms of
these latter projections are controlled by Calder\'{o}n-Zygmund averages, we
are able to obtain an analogue of the Intertwining Proposition in \cite%
{SaShUr7}, which controls the $\func{Far}$ forms. The $\func{Near}$ forms
are controlled by the $\kappa $-Cube Testing conditions and Bilinear
Indicator/Cube Testing property.

Underlying all of this analysis however, is the powerful tool of Nazarov,
Treil and Volberg introduced in \cite{NTV1}, that restricts wavelet
expansions to $\limfunc{good}$ cubes, thus permitting the geometric decay
necessary to control off-diagonal terms in the presence of some appropriate
side condition - such as a pivotal or energy condition, which can be thought
of as a proof catalyst.

Before proceeding with the Parallel Corona decomposition and the subsequent
elements of the proof of Theorem \ref{pivotal theorem} in Subsection \ref%
{details} below, we give detailed analogues of the Monotonicity Lemma and
Intertwining Proposition in the setting of Alpert wavelets.

\subsection{The Monotonicity Lemma}

For $0\leq \alpha <n$ and $m\in \mathbb{R}_{+}$, we recall from (\ref{def
kappa Poisson}) the $m^{th}$-order fractional Poisson integral%
\begin{equation*}
\mathrm{P}_{m}^{\alpha }\left( J,\mu \right) \equiv \int_{\mathbb{R}^{n}}%
\frac{\left\vert J\right\vert ^{m}}{\left( \left\vert J\right\vert
+\left\vert y-c_{J}\right\vert \right) ^{m+n-\alpha }}d\mu \left( y\right) ,
\end{equation*}%
where $\mathrm{P}_{1}^{\alpha }\left( J,\mu \right) =\mathrm{P}^{\alpha
}\left( J,\mu \right) $ is the standard Poisson integral. The following
extension of the Lacey-Wick formulation \cite{LaWi} of the Monotonicity
Lemma to weighted Alpert wavelets is due to Rahm, Sawyer and Wick \cite%
{RaSaWi}. Since the proof in \cite{RaSaWi} is given only for dimension $n=1$%
, we include the straightforward extension to the higher dimensional
operators considered here.

\begin{lemma}[Monotonicity \protect\cite{RaSaWi}]
\label{mono}Let $0\leq \alpha <n$, and $\kappa _{1},\kappa _{2}\in \mathbb{N}
$ and $0<\delta <1$. Suppose that$\ I$ and $J$ are cubes in $\mathbb{R}^{n}$
such that $J\subset 2J\subset I$, and that $\mu $ is a signed measure on $%
\mathbb{R}^{n}$ supported outside $I$. Finally suppose that $T^{\alpha }$ is
a standard $\left( \kappa _{1}+\delta ,\kappa _{2}+\delta \right) $-smooth
fractional singular integral on $\mathbb{R}^{n}$ with kernel $K^{\alpha
}\left( x,y\right) =K_{y}^{\alpha }\left( x\right) $. Then%
\begin{equation}
\left\Vert \bigtriangleup _{J;\kappa }^{\omega }T^{\alpha }\mu \right\Vert
_{L^{2}\left( \omega \right) }^{2}\lesssim \Phi _{\kappa }^{\alpha }\left(
J,\mu \right) ^{2}+\Psi _{\kappa }^{\alpha }\left( J,\left\vert \mu
\right\vert \right) ^{2},  \label{estimate}
\end{equation}%
where for a measure $\nu $,%
\begin{eqnarray*}
&&\Phi _{\kappa }^{\alpha }\left( J,\nu \right) ^{2}\equiv \sum_{\left\vert
\beta \right\vert =\kappa }\left\vert \int \left( K_{y}^{\alpha }\right)
^{\left( \kappa \right) }\left( m_{J}^{\kappa }\right) d\nu \left( y\right)
\right\vert ^{2}\left\Vert \bigtriangleup _{J;\kappa }^{\omega }x^{\beta
}\right\Vert _{L^{2}\left( \omega \right) }^{2}\ , \\
&&\Psi _{\kappa }^{\alpha }\left( J,\left\vert \nu \right\vert \right)
^{2}\equiv \left( \frac{\mathrm{P}_{\kappa +\delta }^{\alpha }\left(
J,\left\vert \nu \right\vert \right) }{\left\vert J\right\vert ^{\frac{%
\kappa }{n}}}\right) ^{2}\left\Vert \left\vert x-m_{J}^{\kappa }\right\vert
^{\kappa }\right\Vert _{L^{2}\left( \mathbf{1}_{J}\omega \right) }^{2}\ , \\
&&\text{where }m_{J}^{\kappa }\in J\text{ satisfies }\left\Vert \left\vert
x-m_{J}^{\kappa }\right\vert ^{\kappa }\right\Vert _{L^{2}\left( \mathbf{1}%
_{J}\omega \right) }^{2}=\inf_{m\in J}\left\Vert \left\vert x-m\right\vert
^{\kappa }\right\Vert _{L^{2}\left( \mathbf{1}_{J}\omega \right) }^{2}.
\end{eqnarray*}
\end{lemma}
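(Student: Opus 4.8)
The plan is to expand $T^{\alpha}\mu$ in a Taylor polynomial of order $\kappa$ about the point $m_J^{\kappa}\in J$, and to exploit the vanishing moments of the Alpert projection $\bigtriangleup_{J;\kappa}^{\omega}$ to kill the low-order part of that expansion. Concretely, for $x\in J$ write
\begin{equation*}
T^{\alpha}\mu(x)=\int K_y^{\alpha}(x)\,d\mu(y),
\end{equation*}
and for each fixed $y\in\operatorname{Supp}\mu$ (so $y\notin I\supset 2J\supset J$) apply Taylor's formula with integral remainder to the smooth function $x\mapsto K_y^{\alpha}(x)$ on $J$, expanded at $m_J^{\kappa}$: the terms of degree $\leq\kappa-1$ are polynomials in $x$ of degree less than $\kappa$, hence are annihilated by $\bigtriangleup_{J;\kappa}^{\omega}$ by the moment vanishing conditions (\ref{mom con}). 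The degree-$\kappa$ term contributes exactly $\Phi_{\kappa}^{\alpha}(J,\mu)$ after applying $\bigtriangleup_{J;\kappa}^{\omega}$, since $\bigtriangleup_{J;\kappa}^{\omega}$ applied to $(x-m_J^{\kappa})^{\beta}$, $|\beta|=\kappa$, reduces (modulo lower order, again killed) to $\bigtriangleup_{J;\kappa}^{\omega}x^{\beta}$, and the coefficient is $\int (K_y^{\alpha})^{(\kappa)}(m_J^{\kappa})\,d\mu(y)$. The remaining remainder term is of order $\kappa+\delta$ by the smoothness hypothesis on $K^{\alpha}$.

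The second step is to estimate this remainder. By the $(\kappa_1+\delta,\kappa_2+\delta)$-smoothness, the $\kappa$-th order Taylor remainder of $K_y^{\alpha}(\cdot)$ on $J$ at $m_J^{\kappa}$ is controlled, for $x\in J$ and $y$ outside $2J$, by
\begin{equation*}
\left|x-m_J^{\kappa}\right|^{\kappa}\left(\frac{\left|x-m_J^{\kappa}\right|}{\left|y-c_J\right|}\right)^{\delta}\left|y-c_J\right|^{\alpha-\kappa-n},
\end{equation*}
up to constants, which is dominated by $\dfrac{\left|x-m_J^{\kappa}\right|^{\kappa}\,\ell(J)^{\delta}}{\left(\ell(J)+\left|y-c_J\right|\right)^{\kappa+\delta+n-\alpha}}$ — precisely the integrand defining $\mathrm{P}_{\kappa+\delta}^{\alpha}(J,|\mu|)$ after factoring out $\ell(J)^{-\kappa-\delta}\left|x-m_J^{\kappa}\right|^{\kappa}$. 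Since $\bigtriangleup_{J;\kappa}^{\omega}$ is an $L^2(\omega)$-orthogonal projection, it does not increase the $L^2(\mathbf{1}_J\omega)$ norm, so applying $\bigtriangleup_{J;\kappa}^{\omega}$ to the remainder and taking $L^2(\omega)$ norms yields the bound $\Psi_{\kappa}^{\alpha}(J,|\mu|)$. Adding the two contributions and using $(a+b)^2\lesssim a^2+b^2$ gives (\ref{estimate}).

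The only genuine subtlety — and the step I expect to require the most care — is the passage from dimension $n=1$ to general $n$: one must write the multivariate Taylor expansion correctly, keep track of which multiindex derivatives $(K_y^{\alpha})^{(\kappa)}$ appears with which monomial $(x-m_J^{\kappa})^{\beta}$, and verify that the $L^2(\omega)$ norms of the degree-$\kappa$ monomials $\bigtriangleup_{J;\kappa}^{\omega}x^{\beta}$ and the norms $\left\|\left|x-m_J^{\kappa}\right|^{\kappa}\right\|_{L^2(\mathbf{1}_J\omega)}$ match the quantities in the statement; the choice of $m_J^{\kappa}$ as the $L^2(\mathbf{1}_J\omega)$-minimizer is what makes the $\Psi$ term clean. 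Everything else (interchanging $T^{\alpha}$ with $\int d\mu$, which is legitimate because truncated kernels are bounded with compact support; the orthogonal-projection norm bound; summing over $|\beta|=\kappa$) is routine, so the proof amounts essentially to bookkeeping once the one-dimensional argument of \cite{RaSaWi} is recalled.
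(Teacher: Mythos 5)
Your proposal is correct and follows essentially the same route as the paper's proof: Taylor-expand $K_y^\alpha(\cdot)$ about $m_J^\kappa$, use the moment vanishing (\ref{mom con}) to annihilate the degree-$<\kappa$ part, read off $\Phi_\kappa^\alpha$ from the top-degree Taylor coefficient evaluated at $m_J^\kappa$, and estimate the remainder via the H\"older continuity of the $\kappa$-th derivatives to produce the factor $\mathrm{P}_{\kappa+\delta}^\alpha(J,|\mu|)/\ell(J)^\kappa$ against $\left\||x-m_J^\kappa|^\kappa\right\|_{L^2(\mathbf{1}_J\omega)}$. The only cosmetic difference is that you invoke the integral form of the remainder while the paper uses the Lagrange form with an intermediate point $\theta(x,m_J^\kappa)$ and then splits off the exact coefficient at $m_J^\kappa$; both lead to the identical $\Phi+\Psi$ decomposition.
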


\begin{proof}[Proof of Lemma \protect\ref{mono}]
The proof is an easy adaptation of the one-dimensional proof in \cite{RaSaWi}%
, which was in turn adapted from the proofs in \cite{LaWi} and \cite{SaShUr7}%
, but using a $\kappa ^{th}$ order Taylor expansion instead of a first order
expansion on the kernel $\left( K_{y}^{\alpha }\right) \left( x\right)
=K^{\alpha }\left( x,y\right) $. Due to the importance of this lemma, as
explained above, we repeat the short argument.

Let $\left\{ h_{J;\kappa }^{\mu ,a}\right\} _{a\in \Gamma _{J,n,\kappa }}$
be an orthonormal basis of $L_{J;\kappa }^{2}\left( \mu \right) $ consisting
of Alpert functions as above. Now we use the $\left( \kappa +\delta \right) $%
-smooth Calder\'{o}n-Zygmund smoothness estimate (\ref{sizeandsmoothness'}),
together with Taylor's formula 
\begin{eqnarray*}
K_{y}^{\alpha }\left( x\right) &=&\limfunc{Tay}\left( K_{y}^{\alpha }\right)
\left( x,c\right) +\frac{1}{\kappa !}\sum_{\left\vert \beta \right\vert
=\kappa }\left( K_{y}^{\alpha }\right) ^{\left( \beta \right) }\left( \theta
\left( x,c\right) \right) \left( x-c\right) ^{\beta }; \\
\limfunc{Tay}\left( K_{y}^{\alpha }\right) \left( x,c\right) &\equiv
&K_{y}^{\alpha }\left( c\right) +\left[ \left( x-c\right) \cdot \nabla %
\right] K_{y}^{\alpha }\left( c\right) +...+\frac{1}{\left( \kappa -1\right)
!}\left[ \left( x-c\right) \cdot \nabla \right] ^{\kappa -1}K_{y}^{\alpha
}\left( c\right) ,
\end{eqnarray*}%
and the vanishing means of the vector of Alpert functions $h_{J;\kappa
}^{\omega }=\left\{ h_{J;\kappa }^{\omega ,a}\right\} _{a\in \Gamma
_{J,n,\kappa }}$, to obtain 
\begin{eqnarray*}
&&\left\langle T^{\alpha }\mu ,h_{J;\kappa }^{\omega }\right\rangle
_{L^{2}\left( \omega \right) }=\int \left\{ \int K^{\alpha }\left(
x,y\right) h_{J;\kappa }^{\omega }\left( x\right) d\omega \left( x\right)
\right\} d\mu \left( y\right) =\int \left\langle K_{y}^{\alpha },h_{J;\kappa
}^{\omega }\right\rangle _{L^{2}\left( \omega \right) }d\mu \left( y\right)
\\
&=&\int \left\langle K_{y}^{\alpha }\left( x\right) -\limfunc{Tay}\left(
K_{y}^{\alpha }\right) \left( x,m_{J}^{\kappa }\right) ,h_{J;\kappa
}^{\omega }\left( x\right) \right\rangle _{L^{2}\left( \omega \right) }d\mu
\left( y\right) \\
&=&\int \left\langle \frac{1}{\kappa !}\sum_{\left\vert \beta \right\vert
=\kappa }\left( K_{y}^{\alpha }\right) ^{\left( \beta \right) }\left( \theta
\left( x,m_{J}^{\kappa }\right) \right) \left( x-m_{J}^{\kappa }\right)
^{\beta },h_{J;\kappa }^{\omega }\left( x\right) \right\rangle _{L^{2}\left(
\omega \right) }d\mu \left( y\right) \ \ \ \ \ \text{(some }\theta \left(
x,m_{J}^{\kappa }\right) \in J\text{) } \\
&=&\sum_{\left\vert \beta \right\vert =\kappa }\left\langle \left[ \int 
\frac{1}{\kappa !}\sum_{\left\vert \beta \right\vert =\kappa }\left(
K_{y}^{\alpha }\right) ^{\left( \beta \right) }\left( m_{J}^{\kappa }\right)
d\mu \left( y\right) \right] \left( x-m_{J}^{\kappa }\right) ^{\beta
},h_{J;\kappa }^{\omega }\right\rangle _{L^{2}\left( \omega \right) } \\
&&+\sum_{\left\vert \beta \right\vert =\kappa }\left\langle \left[ \int 
\frac{1}{\kappa !}\left[ \sum_{\left\vert \beta \right\vert =\kappa }\left(
K_{y}^{\alpha }\right) ^{\left( \beta \right) }\left( \theta \left(
x,m_{J}^{\kappa }\right) \right) -\sum_{\left\vert \beta \right\vert =\kappa
}\left( K_{y}^{\alpha }\right) ^{\left( \beta \right) }\left( m_{J}^{\kappa
}\right) \right] d\mu \left( y\right) \right] \left( x-m_{J}^{\kappa
}\right) ^{\beta },h_{J;\kappa }^{\omega }\right\rangle _{L^{2}\left( \omega
\right) }\ .
\end{eqnarray*}%
Then using that $\int \left( K_{y}^{\alpha }\right) ^{\left( \beta \right)
}\left( m_{J}^{\kappa }\right) d\mu \left( y\right) $ is independent of $%
x\in J$, and that $\left\langle \left( x-m_{J}^{\kappa }\right) ^{\beta
},h_{J;\kappa }^{\omega }\right\rangle _{L^{2}\left( \omega \right)
}=\left\langle x^{\beta },h_{J;\kappa }^{\omega }\right\rangle _{L^{2}\left(
\omega \right) }$ by moment vanishing of the Alpert wavelets, we can
continue with 
\begin{eqnarray*}
&&\left\langle T^{\alpha }\mu ,h_{J;\kappa }^{\omega }\right\rangle
_{L^{2}\left( \omega \right) }=\left[ \int \frac{1}{\kappa !}%
\sum_{\left\vert \beta \right\vert =\kappa }\left( K_{y}^{\alpha }\right)
^{\left( \beta \right) }\left( m_{J}^{\kappa }\right) d\mu \left( y\right) %
\right] \cdot \left\langle x^{\beta },h_{J;\kappa }^{\omega }\right\rangle
_{L^{2}\left( \omega \right) } \\
&&\ \ \ \ \ +\frac{1}{\kappa !}\sum_{\left\vert \beta \right\vert =\kappa
}\left\langle \left[ \int \left[ \sum_{\left\vert \beta \right\vert =\kappa
}\left( K_{y}^{\alpha }\right) ^{\left( \beta \right) }\left( \theta \left(
x,m_{J}^{\kappa }\right) \right) -\sum_{\left\vert \beta \right\vert =\kappa
}\left( K_{y}^{\alpha }\right) ^{\left( \beta \right) }\left( m_{J}^{\kappa
}\right) \right] d\mu \left( y\right) \right] \left( x-m_{J}^{\kappa
}\right) ^{\beta },h_{J;\kappa }^{\omega }\right\rangle _{L^{2}\left( \omega
\right) }\ .
\end{eqnarray*}%
Hence%
\begin{eqnarray*}
&&\left\vert \left\langle T^{\alpha }\mu ,h_{J;\kappa }^{\omega
}\right\rangle _{L^{2}\left( \omega \right) }-\left[ \int \frac{1}{\kappa !}%
\sum_{\left\vert \beta \right\vert =\kappa }\left( K_{y}^{\alpha }\right)
^{\left( \beta \right) }\left( m_{J}^{\kappa }\right) d\mu \left( y\right) %
\right] \cdot \left\langle x^{\beta },h_{J;\kappa }^{\omega }\right\rangle
_{L^{2}\left( \omega \right) }\right\vert \\
&\leq &\frac{1}{\kappa !}\sum_{\left\vert \beta \right\vert =\kappa
}\left\vert \left\langle \left[ \int \sup_{\theta \in J}\left\vert \left(
K_{y}^{\alpha }\right) ^{\left( \beta \right) }\left( \theta \right) -\left(
K_{y}^{\alpha }\right) ^{\left( \beta \right) }\left( m_{J}^{\kappa }\right)
\right\vert d\left\vert \mu \right\vert \left( y\right) \right] \left\vert
x-m_{J}^{\kappa }\right\vert ^{\kappa },\left\vert h_{J;\kappa }^{\omega
}\right\vert \right\rangle _{L^{2}\left( \omega \right) }\right\vert \\
&\lesssim &C_{CZ}\frac{\mathrm{P}_{\kappa +\delta }^{\alpha }\left(
J,\left\vert \mu \right\vert \right) }{\left\vert J\right\vert ^{\kappa }}%
\left\Vert \left\vert x-m_{J}^{\kappa }\right\vert ^{\kappa }\right\Vert
_{L^{2}\left( \mathbf{1}_{J}\omega \right) }
\end{eqnarray*}%
where in the last line we have used%
\begin{eqnarray*}
&&\int \sup_{\theta \in J}\left\vert \left( K_{y}^{\alpha }\right) ^{\left(
\beta \right) }\left( \theta \right) -\left( K_{y}^{\alpha }\right) ^{\left(
\beta \right) }\left( m_{J}^{\kappa }\right) \right\vert d\left\vert \mu
\right\vert \left( y\right) \\
&\lesssim &C_{CZ}\int \left( \frac{\left\vert J\right\vert }{\left\vert
y-c_{J}\right\vert }\right) ^{\delta }\frac{d\left\vert \mu \right\vert
\left( y\right) }{\left\vert y-c_{J}\right\vert ^{\kappa +1-\alpha }}=C_{CZ}%
\frac{\mathrm{P}_{\kappa +\delta }^{\alpha }\left( J,\left\vert \mu
\right\vert \right) }{\left\vert J\right\vert ^{\kappa }}.
\end{eqnarray*}

Thus with $\mathbf{v}_{J}^{\beta }=\frac{1}{\kappa !}\int \left(
K_{y}^{\alpha }\right) ^{\left( \beta \right) }\left( m_{J}^{\kappa }\right)
d\mu \left( y\right) $, and noting that the functions $\left\{ \mathbf{v}%
_{J}^{\beta }\cdot h_{J;\kappa }^{\omega ,a}\right\} _{a\in \Gamma
_{J,n,\kappa }}$ are orthonormal in $a\in \Gamma _{J,n,\kappa }$ for each $%
\beta $ and $J$, we have%
\begin{equation*}
\left\vert \mathbf{v}_{J}^{\beta }\cdot \left\langle x^{\beta },h_{J;\kappa
}^{\omega }\right\rangle _{L^{2}\left( \omega \right) }\right\vert
^{2}=\sum_{a\in \Gamma _{J,n,\kappa }}\left\vert \left\langle x^{\beta },%
\mathbf{v}_{J}^{\beta }\cdot h_{J;\kappa }^{\omega ,a}\right\rangle
_{L^{2}\left( \omega \right) }\right\vert ^{2}=\left\Vert \bigtriangleup
_{J;\kappa }^{\omega }\mathbf{v}_{J}^{\beta }x^{\beta }\right\Vert
_{L^{2}\left( \omega \right) }^{2}=\left\vert \mathbf{v}_{J}^{\beta
}\right\vert ^{2}\left\Vert \bigtriangleup _{J;\kappa }^{\omega }x^{\beta
}\right\Vert _{L^{2}\left( \omega \right) }^{2}\ ,
\end{equation*}%
and hence%
\begin{eqnarray*}
\left\Vert \bigtriangleup _{J;\kappa }^{\omega }T^{\alpha }\mu \right\Vert
_{L^{2}\left( \omega \right) }^{2} &=&\left\vert \left\langle T^{\alpha }\mu
,h_{J;\kappa }^{\omega }\right\rangle _{L^{2}\left( \omega \right)
}\right\vert ^{2} \\
&=&\sum_{\left\vert \beta \right\vert =\kappa }\left\vert \mathbf{v}%
_{J}^{\beta }\right\vert ^{2}\left\Vert \bigtriangleup _{J;\kappa }^{\omega
}x^{\kappa }\right\Vert _{L^{2}\left( \omega \right) }^{2}+O\left( \frac{%
\mathrm{P}_{\kappa +\delta }^{\alpha }\left( J,\left\vert \mu \right\vert
\right) }{\left\vert J\right\vert ^{\frac{\kappa }{n}}}\right)
^{2}\left\Vert \left\vert x-m_{J}^{\kappa }\right\vert ^{\kappa }\right\Vert
_{L^{2}\left( \mathbf{1}_{J}\omega \right) }.
\end{eqnarray*}

Thus we conclude that%
\begin{eqnarray*}
\left\Vert \bigtriangleup _{J;\kappa }^{\omega }T^{\alpha }\mu \right\Vert
_{L^{2}\left( \omega \right) }^{2} &\leq &C_{1}\sum_{\left\vert \beta
\right\vert =\kappa }\left\vert \left\vert \frac{1}{\kappa !}\int \left(
K_{y}^{\alpha }\right) ^{\left( \beta \right) }\left( m_{J}\right) d\mu
\left( y\right) \right\vert ^{2}\right\vert ^{2}\left\Vert \bigtriangleup
_{J;\kappa }^{\omega }x^{\kappa }\right\Vert _{L^{2}\left( \omega \right)
}^{2} \\
&&+C_{2}\left( \frac{\mathrm{P}_{\kappa +\delta }^{\alpha }\left(
J,\left\vert \mu \right\vert \right) }{\left\vert J\right\vert ^{\frac{%
\kappa }{n}}}\right) ^{2}\left\Vert \left\vert x-m_{J}^{\kappa }\right\vert
^{\kappa }\right\Vert _{L^{2}\left( \mathbf{1}_{J}\omega \right) }^{2}\ ,
\end{eqnarray*}%
where%
\begin{equation*}
\sum_{\left\vert \beta \right\vert =\kappa }\left\vert \frac{1}{\kappa !}%
\int \left( K_{y}^{\alpha }\right) ^{\left( \beta \right) }\left(
m_{J}\right) d\mu \left( y\right) \right\vert ^{2}\lesssim \left( \frac{%
\mathrm{P}_{\kappa }^{\alpha }\left( J,\left\vert \mu \right\vert \right) }{%
\left\vert J\right\vert ^{\frac{\kappa }{n}}}\right) ^{2}.
\end{equation*}
\end{proof}

The following Energy Lemma follows from the above Monotonicity Lemma in a
standard way, see e.g. \cite{SaShUr7}. Given a subset $\mathcal{J}\subset 
\mathcal{D}$, define the projection $\mathsf{P}_{\mathcal{J}}^{\omega
}\equiv \sum_{J^{\prime }\in \mathcal{J}}\bigtriangleup _{J^{\prime };\kappa
}^{\omega }$, and given a cube $J\in \mathcal{D}$, define the projection $%
\mathsf{P}_{J}^{\omega }\equiv \sum_{J^{\prime }\in \mathcal{D}:\ J^{\prime
}\subset J}\bigtriangleup _{J^{\prime };\kappa }^{\omega }$.

\begin{lemma}[\textbf{Energy Lemma}]
\label{ener}Fix $\kappa \geq 1$. Let $J\ $be a cube in $\mathcal{D}$. Let $%
\Psi _{J}$ be an $L^{2}\left( \omega \right) $ function supported in $J$
with vanishing $\omega $-means up to order less than $\kappa $, and let $%
\mathcal{J}\subset \mathcal{D}$ be such that $J^{\prime }\subset J$ for
every $J^{\prime }\in \mathcal{J}$. Let $\nu $ be a positive measure
supported in $\mathbb{R}^{n}\setminus \gamma J$ with $\gamma >1$, and for
each $J^{\prime }\in \mathcal{J}$, let $d\nu _{J^{\prime }}=\varphi
_{J^{\prime }}d\nu $ with $\left\vert \varphi _{J^{\prime }}\right\vert \leq
1$. Let $T^{\alpha }$ be a standard $\alpha $-fractional singular integral
operator with $0\leq \alpha <n$. Then we have%
\begin{eqnarray*}
&&\left\vert \sum_{J^{\prime }\in \mathcal{J}}\left\langle T^{\alpha }\left(
\nu _{J^{\prime }}\right) ,\bigtriangleup _{J^{\prime };\kappa }^{\omega
}\Psi _{J}\right\rangle _{\omega }\right\vert \lesssim C_{\gamma
}\sum_{J^{\prime }\in \mathcal{J}}\Phi _{\kappa }^{\alpha }\left( J^{\prime
},\nu \right) \left\Vert \bigtriangleup _{J^{\prime };\kappa }^{\omega }\Psi
_{J}\right\Vert _{L^{2}\left( \mu \right) } \\
&\lesssim &C_{\gamma }\sqrt{\sum_{J^{\prime }\in \mathcal{J}}\Phi _{\kappa
}^{\alpha }\left( J^{\prime },\nu \right) ^{2}}\sqrt{\sum_{J^{\prime }\in 
\mathcal{J}}\left\Vert \bigtriangleup _{J^{\prime };\kappa }^{\omega }\Psi
_{J}\right\Vert _{L^{2}\left( \mu \right) }^{2}} \\
&\leq &C_{\gamma }\left( \frac{\mathrm{P}_{\kappa }^{\alpha }\left( J,\nu
\right) }{\left\vert J\right\vert ^{\frac{\kappa }{n}}}\left\Vert \mathsf{P}%
_{\mathcal{J}}^{\omega }x\right\Vert _{L^{2}\left( \omega \right) }+\frac{%
\mathrm{P}_{\kappa +\delta }^{\alpha }\left( J,\nu \right) }{\left\vert
J\right\vert ^{\frac{\kappa }{n}}}\left\Vert \left\vert x-m_{J}^{\kappa
}\right\vert ^{\kappa }\right\Vert _{L^{2}\left( \mathbf{1}_{J}\omega
\right) }\right) \left\Vert \mathsf{P}_{\mathcal{J}}^{\omega }\Psi
_{J}\right\Vert _{L^{2}\left( \mu \right) },
\end{eqnarray*}%
and in particular the `energy' estimate%
\begin{equation*}
\left\vert \left\langle T^{\alpha }\varphi \nu ,\Psi _{J}\right\rangle
_{\omega }\right\vert \lesssim C_{\gamma }\left( \frac{\mathrm{P}_{\kappa
}^{\alpha }\left( J,\nu \right) }{\left\vert J\right\vert ^{\frac{\kappa }{n}%
}}\left\Vert \mathsf{P}_{J}^{\omega }x\right\Vert _{L^{2}\left( \omega
\right) }+\frac{\mathrm{P}_{\kappa +\delta }^{\alpha }\left( J,\nu \right) }{%
\left\vert J\right\vert ^{\frac{\kappa }{n}}}\left\Vert \left\vert
x-m_{J}^{\kappa }\right\vert ^{\kappa }\right\Vert _{L^{2}\left( \mathbf{1}%
_{J}\omega \right) }\right) \left\Vert \sum_{J^{\prime }\subset
J}\bigtriangleup _{J^{\prime };\kappa }^{\omega }\Psi _{J}\right\Vert
_{L^{2}\left( \omega \right) }\ ,
\end{equation*}%
where $\left\Vert \sum_{J^{\prime }\subset J}\bigtriangleup _{J^{\prime
};\kappa }^{\omega }\Psi _{J}\right\Vert _{L^{2}\left( \omega \right)
}\lesssim \left\Vert \Psi _{J}\right\Vert _{L^{2}\left( \omega \right) }$,
and the `pivotal' bound%
\begin{equation*}
\left\vert \left\langle T^{\alpha }\left( \varphi \nu \right) ,\Psi
_{J}\right\rangle _{L^{2}\left( \omega \right) }\right\vert \lesssim
C_{\gamma }\mathrm{P}_{k}^{\alpha }\left( J,\nu \right) \sqrt{\left\vert
J\right\vert _{\omega }}\left\Vert \Psi _{J}\right\Vert _{L^{2}\left( \omega
\right) }\ ,
\end{equation*}%
for any function $\varphi $ with $\left\vert \varphi \right\vert \leq 1$.
\end{lemma}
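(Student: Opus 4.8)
The plan is to reduce the whole lemma to a single application of the Monotonicity Lemma \ref{mono} to each summand $T^{\alpha }\nu _{J^{\prime }}$, followed by two Cauchy--Schwarz inequalities and a geometric summation over the scales of the cubes in $\mathcal{J}$. First, since each $\bigtriangleup _{J^{\prime };\kappa }^{\omega }$ is a self-adjoint projection on $L^{2}\left( \omega \right) $, I would rewrite $\left\langle T^{\alpha }\nu _{J^{\prime }},\bigtriangleup _{J^{\prime };\kappa }^{\omega }\Psi _{J}\right\rangle _{\omega }=\left\langle \bigtriangleup _{J^{\prime };\kappa }^{\omega }T^{\alpha }\nu _{J^{\prime }},\bigtriangleup _{J^{\prime };\kappa }^{\omega }\Psi _{J}\right\rangle _{\omega }$ and bound it by $\left\Vert \bigtriangleup _{J^{\prime };\kappa }^{\omega }T^{\alpha }\nu _{J^{\prime }}\right\Vert _{L^{2}\left( \omega \right) }\left\Vert \bigtriangleup _{J^{\prime };\kappa }^{\omega }\Psi _{J}\right\Vert _{L^{2}\left( \omega \right) }$. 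Because $\nu $ is supported in $\mathbb{R}^{n}\setminus \gamma J$ and every $J^{\prime }\in \mathcal{J}$ lies in $J$, one has $\limfunc{dist}\left( y,J^{\prime }\right) \geq \limfunc{dist}\left( y,J\right) \geq \frac{\gamma -1}{2}\ell \left( J\right) \geq \frac{\gamma -1}{2}\ell \left( J^{\prime }\right) $ for every $y\in \func{Supp}\nu $; this separation (with constants degrading as $\gamma \downarrow 1$) is exactly what the proof of Lemma \ref{mono} uses, when one reads its hypothesis ``$2J\subset I$, $\mu $ outside $I$'' in the more flexible form ``$\limfunc{dist}\left( \func{Supp}\mu ,J\right) \gtrsim \ell \left( J\right) $''. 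Applying Lemma \ref{mono} with $\mu =\nu _{J^{\prime }}=\varphi _{J^{\prime }}\nu $ therefore gives $\left\Vert \bigtriangleup _{J^{\prime };\kappa }^{\omega }T^{\alpha }\nu _{J^{\prime }}\right\Vert _{L^{2}\left( \omega \right) }\lesssim C_{\gamma }\left( \Phi _{\kappa }^{\alpha }\left( J^{\prime },\nu \right) +\Psi _{\kappa }^{\alpha }\left( J^{\prime },\left\vert \nu \right\vert \right) \right) $. Summing over $J^{\prime }\in \mathcal{J}$, applying Cauchy--Schwarz in $J^{\prime }$, and using the orthogonality of the Alpert projections from Theorem \ref{main1} (so that $\sum_{J^{\prime }\in \mathcal{J}}\left\Vert \bigtriangleup _{J^{\prime };\kappa }^{\omega }\Psi _{J}\right\Vert _{L^{2}\left( \omega \right) }^{2}=\left\Vert \mathsf{P}_{\mathcal{J}}^{\omega }\Psi _{J}\right\Vert _{L^{2}\left( \omega \right) }^{2}\leq \left\Vert \Psi _{J}\right\Vert _{L^{2}\left( \omega \right) }^{2}$), reduces matters to bounding $\sum_{J^{\prime }\in \mathcal{J}}\Phi _{\kappa }^{\alpha }\left( J^{\prime },\nu \right) ^{2}$ and $\sum_{J^{\prime }\in \mathcal{J}}\Psi _{\kappa }^{\alpha }\left( J^{\prime },\left\vert \nu \right\vert \right) ^{2}$.

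For the $\Phi $-sum I would use the bound recorded at the end of the proof of Lemma \ref{mono}, $\sum_{\left\vert \beta \right\vert =\kappa }\left\vert \int \left( K_{y}^{\alpha }\right) ^{\left( \beta \right) }\left( m_{J^{\prime }}^{\kappa }\right) d\nu \left( y\right) \right\vert ^{2}\lesssim \left( \mathrm{P}_{\kappa }^{\alpha }\left( J^{\prime },\left\vert \nu \right\vert \right) /\ell \left( J^{\prime }\right) ^{\kappa }\right) ^{2}$, so that $\Phi _{\kappa }^{\alpha }\left( J^{\prime },\nu \right) ^{2}\lesssim \left( \mathrm{P}_{\kappa }^{\alpha }\left( J^{\prime },\left\vert \nu \right\vert \right) /\ell \left( J^{\prime }\right) ^{\kappa }\right) ^{2}\sum_{\left\vert \beta \right\vert =\kappa }\left\Vert \bigtriangleup _{J^{\prime };\kappa }^{\omega }x^{\beta }\right\Vert _{L^{2}\left( \omega \right) }^{2}$; then the elementary comparison $\ell \left( J^{\prime }\right) +\left\vert y-c_{J^{\prime }}\right\vert \approx _{n,\gamma }\ell \left( J\right) +\left\vert y-c_{J}\right\vert $, valid for $y\notin \gamma J$ and $J^{\prime }\subseteq J$, gives $\mathrm{P}_{\kappa }^{\alpha }\left( J^{\prime },\left\vert \nu \right\vert \right) /\ell \left( J^{\prime }\right) ^{\kappa }\approx _{n,\gamma }\mathrm{P}_{\kappa }^{\alpha }\left( J,\left\vert \nu \right\vert \right) /\ell \left( J\right) ^{\kappa }$, which pulls out as a common factor and leaves $\sum_{\left\vert \beta \right\vert =\kappa }\sum_{J^{\prime }\in \mathcal{J}}\left\Vert \bigtriangleup _{J^{\prime };\kappa }^{\omega }x^{\beta }\right\Vert _{L^{2}\left( \omega \right) }^{2}=\sum_{\left\vert \beta \right\vert =\kappa }\left\Vert \mathsf{P}_{\mathcal{J}}^{\omega }x^{\beta }\right\Vert _{L^{2}\left( \omega \right) }^{2}$, i.e. $\left\Vert \mathsf{P}_{\mathcal{J}}^{\omega }x\right\Vert _{L^{2}\left( \omega \right) }^{2}$ in the notation of the statement (here one recenters the monomials using the moment vanishing (\ref{mom con})). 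For the $\Psi $-sum the same comparison gives the sharper $\mathrm{P}_{\kappa +\delta }^{\alpha }\left( J^{\prime },\left\vert \nu \right\vert \right) /\ell \left( J^{\prime }\right) ^{\kappa }\lesssim _{n,\gamma }\left( \ell \left( J^{\prime }\right) /\ell \left( J\right) \right) ^{\delta }\mathrm{P}_{\kappa +\delta }^{\alpha }\left( J,\left\vert \nu \right\vert \right) /\ell \left( J\right) ^{\kappa }$, while trivially $\left\Vert \left\vert x-m_{J^{\prime }}^{\kappa }\right\vert ^{\kappa }\right\Vert _{L^{2}\left( \mathbf{1}_{J^{\prime }}\omega \right) }^{2}\lesssim \ell \left( J^{\prime }\right) ^{2\kappa }\left\vert J^{\prime }\right\vert _{\omega }$; grouping the cubes of $\mathcal{J}$ by their generation beneath $J$, using that cubes of one generation are pairwise disjoint so $\sum_{J^{\prime }\in \mathcal{J}\cap \mathfrak{C}^{\left( k\right) }\left( J\right) }\left\vert J^{\prime }\right\vert _{\omega }\leq \left\vert J\right\vert _{\omega }$, and summing the geometric series $\sum_{k\geq 0}2^{-2k\left( \kappa +\delta \right) }$, I get $\sum_{J^{\prime }\in \mathcal{J}}\Psi _{\kappa }^{\alpha }\left( J^{\prime },\left\vert \nu \right\vert \right) ^{2}\lesssim _{n,\gamma }\left( \mathrm{P}_{\kappa +\delta }^{\alpha }\left( J,\left\vert \nu \right\vert \right) /\ell \left( J\right) ^{\kappa }\right) ^{2}\left\Vert \left\vert x-m_{J}^{\kappa }\right\vert ^{\kappa }\right\Vert _{L^{2}\left( \mathbf{1}_{J}\omega \right) }^{2}$. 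Combining the two bounds produces the third displayed inequality, and the middle (chain) inequality is the same computation with Cauchy--Schwarz applied before summing.

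The \textbf{energy} estimate is the special case $\mathcal{J}=\left\{ J^{\prime }\in \mathcal{D}:J^{\prime }\subset J\right\} $ with $\varphi _{J^{\prime }}\equiv \varphi $: then $\sum_{J^{\prime }\subset J}\left\langle T^{\alpha }\left( \varphi \nu \right) ,\bigtriangleup _{J^{\prime };\kappa }^{\omega }\Psi _{J}\right\rangle _{\omega }=\left\langle T^{\alpha }\left( \varphi \nu \right) ,\mathsf{P}_{J}^{\omega }\Psi _{J}\right\rangle _{\omega }=\left\langle T^{\alpha }\left( \varphi \nu \right) ,\Psi _{J}\right\rangle _{\omega }$, because $\Psi _{J}$, being supported in $J$ with vanishing $\omega $-means up to order $\kappa -1$, satisfies $\Psi _{J}=\sum_{Q\in \mathcal{D}:\ Q\subseteq J}\bigtriangleup _{Q;\kappa }^{\omega }\Psi _{J}$ (the remaining Alpert projections vanish by the telescoping identity (\ref{telescoping}) and the moment vanishing (\ref{mom con})), and $\left\Vert \mathsf{P}_{J}^{\omega }\Psi _{J}\right\Vert _{L^{2}\left( \omega \right) }\leq \left\Vert \Psi _{J}\right\Vert _{L^{2}\left( \omega \right) }$. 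The \textbf{pivotal} bound then follows from the energy estimate together with the crude inequalities $\left\Vert \mathsf{P}_{J}^{\omega }x\right\Vert _{L^{2}\left( \omega \right) }\lesssim \ell \left( J\right) ^{\kappa }\sqrt{\left\vert J\right\vert _{\omega }}$ and $\left\Vert \left\vert x-m_{J}^{\kappa }\right\vert ^{\kappa }\right\Vert _{L^{2}\left( \mathbf{1}_{J}\omega \right) }\lesssim \ell \left( J\right) ^{\kappa }\sqrt{\left\vert J\right\vert _{\omega }}$, and the pointwise bound $\mathrm{P}_{\kappa +\delta }^{\alpha }\left( J,\nu \right) \leq \mathrm{P}_{\kappa }^{\alpha }\left( J,\nu \right) $.

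The argument is ``standard'' in that no single step is deep; I expect the points needing the most care to be: (i) checking that Lemma \ref{mono} applies \emph{uniformly} over all $J^{\prime }\in \mathcal{J}$ --- including those comparable in size to $J$ --- and that the geometric comparison of the Poisson integrals $\mathrm{P}_{\kappa }^{\alpha }\left( J^{\prime },\cdot \right) /\ell \left( J^{\prime }\right) ^{\kappa }$ and $\mathrm{P}_{\kappa }^{\alpha }\left( J,\cdot \right) /\ell \left( J\right) ^{\kappa }$ holds with the same $\gamma $-dependence, which is exactly the source of $C_{\gamma }\to \infty $ as $\gamma \downarrow 1$; and (ii) verifying that the scale summation in the $\Psi $-term genuinely telescopes across generations to $\left\vert J\right\vert _{\omega }$ rather than diverging, which relies on both the within-generation disjointness of dyadic cubes and on $\kappa +\delta >0$. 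A minor additional point is the recentering of the monomials $x^{\beta }$ via the vanishing moments of the Alpert wavelets, needed to interpret $\left\Vert \mathsf{P}_{\mathcal{J}}^{\omega }x\right\Vert _{L^{2}\left( \omega \right) }$ despite $x^{\beta }\notin L^{2}\left( \omega \right) $.
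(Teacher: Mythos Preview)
Your approach is correct and is precisely the ``standard'' derivation from the Monotonicity Lemma that the paper alludes to; the paper gives no proof of Lemma~\ref{ener} beyond the sentence ``follows from the above Monotonicity Lemma \ref{mono} in a standard way, see e.g.\ \cite{SaShUr7}'', and your outline is that standard argument.

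One small point worth tightening: in your treatment of the $\Psi$-sum, the crude bound $\left\Vert \left\vert x-m_{J^{\prime }}^{\kappa }\right\vert ^{\kappa }\right\Vert _{L^{2}\left( \mathbf{1}_{J^{\prime }}\omega \right) }^{2}\lesssim \ell \left( J^{\prime }\right) ^{2\kappa }\left\vert J^{\prime }\right\vert _{\omega }$ followed by the geometric summation actually yields $\sum_{J^{\prime }\in \mathcal{J}}\Psi _{\kappa }^{\alpha }\left( J^{\prime },\left\vert \nu \right\vert \right) ^{2}\lesssim \mathrm{P}_{\kappa +\delta }^{\alpha }\left( J,\nu \right) ^{2}\left\vert J\right\vert _{\omega }$, i.e.\ with $\ell \left( J\right) ^{2\kappa }\left\vert J\right\vert _{\omega }$ in place of the smaller quantity $\left\Vert \left\vert x-m_{J}^{\kappa }\right\vert ^{\kappa }\right\Vert _{L^{2}\left( \mathbf{1}_{J}\omega \right) }^{2}$. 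This already gives the pivotal bound, and is in fact all that is ever used in the paper (in the proof of the Intertwining Proposition the quantity $\left\Vert \left\vert x-m_{J}^{\kappa }\right\vert ^{\kappa }\right\Vert ^{2}$ is immediately majorized by $\left\vert J\right\vert ^{2\kappa /n}\left\vert J\right\vert _{\omega }$ anyway). If you want the sharper form as stated, expand in the Monotonicity Lemma around the \emph{common} point $m_{J}^{\kappa }$ for every $J^{\prime }$ rather than around $m_{J^{\prime }}^{\kappa }$; the Taylor error then involves $\left\Vert \left\vert x-m_{J}^{\kappa }\right\vert ^{\kappa }\right\Vert _{L^{2}\left( \mathbf{1}_{J^{\prime }}\omega \right) }$, and the extra $\left( \ell \left( J^{\prime }\right) /\ell \left( J\right) \right) ^{\delta }$ decay handles the overlap across generations.
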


\subsubsection{Comparison of the $k^{th}$-order pivotal constant and the
usual pivotal constant}

As in \cite{RaSaWi}, where the corresponding estimate for $k^{th}$-order
energy constants was obtained, we clearly we have the inequality%
\begin{eqnarray*}
\mathrm{P}_{k}^{\alpha }\left( J,\mathbf{1}_{I}\sigma \right) &=&\int_{%
\mathbb{R}^{n}}\frac{\left\vert J\right\vert ^{k}}{\left( \ell \left(
J\right) +\left\vert y-c_{J}\right\vert \right) ^{k+n-\alpha }}d\sigma
\left( y\right) \\
&=&\int_{\mathbb{R}}\left( \frac{\left\vert J\right\vert }{\ell \left(
J\right) +\left\vert y-c_{J}\right\vert }\right) ^{k-\ell }\frac{\left\vert
J\right\vert ^{\ell }}{\left( \ell \left( J\right) +\left\vert
y-c_{J}\right\vert \right) ^{\ell +n-\alpha }}d\sigma \left( y\right) \\
&\leq &\int_{\mathbb{R}}\frac{\left\vert J\right\vert ^{\ell }}{\left( \ell
\left( J\right) +\left\vert y-c_{J}\right\vert \right) ^{\ell +n-\alpha }}%
d\sigma \left( y\right) =\mathrm{P}_{\ell }^{\alpha }\left( J,\mathbf{1}%
_{I}\sigma \right) ,
\end{eqnarray*}%
for $1\leq \ell \leq k$, and as a consequence, we obtain the decrease of the
pivotal constants $\mathcal{V}_{2}^{\alpha ,k}$ in $k$: 
\begin{equation*}
\mathcal{V}_{2}^{\alpha ,k}\leq \mathcal{V}_{2}^{\alpha ,\ell },\text{\ \ \
\ \ for }1\leq \ell \leq k.
\end{equation*}

\subsection{The Intertwining Proposition}

Here we prove the Intertwining Proposition of \cite[Proposition 9.4 on page
123]{SaShUr7} by appealing to the $\kappa ^{th}$\emph{-order pivotal
condition} rather than functional energy, and by using instead of the
Indicator/Cube Testing conditions (\ref{def kth order testing}), the weaker $%
\kappa $\emph{-Cube Testing }conditions (\ref{def Kappa polynomial}) similar
to those introduced in \cite{RaSaWi}:%
\begin{eqnarray}
\left( \mathfrak{T}_{T^{\alpha }}^{\left( \kappa _{1}\right) }\left( \sigma
,\omega \right) \right) ^{2} &\equiv &\sup_{Q\in \mathcal{P}^{n}}\max_{0\leq
\left\vert \beta \right\vert <\kappa _{1}}\frac{1}{\left\vert Q\right\vert
_{\sigma }}\int_{Q}\left\vert T_{\sigma }^{\alpha }\left( \mathbf{1}%
_{Q}m_{Q}^{\beta }\right) \right\vert ^{2}\omega <\infty ,
\label{kappa testing} \\
\left( \mathfrak{T}_{\left( T^{\alpha }\right) ^{\ast }}^{\left( \kappa
_{2}\right) }\left( \omega ,\sigma \right) \right) ^{2} &\equiv &\sup_{Q\in 
\mathcal{P}^{n}}\max_{0\leq \left\vert \beta \right\vert <\kappa _{2}}\frac{1%
}{\left\vert Q\right\vert _{\omega }}\int_{Q}\left\vert \left( T_{\sigma
}^{\alpha }\right) ^{\ast }\left( \mathbf{1}_{Q}m_{Q}^{\beta }\right)
\right\vert ^{2}\sigma <\infty ,  \notag
\end{eqnarray}%
with $m_{Q}^{\beta }\left( x\right) \equiv \left( \frac{x-c_{Q}}{\frac{\sqrt{%
n}}{2}\ell \left( Q\right) }\right) ^{\beta }$ for any cube $Q$ and
multiindex $\beta $, where $c_{Q}$ is the center of the cube $Q$. (The
factor $\frac{\sqrt{n}}{2}$in the denominator ensures that $m_{Q}^{\beta
}\in \left( \mathcal{P}_{\kappa }^{Q}\right) _{\limfunc{norm}}$ has supremum
norm $1$ on $Q$.) In this way we will avoid using the one-tailed Muckenhoupt
conditions, relying instead on only the simpler classical condition $%
A_{2}^{\alpha }$, while requiring the $\kappa $-Cube Testing condition and a
certain weak boundedness property. Later on we will use the one-tailed
Muckenhoupt conditions to both eliminate the weak boundedness property and
reduce $\kappa $-Cube testing to the usual testing over indicators.

\subsubsection{Three NTV estimates}

But first, we recall three estimates of Nazarov, Treil and Volberg \cite%
{NTV4}, in a form taken from \cite[Lemmas 7.1 and 7.2 on page 101]{SaShUr7},
where the `one-tailed' Muckenhoupt constants are not needed, only the
classical Muckenhoupt constant $A_{2}^{\alpha }$. The weak boundedness
constant $\mathcal{WBP}_{T^{\alpha }}^{\left( \kappa _{1},\kappa _{2}\right)
}\left( \sigma ,\omega \right) $ appearing in estimate (\ref{delta near})
below is,%
\begin{equation}
\mathcal{WBP}_{T^{\alpha }}^{\left( \kappa _{1},\kappa _{2}\right) }\left(
\sigma ,\omega \right) =\sup_{\mathcal{D}\in \Omega }\sup_{\substack{ %
Q,Q^{\prime }\in \mathcal{D}  \\ Q\subset 3Q^{\prime }\setminus Q^{\prime }%
\text{ or }Q^{\prime }\subset 3Q\setminus Q}}\frac{1}{\sqrt{\left\vert
Q\right\vert _{\sigma }\left\vert Q^{\prime }\right\vert _{\omega }}}\sup 
_{\substack{ f\in \left( \mathcal{P}_{Q}^{\kappa _{1}}\right) _{\limfunc{norm%
}}  \\ g\in \left( \mathcal{P}_{Q}^{\kappa _{2}}\right) _{\limfunc{norm}}}}%
\left\vert \int_{Q^{\prime }}T_{\sigma }^{\alpha }\left( \mathbf{1}%
_{Q}f\right) \ gd\omega \right\vert <\infty ,  \label{kappa WBP}
\end{equation}%
where the space $\left( \mathcal{P}_{\kappa }^{Q}\right) _{\limfunc{norm}}$
of $Q$-normalized polynomials of degree less than $\kappa $, is defined in
Definition \ref{def Q norm} above. Note that this notion of weak
boundedness, which unlike the Bilinear Indicator/Cube Testing property,
involves only pairs of \emph{disjoint} cubes. Finally, we need the concept
of $\left( \mathbf{r},\varepsilon \right) $-goodness introduced first in 
\cite{NTV1}, and used later in \cite{NTV3} and \cite{NTV4}, and then in
virtually every paper on the subject thereafter.

\begin{definition}
Let $\mathcal{D}$ be a dyadic grid. Given $\mathbf{r}\in \mathbb{N}$ and $%
0<\varepsilon <1$, called goodness parameters, a cube $Q\in \mathcal{D}$ is
said to be $\left( \mathbf{r},\varepsilon \right) $-$\limfunc{bad}$ if there
is a supercube $I\supset Q$ with $\ell \left( I\right) \geq 2^{\mathbf{r}%
}\ell \left( Q\right) $ that satisfies%
\begin{equation*}
\limfunc{dist}\left( Q,\partial I\right) <2\sqrt{n}\left\vert Q\right\vert
^{\varepsilon }\left\vert I\right\vert ^{1-\varepsilon }.
\end{equation*}%
Otherwise $Q$ is said to be $\left( \mathbf{r},\varepsilon \right) $-$%
\limfunc{good}$. The collection of $\left( \mathbf{r},\varepsilon \right) $-$%
\limfunc{good}$ cubes in $\mathcal{D}$ is denoted $\mathcal{D}^{\limfunc{good%
}}$. Finally, a function $f\in L^{2}\left( \mu \right) $ is said to be $%
\limfunc{good}$ if $f=\sum_{I\in \mathcal{D}^{\limfunc{good}}}\bigtriangleup
_{I;\kappa }^{\mu }f$.
\end{definition}

It is shown in \cite{NTV1}, \cite{NTV3} and \cite{NTV4} for the two-grid
world, and in \cite[Section 4]{HyPeTrVo} for the one-grid world, that in
order to prove a two weight testing theorem, it suffices to obtain estimates
for good functions, uniformly over all dyadic grids, provided $\mathbf{r}\in 
\mathbb{N}$ is chosen large enough depending on the choice of $\varepsilon $
satisfying $0<\varepsilon <1$. We assume this reduction is in force for an
appropriate $\varepsilon >0$ from now on.

\begin{lemma}
\label{standard delta}Suppose $T^{\alpha }$ is a standard fractional
singular integral with $0\leq \alpha <n$, and that all of the cubes $I,J\in 
\mathcal{D}$ below are $\left( \mathbf{r},\varepsilon \right) $-$\limfunc{%
good}$ with goodness parameters $\varepsilon $ and $\mathbf{r}$. Fix $\kappa
_{1},\kappa _{2}\geq 1$ and a positive integer $\mathbf{\rho }>\mathbf{r}$.
For $f\in L^{2}\left( \sigma \right) $ and $g\in L^{2}\left( \omega \right) $
we have%
\begin{eqnarray}
&&\sum_{\substack{ I,J\in \mathcal{D}  \\ 2^{-\mathbf{\rho }}\ell \left(
I\right) \leq \ell \left( J\right) \leq \ell \left( I\right) }}\left\vert
\left\langle T_{\sigma }^{\alpha }\left( \bigtriangleup _{I;\kappa
_{1}}^{\sigma }f\right) ,\bigtriangleup _{J;\kappa _{2}}^{\omega
}g\right\rangle _{\omega }\right\vert  \label{delta near} \\
&\lesssim &\left( \mathfrak{T}_{T^{\alpha }}^{\left( \kappa _{1}\right)
}\left( \sigma ,\omega \right) +\mathfrak{T}_{\left( T^{\alpha }\right)
^{\ast }}^{\left( \kappa _{2}\right) }\left( \omega ,\sigma \right) +%
\mathcal{WBP}_{T^{\alpha }}^{\left( \kappa _{1},\kappa _{2}\right) }\left(
\sigma ,\omega \right) +\sqrt{A_{2}^{\alpha }\left( \sigma ,\omega \right) }%
\right) \left\Vert f\right\Vert _{L^{2}\left( \sigma \right) }\left\Vert
g\right\Vert _{L^{2}\left( \omega \right) }  \notag
\end{eqnarray}%
and 
\begin{equation}
\sum_{\substack{ \left( I,J\right) \in \mathcal{D}^{\sigma }\times \mathcal{D%
}^{\omega }  \\ I\cap J=\emptyset \text{ and }\frac{\ell \left( J\right) }{%
\ell \left( I\right) }\notin \left[ 2^{-\mathbf{\rho }},2^{\mathbf{\rho }}%
\right] }}\left\vert \left\langle T_{\sigma }^{\alpha }\left( \bigtriangleup
_{I;\kappa _{1}}^{\sigma }f\right) ,\bigtriangleup _{J;\kappa _{2}}^{\omega
}g\right\rangle _{\omega }\right\vert \lesssim \sqrt{A_{2}^{\alpha }\left(
\sigma ,\omega \right) }\left\Vert f\right\Vert _{L^{2}\left( \sigma \right)
}\left\Vert g\right\Vert _{L^{2}\left( \omega \right) }.  \label{delta far}
\end{equation}
\end{lemma}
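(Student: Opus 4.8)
The strategy is to derive (\ref{delta near}) and (\ref{delta far}) as the Alpert-wavelet versions of the three NTV estimates recorded in \cite[Lemmas 7.1 and 7.2 on page 101]{SaShUr7}, following those arguments closely, but with the weighted Haar projections replaced by the Alpert projections $\bigtriangleup_{I;\kappa_1}^\sigma$ and $\bigtriangleup_{J;\kappa_2}^\omega$, and with the first-order kernel expansions replaced by the $\kappa^{th}$-order expansions already incorporated into the Monotonicity Lemma \ref{mono} and the Energy Lemma \ref{ener}. Since all cubes lie in the single grid $\mathcal{D}$, any two of them are nested or disjoint, and the hypotheses $\kappa_1>\theta_1+\alpha-n$, $\kappa_2>\theta_2+\alpha-n$ secure the smoothness needed for the $\kappa^{th}$-order Taylor expansions; doubling of $\sigma$ and $\omega$ will enter only through the $L^\infty$ control (\ref{analogue}) on the polynomial projections $\mathbb{E}_{I;\kappa}^\mu$ and through the classical constant $A_2^\alpha$.

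For the far/disjoint form (\ref{delta far}), I would split the sum according to whether $\ell(J)\le 2^{-\mathbf{\rho}}\ell(I)$ or $\ell(I)\le 2^{-\mathbf{\rho}}\ell(J)$, and by symmetry treat the first. For such a pair with $I\cap J=\emptyset$ and $J$ good, goodness of $J$ forces $I$ to lie outside $\gamma J$ for a fixed $\gamma>1$ depending on $\mathbf{r},\varepsilon,\mathbf{\rho}$, so the pivotal bound of the Energy Lemma \ref{ener} applies with $\varphi\nu=\bigtriangleup_{I;\kappa_1}^\sigma f\cdot\sigma$, giving
\[
\left\vert\left\langle T_\sigma^\alpha\left(\bigtriangleup_{I;\kappa_1}^\sigma f\right),\bigtriangleup_{J;\kappa_2}^\omega g\right\rangle_\omega\right\vert\lesssim \mathrm{P}_{\kappa_2}^\alpha\left(J,\left\vert\bigtriangleup_{I;\kappa_1}^\sigma f\right\vert\sigma\right)\sqrt{\left\vert J\right\vert_\omega}\left\Vert\bigtriangleup_{J;\kappa_2}^\omega g\right\Vert_{L^2(\omega)}.
\]
I would then bound the Poisson factor by decomposing $I$ into dyadic distance-shells about $J$, using $\left\Vert\bigtriangleup_{I;\kappa_1}^\sigma f\right\Vert_{L^1(\sigma)}\le\sqrt{\left\vert I\right\vert_\sigma}\left\Vert\bigtriangleup_{I;\kappa_1}^\sigma f\right\Vert_{L^2(\sigma)}$, summing the geometric series furnished by goodness of $J$, and finishing with Cauchy--Schwarz in $I$ and in $J$ together with $A_2^\alpha$ to collapse the Poisson kernels --- this is exactly the argument of \cite[Lemma 7.2]{SaShUr7}.

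For the near/comparable form (\ref{delta near}), since $2^{-\mathbf{\rho}}\ell(I)\le\ell(J)\le\ell(I)$ with $\mathbf{\rho}$ fixed, I would split the pairs $(I,J)$ into those with $\limfunc{dist}(I,J)\ge C\ell(I)$, handled exactly as in the far form above and contributing the $\sqrt{A_2^\alpha}$ term, and the remaining ``touching'' pairs, of which there are at most $C_{\mathbf{\rho},n}$ for each $I$. For a touching pair: if $J\subseteq I$, invoke the $\kappa_1$-Cube Testing constant $\mathfrak{T}_{T^\alpha}^{(\kappa_1)}$ after using the telescoping identity (\ref{telescoping}) to realize $\bigtriangleup_{I;\kappa_1}^\sigma f$, summed with the relevant ancestors of $J$, as $\mathbf{1}_I$ times a polynomial of degree $<\kappa_1$ whose $L^\infty$ norm on $I$ is $\lesssim E_I^\sigma\left\vert\bigtriangleup_{I;\kappa_1}^\sigma f\right\vert\lesssim\left\vert I\right\vert_\sigma^{-1/2}\left\Vert\bigtriangleup_{I;\kappa_1}^\sigma f\right\Vert_{L^2(\sigma)}$ by (\ref{analogue}); if $I\subseteq J$, this is dual and uses $\mathfrak{T}_{(T^\alpha)^\ast}^{(\kappa_2)}$; and if $I\cap J=\emptyset$ with $I,J$ close, use $\mathcal{WBP}_{T^\alpha}^{(\kappa_1,\kappa_2)}$ after the same polynomial normalization of both $\bigtriangleup_{I;\kappa_1}^\sigma f$ and $\bigtriangleup_{J;\kappa_2}^\omega g$ via (\ref{analogue}). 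Each case yields a bound $\lesssim C\left\Vert\bigtriangleup_{I;\kappa_1}^\sigma f\right\Vert_{L^2(\sigma)}\left\Vert\bigtriangleup_{J;\kappa_2}^\omega g\right\Vert_{L^2(\omega)}$ with $C$ one of the three constants, and summing over the boundedly-many touching $J$ per $I$ and then over $I$ by Cauchy--Schwarz and Bessel's inequality gives (\ref{delta near}).

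The main obstacle is the non-constant polynomial part of the Alpert projections in the near form: unlike a Haar projection, $\bigtriangleup_{I;\kappa_1}^\sigma f$ is not a constant multiple of an indicator on the children of $I$, so it cannot be extended across a cube boundary for free and no paraproduct decomposition is available here. The resolution is that these particular near/far forms need no paraproduct: the $\kappa$-Cube Testing constants and $\mathcal{WBP}^{(\kappa_1,\kappa_2)}$ are already formulated over $Q$-normalized polynomials of degree $<\kappa$, so the only extra input is the uniform bound (\ref{analogue}) on $\mathbb{E}_{I;\kappa}^\mu$, which is precisely where doubling of $\sigma$ and $\omega$ is used, via part (1) of Lemma \ref{doubling}. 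A secondary technical point is ensuring the truncations of $T^\alpha$ are smooth enough for the $\kappa^{th}$-order expansion in Lemma \ref{mono}, which is guaranteed by the relation between $\kappa_1,\kappa_2$ and the doubling exponents.
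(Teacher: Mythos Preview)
Your proposal is correct and follows essentially the same approach as the paper: (\ref{delta far}) is deferred to the identical argument in \cite[Lemma 7.2]{SaShUr7}, while for (\ref{delta near}) the paper reduces to $J\subset I$, decomposes each Alpert projection into its children pieces as $L^\infty$-normalized polynomials, bounds each child-pair inner product by the combined constant $\mathcal{NTV}_{T^\alpha}^{(\kappa_1,\kappa_2)}\sqrt{|I'|_\sigma|J'|_\omega}$, and closes with (\ref{add con}) plus bounded overlap and Cauchy--Schwarz. One small correction: for $J\subset I$ the telescoping identity is not needed and does not produce $\mathbf{1}_I$ times a single polynomial --- rather, the restriction of $\bigtriangleup_{I;\kappa_1}^\sigma f$ to each child $I'\in\mathfrak{C}(I)$ is already a polynomial of degree $<\kappa_1$ on $I'$, so the paper decomposes directly over children (using (\ref{add con}) for the $L^\infty$--$L^2$ conversion) and applies testing on the child $I'\supset J$ and $\mathcal{WBP}^{(\kappa_1,\kappa_2)}$ or $A_2^\alpha$ on the disjoint siblings.
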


\begin{description}
\item[Justification] Using weak boundedness together with the $L^{\infty }$
control $\left\Vert \mathbb{E}_{I}^{\sigma ,\kappa }f\right\Vert
_{L_{I}^{\infty }\left( \sigma \right) }\lesssim E_{I}^{\sigma }\left\vert
f\right\vert $ of Alpert expectations given by (\ref{analogue}), the proof
in \cite{SaShUr7} \textbf{adapts readily} to obtain (\ref{delta near}), as
we sketch below. The proof of (\ref{delta far}) is \textbf{virtually
identical} to the corresponding proofs in \cite{SaShUr7}, and we will not
repeat those details here.
\end{description}

The inequality (\ref{delta near}) is the only place in the proof where the
weak boundedness constant $\mathcal{WBP}_{T^{\alpha }}^{\left( \kappa
_{1},\kappa _{2}\right) }$\ is used, and this constant $\mathcal{WBP}%
_{T^{\alpha }}^{\left( \kappa _{1},\kappa _{2}\right) }$\ will be eliminated
by exploiting the doubling properties of the measures in the final
subsubsection of the proof. This avoids the more difficult surgery argument
that was used to eliminate a weak boundedness property in Lacey and Wick in 
\cite{LaWi}. Moreover, surgery requires the use of two independent families
of grids, something we do not have in this proof.

\begin{proof}[Sketch of Proof of (\protect\ref{delta near})]
First, following \cite{SaShUr7}, which in turn followed \cite{NTV4}, we
reduce matters to the case when $J\subset I$. Then we break up the Alpert
projections $\bigtriangleup _{I;\kappa _{1}}^{\sigma }f$ and $\bigtriangleup
_{J;\kappa _{2}}^{\omega }g$ according to expectations over their respective
children,%
\begin{eqnarray*}
\bigtriangleup _{I;\kappa _{1}}^{\sigma }f &=&\sum_{I^{\prime }\in \mathfrak{%
C}\left( I\right) \ }\left( \bigtriangleup _{I;\kappa _{1}}^{\sigma
}f\right) \mathbf{1}_{I^{\prime }}=\sum_{I^{\prime }\in \mathfrak{C}\left(
I\right) \ }\left\Vert \left( \bigtriangleup _{I;\kappa _{1}}^{\sigma
}f\right) \mathbf{1}_{I^{\prime }}\right\Vert _{\infty }\ P_{I^{\prime
};\kappa _{1}}^{\sigma }f, \\
\bigtriangleup _{J;\kappa _{2}}^{\omega }g &=&\sum_{J^{\prime }\in \mathfrak{%
C}\left( J\right) \ }\left( \bigtriangleup _{J;\kappa _{2}}^{\omega
}g\right) \mathbf{1}_{J^{\prime }}=\sum_{J^{\prime }\in \mathfrak{C}\left(
J\right) \ }\left\Vert \left( \bigtriangleup _{J;\kappa _{2}}^{\omega
}g\right) \mathbf{1}_{J^{\prime }}\right\Vert _{\infty }\ Q_{J^{\prime
};\kappa _{2}}^{\omega }g,
\end{eqnarray*}%
where $P_{I^{\prime };\kappa _{1}}^{\sigma }f=\frac{\left( \bigtriangleup
_{I;\kappa _{1}}^{\sigma }f\right) \mathbf{1}_{I^{\prime }}}{\left\Vert
\left( \bigtriangleup _{I;\kappa _{1}}^{\sigma }f\right) \mathbf{1}%
_{I^{\prime }}\right\Vert _{\infty }}$ and $Q_{J^{\prime };\kappa
_{2}}^{\omega }g=\frac{\left( \bigtriangleup _{J;\kappa _{2}}^{\omega
}g\right) \mathbf{1}_{J^{\prime }}}{\left\Vert \left( \bigtriangleup
_{J;\kappa _{2}}^{\omega }g\right) \mathbf{1}_{J^{\prime }}\right\Vert
_{\infty }}$, to further reduce matters to proving that%
\begin{equation*}
\sum_{\substack{ I,J\in \mathcal{D}:\ J\subset I  \\ 2^{-\mathbf{\rho }}\ell
\left( I\right) \leq \ell \left( J\right) \leq \ell \left( I\right) }}%
\sum_{I^{\prime }\in \mathfrak{C}\left( I\right) ,J^{\prime }\in \mathfrak{C}%
\left( J\right) \ }\left\Vert \left( \bigtriangleup _{I;\kappa _{1}}^{\sigma
}f\right) \mathbf{1}_{I^{\prime }}\right\Vert _{\infty }\left\Vert \left(
\bigtriangleup _{J;\kappa _{2}}^{\omega }g\right) \mathbf{1}_{J^{\prime
}}\right\Vert _{\infty }\left\vert \left\langle T_{\sigma }^{\alpha }\left(
P_{I^{\prime };\kappa _{1}}^{\sigma }f\right) ,Q_{J^{\prime };\kappa
_{2}}^{\omega }g\right\rangle _{\omega }\right\vert
\end{equation*}%
is dominated by the right hand side of (\ref{delta near}). Note that $%
P_{I^{\prime };\kappa _{1}}^{\sigma }f\in \left( \mathcal{P}_{\kappa
_{1}}^{I^{\prime }}\right) _{\limfunc{norm}}$ and $Q_{J^{\prime };\kappa
_{2}}^{\omega }g\in \left( \mathcal{P}_{\kappa _{2}}^{J^{\prime }}\right) _{%
\limfunc{norm}}$ are $L^{\infty }$ normalized. Then with $\mathcal{NTV}%
_{T^{\alpha }}^{\left( \kappa _{1},\kappa _{2}\right) }\left( \sigma ,\omega
\right) $ denoting the constant in parentheses on the right hand side of (%
\ref{delta near}), we continue with 
\begin{eqnarray*}
&&\sum_{\substack{ I,J\in \mathcal{D}:\ J\subset I  \\ 2^{-\mathbf{\rho }%
}\ell \left( I\right) \leq \ell \left( J\right) \leq \ell \left( I\right) }}%
\left\vert \left\langle T_{\sigma }^{\alpha }\left( \bigtriangleup
_{I;\kappa _{1}}^{\sigma }f\right) ,\bigtriangleup _{J;\kappa _{2}}^{\omega
}g\right\rangle _{\omega }\right\vert \\
&\lesssim &\sum_{\substack{ I,J\in \mathcal{D}:\ J\subset I  \\ 2^{-\mathbf{%
\rho }}\ell \left( I\right) \leq \ell \left( J\right) \leq \ell \left(
I\right) }}\sum_{I^{\prime }\in \mathfrak{C}\left( I\right) ,J^{\prime }\in 
\mathfrak{C}\left( J\right) \ }\left\Vert \left( \bigtriangleup _{I;\kappa
_{1}}^{\sigma }f\right) \mathbf{1}_{I^{\prime }}\right\Vert _{\infty
}\left\Vert \left( \bigtriangleup _{J;\kappa _{2}}^{\omega }g\right) \mathbf{%
1}_{J^{\prime }}\right\Vert _{\infty }\left\vert \left\langle T_{\sigma
}^{\alpha }\left( P_{I^{\prime };\kappa _{1}}^{\sigma }f\right)
,Q_{J^{\prime };\kappa _{2}}^{\omega }g\right\rangle _{\omega }\right\vert \\
&\lesssim &\sum_{\substack{ I,J\in \mathcal{D}:\ J\subset I  \\ 2^{-\mathbf{%
\rho }}\ell \left( I\right) \leq \ell \left( J\right) \leq \ell \left(
I\right) }}\sum_{I^{\prime }\in \mathfrak{C}\left( I\right) ,J^{\prime }\in 
\mathfrak{C}\left( J\right) \ }\left\Vert \left( \bigtriangleup _{I;\kappa
_{1}}^{\sigma }f\right) \mathbf{1}_{I^{\prime }}\right\Vert _{\infty
}\left\Vert \left( \bigtriangleup _{J;\kappa _{2}}^{\omega }g\right) \mathbf{%
1}_{J^{\prime }}\right\Vert _{\infty }\mathcal{NTV}_{T^{\alpha }}^{\left(
\kappa _{1},\kappa _{2}\right) }\left( \sigma ,\omega \right) \sqrt{%
\left\vert I^{\prime }\right\vert _{\sigma }\left\vert J^{\prime
}\right\vert _{\omega }} \\
&\lesssim &\mathcal{NTV}_{T^{\alpha }}^{\left( \kappa _{1},\kappa
_{2}\right) }\left( \sigma ,\omega \right) \left( \sigma ,\omega \right) 
\sqrt{\sum_{I\in \mathcal{D}}\left\Vert \bigtriangleup _{I;\kappa
_{1}}^{\sigma }f\right\Vert _{L^{2}\left( \sigma \right) }^{2}}\sqrt{%
\sum_{J\in \mathcal{D}}\left\Vert \bigtriangleup _{J;\kappa _{2}}^{\omega
}g\right\Vert _{L^{2}\left( \omega \right) }^{2}},
\end{eqnarray*}%
since (\ref{add con}) yields both 
\begin{equation*}
\sum_{I^{\prime }\in \mathfrak{C}\left( I\right) }\left\Vert \left(
\bigtriangleup _{I;\kappa _{1}}^{\sigma }f\right) \mathbf{1}_{I^{\prime
}}\right\Vert _{\infty }^{2}\left\vert I^{\prime }\right\vert _{\sigma
}\lesssim \left\Vert \bigtriangleup _{I;\kappa _{1}}^{\sigma }f\right\Vert
_{L^{2}\left( \sigma \right) }^{2}\text{ and }\sum_{J^{\prime }\in \mathfrak{%
C}\left( J\right) \ }\left\Vert \left( \bigtriangleup _{J;\kappa
_{2}}^{\omega }g\right) \mathbf{1}_{J^{\prime }}\right\Vert _{\infty
}^{2}\left\vert J^{\prime }\right\vert _{\omega }\lesssim \left\Vert
\bigtriangleup _{J;\kappa _{2}}^{\omega }g\right\Vert _{L^{2}\left( \omega
\right) }^{2},
\end{equation*}%
and since the restriction $2^{-\mathbf{\rho }}\ell \left( I\right) \leq \ell
\left( J\right) \leq \ell \left( I\right) $ gives bounded overlap in the sum
over $I,J\in \mathcal{D}$ with$\ J\subset I$. Now we finish by applying the
orthonormality of Alpert projections, namely $\left\Vert f\right\Vert
_{L^{2}\left( \sigma \right) }^{2}=\sum_{I\in \mathcal{D}}\left\Vert
\bigtriangleup _{I;\kappa _{1}}^{\sigma }f\right\Vert _{L^{2}\left( \sigma
\right) }^{2}$ and $\left\Vert g\right\Vert _{L^{2}\left( \omega \right)
}^{2}=\sum_{J\in \mathcal{D}}\left\Vert \bigtriangleup _{J;\kappa
_{2}}^{\omega }g\right\Vert _{L^{2}\left( \omega \right) }^{2}$.
\end{proof}

\begin{lemma}
\label{standard indicator}Suppose $T^{\alpha }$ is a standard fractional
singular integral with $0\leq \alpha <n$, that all of the cubes $I,J\in 
\mathcal{D}$ below are $\left( \mathbf{r},\varepsilon \right) $-$\limfunc{%
good}$ with goodness parameters $\varepsilon $ and $\mathbf{r}$, that $%
\mathbf{\rho }>\mathbf{r}$, that $f\in L^{2}\left( \sigma \right) $ and $%
g\in L^{2}\left( \omega \right) $, that $\mathcal{F}\subset \mathcal{D}%
^{\sigma }$ is $\sigma $-Carleson, i.e.,%
\begin{equation*}
\sum_{F^{\prime }\in \mathcal{F}:\ F^{\prime }\subset F}\left\vert F^{\prime
}\right\vert _{\sigma }\lesssim \left\vert F\right\vert _{\sigma },\ \ \ \ \
F\in \mathcal{F},
\end{equation*}%
that there is a numerical sequence $\left\{ \alpha _{\mathcal{F}}\left(
F\right) \right\} _{F\in \mathcal{F}}$ such that%
\begin{equation}
\sum_{F\in \mathcal{F}}\alpha _{\mathcal{F}}\left( F\right) ^{2}\left\vert
F\right\vert _{\sigma }\leq \left\Vert f\right\Vert _{L^{2}\left( \sigma
\right) }^{2}\ ,  \label{qo}
\end{equation}%
and finally that for each pair of cubes $\left( I,J\right) \in \mathcal{D}%
^{\sigma }\times \mathcal{D}^{\omega }$, there is a bounded function $\beta
_{I,J}$ supported in $I\setminus 2J$ satisfying%
\begin{equation*}
\left\Vert \beta _{I,J}\right\Vert _{\infty }\leq 1.
\end{equation*}%
Then with $\kappa \geq 1$ we have%
\begin{equation}
\sum_{\substack{ \left( F,J\right) \in \mathcal{F}\times \mathcal{D}^{\omega
}  \\ F\cap J=\emptyset \text{ and }\ell \left( J\right) \leq 2^{-\mathbf{%
\rho }}\ell \left( F\right) }}\left\vert \left\langle T_{\sigma }^{\alpha
}\left( \beta _{F,J}\mathbf{1}_{F}\alpha _{\mathcal{F}}\left( F\right)
\right) ,\bigtriangleup _{J;\kappa }^{\omega }g\right\rangle _{\omega
}\right\vert \lesssim \sqrt{A_{2}^{\alpha }}\left\Vert f\right\Vert
_{L^{2}\left( \sigma \right) }\left\Vert g\right\Vert _{L^{2}\left( \omega
\right) }.  \label{indicator far}
\end{equation}
\end{lemma}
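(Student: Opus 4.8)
The plan is to read (\ref{indicator far}) as a standard NTV-type \emph{long-range} (far) estimate, now carried out with Alpert wavelets in place of Haar wavelets, following the template of the Intertwining Proposition in \cite{SaShUr7}. The first step is to apply the Energy Lemma (Lemma \ref{ener}) termwise. Fix a pair $\left( F,J\right) $ occurring in the sum, so $F\cap J=\emptyset $ and $\ell \left( J\right) \leq 2^{-\mathbf{\rho }}\ell \left( F\right) $. The signed measure $\nu _{F,J}\equiv \beta _{F,J}\mathbf{1}_{F}\,\alpha _{\mathcal{F}}\left( F\right) \sigma $ is supported in $F\setminus 2J\subset \mathbb{R}^{n}\setminus \frac{3}{2}J$, while $\bigtriangleup _{J;\kappa }^{\omega }g$ is supported in $J$ with vanishing $\omega $-means of order less than $\kappa $; hence the hypotheses of Lemma \ref{ener} hold with $\gamma =\frac{3}{2}$, and its pivotal bound gives
\begin{equation*}
\left\vert \left\langle T_{\sigma }^{\alpha }\left( \beta _{F,J}\mathbf{1}_{F}\alpha _{\mathcal{F}}\left( F\right) \right) ,\bigtriangleup _{J;\kappa }^{\omega }g\right\rangle _{\omega }\right\vert \lesssim \left\vert \alpha _{\mathcal{F}}\left( F\right) \right\vert \,\mathrm{P}_{\kappa }^{\alpha }\left( J,\mathbf{1}_{F}\sigma \right) \sqrt{\left\vert J\right\vert _{\omega }}\left\Vert \bigtriangleup _{J;\kappa }^{\omega }g\right\Vert _{L^{2}\left( \omega \right) }.
\end{equation*}
Thus it suffices to bound $\sum _{\left( F,J\right) }\left\vert \alpha _{\mathcal{F}}\left( F\right) \right\vert \mathrm{P}_{\kappa }^{\alpha }\left( J,\mathbf{1}_{F}\sigma \right) \sqrt{\left\vert J\right\vert _{\omega }}\left\Vert \bigtriangleup _{J;\kappa }^{\omega }g\right\Vert _{L^{2}\left( \omega \right) }$ by $\sqrt{A_{2}^{\alpha }}\left\Vert f\right\Vert _{L^{2}\left( \sigma \right) }\left\Vert g\right\Vert _{L^{2}\left( \omega \right) }$.

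The second step is to organize this positive double sum by the scale ratio $\ell \left( F\right) /\ell \left( J\right) =2^{s}$, $s\geq \mathbf{\rho }$, and by the position of $J$ relative to $F$. Here goodness of $J$ does the work that doubling does elsewhere in the paper: since $J$ is $\left( \mathbf{r},\varepsilon \right) $-good, $F\cap J=\emptyset $ and $\ell \left( F\right) \geq 2^{\mathbf{r}}\ell \left( J\right) $, taking $I$ to be the dyadic cube of side $\ell \left( F\right) $ containing $J$ (necessarily disjoint from $F$) one gets $\limfunc{dist}\left( J,F\right) \geq \limfunc{dist}\left( J,\partial I\right) \gtrsim \ell \left( J\right) ^{\varepsilon }\ell \left( F\right) ^{1-\varepsilon }$; combining this separation with the shell decomposition of $\mathrm{P}_{\kappa }^{\alpha }\left( J,\mathbf{1}_{F}\sigma \right) $, the elementary bound $\mathrm{P}_{\kappa +\delta }^{\alpha }\leq \mathrm{P}_{\kappa }^{\alpha }$, and the Muckenhoupt estimate $\left\vert B\right\vert _{\sigma }\lesssim A_{2}^{\alpha }\left\vert B\right\vert ^{2\left( 1-\frac{\alpha }{n}\right) }\left\vert B\right\vert _{\omega }^{-1}$ applied to balls $B\supset J$, the standard manipulations of \cite{SaShUr7} produce a geometric gain $2^{-s\eta }$, some $\eta =\eta \left( \varepsilon ,n,\alpha ,\kappa \right) >0$, in each term of the $s$-decomposition (this is exactly what replaces, in the absence of doubling, the summability condition $\kappa >\theta +\alpha -n$). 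With this gain one sums first over $J$ for fixed $F$ by Cauchy--Schwarz, using $\sum _{J\in \mathcal{J}\left( F\right) }\left\Vert \bigtriangleup _{J;\kappa }^{\omega }g\right\Vert _{L^{2}\left( \omega \right) }^{2}\leq \left\Vert g\right\Vert _{L^{2}\left( \omega \right) }^{2}$, and then over $F$ by Cauchy--Schwarz, using the quasi-orthogonality hypothesis (\ref{qo}), $\sum _{F}\alpha _{\mathcal{F}}\left( F\right) ^{2}\left\vert F\right\vert _{\sigma }\leq \left\Vert f\right\Vert _{L^{2}\left( \sigma \right) }^{2}$, and the $\sigma $-Carleson property of $\mathcal{F}$, with the factor $2^{-s\eta }$ absorbing both the sum over scales and the unbounded multiplicity of the $\omega $-side projections.

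The Alpert-specific content is confined to the termwise estimate, which is supplied by the Monotonicity Lemma \ref{mono} above together with the $L^{\infty }$-control (\ref{analogue}) of Alpert expectations; the extra $\kappa -1$ vanishing moments of the Alpert wavelets only improve the Poisson exponents, so the argument for (\ref{delta far}) in Lemma \ref{standard delta} and the far estimates of \cite{SaShUr7} transfer with no new ideas. The main obstacle, and the only step needing genuine care, is the second one: a single good cube $J$ is disjoint from, and hence pairs with, infinitely many $F\in \mathcal{F}$, so a naive application of Cauchy--Schwarz destroys the $\ell ^{2}$-orthogonality on the $\omega $-side; this is resolved precisely by extracting the goodness-induced geometric factor $2^{-s\eta }$ \emph{before} invoking orthogonality rather than after, and by using the Carleson property of $\mathcal{F}$ to control the remaining overlap of the cubes $F$ at each fixed scale.
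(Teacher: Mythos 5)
Your sketch is correct and is essentially the argument the paper has in mind: the paper gives no proof of Lemma \ref{standard indicator} at all, stating only that it is ``virtually identical'' to the corresponding proof in \cite{SaShUr7}, and that proof is precisely the one you outline --- the termwise pivotal bound from the Energy Lemma \ref{ener} (legitimately applicable because $\beta _{F,J}$ is supported in $F\setminus 2J$), the goodness-induced separation $\limfunc{dist}\left( J,F\right) \gtrsim \ell \left( J\right) ^{\varepsilon }\ell \left( F\right) ^{1-\varepsilon }$ yielding geometric decay in the scale ratio, and then Cauchy--Schwarz against (\ref{qo}), the $\sigma $-Carleson condition and $A_{2}^{\alpha }$. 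You also correctly isolate the only delicate point, namely the unbounded multiplicity of cubes $F$ pairing with a fixed $J$, which is resolved exactly as you describe by extracting the geometric gain (in both the scale and the spatial shell parameters) before invoking orthogonality on the $\omega $-side.
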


The proof of (\ref{indicator far}) is again \textbf{virtually identical} to
the corresponding proof in \cite{SaShUr7}, and we will not repeat the
details here.

We will also need the following Poisson estimate, that is a straightforward
extension of the case $m=1$ due to NTV in \cite{NTV4}.

\begin{lemma}
\label{Poisson inequality}Fix $m\geq 1$. Suppose that $J\subset I\subset K$
and that $\limfunc{dist}\left( J,\partial I\right) >2\sqrt{n}\ell \left(
J\right) ^{\varepsilon }\ell \left( I\right) ^{1-\varepsilon }$. Then%
\begin{equation}
\mathrm{P}_{m}^{\alpha }(J,\sigma \mathbf{1}_{K\setminus I})\lesssim \left( 
\frac{\ell \left( J\right) }{\ell \left( I\right) }\right) ^{m-\varepsilon
\left( n+m-\alpha \right) }\mathrm{P}_{m}^{\alpha }(I,\sigma \mathbf{1}%
_{K\setminus I}).  \label{e.Jsimeq}
\end{equation}
\end{lemma}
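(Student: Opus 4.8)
The plan is to estimate the ratio of the two Poisson-type integrals by splitting the region $K \setminus I$ according to dyadic distance from $J$ versus from $I$, exactly as in the classical NTV argument for the case $m=1$, and then tracking the extra power of $\ell(J)/\ell(I)$ that the higher exponent $m$ produces. First I would recall that, by definition,
\begin{equation*}
\mathrm{P}_m^\alpha(J,\sigma\mathbf{1}_{K\setminus I}) = \int_{K\setminus I}\frac{\ell(J)^m}{\left(\ell(J)+|y-c_J|\right)^{m+n-\alpha}}\,d\sigma(y).
\end{equation*}
Since $y \in K\setminus I$ and $J\subset I$ with $\operatorname{dist}(J,\partial I) > 2\sqrt n\,\ell(J)^\varepsilon\ell(I)^{1-\varepsilon}$, we have the crucial lower bound $|y-c_J| \gtrsim \ell(J)^\varepsilon \ell(I)^{1-\varepsilon}$ for all such $y$, and moreover $|y-c_J| \approx |y-c_I|$ up to an additive error of $\ell(I)$, which is harmless because $|y-c_I|\gtrsim \ell(I)$ is impossible only near $\partial I$, where the goodness/separation hypothesis again saves us. The key step is therefore to observe that on $K\setminus I$,
\begin{equation*}
\frac{\ell(J)^m}{\left(\ell(J)+|y-c_J|\right)^{m+n-\alpha}} = \left(\frac{\ell(J)}{\ell(J)+|y-c_J|}\right)^{?}\cdot\frac{\ell(I)^m}{\left(\ell(I)+|y-c_I|\right)^{m+n-\alpha}}\cdot(\text{comparable factors}),
\end{equation*}
and to extract the gain by writing $\ell(J)/(\ell(J)+|y-c_J|) \lesssim \ell(J)/(\ell(J)^\varepsilon\ell(I)^{1-\varepsilon}) = (\ell(J)/\ell(I))^{1-\varepsilon}$ on the separated region.

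More concretely, I would carry it out as follows. Write $|y-c_J| \geq \operatorname{dist}(J,\partial I) > 2\sqrt n\,\ell(J)^\varepsilon\ell(I)^{1-\varepsilon}$ for $y\in K\setminus I$, so that $\ell(J)+|y-c_J| \approx |y-c_J|$, and also $|y-c_J| \leq |y-c_I| + \operatorname{diam} I \lesssim |y-c_I| + \ell(I) \approx \ell(I)+|y-c_I|$, while conversely $\ell(I)+|y-c_I| \lesssim \ell(I) + |y-c_J| + \ell(I) \lesssim |y-c_J|$ again because $|y-c_J| \gtrsim \ell(I)^{1-\varepsilon}\ell(J)^\varepsilon$ dominates $\ell(I)$ only after we also use $\ell(J)\le\ell(I)$; a cleaner route is to note $\ell(I)+|y-c_I| \approx \ell(I)+|y-c_J|$ with absolute constants since $|c_I-c_J| \le \operatorname{diam} I \lesssim \ell(I)$. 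Thus
\begin{equation*}
\frac{\ell(J)^m}{\left(\ell(J)+|y-c_J|\right)^{m+n-\alpha}} \approx \frac{\ell(J)^m}{\left(\ell(I)+|y-c_J|\right)^{m+n-\alpha}} = \left(\frac{\ell(J)}{\ell(I)}\right)^m \cdot \frac{\ell(I)^m}{\left(\ell(I)+|y-c_J|\right)^{m+n-\alpha}},
\end{equation*}
and then I would further write $\ell(I)+|y-c_J| \gtrsim \ell(J)^\varepsilon\ell(I)^{1-\varepsilon}$, so that $(\ell(J)/\ell(I))^m$ can be upgraded: peel off $\varepsilon(n+m-\alpha)$ powers of $(\ell(I)/(\ell(I)+|y-c_J|))$ and bound each by $(\ell(I)/(\ell(J)^\varepsilon\ell(I)^{1-\varepsilon}))^{\varepsilon(n+m-\alpha)} $, wait — more precisely, replace $\ell(I)^m$ by $\ell(I)^{m-\varepsilon(n+m-\alpha)}\cdot\ell(I)^{\varepsilon(n+m-\alpha)}$ and use $\ell(I)^{\varepsilon(n+m-\alpha)}\le (\ell(I)+|y-c_J|)^{\varepsilon(n+m-\alpha)}(\ell(I)/(\ell(J)^\varepsilon\ell(I)^{1-\varepsilon}))^{\varepsilon(n+m-\alpha)}$ on the separated region, which produces the factor $(\ell(J)/\ell(I))^{-\varepsilon(n+m-\alpha)}$; combining with $(\ell(J)/\ell(I))^m$ yields exactly $(\ell(J)/\ell(I))^{m-\varepsilon(n+m-\alpha)}$ times $\ell(I)^m/(\ell(I)+|y-c_J|)^{m+n-\alpha} \approx \ell(I)^m/(\ell(I)+|y-c_I|)^{m+n-\alpha}$. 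Integrating $d\sigma(y)$ over $K\setminus I$ then gives $\mathrm{P}_m^\alpha(I,\sigma\mathbf{1}_{K\setminus I})$ on the right, completing \eqref{e.Jsimeq}.

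The main obstacle — though a minor one — is making the comparison $\ell(J)+|y-c_J| \approx \ell(I)+|y-c_I|$ rigorous uniformly over $y\in K\setminus I$, since near $\partial I$ the quantity $|y-c_I|$ can be as small as $\ell(I)/2$ while $|y-c_J|$ could in principle be smaller; this is precisely where the separation hypothesis $\operatorname{dist}(J,\partial I) > 2\sqrt n\,\ell(J)^\varepsilon\ell(I)^{1-\varepsilon}$ is indispensable, forcing $|y-c_J| \gtrsim \ell(J)^\varepsilon\ell(I)^{1-\varepsilon}$ for every $y\notin I$, and this is what both keeps the two Poisson kernels comparable and supplies the decay gain. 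Everything else is the bookkeeping of exponents indicated above; the case $m=1$ is literally the NTV estimate, so I would simply remark that the same computation goes through verbatim with $1$ replaced by $m$ throughout.
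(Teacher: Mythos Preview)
Your overall strategy---a direct pointwise comparison of the two Poisson kernels on $K\setminus I$---is sound and in fact cleaner than the paper's route, which instead decomposes $\mathrm{P}_m^\alpha(J,\sigma\mathbf{1}_{K\setminus I})$ into dyadic annuli $2^kJ$, identifies the smallest $k_0$ with $2^{k_0}J\cap(K\setminus I)\neq\emptyset$ (so $2^{-k_0}\lesssim(\ell(J)/\ell(I))^{1-\varepsilon}$ by the separation hypothesis), splits the sum at $k_1$ where $2^{k_1}=\ell(I)/\ell(J)$, and estimates the two pieces separately. Your pointwise approach avoids this case split.

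However, there is a genuine slip in your execution. You assert
\[
\ell(J)+|y-c_J|\ \approx\ \ell(I)+|y-c_J|,
\]
but only the inequality $\ell(J)+|y-c_J|\le \ell(I)+|y-c_J|$ holds: the separation hypothesis gives $|y-c_J|\gtrsim \ell(J)^\varepsilon\ell(I)^{1-\varepsilon}$, which can be much smaller than $\ell(I)$ when $\ell(J)\ll\ell(I)$, so the reverse comparison fails precisely in the regime you care about. (You half-notice this yourself in the ``conversely\dots'' sentence and then retreat to a weaker statement.) Consequently your displayed $\approx$ goes the wrong way for the upper bound you need.

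The fix is immediate and keeps your argument intact: use instead $\ell(J)+|y-c_J|\approx|y-c_J|$ (which \emph{is} correct from the separation bound), so the $J$-kernel is $\approx \ell(J)^m/|y-c_J|^{m+n-\alpha}$. Then compare to the $I$-kernel $\approx \ell(I)^m/(\ell(I)+|y-c_J|)^{m+n-\alpha}$ via
\[
\frac{\ell(I)+|y-c_J|}{|y-c_J|}\ =\ 1+\frac{\ell(I)}{|y-c_J|}\ \lesssim\ 1+\frac{\ell(I)}{\ell(J)^\varepsilon\ell(I)^{1-\varepsilon}}\ \lesssim\ \Bigl(\frac{\ell(I)}{\ell(J)}\Bigr)^{\varepsilon},
\]
which raised to the power $m+n-\alpha$ gives exactly the loss $(\ell(I)/\ell(J))^{\varepsilon(m+n-\alpha)}$; combined with the trivial $(\ell(J)/\ell(I))^m$ this yields the claimed exponent $m-\varepsilon(n+m-\alpha)$. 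This is the clean version of the ``peeling'' you were groping toward.
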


\begin{proof}
We have%
\begin{equation*}
\mathrm{P}_{m}^{\alpha }\left( J,\sigma \chi _{K\setminus I}\right) \approx
\sum_{k=0}^{\infty }2^{-km}\frac{1}{\left\vert 2^{k}J\right\vert ^{1-\frac{%
\alpha }{n}}}\int_{\left( 2^{k}J\right) \cap \left( K\setminus I\right)
}d\sigma ,
\end{equation*}%
and $\left( 2^{k}J\right) \cap \left( K\setminus I\right) \neq \emptyset $
requires%
\begin{equation*}
\limfunc{dist}\left( J,K\setminus I\right) \leq c2^{k}\ell \left( J\right) ,
\end{equation*}%
for some dimensional constant $c>0$. Let $k_{0}$ be the smallest such $k$.
By our distance assumption we must then have%
\begin{equation*}
2\sqrt{n}\ell \left( J\right) ^{\varepsilon }\ell \left( I\right)
^{1-\varepsilon }\leq \limfunc{dist}\left( J,\partial I\right) \leq
c2^{k_{0}}\ell \left( J\right) ,
\end{equation*}%
or%
\begin{equation*}
2^{-k_{0}-1}\leq c\left( \frac{\ell \left( J\right) }{\ell \left( I\right) }%
\right) ^{1-\varepsilon }.
\end{equation*}%
Now let $k_{1}$ be defined by $2^{k_{1}}\equiv \frac{\ell \left( I\right) }{%
\ell \left( J\right) }$. Then assuming $k_{1}>k_{0}$ (the case $k_{1}\leq
k_{0}$ is similar) we have%
\begin{eqnarray*}
\mathrm{P}_{m}^{\alpha }\left( J,\sigma \chi _{K\setminus I}\right) &\approx
&\left\{ \sum_{k=k_{0}}^{k_{1}}+\sum_{k=k_{1}}^{\infty }\right\} 2^{-km}%
\frac{1}{\left\vert 2^{k}J\right\vert ^{1-\frac{\alpha }{n}}}\int_{\left(
2^{k}J\right) \cap \left( K\setminus I\right) }d\sigma \\
&\lesssim &2^{-k_{0}m}\frac{\left\vert I\right\vert ^{1-\frac{\alpha }{n}}}{%
\left\vert 2^{k_{0}}J\right\vert ^{1-\frac{\alpha }{n}}}\left( \frac{1}{%
\left\vert I\right\vert ^{1-\frac{\alpha }{n}}}\int_{\left(
2^{k_{1}}J\right) \cap \left( K\setminus I\right) }d\sigma \right)
+2^{-k_{1}m}\mathrm{P}_{m}^{\alpha }\left( I,\sigma \chi _{\setminus
I}\right) \\
&\lesssim &\left( \frac{\ell \left( J\right) }{\ell \left( I\right) }\right)
^{\left( 1-\varepsilon \right) \left( n+m-\alpha \right) }\left( \frac{\ell
\left( I\right) }{\ell \left( J\right) }\right) ^{n-\alpha }\mathrm{P}%
_{m}^{\alpha }\left( I,\sigma \chi _{K\setminus I}\right) +\left( \frac{\ell
\left( J\right) }{\ell \left( I\right) }\right) ^{m}\mathrm{P}_{m}^{\alpha
}\left( I,\sigma \chi _{K\setminus I}\right) ,
\end{eqnarray*}%
which is the inequality (\ref{e.Jsimeq}).
\end{proof}

\subsubsection{Stopping data}

Next we review the notion of stopping data from \cite{LaSaShUr3}.

\begin{definition}
\label{general stopping data}Suppose we are given a positive constant $%
C_{0}\geq 4$, a subset $\mathcal{F}$ of the dyadic quasigrid $\mathcal{D}$
(called the stopping times), and a corresponding sequence $\alpha _{\mathcal{%
F}}\equiv \left\{ \alpha _{\mathcal{F}}\left( F\right) \right\} _{F\in 
\mathcal{F}}$ of nonnegative numbers $\alpha _{\mathcal{F}}\left( F\right)
\geq 0$ (called the stopping data). Let $\left( \mathcal{F},\prec ,\pi _{%
\mathcal{F}}\right) $ be the tree structure on $\mathcal{F}$ inherited from $%
\mathcal{D}$, and for each $F\in \mathcal{F}$ denote by $\mathcal{C}_{%
\mathcal{F}}\left( F\right) =\left\{ I\in \mathcal{D}:\pi _{\mathcal{F}%
}I=F\right\} $ the corona associated with $F$: 
\begin{equation*}
\mathcal{C}_{\mathcal{F}}\left( F\right) =\left\{ I\in \mathcal{D}:I\subset F%
\text{ and }I\not\subset F^{\prime }\text{ for any }F^{\prime }\prec
F\right\} .
\end{equation*}%
We say the triple $\left( C_{0},\mathcal{F},\alpha _{\mathcal{F}}\right) $
constitutes \emph{stopping data} for a function $f\in L_{loc}^{1}\left( \mu
\right) $ if

\begin{enumerate}
\item $\mathbb{E}_{I}^{\mu }\left\vert f\right\vert \leq \alpha _{\mathcal{F}%
}\left( F\right) $ for all $I\in \mathcal{C}_{F}$ and $F\in \mathcal{F}$,

\item $\sum_{F^{\prime }\preceq F}\left\vert F^{\prime }\right\vert _{\mu
}\leq C_{0}\left\vert F\right\vert _{\mu }$ for all $F\in \mathcal{F}$,

\item $\sum_{F\in \mathcal{F}}\alpha _{\mathcal{F}}\left( F\right)
^{2}\left\vert F\right\vert _{\mu }\mathbf{\leq }C_{0}^{2}\left\Vert
f\right\Vert _{L^{2}\left( \mu \right) }^{2}$,

\item $\alpha _{\mathcal{F}}\left( F\right) \leq \alpha _{\mathcal{F}}\left(
F^{\prime }\right) $ whenever $F^{\prime },F\in \mathcal{F}$ with $F^{\prime
}\subset F$,

\item $\left\Vert \sum_{F\in \mathcal{F}}\alpha _{\mathcal{F}}\left(
F\right) \mathbf{1}_{F}\right\Vert _{L^{2}\left( \mu \right) }^{2}\leq
C_{0}^{\prime }\left\Vert f\right\Vert _{L^{2}\left( \mu \right) }^{2}$.
\end{enumerate}
\end{definition}

\begin{definition}
If $\left( C_{0},\mathcal{F},\alpha _{\mathcal{F}}\right) $ constitutes
stopping data for a function $f\in L_{loc}^{1}\left( \mu \right) $, we refer
to the othogonal weighted Alpert decomposition 
\begin{equation*}
f=\sum_{F\in \mathcal{F}}\mathsf{P}_{\mathcal{C}_{\mathcal{F}}\left(
F\right) }^{\mu }f;\ \ \ \ \ \mathsf{P}_{\mathcal{C}_{\mathcal{F}}\left(
F\right) }^{\mu }f\equiv \sum_{I\in \mathcal{C}_{\mathcal{F}}\left( F\right)
}\bigtriangleup _{I;\kappa }^{\mu }f,
\end{equation*}%
as the \emph{corona decomposition} of $f$ associated with the stopping times 
$\mathcal{F}$.
\end{definition}

It is often convenient to extend the definition of $\alpha _{\mathcal{F}}$
from $\mathcal{F}$ to the entire grid $\mathcal{D}$ by setting%
\begin{equation*}
\alpha _{\mathcal{F}}\left( I\right) \equiv \sup_{F\in \mathcal{F}:\
F\supset I}\alpha _{\mathcal{F}}\left( F\right) .
\end{equation*}%
When we wish to emphasize the dependence of $\alpha _{\mathcal{F}}$ on $f$
we will write $\alpha _{\mathcal{F};f}$.

\begin{description}
\item[Comments on stopping data] Property (1) says that $\alpha _{\mathcal{F}%
}\left( F\right) $ bounds the averages of $f$ in the corona $\mathcal{C}_{F}$%
, and property (2) says that the cubes at the tops of the coronas satisfy a
Carleson condition relative to the weight $\mu $. Note that a standard
`maximal cube' argument extends the Carleson condition in property (2) to
the inequality%
\begin{equation*}
\sum_{F^{\prime }\in \mathcal{F}:\ F^{\prime }\subset A}\left\vert F^{\prime
}\right\vert _{\mu }\leq C_{0}\left\vert A\right\vert _{\mu }\text{ for all
open sets }A\subset \mathbb{R}^{n}.
\end{equation*}%
Property (3) is the quasiorthogonality condition that says the sequence of
functions $\left\{ \alpha _{\mathcal{F}}\left( F\right) \mathbf{1}%
_{F}\right\} _{F\in \mathcal{F}}$ is in the vector-valued space $L^{2}\left(
\ell ^{2};\mu \right) $, and property (4) says that the control on stopping
data is nondecreasing on the stopping tree $\mathcal{F}$ . (For the Calder%
\'{o}n-Zgumund stopping times above, we have the stronger property that $%
\alpha _{\mathcal{F}}\left( F^{\prime }\right) >C_{0}\alpha _{\mathcal{F}%
}\left( F\right) $ when $F^{\prime }$ is an $\mathcal{F}$-child of $F$, and
this stronger property implies both (2) and (3).) Finally, property (5) is a
consequence of (2) and (3) that says the sequence $\left\{ \alpha _{\mathcal{%
F}}\left( F\right) \mathbf{1}_{F}\right\} _{F\in \mathcal{F}}$ has a \emph{%
quasiorthogonal} property relative to $f$ with a constant $C_{0}^{\prime }$
depending only on $C_{0}$. Indeed, the Carleson condition (2) implies a
geometric decay in levels of the tree $\mathcal{F}$, namely that there are
positive constants $C_{1}$ and $\varepsilon $, depending on $C_{0}$, such
that if $\mathfrak{C}_{\mathcal{F}}^{\left( m\right) }\left( F\right) $
denotes the set of $m^{th}$ generation children of $F$ in $\mathcal{F}$,%
\begin{equation*}
\sum_{F^{\prime }\in \mathfrak{C}_{\mathcal{F}}^{\left( m\right) }\left(
F\right) :\ }\left\vert F^{\prime }\right\vert _{\mu }\leq \left(
C_{1}2^{-\varepsilon m}\right) ^{2}\left\vert F\right\vert _{\mu },\ \ \ \ \ 
\text{for all }m\geq 0\text{ and }F\in \mathcal{F},
\end{equation*}%
and the proof of Property (5) follows from this in a standard way, see e.g. 
\cite{SaShUr7}.
\end{description}

Define Alpert corona projections%
\begin{equation*}
\mathsf{P}_{\mathcal{C}_{\mathcal{F}}\left( F\right) }^{\sigma }\equiv
\sum_{I\in \mathcal{C}_{\mathcal{F}}\left( F\right) }\bigtriangleup
_{I;\kappa _{1}}^{\sigma }\text{ and }\mathsf{P}_{\mathcal{C}_{\mathcal{F}}^{%
\mathbf{\tau }-\limfunc{shift}}\left( F\right) }^{\omega }\equiv \sum_{J\in 
\mathcal{C}_{\mathcal{F}}^{\mathbf{\tau }-\limfunc{shift}}\left( F\right)
}\bigtriangleup _{J;\kappa _{2}}^{\omega }\ ,
\end{equation*}%
where%
\begin{eqnarray*}
\mathcal{C}_{\mathcal{F}}^{\mathbf{\tau }-\limfunc{shift}}\left( F\right)
&\equiv &\left[ \mathcal{C}_{\mathcal{F}}\left( F\right) \setminus \mathcal{N%
}_{\mathcal{D}}^{\mathbf{\tau }}\left( F\right) \right] \cup
\dbigcup\limits_{F^{\prime }\in \mathcal{F}}\mathcal{N}_{\mathcal{D}}^{%
\mathbf{\tau }}\left( F^{\prime }\right) ; \\
\text{where }\mathcal{N}_{\mathcal{D}}^{\mathbf{\tau }}\left( E\right)
&\equiv &\left\{ J\in \mathcal{D}:J\subset E\text{ and }\ell \left( J\right)
\geq 2^{\mathbf{\tau }}\ell \left( E\right) \right\} .
\end{eqnarray*}%
Thus the \emph{shifted} corona $\mathcal{C}_{\mathcal{F}}^{\mathbf{\tau }-%
\limfunc{shift}}\left( F\right) $ has the top $\mathbf{\tau }$ levels from $%
\mathcal{C}_{\mathcal{F}}\left( F\right) $ removed, and includes the first $%
\mathbf{\tau }$ levels from each of its $\mathcal{F}$-children, even if some
of them were initially removed. Keep in mind that we are restricting the
Alpert supports of $f$ and $g$ to $\limfunc{good}$ functions so that%
\begin{equation*}
\mathsf{P}_{\mathcal{C}_{\mathcal{F}}\left( F\right) }^{\sigma }f=\sum_{I\in 
\mathcal{C}_{\mathcal{F}}^{\limfunc{good}}\left( F\right) }\bigtriangleup
_{I;\kappa _{1}}^{\sigma }\text{ and }\mathsf{P}_{\mathcal{C}_{\mathcal{F}}^{%
\mathbf{\tau }-\limfunc{shift}}\left( F\right) }^{\omega }g=\sum_{J\in 
\mathcal{C}_{\mathcal{F}}^{\limfunc{good},\mathbf{\tau }-\limfunc{shift}%
}\left( F\right) }\bigtriangleup _{J;\kappa _{2}}^{\omega }\ ,
\end{equation*}%
where $\mathcal{C}_{\mathcal{F}}^{\limfunc{good}}\left( F\right) \equiv 
\mathcal{C}_{\mathcal{F}}\left( F\right) \cap \mathcal{D}^{\limfunc{good}}$
and $\mathcal{C}_{\mathcal{F}}^{\limfunc{good},\mathbf{\tau }-\limfunc{shift}%
}\left( F\right) \equiv \mathcal{C}_{\mathcal{F}}^{\mathbf{\tau }-\limfunc{%
shift}}\left( F\right) \cap \mathcal{D}^{\limfunc{good}}$. Note also that we
suppress the integers $\kappa _{1}$ and $\kappa _{2}$ from the notation for
the corona projections $\mathsf{P}_{\mathcal{C}_{\mathcal{F}}\left( F\right)
}^{\sigma }$ and $\mathsf{P}_{\mathcal{C}_{\mathcal{F}}^{\mathbf{\tau }-%
\limfunc{shift}}\left( F\right) }^{\omega }$. Finally note that we do not
assume that $\sigma $ is doubling for the next proposition, although the
assumptions come close to forcing this.

\subsubsection{The main Intertwining Proposition}

Here now is the Intertwining Proposition with a proof obtained by adapting
the argument in Nazarov, Treil and Volberg \cite{NTV4} to the argument in 
\cite{SaShUr7}, and using weaker pivotal conditions with Alpert wavelets.
Recall that $0<\varepsilon <1$ and $\mathbf{r}$ is chosen sufficiently large
depending on $\varepsilon $. Later, in using the Intertwining Proposition to
control the Far form in Subsubsection \ref{Subsubsection 6.6.1}\ below, we
will need to resolve the difference between the shifted coronas used here
and the parallel coronas used there.

\begin{proposition}[The Intertwining Proposition]
\label{strongly adapted}Suppose that $\mathcal{F}$ is $\sigma $-Carleson,
that $\left( C_{0},\mathcal{F},\alpha _{\mathcal{F};f}\right) $ constitutes
stopping data for $f$ for all $f\in L^{2}\left( \sigma \right) $, and that%
\begin{equation*}
\left\Vert \bigtriangleup _{I;\kappa _{1}}^{\sigma }f\right\Vert _{L^{\infty
}\left( \sigma \right) }\leq C\alpha _{\mathcal{F};f}\left( I\right) ,\ \ \
\ \ f\in L^{2}\left( \sigma \right) ,I\in \mathcal{D}.
\end{equation*}%
Then for $\limfunc{good}$ functions $f\in L^{2}\left( \sigma \right) $ and $%
g\in L^{2}\left( \omega \right) $, and with $\kappa _{1},\kappa _{2}\geq 1$,
we have%
\begin{equation*}
\left\vert \sum_{F\in \mathcal{F}}\ \sum_{I:\ I\supsetneqq F}\ \left\langle
T_{\sigma }^{\alpha }\bigtriangleup _{I;\kappa _{1}}^{\sigma }f,\mathsf{P}_{%
\mathcal{C}_{\mathcal{F}}^{\mathbf{\tau }-\limfunc{shift}}\left( F\right)
}^{\omega }g\right\rangle _{\omega }\right\vert \lesssim \left( \mathcal{V}%
_{2}^{\alpha ,\kappa _{1}}+\sqrt{A_{2}^{\alpha }}+\mathfrak{T}_{T^{\alpha
}}^{\left( \kappa _{1}\right) }\right) \ \left\Vert f\right\Vert
_{L^{2}\left( \sigma \right) }\left\Vert g\right\Vert _{L^{2}\left( \omega
\right) }.
\end{equation*}
\end{proposition}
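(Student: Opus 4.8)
The plan is to adapt the proof of the Intertwining Proposition from \cite{SaShUr7} (Proposition 9.4 there), but replacing the functional energy argument with the weaker $\kappa_1^{th}$-order pivotal condition $\mathcal{V}_2^{\alpha,\kappa_1}$, and using the $L^\infty$ control of Alpert projections $\left\Vert \bigtriangleup_{I;\kappa_1}^\sigma f\right\Vert_{L^\infty(\sigma)}\lesssim \alpha_{\mathcal{F};f}(I)$ from (\ref{analogue}). First I would fix $F\in\mathcal{F}$ and, for each cube $I\supsetneqq F$, split the interaction of $\bigtriangleup_{I;\kappa_1}^\sigma f$ with the corona projection $\mathsf{P}_{\mathcal{C}_{\mathcal{F}}^{\mathbf{\tau}-\limfunc{shift}}(F)}^\omega g$ according to whether the mass of $\bigtriangleup_{I;\kappa_1}^\sigma f$ lies on the child $\theta_F(I)$ of $I$ containing $F$ or on the other children. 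For the children not containing $F$, the relevant part of $\bigtriangleup_{I;\kappa_1}^\sigma f$ is supported outside $2F$ (after absorbing finitely many scales using goodness and $\mathbf{r}$ large), so this piece is handled by the Energy Lemma \ref{ener} applied to the measure $\nu = \mathbf{1}_{I\setminus\theta_F(I)}\,\bigtriangleup_{I;\kappa_1}^\sigma f\,\sigma$, producing the Poisson factor $\mathrm{P}_\kappa^\alpha(F,\nu)/\ell(F)^{\kappa/n}$ times an energy term $\left\Vert\mathsf{P}_F^\omega x\right\Vert_{L^2(\omega)}$ and a remainder controlled by $\left\Vert|x-m_F^\kappa|^\kappa\right\Vert_{L^2(\mathbf{1}_F\omega)}$.

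Next, the key move is to telescope. Using the telescoping identity (\ref{telescoping}), the sum $\sum_{I:\,I\supsetneqq F}$ of the pieces supported near $F$ collapses to a difference of the polynomial expectations $\mathbb{E}_{Q;\kappa_1}^\sigma$, which by (\ref{analogue}) are bounded in $L^\infty$ by $\mathbb{E}_F^\sigma|f| \le \alpha_{\mathcal{F}}(F)$ (using that $F$ lies in its own corona or the relevant super-corona). This reduces the "near" portion to estimating $\left\langle T_\sigma^\alpha(\mathbf{1}_{F^\flat}P), \mathsf{P}_{\mathcal{C}_{\mathcal{F}}^{\mathbf{\tau}-\limfunc{shift}}(F)}^\omega g\right\rangle_\omega$ for a normalized polynomial $P$ and a suitable cube $F^\flat$, which is where the $\kappa_1$-Cube Testing constant $\mathfrak{T}_{T^\alpha}^{(\kappa_1)}$ enters (one splits $\mathbf{1}_{F^\flat}$-testing against the corona, and uses $A_2^\alpha$ for the tails). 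For the "far" (Energy Lemma) portion, I would sum over $F\in\mathcal{F}$: by Cauchy-Schwarz in $F$, the sum of squared Poisson-energy terms is bounded by $\sum_F \mathrm{P}_\kappa^\alpha(F,\nu_F)^2\left\vert F\right\vert_\omega/\ell(F)^{2\kappa/n}\cdot(\dots)$, and this is precisely where the $\kappa_1^{th}$-order pivotal condition $(\mathcal{V}_2^{\alpha,\kappa_1})^2$ is invoked — applied to the subdecomposition of each $F$ by the cubes $I\supsetneqq F$ reorganized as disjoint subcubes — yielding $\lesssim (\mathcal{V}_2^{\alpha,\kappa_1})^2\left\Vert f\right\Vert_{L^2(\sigma)}^2$ after using quasiorthogonality (property (3)/(5) of stopping data) to control $\sum_F\alpha_{\mathcal{F}}(F)^2\left\vert F\right\vert_\sigma$. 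The $\omega$-side factor $\sum_F\left\Vert\mathsf{P}_{\mathcal{C}_{\mathcal{F}}^{\mathbf{\tau}-\limfunc{shift}}(F)}^\omega g\right\Vert_{L^2(\omega)}^2 \le \left\Vert g\right\Vert_{L^2(\omega)}^2$ by orthogonality of the shifted coronas (which partition the good part of $\mathcal{D}$).

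The remaining bookkeeping is the standard NTV interchange: organizing the double sum over $(F,I)$ with $I\supsetneqq F$ so that for fixed $I$ the cubes $F\in\mathcal{F}$ with $F\subsetneqq I$ are handled, using the Carleson property of $\mathcal{F}$ (property (2)) to sum geometrically, and using goodness to ensure $J\in\mathcal{C}_{\mathcal{F}}^{\mathbf{\tau}-\limfunc{shift}}(F)$ is deep inside $F$ relative to $I$ so the Poisson decay estimate Lemma \ref{Poisson inequality} applies with a genuine gain $\left(\ell(J)/\ell(I)\right)^{\kappa_1-\varepsilon(n+\kappa_1-\alpha)}$ — this gain is what makes the geometric series in scales converge. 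The main obstacle, I expect, is the resolution of the mismatch flagged in the paper between the \emph{shifted} coronas $\mathcal{C}_{\mathcal{F}}^{\mathbf{\tau}-\limfunc{shift}}(F)$ used here and the pivotal/energy estimates: one must carefully check that removing the top $\mathbf{\tau}$ levels and adding levels from $\mathcal{F}$-children does not destroy either the orthogonality used on the $g$-side or the support separation needed for the Energy Lemma, and that $\mathbf{\tau}$ can be chosen (depending on $\mathbf{r}$, $\varepsilon$) so that all the geometric sums still converge; this is exactly the point where \cite{SaShUr7}'s argument needs the most care in the Alpert setting, since the polynomial (rather than constant) nature of $\mathbb{E}_{Q;\kappa_1}^\sigma$ forces us to track the extra factors $\left\Vert|x-m_F^\kappa|^\kappa\right\Vert_{L^2(\mathbf{1}_F\omega)}$ and $\left\Vert\mathsf{P}_F^\omega x\right\Vert_{L^2(\omega)}$, both of which are $\lesssim \ell(F)^\kappa\sqrt{\left\vert F\right\vert_\omega}$ by construction and hence absorbable into the pivotal constant.
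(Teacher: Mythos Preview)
Your outline matches the paper's strategy in broad strokes --- telescope the tower $\{I: I\supsetneqq F\}$ to obtain a polynomial on $F$ (handled by $\mathfrak{T}_{T^\alpha}^{(\kappa_1)}$) plus a piece supported on $F^c$ that is bounded on each $\mathcal{F}$-annulus $\pi_\mathcal{F}^{m+1}F\setminus\pi_\mathcal{F}^m F$ by $\alpha_\mathcal{F}(\pi_\mathcal{F}^{m+1}F)$, and then invoke the Energy Lemma together with the pivotal condition. However, your description of how the pivotal condition enters is backwards. You write ``applied to the subdecomposition of each $F$ by the cubes $I\supsetneqq F$ reorganized as disjoint subcubes,'' but the cubes $I\supsetneqq F$ are \emph{supercubes} of $F$, and the pivotal condition (\ref{both pivotal k}) applies to a cube $Q$ decomposed into disjoint \emph{subcubes} $Q_r$. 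The correct move --- and the one the paper carries out --- is to reindex the double sum $\sum_m\sum_{F\in\mathcal{F}}$ by the ancestor $F^*=\pi_\mathcal{F}^{m+1}F$: for fixed $F^*$, the $\mathcal{F}$-grandchildren $F''\in\mathfrak{C}_\mathcal{F}^{(2)}(F^*)$ (or rather the maximal good $(\mathbf{r},\varepsilon)$-deep subcubes therein) form the subdecomposition to which $\mathcal{V}_2^{\alpha,\kappa_1}$ is applied, giving $\sum_{F''}\mathrm{P}_{\kappa_1}^\alpha(F'',\mathbf{1}_{F^*\setminus\pi_\mathcal{F}F''}\sigma)^2|F''|_\omega\leq(\mathcal{V}_2^{\alpha,\kappa_1})^2|F^*|_\sigma$. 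Only after this reindexing does quasiorthogonality $\sum_{F^*}\alpha_\mathcal{F}(F^*)^2|F^*|_\sigma\lesssim\|f\|_{L^2(\sigma)}^2$ close the estimate; the geometric gain from Lemma~\ref{Poisson inequality} is what makes the remaining sum over $m$ converge, but it does not substitute for the reindexing.

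A second point that your sketch glosses over: the Energy Lemma output splits into a piece governed by $\|\bigtriangleup_{J;\kappa}^\omega x^\beta\|_{L^2(\omega)}$ (orthogonal in $J$) and a piece governed by $\||x-m_J^\kappa|^\kappa\|_{L^2(\mathbf{1}_J\omega)}$ (which is \emph{not} orthogonal in $J$). For the latter, the paper exploits the extra $\delta'$ in the Poisson order $\mathrm{P}_{\kappa_1+\delta'}^\alpha$ together with goodness to extract geometric decay in the chain of $\mathcal{F}$-ancestors \emph{before} invoking the pivotal estimate; your claim that this term is ``$\lesssim\ell(F)^\kappa\sqrt{|F|_\omega}$ and hence absorbable'' is correct pointwise but does not by itself give summability over the ancestors. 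Finally, the paper separates a piece $\varphi_F$ (the restrictions of $\bigtriangleup_{I_k}^\sigma f$ to those siblings $\theta(I_k)$ that happen to lie in $\mathcal{F}$) and handles it directly by Lemma~\ref{standard indicator} with the $\sqrt{A_2^\alpha}$ constant; this step is not strictly forced, since $|\varphi_F|$ obeys the same annulus bound and could be absorbed into the Energy Lemma argument, but it explains the explicit appearance of $\sqrt{A_2^\alpha}$ in the conclusion.
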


\begin{proof}
We write the left hand side of the display above as%
\begin{equation*}
\sum_{F\in \mathcal{F}}\ \sum_{I:\ I\supsetneqq F}\ \left\langle T_{\sigma
}^{\alpha }\bigtriangleup _{I;\kappa _{1}}^{\sigma }f,g_{F}\right\rangle
_{\omega }=\sum_{F\in \mathcal{F}}\ \left\langle T_{\sigma }^{\alpha }\left(
\sum_{I:\ I\supsetneqq F}\bigtriangleup _{I;\kappa _{1}}^{\sigma }f\right)
,g_{F}\right\rangle _{\omega }\equiv \sum_{F\in \mathcal{F}}\ \left\langle
T_{\sigma }^{\alpha }f_{F},g_{F}\right\rangle _{\omega }\ ,
\end{equation*}%
where%
\begin{equation*}
g_{F}=\mathsf{P}_{\mathcal{C}_{\mathcal{F}}^{\mathbf{\tau }-\limfunc{shift}%
}\left( F\right) }^{\omega }g\text{ and }f_{F}\equiv \sum_{I:\ I\supsetneqq
F}\bigtriangleup _{I;\kappa _{1}}^{\sigma }f\ .
\end{equation*}%
Note that $g_{F}$ is supported in $F$, and that $f_{F}$ is the restriction
of a polynomial of degree less than $\kappa $ to $F$. We next observe that
the cubes $I$ occurring in this sum are linearly and consecutively ordered
by inclusion, along with the cubes $F^{\prime }\in \mathcal{F}$ that contain 
$F$. More precisely, we can write%
\begin{equation*}
F\equiv F_{0}\subsetneqq F_{1}\subsetneqq F_{2}\subsetneqq ...\subsetneqq
F_{n}\subsetneqq F_{n+1}\subsetneqq ...F_{N}
\end{equation*}%
where $F_{m}=\pi _{\mathcal{F}}^{m}F$ is the $m^{th}$ ancestor of $F$ in the
tree $\mathcal{F}$ for all $m\geq 1$. We can also write%
\begin{equation*}
F=F_{0}\subsetneqq I_{1}\subsetneqq I_{2}\subsetneqq ...\subsetneqq
I_{k}\subsetneqq I_{k+1}\subsetneqq ...\subsetneqq I_{K}=F_{N}
\end{equation*}%
where $I_{k}=\pi _{\mathcal{D}}^{k}F$ is the $k^{th}$ ancestor of $F$ in the
tree $\mathcal{D}$ for all $k\geq 1$. There is a (unique) subsequence $%
\left\{ k_{m}\right\} _{m=1}^{N}$ such that%
\begin{equation*}
F_{m}=I_{k_{m}},\ \ \ \ \ 1\leq m\leq N.
\end{equation*}%
Then we have%
\begin{equation*}
f_{F}\left( x\right) =\sum_{\ell =1}^{\infty }\bigtriangleup _{I_{\ell
};\kappa _{1}}^{\sigma }f\left( x\right) .
\end{equation*}

Assume now that $k_{m}\leq k<k_{m+1}$. We denote the $2^{n}-1$ siblings of $%
I $ by $\theta \left( I\right) $, $\theta \in \Theta $, i.e. $\left\{ \theta
\left( I\right) \right\} _{\theta \in \Theta }=\mathfrak{C}_{\mathcal{D}%
}\left( \pi _{\mathcal{D}}I\right) \setminus \left\{ I\right\} $. There are
two cases to consider here:%
\begin{equation*}
\theta \left( I_{k}\right) \notin \mathcal{F}\text{ and }\theta \left(
I_{k}\right) \in \mathcal{F}.
\end{equation*}%
Suppose first that $\theta \left( I_{k}\right) \notin \mathcal{F}$. Then $%
\theta \left( I_{k}\right) \in \mathcal{C}_{F_{m+1}}^{\sigma }$ and using a
telescoping sum, we compute that for 
\begin{equation*}
x\in \theta \left( I_{k}\right) \subset I_{k+1}\setminus I_{k}\subset
F_{m+1}\setminus F_{m},
\end{equation*}%
we have 
\begin{equation*}
\left\vert f_{F}\left( x\right) \right\vert =\left\vert \sum_{\ell
=k}^{\infty }\bigtriangleup _{I_{\ell };\kappa _{1}}^{\sigma }f\left(
x\right) \right\vert =\left\vert \mathbb{E}_{\theta \left( I_{k}\right)
}^{\sigma }f\left( x\right) -\mathbb{E}_{I_{K}}^{\sigma }f\left( x\right)
\right\vert \lesssim E_{F_{m+1}}^{\sigma }\left\vert f\right\vert \ ,
\end{equation*}%
by (\ref{analogue}).

On the other hand, if $\theta \left( I_{k}\right) \in \mathcal{F}$, then $%
I_{k+1}\in \mathcal{C}_{F_{m+1}}^{\sigma }$ and we have for $x\in \theta
\left( I_{k}\right) $ that%
\begin{equation*}
\left\vert f_{F}\left( x\right) -\bigtriangleup _{\theta \left( I_{k}\right)
;\kappa _{1}}^{\sigma }f\left( x\right) \right\vert =\left\vert \sum_{\ell
=k+1}^{\infty }\bigtriangleup _{I_{\ell };\kappa _{1}}^{\sigma }f\left(
x\right) \right\vert =\left\vert \mathbb{E}_{I_{k+1};\kappa _{1}}^{\sigma
}f\left( x\right) -\mathbb{E}_{I_{K};\kappa _{1}}^{\sigma }f\left( x\right)
\right\vert \lesssim E_{F_{m+1}}^{\sigma }\left\vert f\right\vert \ ,
\end{equation*}%
by (\ref{analogue}) again. Now we write%
\begin{eqnarray*}
f_{F} &=&\varphi _{F}+\psi _{F}, \\
\text{where }\varphi _{F} &\equiv &\sum_{1\leq k<\infty ,\theta :\ \theta
\left( I_{k}\right) \in \mathcal{F}}\mathbf{1}_{\theta \left( I_{k}\right)
}\bigtriangleup _{I_{k};\kappa _{1}}^{\sigma }f\text{ and }\psi
_{F}=f_{F}-\varphi _{F}\ ; \\
\sum_{F\in \mathcal{F}}\ \left\langle T_{\sigma }^{\alpha
}f_{F},g_{F}\right\rangle _{\omega } &=&\sum_{F\in \mathcal{F}}\
\left\langle T_{\sigma }^{\alpha }\varphi _{F},g_{F}\right\rangle _{\omega
}+\sum_{F\in \mathcal{F}}\ \left\langle T_{\sigma }^{\alpha }\psi
_{F},g_{F}\right\rangle _{\omega }\ .
\end{eqnarray*}%
We can apply (\ref{indicator far}) using $\theta \left( I_{k}\right) \in 
\mathcal{F}$ to the first sum here to obtain%
\begin{eqnarray*}
\left\vert \sum_{F\in \mathcal{F}}\ \left\langle T_{\sigma }^{\alpha
}\varphi _{F},g_{F}\right\rangle _{\omega }\right\vert &\lesssim &\sqrt{%
A_{2}^{\alpha }}\ \left\Vert \sum_{F\in \mathcal{F}}\varphi _{F}\right\Vert
_{L^{2}\left( \sigma \right) }\left\Vert \sum_{F\in \mathcal{F}%
}g_{F}\right\Vert _{L^{2}\left( \omega \right) }^{2} \\
&\lesssim &\sqrt{A_{2}^{\alpha }}\ \left\Vert f\right\Vert _{L^{2}\left(
\sigma \right) }\left[ \sum_{F\in \mathcal{F}}\left\Vert g_{F}\right\Vert
_{L^{2}\left( \omega \right) }^{2}\right] ^{\frac{1}{2}}.
\end{eqnarray*}

Turning to the second sum we note that%
\begin{eqnarray*}
\psi _{F}\left( x\right) &=&f_{F}\left( x\right) -\varphi _{F}\left(
x\right) =\sum_{\ell =1}^{\infty }\left[ 1-\mathbf{1}_{\mathcal{F}}\left(
\theta \left( I_{\ell }\right) \right) \mathbf{1}_{\theta \left( I_{\ell
}\right) }\left( x\right) \right] \bigtriangleup _{I_{\ell };\kappa
_{1}}^{\sigma }f\left( x\right) \\
&=&\sum_{m=1}^{\infty }\sum_{\ell =k_{m-1}}^{k_{m}}\left[ 1-\mathbf{1}_{%
\mathcal{F}}\left( \theta \left( I_{\ell }\right) \right) \mathbf{1}_{\theta
\left( I_{\ell }\right) }\left( x\right) \right] \bigtriangleup _{I_{\ell
};\kappa _{1}}^{\sigma }f\left( x\right) \equiv \sum_{m=1}^{\infty }\psi
_{F}^{\left( m\right) }\left( x\right) ,
\end{eqnarray*}%
where 
\begin{eqnarray*}
\psi _{F}^{m}\left( x\right) &=&\sum_{\ell =k_{m-1}}^{k_{m}}\left[ 1-\mathbf{%
1}_{\mathcal{F}}\left( \theta \left( I_{\ell }\right) \right) \mathbf{1}%
_{\theta \left( I_{\ell }\right) }\left( x\right) \right] \bigtriangleup
_{I_{\ell };\kappa }^{\sigma }f\left( x\right) \\
&=&\left\{ 
\begin{array}{ccc}
\mathbb{E}_{I_{\ell +1};\kappa _{1}}f-\mathbb{E}_{\pi _{\mathcal{F}}^{\left(
m\right) }F;\kappa _{1}}f & \text{ if } & x\in \theta \left( I_{\ell
}\right) \text{ and }\theta \left( I_{\ell }\right) \in \mathcal{F},\
k_{m}\leq \ell \leq k_{m+1} \\ 
\mathbb{E}_{\theta \left( I_{\ell }\right) ;\kappa }f-\mathbb{E}_{\pi _{%
\mathcal{F}}^{\left( m\right) }F;\kappa _{1}}f & \text{ if } & x\not\in
\theta \left( I_{\ell }\right) \text{ or }\theta \left( I_{\ell }\right)
\not\in \mathcal{F},\ k_{m}\leq \ell \leq k_{m+1}%
\end{array}%
\right. .
\end{eqnarray*}%
Now we write%
\begin{equation*}
\sum_{F\in \mathcal{F}}\ \left\langle T_{\sigma }^{\alpha }\psi
_{F},g_{F}\right\rangle _{\omega }=\sum_{m=1}^{\infty }\sum_{F\in \mathcal{F}%
}\ \left\langle T_{\sigma }^{\alpha }\psi _{F}^{m},g_{F}\right\rangle
_{\omega }=\sum_{m=1}^{\infty }\sum_{F\in \mathcal{F}}\ \left\langle
T_{\sigma }^{\alpha }\mathbf{1}_{\pi _{\mathcal{F}}^{m+1}F\setminus \pi _{%
\mathcal{F}}^{m}F}\psi _{F}^{m},g_{F}\right\rangle _{\omega }\equiv
\sum_{m=1}^{\infty }\sum_{F\in \mathcal{F}}\mathcal{I}_{m}\left( F\right) \ ,
\end{equation*}%
where 
\begin{equation}
\mathcal{I}_{m}\left( F\right) =\left\langle T_{\sigma }^{\alpha }\left( 
\mathbf{1}_{\pi _{\mathcal{F}}^{m+1}F\setminus \pi _{\mathcal{F}}^{m}F}\psi
_{F}^{m}\right) ,g_{F}\right\rangle _{\omega }\ .  \label{ImF}
\end{equation}

We then note that (\ref{analogue}) once more gives 
\begin{equation*}
\left\vert \psi _{F}^{m}\right\vert \lesssim E_{F_{m+1}}^{\sigma }\left\vert
f\right\vert \lesssim \alpha _{\mathcal{F}}\left( \pi _{\mathcal{F}%
}^{m+1}F\right) \ \mathbf{1}_{\pi _{\mathcal{F}}^{m+1}F\setminus \pi _{%
\mathcal{F}}^{m}F}\ ,
\end{equation*}%
and so 
\begin{eqnarray*}
\left\vert \psi _{F}\right\vert &\leq &\sum_{m=0}^{N}\left(
E_{F_{m+1}}^{\sigma }\left\vert f\right\vert \right) \ \mathbf{1}%
_{F_{m+1}\setminus F_{m}}=\left( E_{F}^{\sigma }\left\vert f\right\vert
\right) \ \mathbf{1}_{F}+\sum_{m=0}^{N}\left( E_{\pi _{\mathcal{F}%
}^{m+1}F}^{\sigma }\left\vert f\right\vert \right) \ \mathbf{1}_{\pi _{%
\mathcal{F}}^{m+1}F\setminus \pi _{\mathcal{F}}^{m}F} \\
&=&\left( E_{F}^{\sigma }\left\vert f\right\vert \right) \ \mathbf{1}%
_{F}+\sum_{F^{\prime }\in \mathcal{F}:\ F\subset F^{\prime }}\left( E_{\pi _{%
\mathcal{F}}F^{\prime }}^{\sigma }\left\vert f\right\vert \right) \ \mathbf{1%
}_{\pi _{\mathcal{F}}F^{\prime }\setminus F^{\prime }} \\
&\leq &\alpha _{\mathcal{F}}\left( F\right) \ \mathbf{1}_{F}+\sum_{F^{\prime
}\in \mathcal{F}:\ F\subset F^{\prime }}\alpha _{\mathcal{F}}\left( \pi _{%
\mathcal{F}}F^{\prime }\right) \ \mathbf{1}_{\pi _{\mathcal{F}}F^{\prime
}\setminus F^{\prime }} \\
&\leq &\alpha _{\mathcal{F}}\left( F\right) \ \mathbf{1}_{F}+\sum_{F^{\prime
}\in \mathcal{F}:\ F\subset F^{\prime }}\alpha _{\mathcal{F}}\left( \pi _{%
\mathcal{F}}F^{\prime }\right) \ \mathbf{1}_{\pi _{\mathcal{F}}F^{\prime }}\ 
\mathbf{1}_{F^{c}},\ \ \ \ \ \text{\ for all }F\in \mathcal{F}.
\end{eqnarray*}

Now we write%
\begin{eqnarray*}
&&\sum_{F\in \mathcal{F}}\ \left\langle T_{\sigma }^{\alpha }\psi
_{F},g_{F}\right\rangle _{\omega }=I+II\ ; \\
&&\text{where }I\equiv \sum_{F\in \mathcal{F}}\ \left\langle T_{\sigma
}^{\alpha }\left( \mathbf{1}_{F}\psi _{F}\right) ,g_{F}\right\rangle
_{\omega }\text{ and }II\equiv \sum_{F\in \mathcal{F}}\ \left\langle
T_{\sigma }^{\alpha }\left( \mathbf{1}_{F^{c}}\psi _{F}\right)
,g_{F}\right\rangle _{\omega }\ .
\end{eqnarray*}%
Then by $\kappa $-Cube Testing (\ref{kappa testing}), and the fact that $%
\psi _{F}\mathbf{1}_{F}$ is a polynomial on $F$ bounded in modulus by $%
\alpha _{\mathcal{F}}\left( F\right) $, we have 
\begin{equation*}
\left\vert \left\langle T_{\sigma }^{\alpha }\left( \psi _{F}\right)
,g_{F}\right\rangle _{\omega }\right\vert \leq \left\Vert T_{\sigma
}^{\alpha }\left( \psi _{F}\mathbf{1}_{F}\right) \right\Vert _{L^{2}\left(
\omega \right) }\left\Vert g_{F}\right\Vert _{L^{2}\left( \omega \right)
}\leq \mathfrak{T}_{T^{\alpha }}^{\left( \kappa \right) }\alpha _{\mathcal{F}%
}\left( F\right) \sqrt{\left\vert F\right\vert _{\sigma }}\left\Vert
g_{F}\right\Vert _{L^{2}\left( \omega \right) }\ ,
\end{equation*}%
and then quasi-orthogonality yields%
\begin{equation*}
\left\vert I\right\vert \leq \sum_{F\in \mathcal{F}}\ \left\vert
\left\langle T_{\sigma }^{\alpha }\mathbf{1}_{F}\psi _{F},g_{F}\right\rangle
_{\omega }\right\vert \lesssim \mathfrak{T}_{T^{\alpha }}^{\left( \kappa
\right) }\sum_{F\in \mathcal{F}}\ \alpha _{\mathcal{F}}\left( F\right) \sqrt{%
\left\vert F\right\vert _{\sigma }}\left\Vert g_{F}\right\Vert _{L^{2}\left(
\omega \right) }\lesssim \mathfrak{T}_{T^{\alpha }}^{\left( \kappa \right)
}\left\Vert f\right\Vert _{L^{2}\left( \sigma \right) }\left[ \sum_{F\in 
\mathcal{F}}\left\Vert g_{F}\right\Vert _{L^{2}\left( \omega \right) }^{2}%
\right] ^{\frac{1}{2}}.
\end{equation*}

On the other hand, $\mathbf{1}_{F^{c}}\psi _{F}$ is supported outside $F$,
and each $J$ in the Alpert support of $g_{F}$ is $\left( \mathbf{r}%
,\varepsilon \right) $-deeply embedded in $F$, which we write as $J\Subset _{%
\mathbf{r},\varepsilon }F$. So if we denote by 
\begin{equation*}
\mathcal{M}_{\left( \mathbf{r},\varepsilon \right) -\limfunc{deep}}^{%
\limfunc{good}}\left( F\right) \equiv \left\{ \text{maximal good }J\Subset _{%
\mathbf{r},\varepsilon }F\right\}
\end{equation*}%
the set of \emph{maximal} intervals\emph{\ }that are \emph{both} $\limfunc{%
good}$ and $\left( \mathbf{r},\varepsilon \right) $-deeply embedded in $F$,
then 
\begin{eqnarray*}
F &=&\overset{\cdot }{\dbigcup\limits_{K\in \mathcal{M}_{\left( \mathbf{r}%
,\varepsilon \right) -\limfunc{deep}}\left( F\right) }}K=\overset{\cdot }{%
\dbigcup\limits_{G\in \mathcal{M}_{\left( \mathbf{r},\varepsilon \right) -%
\limfunc{deep}}^{\limfunc{good}}\left( F\right) }}G. \\
\text{where each }G &\in &\mathcal{M}_{\left( \mathbf{r},\varepsilon \right)
-\limfunc{deep}}^{\limfunc{good}}\left( F\right) \text{ is contained in some 
}K\in \mathcal{M}_{\left( \mathbf{r},\varepsilon \right) -\limfunc{deep}%
}\left( F\right) .
\end{eqnarray*}%
Thus we can apply the Energy Lemma \ref{ener} to obtain from (\ref{ImF}) that%
\begin{eqnarray*}
&&\left\vert II\right\vert =\left\vert \sum_{F\in \mathcal{F}}\left\langle
T_{\sigma }^{\alpha }\left( \mathbf{1}_{F^{c}}\psi _{F}\right)
,g_{F}\right\rangle _{\omega }\right\vert =\left\vert \sum_{m=1}^{\infty
}\sum_{F\in \mathcal{F}}\mathcal{I}_{m}\left( F\right) \right\vert \leq
\sum_{m=1}^{\infty }\sum_{F\in \mathcal{F}}\left\vert \mathcal{I}_{m}\left(
F\right) \right\vert \\
&\lesssim &\sum_{m=1}^{\infty }\sum_{F\in \mathcal{F}}\sum_{J\in \mathcal{M}%
_{\left( \mathbf{r},\varepsilon \right) -\limfunc{deep}}^{\limfunc{good}%
}\left( F\right) }\frac{\mathrm{P}_{\kappa _{1}}^{\alpha }\left( J,\alpha _{%
\mathcal{F}}\left( \pi _{\mathcal{F}}^{m+1}F\right) \mathbf{1}_{\pi _{%
\mathcal{F}}^{m+1}F\setminus \pi _{\mathcal{F}}^{m}F}\sigma \right) }{%
\left\vert J\right\vert ^{\frac{\kappa }{n}}}\sqrt{\sum_{\left\vert \beta
\right\vert =\kappa }\left\Vert \mathsf{Q}_{\mathcal{C}_{F}^{\mathbf{\tau }-%
\limfunc{shift}};J}^{\omega }x^{\beta }\right\Vert _{L^{2}\left( \omega
\right) }^{2}}\left\Vert \mathsf{P}_{J}^{\omega }g_{F}\right\Vert
_{L^{2}\left( \omega \right) } \\
&&+\sum_{m=1}^{\infty }\sum_{F\in \mathcal{F}}\sum_{J\in \mathcal{M}_{\left( 
\mathbf{r},\varepsilon \right) -\limfunc{deep}}^{\limfunc{good}}\left(
F\right) }\frac{\mathrm{P}_{\kappa _{1}+\delta ^{\prime }}^{\alpha }\left(
J,\alpha _{\mathcal{F}}\left( \pi _{\mathcal{F}}^{m+1}F\right) \mathbf{1}%
_{\pi _{\mathcal{F}}^{m+1}F\setminus \pi _{\mathcal{F}}^{m}F}\sigma \right) 
}{\left\vert J\right\vert ^{\frac{\kappa }{n}}}\left\Vert \left\vert
x-m_{J}^{\kappa }\right\vert ^{\kappa }\right\Vert _{L^{2}\left( \omega
\right) }\left\Vert \mathsf{P}_{J}^{\omega }g_{F}\right\Vert _{L^{2}\left(
\omega \right) } \\
&\equiv &II_{G}+II_{B}\ .
\end{eqnarray*}

Then we have that $\left\vert II_{G}\right\vert $ is bounded by%
\begin{eqnarray}
&&  \label{II_G bound} \\
&&\sum_{m=1}^{\infty }\sum_{F\in \mathcal{F}}\alpha _{\mathcal{F}}\left( \pi
_{\mathcal{F}}^{m+1}F\right) \left\{ \sum_{J\in \mathcal{M}_{\left( \mathbf{r%
},\varepsilon \right) -\limfunc{deep}}^{\limfunc{good}}\left( F\right) }%
\mathrm{P}_{\kappa _{1}}^{\alpha }\left( J,\mathbf{1}_{\pi _{\mathcal{F}%
}^{m+1}F\setminus \pi _{\mathcal{F}}^{m}F}\sigma \right) \sqrt{%
\sum_{\left\vert \beta \right\vert =\kappa }\left\Vert \mathsf{Q}_{\mathcal{C%
}_{F}^{\mathbf{\tau }-\limfunc{shift}};J}^{\omega }x^{\beta }\right\Vert
_{L^{2}\left( \omega \right) }^{2}}\left\Vert \mathsf{P}_{J}^{\omega
}g_{F}\right\Vert _{L^{2}\left( \omega \right) }\right\}  \notag \\
&\lesssim &\sum_{m=1}^{\infty }\sum_{F\in \mathcal{F}}\alpha _{\mathcal{F}%
}\left( \pi _{\mathcal{F}}^{m+1}F\right) \left\{ \sum_{J\in \mathcal{M}%
_{\left( \mathbf{r},\varepsilon \right) -\limfunc{deep}}^{\limfunc{good}%
}\left( F\right) }\mathrm{P}_{\kappa _{1}}^{\alpha }\left( J,\mathbf{1}_{\pi
_{\mathcal{F}}^{m+1}F\setminus \pi _{\mathcal{F}}^{m}F}\sigma \right)
^{2}\sum_{\left\vert \beta \right\vert =\kappa }\left\Vert \mathsf{Q}_{%
\mathcal{C}_{F}^{\mathbf{\tau }-\limfunc{shift}};J}^{\omega }x^{\beta
}\right\Vert _{L^{2}\left( \omega \right) }^{2}\right\} ^{\frac{1}{2}} 
\notag \\
&&\ \ \ \ \ \ \ \ \ \ \ \ \ \ \ \ \ \ \ \ \ \ \ \ \ \ \ \ \ \ \ \ \ \ \ \ \
\ \ \ \ \ \ \ \ \ \ \ \ \ \ \ \ \ \ \ \ \ \ \ \ \ \ \ \ \ \ \ \ \ \ \ \
\times \left\{ \sum_{J\in \mathcal{M}_{\left( \mathbf{r},\varepsilon \right)
-\limfunc{deep}}^{\limfunc{good}}\left( F\right) }\left\Vert \mathsf{P}%
_{J}^{\omega }g_{F}\right\Vert _{L^{2}\left( \omega \right) }^{2}\right\} ^{%
\frac{1}{2}}.  \notag
\end{eqnarray}%
We now reindex the last sum in (\ref{II_G bound}) above sum using $F^{\ast
}=\pi _{\mathcal{F}}^{m+1}F$ to rewrite it as%
\begin{eqnarray}
&&  \label{at most} \\
&&\sum_{m=1}^{\infty }\sum_{F^{\ast }\in \mathcal{F}}\alpha _{\mathcal{F}%
}\left( F^{\ast }\right) \sum_{F^{\prime }\in \mathfrak{C}_{\mathcal{F}%
}\left( F^{\ast }\right) }\sum_{F\in \mathfrak{C}_{\mathcal{F}}^{\left(
m\right) }\left( F^{\prime }\right) }\left\{ \sum_{J\in \mathcal{M}_{\left( 
\mathbf{r},\varepsilon \right) -\limfunc{deep}}^{\limfunc{good}}\left(
F\right) }\mathrm{P}_{\kappa }^{\alpha }\left( J,\mathbf{1}_{F^{\ast
}\setminus F^{\prime }}\sigma \right) ^{2}\sum_{\left\vert \beta \right\vert
=\kappa }\left\Vert \mathsf{Q}_{\mathcal{C}_{F}^{\limfunc{good},\mathbf{\tau 
}-\limfunc{shift}};J}^{\omega }\left( \frac{x}{\ell \left( J\right) }\right)
^{\beta }\right\Vert _{L^{2}\left( \omega \right) }^{2}\right\} ^{\frac{1}{2}%
}  \notag \\
&&\ \ \ \ \ \ \ \ \ \ \ \ \ \ \ \ \ \ \ \ \ \ \ \ \ \ \ \ \ \ \ \ \ \ \ \ \
\ \ \ \ \ \ \ \ \ \ \ \ \ \ \ \ \ \ \ \ \ \ \ \ \ \ \ \ \ \ \ \ \ \ \ \
\times \left\{ \sum_{J\in \mathcal{M}_{\left( \mathbf{r},\varepsilon \right)
-\limfunc{deep}}^{\limfunc{good}}\left( F\right) }\left\Vert \mathsf{P}%
_{J}^{\omega }g_{F}\right\Vert _{L^{2}\left( \omega \right) }^{2}\right\} ^{%
\frac{1}{2}}.  \notag
\end{eqnarray}

Using (\ref{e.Jsimeq}) with $m=\kappa $ and $\mu =\sigma $, we obtain that
for $J\in \mathcal{M}_{\left( \mathbf{r},\varepsilon \right) -\limfunc{deep}%
}^{\limfunc{good}}\left( F\right) $ and $I=\pi _{\mathcal{F}}^{m-1}F$, we
have $\frac{\ell \left( J\right) }{\ell \left( \pi _{\mathcal{F}%
}^{m-1}F\right) }=2^{-k}$ for some $k\geq m-1$, and hence%
\begin{eqnarray*}
\mathrm{P}_{\kappa _{1}}^{\alpha }\left( J,\mathbf{1}_{F^{\ast }\setminus
F^{\prime }}\sigma \right) ^{2} &\leq &\left( \frac{\ell \left( J\right) }{%
\ell \left( \pi _{\mathcal{F}}^{m-1}F\right) }\right) ^{2\kappa
-2\varepsilon \left( n+\kappa -\alpha \right) }\mathrm{P}_{\kappa
_{1}}^{\alpha }\left( \pi _{\mathcal{F}}^{m-1}F,\mathbf{1}_{\pi _{\mathcal{F}%
}^{m+1}F\setminus \pi _{\mathcal{F}}^{m}F}\sigma \right) ^{2} \\
&=&\left( 2^{-k}\right) ^{2\kappa -2\varepsilon \left( n+\kappa -\alpha
\right) }\mathrm{P}_{\kappa _{1}}^{\alpha }\left( \pi _{\mathcal{F}}^{m-1}F,%
\mathbf{1}_{\pi _{\mathcal{F}}^{m+1}F\setminus \pi _{\mathcal{F}%
}^{m}F}\sigma \right) ^{2}.
\end{eqnarray*}

Now we pigeonhole the intervals $J$ by side length in the sum over $J\in 
\mathcal{M}_{\left( \mathbf{r},\varepsilon \right) -\limfunc{deep}}^{%
\limfunc{good}}\left( F\right) $ in the first factor in braces in (\ref{at
most}) to obtain that it satisfies, under the assumptions $F^{\prime }\in 
\mathfrak{C}_{\mathcal{F}}\left( F^{\ast }\right) $ and $F\in \mathfrak{C}_{%
\mathcal{F}}^{\left( m\right) }\left( F^{\prime }\right) $, 
\begin{eqnarray*}
&&\sum_{J\in \mathcal{M}_{\left( \mathbf{r},\varepsilon \right) -\limfunc{%
deep}}^{\limfunc{good}}\left( F\right) }\mathrm{P}_{\kappa _{1}}^{\alpha
}\left( J,\mathbf{1}_{F^{\ast }\setminus F^{\prime }}\sigma \right)
^{2}\sum_{\left\vert \beta \right\vert =\kappa }\left\Vert \mathsf{Q}_{%
\mathcal{C}_{F}^{\mathbf{\tau }-\limfunc{shift}};J}^{\omega }\left( \frac{x}{%
\ell \left( J\right) }\right) ^{\beta }\right\Vert _{L^{2}\left( \omega
\right) }^{2} \\
&\lesssim &\sum_{k=m-1}^{\infty }\left( 2^{-k}\right) ^{2\kappa
-2\varepsilon \left( n+\kappa -\alpha \right) }\mathrm{P}_{\kappa
_{1}}^{\alpha }\left( \pi _{\mathcal{F}}^{m-1}F,\mathbf{1}_{\pi _{\mathcal{F}%
}^{m+1}F\setminus \pi _{\mathcal{F}}^{m}F}\sigma \right) ^{2}\sum_{\substack{
J\in \mathcal{M}_{\left( \mathbf{r},\varepsilon \right) -\limfunc{deep}}^{%
\limfunc{good}}\left( F\right)  \\ \ell \left( J\right) =2^{-k}\ell \left(
\pi _{\mathcal{F}}^{m-1}F\right) }}\sum_{\left\vert \beta \right\vert
=\kappa }\left\Vert \mathsf{Q}_{\mathcal{C}_{F}^{\mathbf{\tau }-\limfunc{%
shift}};J}^{\omega }\left( \frac{x}{\ell \left( J\right) }\right) ^{\beta
}\right\Vert _{L^{2}\left( \omega \right) }^{2} \\
&&\ \ \ \lesssim \sum_{k=m-1}^{\infty }\left( 2^{-k}\right) ^{2\kappa
-2\varepsilon \left( n+\kappa -\alpha \right) }\mathrm{P}_{\kappa }^{\alpha
}\left( \pi _{\mathcal{F}}^{m-1}F,\mathbf{1}_{\pi _{\mathcal{F}%
}^{m+1}F\setminus \pi _{\mathcal{F}}^{m}F}\sigma \right) ^{2}\sum_{\substack{
J\in \mathcal{M}_{\left( \mathbf{r},\varepsilon \right) -\limfunc{deep}}^{%
\limfunc{good}}\left( F\right)  \\ \ell \left( J\right) =2^{-k}\ell \left(
\pi _{\mathcal{F}}^{m-1}F\right) }}\left\vert J\right\vert _{\omega } \\
&&\ \ \ \lesssim \left( 2^{-m}\right) ^{2\kappa -2\varepsilon \left(
n+\kappa -\alpha \right) }\mathrm{P}_{\kappa _{1}}^{\alpha }\left( \pi _{%
\mathcal{F}}^{m-1}F,\mathbf{1}_{\pi _{\mathcal{F}}^{m+1}F\setminus \pi _{%
\mathcal{F}}^{m}F}\sigma \right) ^{2}\sum_{J\in \mathcal{M}_{\left( \mathbf{r%
},\varepsilon \right) -\limfunc{deep}}^{\limfunc{good}}\left( F\right)
}\left\vert J\right\vert _{\omega }
\end{eqnarray*}%
so that altogether we have%
\begin{eqnarray*}
\left\vert II_{G}\right\vert &\lesssim &\sum_{F\in \mathcal{F}}\left\{
\sum_{m=1}^{\infty }\alpha _{\mathcal{F}}\left( \pi _{\mathcal{F}%
}^{m+1}F\right) ^{2}\left( 2^{-m}\right) ^{\kappa -\varepsilon \left(
n+\kappa -\alpha \right) }\mathrm{P}_{\kappa _{1}}^{\alpha }\left( \pi _{%
\mathcal{F}}^{m-1}F,\mathbf{1}_{\pi _{\mathcal{F}}^{m+1}F\setminus \pi _{%
\mathcal{F}}^{m}F}\sigma \right) ^{2}\left\vert F\right\vert _{\omega
}\right\} ^{\frac{1}{2}}C_{\varepsilon ,\alpha }\left\Vert g_{F}\right\Vert
_{L^{2}\left( \omega \right) } \\
&\lesssim &\left\{ \sum_{m=1}^{\infty }\left( 2^{-m}\right) ^{\kappa
-\varepsilon \left( n+\kappa -\alpha \right) }\sum_{F\in \mathcal{F}}\alpha
_{\mathcal{F}}\left( \pi _{\mathcal{F}}^{m+1}F\right) ^{2}\mathrm{P}_{\kappa
_{1}}^{\alpha }\left( \pi _{\mathcal{F}}^{m-1}F,\mathbf{1}_{\pi _{\mathcal{F}%
}^{m+1}F\setminus \pi _{\mathcal{F}}^{m}F}\sigma \right) ^{2}\left\vert
F\right\vert _{\omega }\right\} ^{\frac{1}{2}}\sqrt{\sum_{F\in \mathcal{F}%
}\left\Vert g_{F}\right\Vert _{L^{2}\left( \omega \right) }^{2}} \\
&\lesssim &\left( \mathcal{V}_{2}^{\alpha ,\kappa _{1}}\right)
^{2}\left\Vert f\right\Vert _{L^{2}\left( \sigma \right) }\left\Vert
g\right\Vert _{L^{2}\left( \omega \right) }\ ,
\end{eqnarray*}%
since $\kappa -\varepsilon \left( n+\kappa -\alpha \right) >0$ implies $%
C_{\varepsilon ,\alpha }=\sqrt{\sum_{m=1}^{\infty }\left( 2^{-m}\right)
^{\kappa -\varepsilon \left( n+\kappa -\alpha \right) }}<\infty $, and since
for each fixed $m\geq 1$ we have%
\begin{eqnarray*}
&&\sum_{F\in \mathcal{F}}\alpha _{\mathcal{F}}\left( \pi _{\mathcal{F}%
}^{m+1}F\right) ^{2}\mathrm{P}_{\kappa _{1}}^{\alpha }\left( \pi _{\mathcal{F%
}}^{m-1}F,\mathbf{1}_{\pi _{\mathcal{F}}^{m+1}F\setminus \pi _{\mathcal{F}%
}^{m}F}\sigma \right) ^{2}\left\vert F\right\vert _{\omega } \\
&=&\sum_{F^{\prime }\in \mathcal{F}}\alpha _{\mathcal{F}}\left( \pi _{%
\mathcal{F}}F^{\prime }\right) ^{2}\sum_{F^{\prime \prime }\in \mathfrak{C}_{%
\mathcal{F}}\left( F^{\prime }\right) }\mathrm{P}_{\kappa _{1}}^{\alpha
}\left( F^{\prime \prime },\mathbf{1}_{\pi _{\mathcal{F}}F^{\prime
}\setminus F^{\prime }}\sigma \right) ^{2}\sum_{F\in \mathfrak{C}_{\mathcal{F%
}}^{m-1}\left( F^{\prime \prime }\right) }\left\vert F\right\vert _{\omega }
\\
&\leq &\sum_{F^{\prime }\in \mathcal{F}}\alpha _{\mathcal{F}}\left( \pi _{%
\mathcal{F}}F^{\prime }\right) ^{2}\sum_{F^{\prime \prime }\in \mathfrak{C}_{%
\mathcal{F}}\left( F^{\prime }\right) }\mathrm{P}_{\kappa _{1}}^{\alpha
}\left( F^{\prime \prime },\mathbf{1}_{\pi _{\mathcal{F}}F^{\prime
}\setminus F^{\prime }}\sigma \right) ^{2}\left\vert F^{\prime \prime
}\right\vert _{\omega } \\
&=&\sum_{F^{\ast }\in \mathcal{F}}\alpha _{\mathcal{F}}\left( F^{\ast
}\right) ^{2}\sum_{F^{\prime \prime }\in \mathfrak{C}_{\mathcal{F}}^{\left(
2\right) }\left( F^{\ast }\right) }\mathrm{P}_{\kappa _{1}}^{\alpha }\left(
F^{\prime \prime },\mathbf{1}_{F^{\ast }\setminus \pi _{\mathcal{F}%
}F^{\prime \prime }}\sigma \right) ^{2}\left\vert F^{\prime \prime
}\right\vert _{\omega } \\
&\leq &\sum_{F^{\prime }\in \mathcal{F}}\alpha _{\mathcal{F}}\left( \pi _{%
\mathcal{F}}F^{\prime }\right) ^{2}\left( \mathcal{V}_{2}^{\alpha ,\kappa
_{1}}\right) ^{2}\left\vert \pi _{\mathcal{F}}F^{\prime }\right\vert
_{\sigma }\leq \left( \mathcal{V}_{2}^{\alpha ,\kappa _{1}}\right)
^{2}\left\Vert f\right\Vert _{L^{2}\left( \sigma \right) }^{2}\ .
\end{eqnarray*}

In term $II_{B}$ the expressions $\left\Vert \left\vert x-m_{J}^{\kappa
_{1}}\right\vert ^{\kappa _{1}}\right\Vert _{L^{2}\left( \omega \right)
}^{2} $ are no longer `almost orthogonal' in $J$, and we must instead
exploit the extra decay in the Poisson integral $\mathrm{P}_{\kappa +\delta
^{\prime }}^{\alpha }$ due to the addition of $\delta ^{\prime }>0$, along
with goodness of the cubes $J$. This idea was already used by M. Lacey and
B. Wick in \cite{LaWi} in a similar situation, and subsequently exploited in 
\cite{SaShUr7}. As a consequence of this decay we will be able to bound $%
II_{B}$ \emph{directly} by the $\kappa ^{th}$ order pivotal condition,
without having to invoke the more difficult functional energy condition of 
\cite{LaSaShUr3} and \cite{SaShUr7}. For the decay, we follow \cite{SaShUr7}
and use the `large' function 
\begin{equation*}
\Phi \equiv \sum_{F^{\prime \prime }\in \mathcal{F}}\alpha _{\mathcal{F}%
}\left( F^{\prime \prime }\right) \mathbf{1}_{F^{\prime \prime }}
\end{equation*}%
that dominates $\left\vert \psi _{F}\right\vert $ for all $F\in \mathcal{F}$%
, and compute that%
\begin{eqnarray*}
\frac{\mathrm{P}_{\kappa _{1}+\delta ^{\prime }}^{\alpha }\left( J,\Phi
\sigma \right) }{\left\vert J\right\vert ^{\frac{\kappa }{n}}}
&=&\int_{F^{c}}\frac{\left\vert J\right\vert ^{\frac{\delta ^{\prime }}{n}}}{%
\left\vert y-c_{J}\right\vert ^{n+\kappa +\delta -\alpha }}\Phi \left(
y\right) d\sigma \left( y\right) \\
&\leq &\sum_{t=0}^{\infty }\int_{\pi _{\mathcal{F}}^{t+1}F\setminus \pi _{%
\mathcal{F}}^{t}F}\left( \frac{\left\vert J\right\vert ^{\frac{1}{n}}}{%
\limfunc{dist}\left( c_{J},\left( \pi _{\mathcal{F}}^{t}F\right) ^{c}\right) 
}\right) ^{\delta ^{\prime }}\frac{1}{\left\vert y-c_{J}\right\vert
^{n+\kappa _{1}-\alpha }}\Phi \left( y\right) d\sigma \left( y\right) \\
&\leq &\sum_{t=0}^{\infty }\left( \frac{\left\vert J\right\vert ^{\frac{1}{n}%
}}{\limfunc{dist}\left( c_{J},\left( \pi _{\mathcal{F}}^{t}F\right)
^{c}\right) }\right) ^{\delta ^{\prime }}\frac{\mathrm{P}_{\kappa
_{1}}^{\alpha }\left( J,\mathbf{1}_{\pi _{\mathcal{F}}^{t+1}F\setminus \pi _{%
\mathcal{F}}^{t}F}\Phi \sigma \right) }{\left\vert J\right\vert ^{\frac{%
\kappa }{n}}},
\end{eqnarray*}%
and then use the goodness inequality%
\begin{equation*}
\limfunc{dist}\left( c_{J},\left( \pi _{\mathcal{F}}^{t}F\right) ^{c}\right)
\geq \frac{1}{2}\ell \left( \pi _{\mathcal{F}}^{t}F\right) ^{1-\varepsilon
}\ell \left( J\right) ^{\varepsilon }\geq \frac{1}{2}2^{t\left(
1-\varepsilon \right) }\ell \left( F\right) ^{1-\varepsilon }\ell \left(
J\right) ^{\varepsilon }\geq 2^{t\left( 1-\varepsilon \right) -1}\ell \left(
J\right) ,
\end{equation*}%
to conclude that%
\begin{eqnarray}
\left( \frac{\mathrm{P}_{\kappa _{1}+\delta ^{\prime }}^{\alpha }\left( J,%
\mathbf{1}_{F^{c}}\Phi \sigma \right) }{\left\vert J\right\vert ^{\frac{%
\kappa }{n}}}\right) ^{2} &\lesssim &\left( \sum_{t=0}^{\infty }2^{-t\delta
^{\prime }\left( 1-\varepsilon \right) }\frac{\mathrm{P}_{\kappa
_{1}}^{\alpha }\left( J,\mathbf{1}_{\pi _{\mathcal{F}}^{t+1}F\setminus \pi _{%
\mathcal{F}}^{t}F}\Phi \sigma \right) }{\left\vert J\right\vert ^{\frac{%
\kappa }{n}}}\right) ^{2}  \label{decay in t} \\
&\lesssim &\sum_{t=0}^{\infty }2^{-t\delta ^{\prime }\left( 1-\varepsilon
\right) }\left( \frac{\mathrm{P}_{\kappa _{1}}^{\alpha }\left( J,\mathbf{1}%
_{\pi _{\mathcal{F}}^{t+1}F\setminus \pi _{\mathcal{F}}^{t}F}\Phi \sigma
\right) }{\left\vert J\right\vert ^{\frac{\kappa }{n}}}\right) ^{2}.  \notag
\end{eqnarray}%
Now we apply Cauchy-Schwarz to obtain%
\begin{eqnarray*}
II_{B} &=&\sum_{F\in \mathcal{F}}\sum_{J\in \mathcal{M}_{\left( \mathbf{r}%
,\varepsilon \right) -\limfunc{deep}}\left( F\right) }\frac{\mathrm{P}%
_{\kappa _{1}+\delta ^{\prime }}^{\alpha }\left( J,\mathbf{1}_{F^{c}}\Phi
\sigma \right) }{\left\vert J\right\vert ^{\frac{\kappa }{n}}}\left\Vert
\left\vert x-m_{J}^{\kappa _{1}}\right\vert ^{\kappa _{1}}\right\Vert
_{L^{2}\left( \omega \right) }\left\Vert \mathsf{P}_{J}^{\omega
}g_{F}\right\Vert _{L_{\mathcal{H}_{2}}^{2}\left( \omega \right) } \\
&\leq &\left( \sum_{F\in \mathcal{F}}\sum_{J\in \mathcal{M}_{\left( \mathbf{r%
},\varepsilon \right) -\limfunc{deep}}\left( F\right) }\left( \frac{\mathrm{P%
}_{\kappa _{1}+\delta ^{\prime }}^{\alpha }\left( J,\mathbf{1}_{F^{c}}\Phi
\sigma \right) }{\left\vert J\right\vert ^{\frac{\kappa }{n}}}\right)
^{2}\left\Vert \left\vert x-m_{J}^{\kappa _{1}}\right\vert ^{\kappa
_{1}}\right\Vert _{L^{2}\left( \omega \right) }^{2}\right) ^{\frac{1}{2}}%
\left[ \sum_{F}\left\Vert g_{F}\right\Vert _{L_{\mathcal{H}_{2}}^{2}\left(
\omega \right) }^{2}\right] ^{\frac{1}{2}} \\
&\equiv &\sqrt{II_{\limfunc{energy}}}\left[ \sum_{F}\left\Vert
g_{F}\right\Vert _{L_{\mathcal{H}_{2}}^{2}\left( \omega \right) }^{2}\right]
^{\frac{1}{2}},
\end{eqnarray*}%
and it remains to estimate $II_{\limfunc{energy}}$. From (\ref{decay in t})
and the $\kappa ^{th}$ order pivotal condition we have%
\begin{eqnarray*}
II_{\limfunc{energy}} &\leq &\sum_{F\in \mathcal{F}}\sum_{J\in \mathcal{M}%
_{\left( \mathbf{r},\varepsilon \right) -\limfunc{deep}}\left( F\right)
}\sum_{t=0}^{\infty }2^{-t\delta ^{\prime }\left( 1-\varepsilon \right)
}\left( \frac{\mathrm{P}_{\kappa _{1}}^{\alpha }\left( J,\mathbf{1}_{\pi _{%
\mathcal{F}}^{t+1}F\setminus \pi _{\mathcal{F}}^{t}F}\Phi \sigma \right) }{%
\left\vert J\right\vert ^{\frac{\kappa }{n}}}\right) ^{2}\left\Vert
\left\vert x-m_{J}^{\kappa _{1}}\right\vert ^{\kappa _{1}}\right\Vert
_{L^{2}\left( \omega \right) }^{2} \\
&=&\sum_{t=0}^{\infty }2^{-t\delta ^{\prime }\left( 1-\varepsilon \right)
}\sum_{G\in \mathcal{F}}\sum_{F\in \mathfrak{C}_{\mathcal{F}}^{\left(
t+1\right) }\left( G\right) }\sum_{J\in \mathcal{M}_{\left( \mathbf{r}%
,\varepsilon \right) -\limfunc{deep}}\left( F\right) }\left( \frac{\mathrm{P}%
_{\kappa _{1}}^{\alpha }\left( J,\mathbf{1}_{G\setminus \pi _{\mathcal{F}%
}^{t}F}\Phi \sigma \right) }{\left\vert J\right\vert ^{\frac{\kappa }{n}}}%
\right) ^{2}\left\vert J\right\vert ^{\frac{k}{n}}\left\vert J\right\vert
_{\omega } \\
&\lesssim &\sum_{t=0}^{\infty }2^{-t\delta ^{\prime }\left( 1-\varepsilon
\right) }\sum_{G\in \mathcal{F}}\alpha _{\mathcal{F}}\left( G\right)
^{2}\sum_{F\in \mathfrak{C}_{\mathcal{F}}^{\left( t+1\right) }\left(
G\right) }\sum_{J\in \mathcal{M}_{\left( \mathbf{r},\varepsilon \right) -%
\limfunc{deep}}\left( F\right) }\mathrm{P}_{\kappa _{1}}^{\alpha }\left( J,%
\mathbf{1}_{G\setminus \pi _{\mathcal{F}}^{t}F}\sigma \right) ^{2}\left\vert
J\right\vert _{\omega } \\
&\lesssim &\sum_{t=0}^{\infty }2^{-t\delta ^{\prime }\left( 1-\varepsilon
\right) }\sum_{G\in \mathcal{F}}\alpha _{\mathcal{F}}\left( G\right)
^{2}\left( \left( \mathcal{V}_{\alpha }^{\alpha ,\kappa _{1}}\right)
^{2}+A_{2}^{\alpha }\right) \left\vert G\right\vert _{\sigma }\lesssim
\left( \left( \mathcal{V}_{\alpha }^{\alpha ,\kappa _{1}}\right)
^{2}+A_{2}^{\alpha }\right) \left\Vert f\right\Vert _{L^{2}\left( \sigma
\right) }^{2}.
\end{eqnarray*}

This completes the proof of the Intertwining Proposition \ref{strongly
adapted}.
\end{proof}

\subsubsection{An alternate Intertwining Corollary}

We will also need an alternate version of the Intertwining Proposition \ref%
{strongly adapted} in which $J$ and $I$ are at least $\mathbf{\tau }$ levels
apart, but where the proximity of $J$ and $I$ to $F$ is reversed, namely the
cubes $J$ are close to $F$ but the cubes $I$ are not. We exploit the
doubling property of $\sigma $ to obtain this alternate version as a
relatively simple corollary of the Intertwining Proposition \ref{strongly
adapted}.

\begin{corollary}[The Alternate Intertwining Corollary]
\label{alt int prop}Suppose that $\sigma $ is a \emph{doubling} measure,
that $\mathcal{F}$ is $\sigma $-Carleson, that $\left( C_{0},\mathcal{F}%
,\alpha _{\mathcal{F};f}\right) $ constitutes stopping data for $f$ for all $%
f\in L^{2}\left( \sigma \right) $, and that%
\begin{equation*}
\left\Vert \bigtriangleup _{I;\kappa _{1}}^{\sigma }f\right\Vert _{L^{\infty
}\left( \sigma \right) }\leq C\alpha _{\mathcal{F};f}\left( I\right) ,\ \ \
\ \ f\in L^{2}\left( \sigma \right) ,I\in \mathcal{D}.
\end{equation*}%
Let $\mathcal{W}_{F}$ be any subset of $\mathcal{C}_{\mathcal{F}}\left(
F\right) $. Then for $\limfunc{good}$ functions $f\in L^{2}\left( \sigma
\right) $ and $g\in L^{2}\left( \omega \right) $, and with $\kappa
_{1},\kappa _{2}\geq 1$, we have%
\begin{equation*}
\left\vert \sum_{F\in \mathcal{F}}\ \sum_{I:\ I\supsetneqq \pi _{\mathcal{D}%
}^{\left( \mathbf{\tau }\right) }F}\ \left\langle T_{\sigma }^{\alpha
}\bigtriangleup _{I;\kappa _{1}}^{\sigma }f,\mathsf{P}_{\mathcal{W}%
_{F}}^{\omega }g\right\rangle _{\omega }\right\vert \lesssim \left( \mathcal{%
V}_{2}^{\alpha ,\kappa _{1}}+\sqrt{A_{2}^{\alpha }}+\mathfrak{T}_{T^{\alpha
}}^{\left( \kappa _{1}\right) }\right) \ \left\Vert f\right\Vert
_{L^{2}\left( \sigma \right) }\left\Vert g\right\Vert _{L^{2}\left( \omega
\right) }.
\end{equation*}
\end{corollary}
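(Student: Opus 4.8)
The plan is to deduce Corollary \ref{alt int prop} by re-running the proof of Proposition \ref{strongly adapted} with two bookkeeping changes, the decisive point being that the $\mathbf{\tau}$-gap between $I$ and $\pi_{\mathcal{D}}^{(\mathbf{\tau})}F\supseteq J$ now supplies, \emph{through goodness alone}, exactly the deep-embeddedness of $J$ that the shifted corona $\mathcal{C}_{\mathcal{F}}^{\mathbf{\tau}-\limfunc{shift}}(F)$ supplied there. For $\limfunc{good}$ $f\in L^{2}(\sigma)$, $g\in L^{2}(\omega)$ and $F\in\mathcal{F}$, I would set $g_{F}:=\mathsf{P}_{\mathcal{W}_{F}}^{\omega}g$ and $\Lambda_{F}:=\sum_{I:\ I\supsetneqq\pi_{\mathcal{D}}^{(\mathbf{\tau})}F}\bigtriangleup_{I;\kappa_{1}}^{\sigma}f$, and let $F^{\ast}\in\mathcal{F}$ be the $\mathcal{F}$-cube whose corona contains $\pi_{\mathcal{D}}^{(\mathbf{\tau})}F$, so that $F\subseteq\pi_{\mathcal{D}}^{(\mathbf{\tau})}F\subseteq F^{\ast}$. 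Since $\mathcal{W}_{F}\subseteq\mathcal{C}_{\mathcal{F}}(F)$ and the coronas $\{\mathcal{C}_{\mathcal{F}}(F)\}_{F\in\mathcal{F}}$ partition $\mathcal{D}$, the $g_{F}$ are supported in $F$ and obey $\sum_{F\in\mathcal{F}}\Vert g_{F}\Vert_{L^{2}(\omega)}^{2}\le\Vert g\Vert_{L^{2}(\omega)}^{2}$, which is all the proof of Proposition \ref{strongly adapted} ever uses of the functions it names $g_{F}$. We may assume $\mathbf{\tau}\ge\mathbf{r}$; then for every $J\in\mathcal{C}_{\mathcal{F}}(F)$ we have $J\subseteq F\subsetneqq\pi_{\mathcal{D}}^{(\mathbf{\tau})}F$ with $\ell(\pi_{\mathcal{D}}^{(\mathbf{\tau})}F)=2^{\mathbf{\tau}}\ell(F)\ge2^{\mathbf{r}}\ell(J)$, so goodness of $J$ yields $\limfunc{dist}(J,\partial I)\ge2\sqrt{n}\,\ell(J)^{\varepsilon}\ell(I)^{1-\varepsilon}$ for $I=\pi_{\mathcal{D}}^{(\mathbf{\tau})}F$ and hence for \emph{every} $I\supsetneqq\pi_{\mathcal{D}}^{(\mathbf{\tau})}F$.

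Next I would split $\Lambda_{F}=\mathbf{1}_{\pi_{\mathcal{D}}^{(\mathbf{\tau})}F}\Lambda_{F}+\mathbf{1}_{(\pi_{\mathcal{D}}^{(\mathbf{\tau})}F)^{c}}\Lambda_{F}$. By the telescoping identity (\ref{telescoping}), $\mathbf{1}_{\pi_{\mathcal{D}}^{(\mathbf{\tau})}F}\Lambda_{F}=\mathbb{E}_{\pi_{\mathcal{D}}^{(\mathbf{\tau})}F;\kappa_{1}}^{\sigma}f-\mathbf{1}_{\pi_{\mathcal{D}}^{(\mathbf{\tau})}F}\bigtriangleup_{\mathbb{R}^{n};\kappa_{1}}^{\sigma}f$ is, up to the global term (disposed of as in the Initial Steps by full $\kappa$-cube testing and $A_{2}^{\alpha}$), a polynomial of degree $<\kappa_{1}$ on $\pi_{\mathcal{D}}^{(\mathbf{\tau})}F$; here the \emph{doubling} of $\sigma$ enters through (\ref{analogue}), giving $\Vert\mathbb{E}_{\pi_{\mathcal{D}}^{(\mathbf{\tau})}F;\kappa_{1}}^{\sigma}f\Vert_{L^{\infty}(\pi_{\mathcal{D}}^{(\mathbf{\tau})}F,\sigma)}\lesssim E_{\pi_{\mathcal{D}}^{(\mathbf{\tau})}F}^{\sigma}|f|\le\alpha_{\mathcal{F}}(F^{\ast})\le\alpha_{\mathcal{F}}(F)$. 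Testing over the corresponding normalized polynomial via $\kappa$-Cube Testing (\ref{kappa testing}), using doubling once more through $|\pi_{\mathcal{D}}^{(\mathbf{\tau})}F|_{\sigma}\le C_{\limfunc{doub}}^{\mathbf{\tau}}|F|_{\sigma}$, then Cauchy-Schwarz together with the quasi-orthogonality $\sum_{F}\alpha_{\mathcal{F}}(F)^{2}|F|_{\sigma}\lesssim\Vert f\Vert_{L^{2}(\sigma)}^{2}$ and $\sum_{F}\Vert g_{F}\Vert_{L^{2}(\omega)}^{2}\le\Vert g\Vert_{L^{2}(\omega)}^{2}$, bounds this contribution by $\mathfrak{T}_{T^{\alpha}}^{(\kappa_{1})}\,\Vert f\Vert_{L^{2}(\sigma)}\Vert g\Vert_{L^{2}(\omega)}$.

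For the complementary piece, the source $\nu_{F}:=\mathbf{1}_{(\pi_{\mathcal{D}}^{(\mathbf{\tau})}F)^{c}}|\Lambda_{F}|\,\sigma$ is supported outside $\pi_{\mathcal{D}}^{(\mathbf{\tau})}F$, hence (as $\mathbf{\tau}\ge\mathbf{r}$ forces $\gamma J\subseteq\pi_{\mathcal{D}}^{(\mathbf{\tau})}F$ for a fixed $\gamma>1$ and all $J\in\mathcal{W}_{F}$) outside $\gamma J$ for each $J\in\mathcal{W}_{F}$; moreover telescoping and (\ref{analogue}) (doubling of $\sigma$ again) bound $|\mathbf{1}_{(\pi_{\mathcal{D}}^{(\mathbf{\tau})}F)^{c}}\Lambda_{F}|$ on the shell $\pi_{\mathcal{F}}^{m+1}F^{\ast}\setminus\pi_{\mathcal{F}}^{m}F^{\ast}$ by $\alpha_{\mathcal{F}}(\pi_{\mathcal{F}}^{m+1}F^{\ast})$, apart from a $\varphi_{F}$-type exceptional piece (from chain cubes whose dyadic sibling lies in $\mathcal{F}$) handled by (\ref{indicator far}) and $\sqrt{A_{2}^{\alpha}}$ exactly as in Proposition \ref{strongly adapted}. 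I would then apply the Energy Lemma \ref{ener} with ``$J$''$\,=\pi_{\mathcal{D}}^{(\mathbf{\tau})}F$, ``$\mathcal{J}$''$\,=\mathcal{W}_{F}$, ``$\Psi_{J}$''$\,=g_{F}$ (which has vanishing $\omega$-moments up to order $<\kappa_{2}$, being a sum of Alpert projections), ``$\nu$''$\,=\nu_{F}$, and run the terms $II_{G}$ and $II_{B}$ verbatim as there: pigeonhole the $J$'s by scale, invoke the Poisson comparison Lemma \ref{Poisson inequality} (whose hypothesis is now the goodness-deep-embeddedness noted above), and for $II_{B}$ exploit the extra decay of $\mathrm{P}_{\kappa_{1}+\delta^{\prime}}^{\alpha}$ over the shells. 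This bounds the contribution by $\bigl(\mathcal{V}_{2}^{\alpha,\kappa_{1}}+\sqrt{A_{2}^{\alpha}}\bigr)\Vert f\Vert_{L^{2}(\sigma)}\Vert g\Vert_{L^{2}(\omega)}$, with $\pi_{\mathcal{D}}^{(\mathbf{\tau})}F$ (resp. its $\mathcal{F}$-hull $F^{\ast}$) everywhere playing the role that $F$ played before.

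The one genuinely delicate point, I expect, is the re-indexing in the quasi-orthogonality estimates: the numerical data $\alpha_{\mathcal{F}}(F^{\ast})$ and the Poisson-shell sums are naturally indexed by $F^{\ast}$, while distinct $F\in\mathcal{F}$ may share a common $F^{\ast}$ (precisely when several stopping cubes sit within $\mathbf{\tau}$ dyadic generations below one cube). The fix is to observe that for each fixed $F^{\ast}$ and each fixed scale $2^{-j}\ell(F^{\ast})$ the cubes $\pi_{\mathcal{D}}^{(\mathbf{\tau})}F$ at that scale form a pairwise disjoint subfamily of $\mathcal{D}$, each lying over at most $C_{n,\mathbf{\tau}}$ stopping cubes $F$; combined with $\sum_{F}\Vert g_{F}\Vert_{L^{2}(\omega)}^{2}\le\Vert g\Vert_{L^{2}(\omega)}^{2}$ this costs only a factor depending on $\mathbf{\tau}$, $n$ and $C_{\limfunc{doub}}(\sigma)$ — which is exactly why the doubling of $\sigma$ is essential and why the statement really is a corollary of Proposition \ref{strongly adapted} rather than a fresh argument. (If $f$ has first been reduced to a single dyadic cube $I_{0}$ as in the Initial Steps, the finitely many $\mathcal{F}$-maximal $F$ contribute nothing, since then $I\supsetneqq\pi_{\mathcal{D}}^{(\mathbf{\tau})}F$ already forces $I\not\subseteq I_{0}$.)
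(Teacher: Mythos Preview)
Your approach is workable in outline but takes a substantially different and longer route than the paper does. You propose to \emph{re-run} the entire proof of Proposition~\ref{strongly adapted}, replacing $F$ everywhere by $\pi_{\mathcal{D}}^{(\mathbf{\tau})}F$ and re-verifying each estimate (the telescoping/testing piece, the $\varphi_F$ piece via (\ref{indicator far}), the $II_G$ and $II_B$ pieces via the Energy Lemma and Lemma~\ref{Poisson inequality}). The paper instead derives the corollary in about a dozen lines by applying Proposition~\ref{strongly adapted} \emph{as a black box} to a new stopping family: it sets $\mathcal{H}\equiv\{\pi_{\mathcal{D}}^{(\mathbf{\tau})}F:F\in\mathcal{F}\}$, uses doubling of $\sigma$ to show that $\mathcal{H}$ is again $\sigma$-Carleson (this is where doubling enters, via $|\pi_{\mathcal{D}}^{(\mathbf{\tau})}A|_\sigma\le C_{\mathbf{\tau}}|A|_\sigma$), builds stopping data $(C_0,\mathcal{H},\alpha_{\mathcal{H};f})$ with $\alpha_{\mathcal{H};f}(H)=\sup_{H'\supset H}E_{H'}^\sigma|f|$, defines an Alpert projection $\widehat{g}$ so that $\sum_{F}\mathsf{P}_{\mathcal{W}_F}^\omega g=\sum_{F}\mathsf{P}_{\mathcal{C}_{\mathcal{F}}(F)}^\omega\widehat{g}$, and then simply invokes Proposition~\ref{strongly adapted} for the pair $(\mathcal{H},\widehat{g})$; unravelling the definitions of $\mathcal{C}_{\mathcal{H}}^{\mathbf{\tau}-\limfunc{shift}}(H)$ recovers exactly the desired sum.

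What each approach buys: the paper's reduction is short and makes transparent that the corollary really is a corollary---the ``delicate re-indexing'' you flag (multiple $F$ sharing a common $\pi_{\mathcal{D}}^{(\mathbf{\tau})}F$, and the associated quasi-orthogonality bookkeeping) is absorbed once and for all into the verification that $\mathcal{H}$ is $\sigma$-Carleson and carries stopping data, after which Proposition~\ref{strongly adapted} handles everything. Your approach, by contrast, requires reproducing several pages of estimates and confronting that re-indexing issue inside the $II_G$/$II_B$ analysis; your proposed fix (bounded overlap of the $\pi_{\mathcal{D}}^{(\mathbf{\tau})}F$ at each scale, costing a factor $C_{n,\mathbf{\tau}}$) is morally the same as the paper's Carleson verification for $\mathcal{H}$, but packaged less cleanly. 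Both routes use doubling of $\sigma$ at essentially the same point.
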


Note that the cubes $J$ in $\mathcal{W}_{F}$ can be close to $F$, but that
the cubes $I$ with $I\supsetneqq \pi _{\mathcal{D}}^{\left( \mathbf{\tau }%
\right) }F$ are far from $F$.

\begin{proof}
We will apply the Intertwining Proposition \ref{strongly adapted} to
stopping data $\left( C_{0},\mathcal{H},\alpha _{\mathcal{H};f}\right) $
derived from the $\mathbf{\tau }$-grandparents of cubes in $\mathcal{F}$,
where%
\begin{equation*}
\mathcal{H}\equiv \left\{ \pi _{\mathcal{D}}^{\left( \mathbf{\tau }\right)
}A:A\in \mathcal{A}\right\} .
\end{equation*}%
Since $\sigma $ is doubling we conclude that the collection of $\mathbf{\tau 
}$-grandparents $\mathcal{H}$ also satisfies a $\sigma $-Carleson condition.
In fact, if $H=\pi _{\mathcal{D}}^{\left( \mathbf{\tau }\right) }A\subset
\pi _{\mathcal{D}}^{\left( \mathbf{\tau }\right) }B=K$, then $A\subset K$,
and so if $\mathcal{M}_{K}^{\left( \mathbf{\tau }\right) }$ is the
collection of maximal cubes $A\in \mathcal{A}$ for which $\pi _{\mathcal{D}%
}^{\left( \mathbf{\tau }\right) }A\subset K$, we have 
\begin{eqnarray*}
\sum_{H\in \mathcal{H}:\ H\subset K}\left\vert H\right\vert _{\sigma }
&=&\sum_{A\in \mathcal{A}:\ \pi _{\mathcal{D}}^{\left( \mathbf{\tau }\right)
}A\subset K}\left\vert \pi _{\mathcal{D}}^{\left( \mathbf{\tau }\right)
}A\right\vert _{\sigma }=\sum_{M\in \mathcal{M}_{K}^{\left( \mathbf{\tau }%
\right) }}\sum_{A\in \mathcal{A}:\ A\subset M}\left\vert \pi _{\mathcal{D}%
}^{\left( \mathbf{\tau }\right) }A\right\vert _{\sigma } \\
&\leq &C_{\mathbf{\tau }}\sum_{M\in \mathcal{M}_{K}^{\left( \mathbf{\tau }%
\right) }}\sum_{A\in \mathcal{A}:\ A\subset M}\left\vert A\right\vert
_{\sigma }\leq C_{\mathbf{\tau }}C_{\func{Carleson}}\sum_{M\in \mathcal{M}%
_{K}^{\left( \mathbf{\tau }\right) }}\left\vert M\right\vert _{\sigma }\leq
C_{\mathbf{\tau }}C_{\func{Carleson}}\left\vert M\right\vert _{\sigma }\ .
\end{eqnarray*}%
Moreover, from this $\sigma $-Carleson condition, and the generalized
Carleson Embedding Theorem, we obtain the following quasi-orthogonality
inequality%
\begin{equation}
\sum_{H\in \mathcal{H}}\left\vert H\right\vert _{\sigma }\left(
\sup_{H^{\prime }\in \mathcal{D}:\ H^{\prime }\supset H}\frac{1}{\left\vert
H^{\prime }\right\vert _{\sigma }}\int_{H^{\prime }}\left\vert f\right\vert
d\sigma \right) ^{2}\lesssim \left\Vert f\right\Vert _{L^{2}\left( \sigma
\right) }^{2}\ .  \label{parent quasi}
\end{equation}%
Indeed, this follows from interpolating the trivial estimate $A:L^{\infty
}\left( \sigma \right) \rightarrow \ell ^{\infty }\left( \mathcal{H}\right) $
for the sublinear operator $Af\left( H\right) \equiv \sup_{H^{\prime }\in 
\mathcal{D}:\ H^{\prime }\supset H}E_{H^{\prime }}^{\sigma }\left\vert
f\right\vert $ with the weak type estimate $A:L^{1}\left( \sigma \right)
\rightarrow \ell ^{1,\infty }\left( \mathcal{H}\right) $, which in turn
follows by applying the Carleson condition to the maximal cubes $M$ for
which $Af\left( M\right) >\lambda $, $\lambda >0$. Finally, set 
\begin{equation*}
\alpha _{\mathcal{H};f}\left( H\right) \equiv \sup_{H^{\prime }\in \mathcal{D%
}:\ H^{\prime }\supset H}\frac{1}{\left\vert H^{\prime }\right\vert _{\sigma
}}\int_{H^{\prime }}\left\vert f\right\vert d\sigma ,\ \ \ \ \ H\in \mathcal{%
H},
\end{equation*}%
so that the triple $\left( C_{0},\mathcal{H},\alpha _{\mathcal{H};f}\right) $
constitutes stopping data for the function $f\in L^{2}\left( \sigma \right) $
in the sense of Definition \ref{general stopping data}. Now define an Alpert
projection $\widehat{g}$ so that%
\begin{equation*}
\sum_{F\in \mathcal{F}}\mathsf{P}_{\mathcal{W}_{F}}^{\omega }g=\sum_{F\in 
\mathcal{F}}\mathsf{P}_{\mathcal{C}_{\mathcal{F}}\left( F\right) }^{\omega }%
\widehat{g}.
\end{equation*}%
Then $\left\Vert \widehat{g}\right\Vert _{L^{2}\left( \omega \right) }\leq
\left\Vert g\right\Vert _{L^{2}\left( \omega \right) }$ and the Intertwining
Proposition \ref{strongly adapted} yields%
\begin{equation*}
\left\vert \sum_{H\in \mathcal{H}}\ \sum_{I:\ I\supsetneqq H}\ \left\langle
T_{\sigma }^{\alpha }\bigtriangleup _{I;\kappa _{1}}^{\sigma }f,\mathsf{P}_{%
\mathcal{C}_{\mathcal{H}}^{\mathbf{\tau }-\limfunc{shift}}\left( H\right)
}^{\omega }\widehat{g}\right\rangle _{\omega }\right\vert \lesssim \left( 
\mathcal{V}_{2}^{\alpha ,\kappa _{1}}+\sqrt{A_{2}^{\alpha }}+\mathfrak{T}%
_{T^{\alpha }}^{\left( \kappa _{1}\right) }\right) \ \left\Vert f\right\Vert
_{L^{2}\left( \sigma \right) }\left\Vert g\right\Vert _{L^{2}\left( \omega
\right) }.
\end{equation*}%
Unravelling the definitions shows that this inequality is precisely the
conclusion of the Alternate Intertwining Corollary \ref{alt int prop}.
\end{proof}

\subsection{The Parallel Corona\label{details}}

Armed with the Monotonicity Lemma and the Intertwining Proposition in the
previous two subsections, we can now give the proof of Theorem \ref{pivotal
theorem}, for which it suffices to show that%
\begin{equation}
\left\vert \left\langle T_{\sigma }^{\alpha }f,g\right\rangle _{L^{2}\left(
\omega \right) }\right\vert \lesssim \left( \mathfrak{T}_{T^{\alpha }}+%
\mathfrak{T}_{T^{\alpha }}^{\ast }+\mathcal{BICT}_{T^{\alpha }}+\mathcal{V}%
_{2}^{\alpha ,\kappa _{1}}+\mathcal{V}_{2}^{\alpha ,\kappa _{2},\ast }+\sqrt{%
\mathcal{A}_{2}^{\alpha }}+\sqrt{\mathcal{A}_{2}^{\alpha ,\ast }}\right)
\left\Vert f\right\Vert _{L^{2}\left( \sigma \right) }\left\Vert
g\right\Vert _{L^{2}\left( \omega \right) }\ ,  \label{once we show}
\end{equation}%
since by (\ref{piv control}), 
\begin{equation*}
\mathcal{V}_{2}^{\alpha ,\kappa _{1}}+\mathcal{V}_{2}^{\alpha ,\kappa
_{2},\ast }\leq C_{n,\alpha ,\kappa _{1,}\left( \beta _{1},\gamma
_{1}\right) ,\kappa _{2,}\left( \beta _{2},\gamma _{2}\right) }\sqrt{%
A_{2}^{\alpha }},\ \ \ \ \ \text{for }\kappa _{1}>\theta _{1}+\alpha -n\text{
and }\kappa _{2}>\theta _{2}+\alpha -n\ .
\end{equation*}%
Note that as above we are abbreviating $\mathfrak{T}_{\left( T^{\alpha
}\right) ^{\ast }}^{\left( \kappa \right) }\left( \omega ,\sigma \right) $
with $\mathfrak{T}_{T^{\alpha }}^{\left( \kappa \right) ,\ast }$.

As a first step, we will prove the weaker inequality%
\begin{equation}
\left\vert \left\langle T_{\sigma }^{\alpha }f,g\right\rangle _{L^{2}\left(
\omega \right) }\right\vert \lesssim \left( \mathfrak{T}_{T^{\alpha
}}^{\left( \kappa \right) }+\mathfrak{T}_{T^{\alpha }}^{\left( \kappa
\right) ,\ast }+\mathcal{BICT}_{T^{\alpha }}+\mathcal{V}_{2}^{\alpha ,\kappa
_{1}}+\mathcal{V}_{2}^{\alpha ,\kappa _{2},\ast }+\sqrt{A_{2}^{\alpha }}+%
\mathcal{WBP}_{T^{\alpha }}^{\left( \kappa _{1},\kappa _{2}\right) }\right)
\left\Vert f\right\Vert _{L^{2}\left( \sigma \right) }\left\Vert
g\right\Vert _{L^{2}\left( \omega \right) }\ ,  \label{once we show'}
\end{equation}%
in which we only need the classical Muckenhoupt constant $A_{2}^{\alpha }$,
and then replace $\kappa $-testing with $1$-testing, and\ remove the $\kappa
^{th}$-order weak boundedness constant $\mathcal{WBP}_{T^{\alpha }}^{\left(
\kappa _{1},\kappa _{2}\right) }\left( \sigma ,\omega \right) $ all at the
price of using the one-tailed constants $\mathcal{A}_{2}^{\alpha },\mathcal{A%
}_{2}^{\alpha ,\ast }$ instead of $A_{2}^{\alpha }$.

A crucial result of Nazarov, Treil and Volberg in \cite{NTV1}, \cite{NTV3}
and \cite{NTV4} shows that all of the cubes $I$ and $J$ in the sum 
\begin{equation*}
\left\langle T_{\sigma }^{\alpha }f,g\right\rangle _{L^{2}\left( \omega
\right) }=\dsum\limits_{I,J\in \mathcal{D}}\left\langle T_{\sigma }^{\alpha
}\left( \bigtriangleup _{I;\kappa _{1}}^{\sigma }f\right) ,\bigtriangleup
_{J;\kappa _{2}}^{\omega }g\right\rangle _{L^{2}\left( \omega \right) }
\end{equation*}%
may be assumed $\left( \mathbf{r},\varepsilon \right) $-$\limfunc{good}$.

\subsubsection{The Calder\'{o}n-Zygmund corona construction}

Let $\mu $ be a locally finite positive Borel measure on $\mathbb{R}^{n}$.
Let $\mathcal{F}$ be a collection of Calder\'{o}n-Zygmund stopping cubes for 
$f$, and let $\mathcal{D}=\dbigcup\limits_{F\in \mathcal{F}}\mathcal{C}_{%
\mathcal{F}}\left( F\right) $ be the associated corona decomposition of the
dyadic grid $\mathcal{D}$. Then we have 
\begin{eqnarray*}
E_{F^{\prime }}^{\mu }\left\vert f\right\vert &>&C_{0}E_{F}^{\mu }\left\vert
f\right\vert \text{ whenever }F^{\prime },F\in \mathcal{F}\text{ with }%
F^{\prime }\subsetneqq F, \\
E_{I}^{\mu }\left\vert f\right\vert &\leq &C_{0}E_{F}^{\mu }\left\vert
f\right\vert \text{ for }I\in \mathcal{C}_{\mathcal{F}}\left( F\right) .
\end{eqnarray*}%
For a cube $I\in \mathcal{D}$ let $\pi _{\mathcal{D}}I$ be the $\mathcal{D}$%
-parent of $I$ in the grid $\mathcal{D}$, and let $\pi _{\mathcal{F}}I$ be
the smallest member of $\mathcal{F}$ that contains $I$. For $F,F^{\prime
}\in \mathcal{F}$, we say that $F^{\prime }$ is an $\mathcal{F}$-child of $F$
if $\pi _{\mathcal{F}}\left( \pi _{\mathcal{D}}F^{\prime }\right) =F$ (it
could be that $F=\pi _{\mathcal{D}}F^{\prime }$), and we denote by $%
\mathfrak{C}_{\mathcal{F}}\left( F\right) $ the set of $\mathcal{F}$%
-children of $F$.

For $F\in \mathcal{F}$, define the projection $\mathsf{P}_{\mathcal{C}_{%
\mathcal{F}}\left( F\right) }^{\mu }$ onto the linear span of the Alpert
functions $\left\{ h_{I;\kappa }^{\mu ,a}\right\} _{I\in \mathcal{C}_{F},\
a\in \Gamma _{I,n.\kappa }}$ by%
\begin{equation*}
\mathsf{P}_{\mathcal{C}_{\mathcal{F}}\left( F\right) }^{\mu }f=\sum_{I\in 
\mathcal{C}_{\mathcal{F}}\left( F\right) }\bigtriangleup _{I;\kappa }^{\mu
}f=\sum_{I\in \mathcal{C}_{F},\ a\in \Gamma _{I,n.\kappa }}\left\langle
f,h_{I;\kappa }^{\mu ,a}\right\rangle _{L^{2}\left( \sigma \right)
}h_{I;\kappa }^{\mu ,a}.
\end{equation*}%
The standard properties of these projections are%
\begin{equation*}
f=\sum_{F\in \mathcal{F}}\mathsf{P}_{\mathcal{C}_{\mathcal{F}}\left(
F\right) }^{\mu }f,\ \ \ \ \ \int \left( \mathsf{P}_{\mathcal{C}_{\mathcal{F}%
}\left( F\right) }^{\mu }f\right) d\mu =0,\ \ \ \ \ \left\Vert f\right\Vert
_{L^{2}\left( \mu \right) }^{2}=\sum_{F\in \mathcal{F}}\left\Vert \mathsf{P}%
_{\mathcal{C}_{\mathcal{F}}\left( F\right) }^{\mu }f\right\Vert
_{L^{2}\left( \mu \right) }^{2}.
\end{equation*}%
There is also a $\mu $-Carleson condition satisfied by the stopping cubes,
namely%
\begin{equation*}
\sum_{F^{\prime }\in \mathcal{F}:\ F^{\prime }\subset F}\left\vert F^{\prime
}\right\vert _{\mu }\leq C_{0}\left\vert F\right\vert _{\mu }\text{ for all }%
F\in \mathcal{F}.
\end{equation*}%
Thus with $\alpha _{\mathcal{F}}\equiv $ $E_{F}^{\mu }\left\vert
f\right\vert $, the triple $\left( C_{0},\mathcal{F},\alpha _{\mathcal{F}%
}\right) $ constitutes stopping data for $f$ in the sense of \cite{LaSaShUr3}%
, i.e. Definition \ref{general stopping data} above.

\begin{description}
\item[Important restriction] In the proof of Theorem \ref{pivotal theorem}
we only use the Calder\'{o}n-Zygmund corona decomposition, and in this case,
property (1) can be improved to 
\begin{equation*}
\mathbb{E}_{F}^{\mu }\left\vert f\right\vert \approx \alpha _{\mathcal{F}%
}\left( F\right) \text{ for all }F\in \mathcal{F},
\end{equation*}%
which we assume for the remainder of the proof.
\end{description}

\subsection{Form splittings and decompositions}

Let $\left( C_{0},\mathcal{A},\alpha _{\mathcal{A}}\right) $ constitute
stopping data for $f\in L^{2}\left( \sigma \right) $,\ and let $\left( C_{0},%
\mathcal{B},\alpha _{\mathcal{B}}\right) $ constitute stopping data for $%
g\in L^{2}\left( \omega \right) $ as in the previous subsubsection. We now
organize the bilinear form,%
\begin{eqnarray*}
\left\langle T_{\sigma }^{\alpha }f,g\right\rangle _{\omega }
&=&\left\langle T_{\sigma }^{\alpha }\left( \sum_{I\in \mathcal{D}%
}\bigtriangleup _{I;\kappa _{1}}^{\sigma }f\right) ,\left( \sum_{J\in 
\mathcal{D}}\bigtriangleup _{J;\kappa _{2}}^{\omega }g\right) \right\rangle
_{\omega }=\sum_{I\in \mathcal{D}\ \text{and }J\in \mathcal{D}}\left\langle
T_{\sigma }^{\alpha }\left( \bigtriangleup _{I;\kappa _{1}}^{\sigma
}f\right) ,\left( \bigtriangleup _{J;\kappa _{2}}^{\omega }g\right)
\right\rangle _{\omega } \\
&=&\sum_{\left( A,B\right) \in \mathcal{A}\times \mathcal{B}}\sum_{I\in 
\mathcal{C}_{\mathcal{A}}\left( A\right) \text{ and }J\in \mathcal{C}_{%
\mathcal{B}}\left( B\right) }\left\langle T_{\sigma }^{\alpha }\left(
\bigtriangleup _{I;\kappa _{1}}^{\sigma }f\right) ,\left( \bigtriangleup
_{J;\kappa _{2}}^{\omega }g\right) \right\rangle _{\omega }=\sum_{\left(
A,B\right) \in \mathcal{A}\times \mathcal{B}}\left\langle T_{\sigma
}^{\alpha }\left( \mathsf{P}_{\mathcal{C}_{\mathcal{A}}\left( A\right)
}^{\sigma }f\right) ,\mathsf{P}_{\mathcal{C}_{\mathcal{B}}\left( B\right)
}^{\omega }g\right\rangle _{\omega }\ ,
\end{eqnarray*}%
as a sum over the families of Calder\'{o}n-Zygmund stopping cubes $\mathcal{A%
}$ and $\mathcal{B}$, and then decompose this sum by the \emph{Parallel
Corona decomposition},\emph{\ }in which the `diagonal cut' in the bilinear
form is made according to the relative positions of intersecting coronas,
rather than the traditional way of making the `diagonal cut' according to
relative side lengths of cubes. The parallel corona as used here was
introduced in an unpublished manuscript on the \textit{arXiv} \cite%
{LaSaShUr4} by Lacey, Sawyer, Shen and Uriarte-Tuero that proved the
Indicator/Interval Testing characterization for the Hilbert transform, just
before Michael Lacey's breakthrough in controlling the local form \cite{Lac}%
. This manuscript was referenced in the survey article \cite[see page 21]%
{Lac2}, and subsequently used in at least \cite{Hyt3}, \cite{Tan} and \cite%
{LaSaShUrWi}.

We have%
\begin{eqnarray}
&&\left\langle T_{\sigma }^{\alpha }f,g\right\rangle _{\omega }=\sum_{\left(
A,B\right) \in \mathcal{A}\times \mathcal{B}}\left\langle T_{\sigma
}^{\alpha }\left( \mathsf{P}_{\mathcal{C}_{\mathcal{A}}\left( A\right)
}^{\sigma }f\right) ,\mathsf{P}_{\mathcal{C}_{\mathcal{B}}\left( B\right)
}^{\omega }g\right\rangle _{\omega }  \label{parallel corona decomp'} \\
&=&\left\{ \sum_{\left( A,B\right) \in \func{Near}\left( \mathcal{A}\times 
\mathcal{B}\right) }+\sum_{\left( A,B\right) \in \func{Disjoint}\left( 
\mathcal{A}\times \mathcal{B}\right) }+\sum_{\left( A,B\right) \in \func{Far}%
\left( \mathcal{A}\times \mathcal{B}\right) }\right\} \left\langle T_{\sigma
}^{\alpha }\left( \mathsf{P}_{\mathcal{C}_{\mathcal{A}}\left( A\right)
}^{\sigma }f\right) ,\mathsf{P}_{\mathcal{C}_{\mathcal{B}}\left( B\right)
}^{\omega }g\right\rangle _{\omega }  \notag \\
&\equiv &\func{Near}\left( f,g\right) +\func{Disjoint}\left( f,g\right) +%
\func{Far}\left( f,g\right) .  \notag
\end{eqnarray}%
Here $\func{Near}\left( \mathcal{A}\times \mathcal{B}\right) $ is the set of
pairs $\left( A,B\right) \in \mathcal{A}\times \mathcal{B}$ such that one of 
$A,B$ is contained in the other, and there is no $A_{1}\in \mathcal{A}$ with 
$B\subset A_{1}\subsetneqq A$, nor is there $B_{1}\in \mathcal{B}$ with $%
A\subset B_{1}\subsetneqq B$. The set $\func{Disjoint}\left( \mathcal{A}%
\times \mathcal{B}\right) $ is the set of pairs $\left( A,B\right) \in 
\mathcal{A}\times \mathcal{B}$ such that $A\cap B=\emptyset $. The set $%
\func{Far}\left( \mathcal{A}\times \mathcal{B}\right) $ is the complement of 
$\func{Near}\left( \mathcal{A}\times \mathcal{B}\right) \cup \func{Disjoint}%
\left( \mathcal{A}\times \mathcal{B}\right) $ in $\mathcal{A}\times \mathcal{%
B}$:%
\begin{equation*}
\func{Far}\left( \mathcal{A}\times \mathcal{B}\right) =\left( \mathcal{A}%
\times \mathcal{B}\right) \setminus \left\{ \func{Near}\left( \mathcal{A}%
\times \mathcal{B}\right) \cup \func{Disjoint}\left( \mathcal{A}\times 
\mathcal{B}\right) \right\} .
\end{equation*}%
Note that if $\left( A,B\right) \in \func{Far}\left( \mathcal{A}\times 
\mathcal{B}\right) $, then \textbf{either} $B\subset A^{\prime }$ for some $%
A^{\prime }\in \mathfrak{C}_{\mathcal{A}}\left( A\right) $, \textbf{or} $%
A\subset B^{\prime }$ for some $B^{\prime }\in \mathfrak{C}_{\mathcal{B}%
}\left( B\right) $.

\subsubsection{Disjoint form}

By Lemma \ref{standard delta}, the disjoint form\ $\func{Disjoint}\left(
f,g\right) $ is controlled by the $A_{2}^{\alpha }$ condition, the $\kappa $%
-cube testing conditions (\ref{kappa testing}), and the $\kappa $-weak
boundedness property (\ref{kappa WBP}):%
\begin{equation}
\left\vert \func{Disjoint}\left( f,g\right) \right\vert \lesssim \left( 
\mathfrak{T}_{\alpha }^{\left( \kappa \right) }+\mathfrak{T}_{\alpha
}^{\left( \kappa \right) ,\ast }+\mathcal{WBP}_{T^{\alpha }}^{\left( \kappa
_{1},\kappa _{2}\right) }+\sqrt{A_{2}^{\alpha }}\right) \left\Vert
f\right\Vert _{L^{2}\left( \sigma \right) }\left\Vert g\right\Vert
_{L^{2}\left( \omega \right) }.  \label{disj est}
\end{equation}

\subsubsection{Far form\label{Subsubsection 6.6.1}}

Next we control the far form 
\begin{equation*}
\func{Far}\left( f,g\right) =\sum_{\left( A,B\right) \in \func{Far}\left( 
\mathcal{A}\times \mathcal{B}\right) }\left\langle T_{\sigma }^{\alpha
}\left( \mathsf{P}_{\mathcal{C}_{\mathcal{A}}\left( A\right) }^{\sigma
}f\right) ,\mathsf{P}_{\mathcal{C}_{\mathcal{B}}\left( B\right) }^{\omega
}g\right\rangle _{\omega },
\end{equation*}%
which we first decompose into `far below' and `far above' pieces,%
\begin{eqnarray*}
\func{Far}\left( f,g\right) &=&\sum_{\substack{ \left( A,B\right) \in \func{%
Far}\left( \mathcal{A}\times \mathcal{B}\right)  \\ B\subset A}}\left\langle
T_{\sigma }^{\alpha }\left( \mathsf{P}_{\mathcal{C}_{\mathcal{A}}\left(
A\right) }^{\sigma }f\right) ,\mathsf{P}_{\mathcal{C}_{\mathcal{B}}\left(
B\right) }^{\omega }g\right\rangle _{\omega }+\sum_{\substack{ \left(
A,B\right) \in \func{Far}\left( \mathcal{A}\times \mathcal{B}\right)  \\ %
A\subset B}}\left\langle T_{\sigma }^{\alpha }\left( \mathsf{P}_{\mathcal{C}%
_{\mathcal{A}}\left( A\right) }^{\sigma }f\right) ,\mathsf{P}_{\mathcal{C}_{%
\mathcal{B}}\left( B\right) }^{\omega }g\right\rangle _{\omega } \\
&=&\func{Far}_{\limfunc{below}}\left( f,g\right) +\func{Far}_{\limfunc{above}%
}\left( f,g\right) ,
\end{eqnarray*}%
where as we noted above, if $\left( A,B\right) \in \func{Far}\left( \mathcal{%
A}\times \mathcal{B}\right) $ and $B\subset A$, then $B$ is actually `far
below' the cube $A$ in the sense that $B\subset A^{\prime }$ for some $%
A^{\prime }\in \mathfrak{C}_{\mathcal{A}}\left( A\right) $.

At this point we recall that the Intertwining Proposition \ref{strongly
adapted} was built on the \emph{shifted} corona decomposition,%
\begin{equation*}
\left\langle T_{\sigma }^{\alpha }f,g\right\rangle _{\omega
}=\sum_{A,A^{\prime }\in \mathcal{A}}\left\langle T_{\sigma }^{\alpha
}\left( \mathsf{P}_{\mathcal{C}_{\mathcal{A}}\left( A\right) }^{\sigma
}f\right) ,\mathsf{P}_{\mathcal{C}_{\mathcal{A}}^{\mathbf{\tau }-\func{shift}%
}\left( A^{\prime }\right) }^{\omega }g\right\rangle _{\omega }\ ,
\end{equation*}%
in which the shifted $\mathcal{A}$-coronas $\left\{ \mathcal{C}_{\mathcal{A}%
}^{\mathbf{\tau }-\func{shift}}\left( A^{\prime }\right) \right\}
_{A^{\prime }\in \mathcal{A}}$ are used in place of the parallel $\mathcal{B}
$-coronas $\left\{ \mathcal{C}_{\mathcal{B}}\left( B\right) \right\} _{B\in 
\mathcal{B}}$ in defining a complete set of projections in $L^{2}\left(
\omega \right) $. In fact, using that $\dbigcup\limits_{A^{\prime }\in 
\mathcal{A}:\ A^{\prime }\subsetneqq A}\mathcal{C}_{\mathcal{A}}^{\mathbf{%
\tau }-\func{shift}}\left( A^{\prime }\right) =\left\{ J\in \mathcal{D}%
:J\Subset _{\mathbf{\tau }}A\right\} $, the conclusion of the Intertwining
Proposition \ref{strongly adapted} can be written,%
\begin{eqnarray*}
\left\vert \func{Shift}\left( f,g\right) \right\vert &\lesssim &\left( 
\mathcal{V}_{2}^{\alpha ,\kappa _{1}}+\sqrt{A_{2}^{\alpha }}+\mathfrak{T}%
_{T^{\alpha }}^{\left( \kappa _{1}\right) }\right) \ \left\Vert f\right\Vert
_{L^{2}\left( \sigma \right) }\left\Vert g\right\Vert _{L^{2}\left( \omega
\right) }; \\
\text{where }\func{Shift}\left( f,g\right) &\equiv &\sum_{A\in \mathcal{A}%
}\sum_{I:\ I\supsetneqq A}\sum_{J\in \mathcal{D}:\ J\Subset _{\mathbf{\tau }%
}A}\left\langle T_{\sigma }^{\alpha }\bigtriangleup _{I;\kappa _{1}}^{\sigma
}f,\bigtriangleup _{J;\kappa _{2}}^{\omega }g\right\rangle _{\omega }\ .
\end{eqnarray*}

We now wish to apply this estimate to the far below form $\func{Far}_{%
\limfunc{below}}\left( f,g\right) $ in the parallel corona decomposition,
and for this we write%
\begin{eqnarray*}
\func{Far}_{\limfunc{below}}\left( f,g\right) &=&\sum_{\left( A,B\right) \in 
\func{Far}\left( \mathcal{A}\times \mathcal{B}\right) :\ B\subset
A}\left\langle T_{\sigma }^{\alpha }\left( \mathsf{P}_{\mathcal{C}_{\mathcal{%
A}}\left( A\right) }^{\sigma }f\right) ,\mathsf{P}_{\mathcal{C}_{\mathcal{B}%
}\left( B\right) }^{\omega }g\right\rangle _{\omega } \\
&=&\sum_{A\in \mathcal{A}}\left\langle T_{\sigma }^{\alpha }\left( \mathsf{P}%
_{\mathcal{C}_{\mathcal{A}}\left( A\right) }^{\sigma }f\right) ,\sum_{B\in 
\mathcal{B}:\ \left( A,B\right) \in \func{Far}\left( \mathcal{A}\times 
\mathcal{B}\right) \text{ and }B\subset A}\mathsf{P}_{\mathcal{C}_{\mathcal{B%
}}\left( B\right) }^{\omega }g\right\rangle _{\omega } \\
&=&\sum_{A\in \mathcal{A}}\left\langle T_{\sigma }^{\alpha }\left( \mathsf{P}%
_{\mathcal{C}_{\mathcal{A}}\left( A\right) }^{\sigma }f\right)
,\sum_{A^{\prime }\in \mathcal{A}:\ A^{\prime }\subsetneqq A}\sum_{B\in 
\mathcal{B}\cap \mathcal{C}_{\mathcal{A}}\left( A^{\prime }\right) }\mathsf{P%
}_{\mathcal{C}_{\mathcal{A}}\left( A^{\prime }\right) \cap \mathcal{C}_{%
\mathcal{B}}\left( B\right) }^{\omega }g\right\rangle _{\omega } \\
&=&\sum_{A^{\prime }\in \mathcal{A}}\sum_{I:\ I\supsetneqq A^{\prime
}}\sum_{B\in \mathcal{B}\cap \mathcal{C}_{\mathcal{A}}\left( A^{\prime
}\right) }\left\langle T_{\sigma }^{\alpha }\bigtriangleup _{I;\kappa
_{1}}^{\sigma }f,\mathsf{P}_{\mathcal{C}_{\mathcal{A}}\left( A^{\prime
}\right) \cap \mathcal{C}_{\mathcal{B}}\left( B\right) }^{\omega
}g\right\rangle _{\omega } \\
&=&\sum_{A^{\prime }\in \mathcal{A}}\sum_{I:\ I\supsetneqq A^{\prime
}}\sum_{J\in \mathcal{C}_{\mathcal{A}}\left( A^{\prime }\right) :\ J\subset
B\subset A^{\prime }\text{ for some }B\in \mathcal{B}}\left\langle T_{\sigma
}^{\alpha }\bigtriangleup _{I;\kappa _{1}}^{\sigma }f,\bigtriangleup
_{J;\kappa _{2}}^{\omega }g\right\rangle _{\omega }\ .
\end{eqnarray*}%
If we now replace $A^{\prime }$ with $A$ in the last line, then the
difference between forms is given by%
\begin{eqnarray}
&&\func{Far}_{\limfunc{below}}\left( f,g\right) -\func{Shift}\left(
f,g\right)  \label{Far-Shift} \\
&=&\sum_{A\in \mathcal{A}}\sum_{I:\ I\supsetneqq A}\left\{ \sum_{J\in 
\mathcal{C}_{\mathcal{A}}\left( A\right) :\ J\subset B\subset A\text{ for
some }B\in \mathcal{B}}-\sum_{J\in \mathcal{D}:\ J\Subset _{\mathbf{\tau }%
}A}\right\} \left\langle T_{\sigma }^{\alpha }\bigtriangleup _{I;\kappa
_{1}}^{\sigma }f,\bigtriangleup _{J;\kappa _{2}}^{\omega }g\right\rangle
_{\omega }  \notag \\
&=&\sum_{A\in \mathcal{A}}\sum_{I:\ I\supsetneqq A}\left\{ \sum_{J\in 
\mathcal{W}_{A}}-\sum_{J\in \mathcal{X}_{A}}\right\} \left\langle T_{\sigma
}^{\alpha }\bigtriangleup _{I;\kappa _{1}}^{\sigma }f,\bigtriangleup
_{J;\kappa _{2}}^{\omega }g\right\rangle _{\omega }\equiv S-T\ ,  \notag
\end{eqnarray}%
where%
\begin{equation*}
S=\sum_{A\in \mathcal{A}}\sum_{I:\ I\supsetneqq A}\sum_{J\in \mathcal{W}%
_{A}}\left\langle T_{\sigma }^{\alpha }\bigtriangleup _{I;\kappa
_{1}}^{\sigma }f,\bigtriangleup _{J;\kappa _{2}}^{\omega }g\right\rangle
_{\omega }\text{ and }T=\sum_{A\in \mathcal{A}}\sum_{I:\ I\supsetneqq
A}\sum_{J\in \mathcal{X}_{A}}\left\langle T_{\sigma }^{\alpha
}\bigtriangleup _{I;\kappa _{1}}^{\sigma }f,\bigtriangleup _{J;\kappa
_{2}}^{\omega }g\right\rangle _{\omega }\ ,
\end{equation*}%
and%
\begin{eqnarray*}
\mathcal{W}_{A} &\equiv &\left\{ J\in \mathcal{D}:\ J\in \mathcal{C}_{%
\mathcal{A}}\left( A\right) ,\ \ell \left( J\right) \geq 2^{-\mathbf{\tau }%
}\ell \left( A\right) ,\ \text{and }J\subset B\subset A\text{ for some }B\in 
\mathcal{B}\right\} , \\
\mathcal{X}_{A} &\equiv &\left\{ J\in \mathcal{D}:\ J\in \mathcal{C}_{%
\mathcal{A}}\left( A\right) ,\ \ell \left( J\right) <2^{-\mathbf{\tau }}\ell
\left( A\right) ,\ \text{and there is no }B\in \mathcal{B}\text{ with }%
J\subset B\subset A\right\} .
\end{eqnarray*}

The sum $T$ can be estimated directly by the Intertwining Proposition \ref%
{strongly adapted} using the Alpert projection 
\begin{equation*}
\widehat{g}=\sum_{A\in \mathcal{A}}\sum_{\substack{ J\in \mathcal{C}_{%
\mathcal{A}}\left( A\right) :\ \ell \left( J\right) <2^{-\mathbf{\tau }}\ell
\left( A\right)  \\ \text{and there exists }B\in \mathcal{B}\text{ with }%
J\subset B\subset A}}\bigtriangleup _{J;\kappa _{2}}^{\omega }g.
\end{equation*}%
Indeed, we then have $\sum_{J\in \mathcal{X}_{A}}\bigtriangleup _{J;\kappa
_{2}}^{\omega }g=\sum_{J\in \mathcal{C}_{\mathcal{A}}^{\mathbf{\tau }-\func{%
shift}}\left( A\right) }\bigtriangleup _{J;\kappa _{2}}^{\omega }\widehat{g}$
and so we obtain%
\begin{eqnarray*}
\left\vert T\right\vert &=&\left\vert \sum_{A\in \mathcal{A}}\sum_{I:\
I\supsetneqq A}\sum_{J\in \mathcal{X}_{A}}\left\langle T_{\sigma }^{\alpha
}\bigtriangleup _{I;\kappa _{1}}^{\sigma }f,\bigtriangleup _{J;\kappa
_{2}}^{\omega }g\right\rangle _{\omega }\right\vert =\left\vert \sum_{A\in 
\mathcal{A}}\sum_{I:\ I\supsetneqq A}\sum_{J\in \mathcal{C}_{\mathcal{A}}^{%
\mathbf{\tau }-\func{shift}}\left( A\right) }\left\langle T_{\sigma
}^{\alpha }\bigtriangleup _{I;\kappa _{1}}^{\sigma }f,\bigtriangleup
_{J;\kappa _{2}}^{\omega }\widehat{g}\right\rangle _{\omega }\right\vert \\
&\lesssim &\left( \mathcal{V}_{2}^{\alpha ,\kappa _{1}}+\sqrt{A_{2}^{\alpha }%
}+\mathfrak{T}_{T^{\alpha }}^{\left( \kappa _{1}\right) }\right) \
\left\Vert f\right\Vert _{L^{2}\left( \sigma \right) }\left\Vert \widehat{g}%
\right\Vert _{L^{2}\left( \omega \right) }\leq \left( \mathcal{V}%
_{2}^{\alpha ,\kappa _{1}}+\sqrt{A_{2}^{\alpha }}+\mathfrak{T}_{T^{\alpha
}}^{\left( \kappa _{1}\right) }\right) \ \left\Vert f\right\Vert
_{L^{2}\left( \sigma \right) }\left\Vert g\right\Vert _{L^{2}\left( \omega
\right) }\ .
\end{eqnarray*}

Now we claim that $S$ satisfies%
\begin{equation}
\left\vert S\right\vert \lesssim \left( \mathfrak{T}_{\alpha }^{\left(
\kappa \right) }+\mathfrak{T}_{\alpha }^{\left( \kappa \right) ,\ast }+%
\mathcal{WBP}_{T^{\alpha }}^{\left( \kappa _{1},\kappa _{2}\right) }+\sqrt{%
A_{2}^{\alpha }}\right) \left\Vert f\right\Vert _{L^{2}\left( \sigma \right)
}\left\Vert g\right\Vert _{L^{2}\left( \omega \right) }\ .
\label{diff par shift}
\end{equation}%
To see (\ref{diff par shift}), momentarily fix $A\in \mathcal{A}$ and $J\in 
\mathcal{W}_{A}$ and write 
\begin{equation*}
\sum_{I:\ I\supsetneqq A}\left\langle T_{\sigma }^{\alpha }\bigtriangleup
_{I;\kappa _{1}}^{\sigma }f,\bigtriangleup _{J;\kappa _{2}}^{\omega
}g\right\rangle _{\omega }=\sum_{I:\ A\subsetneqq I\subset \pi _{\mathcal{D}%
}^{\left( \mathbf{\tau }\right) }A}\left\langle T_{\sigma }^{\alpha
}\bigtriangleup _{I;\kappa _{1}}^{\sigma }f,\bigtriangleup _{J;\kappa
_{2}}^{\omega }g\right\rangle _{\omega }+\sum_{I:\ I\supsetneqq \pi _{%
\mathcal{D}}^{\left( \mathbf{\tau }\right) }A}\left\langle T_{\sigma
}^{\alpha }\bigtriangleup _{I;\kappa _{1}}^{\sigma }f,\bigtriangleup
_{J;\kappa _{2}}^{\omega }g\right\rangle _{\omega }\equiv
S_{A,J}^{1}+S_{A,J}^{2}.
\end{equation*}%
We have%
\begin{equation*}
\left\vert \sum_{A\in \mathcal{A}}\sum_{J\in \mathcal{W}_{A}}S_{A,J}^{1}%
\right\vert \lesssim \left( \mathfrak{T}_{\alpha }^{\left( \kappa \right) }+%
\mathfrak{T}_{\alpha }^{\left( \kappa \right) ,\ast }+\mathcal{WBP}%
_{T^{\alpha }}^{\left( \kappa _{1},\kappa _{2}\right) }+\sqrt{A_{2}^{\alpha }%
}\right) \left\Vert f\right\Vert _{L^{2}\left( \sigma \right) }\left\Vert
g\right\Vert _{L^{2}\left( \omega \right) }
\end{equation*}%
by Lemma \ref{standard delta}, since $J\subset I$ and $\frac{\ell \left(
I\right) }{\ell \left( J\right) }=\frac{\ell \left( I\right) }{\ell \left(
A\right) }\frac{\ell \left( A\right) }{\ell \left( J\right) }\leq 2^{\mathbf{%
\tau }}2^{\mathbf{\tau }}<2^{\mathbf{\rho }}$. For the remaining sum,%
\begin{equation*}
\func{Parallel}\left( f,g\right) \equiv \sum_{A\in \mathcal{A}}\sum_{J\in 
\mathcal{W}_{A}}S_{A,J}^{2}=\sum_{A\in \mathcal{A}}\left\langle T_{\sigma
}^{\alpha }\left( \sum_{I:\ I\supsetneqq \pi _{\mathcal{D}}^{\left( \mathbf{%
\tau }\right) }A}\bigtriangleup _{I;\kappa _{1}}^{\sigma }f\right)
,\sum_{J\in \mathcal{W}_{A}}\bigtriangleup _{J;\kappa _{2}}^{\omega
}g\right\rangle _{\omega }\ ,
\end{equation*}%
we apply the Alternate Intertwining Corollary \ref{alt int prop} to obtain%
\begin{equation*}
\left\vert \func{Parallel}\left( f,g\right) \right\vert \lesssim \left( 
\mathcal{V}_{2}^{\alpha ,\kappa _{1}}+\sqrt{A_{2}^{\alpha }}+\mathfrak{T}%
_{T^{\alpha }}^{\left( \kappa _{1}\right) }\right) \ \left\Vert f\right\Vert
_{L^{2}\left( \sigma \right) }\left\Vert g\right\Vert _{L^{2}\left( \omega
\right) }\ .
\end{equation*}%
Altogether then we have%
\begin{equation}
\left\vert \func{Far}\left( f,g\right) \right\vert \lesssim \left( \mathfrak{%
T}_{T^{\alpha }}^{\left( \kappa _{1}\right) }+\mathfrak{T}_{T^{\alpha
}}^{\left( \kappa _{2}\right) ,\ast }+\mathcal{WBP}_{T^{\alpha }}^{\left(
\kappa _{1},\kappa _{2}\right) }+\sqrt{A_{2}^{\alpha }}+\mathcal{V}%
_{2}^{\alpha ,\kappa _{1}}+\mathcal{V}_{2}^{\alpha ,\kappa _{2},\ast
}\right) \left\Vert f\right\Vert _{L^{2}\left( \sigma \right) }\left\Vert
g\right\Vert _{L^{2}\left( \omega \right) }\ .  \label{far est}
\end{equation}

\subsubsection{Near form}

It remains to control the near form $\func{Near}\left( f,g\right) $ either
by the Indicator/Cube Testing conditions and the classical Muckenhoupt
condition $A_{2}^{\alpha }$, or in the case the measures $\sigma $ and $%
\omega $ are comparable, by the $\kappa $-Cube Testing conditions, Bilinear
Indicator/Cube Testing property, and $A_{2}^{\alpha }$. We first further
decompose $\func{Near}\left( f,g\right) $ into%
\begin{eqnarray*}
\func{Near}\left( f,g\right) &=&\left\{ \sum_{\substack{ \left( A,B\right)
\in \func{Near}\left( \mathcal{A}\times \mathcal{B}\right)  \\ B\subset A}}%
+\sum_{\substack{ \left( A,B\right) \in \func{Near}\left( \mathcal{A}\times 
\mathcal{B}\right)  \\ A\subset B}}\right\} \left\langle T_{\sigma }^{\alpha
}\left( \mathsf{P}_{\mathcal{C}_{\mathcal{A}}\left( A\right) }^{\sigma
}f\right) ,\mathsf{P}_{\mathcal{C}_{\mathcal{B}}\left( B\right) }^{\omega
}g\right\rangle _{\omega } \\
&=&\func{Near}_{\limfunc{below}}\left( f,g\right) +\func{Near}_{\limfunc{%
above}}\left( f,g\right) .
\end{eqnarray*}%
To control $\func{Near}_{\limfunc{below}}\left( f,g\right) $ we define
projections%
\begin{equation*}
\mathsf{Q}_{A}^{\omega }g\equiv \sum_{\substack{ B\in \mathcal{B}:\ \left(
A,B\right) \in \func{Near}\left( \mathcal{A}\times \mathcal{B}\right)  \\ %
B\subset A}}\mathsf{P}_{\mathcal{C}_{\mathcal{B}}\left( B\right) }^{\omega
}g,
\end{equation*}%
and observe that, while the Alpert support of $\mathsf{Q}_{A}^{\omega }$
need not be contained in the corona $\mathcal{C}_{\mathcal{A}}\left(
A\right) $, these projections are nevertheless mutually orthogonal in the
index $A\in \mathcal{A}$.

It is now an easy exercise to use the Indicator/Cube Testing condition (\ref%
{def kth order testing}) to control $\func{Near}_{\limfunc{below}}\left(
f,g\right) $,%
\begin{eqnarray}
&&\left\vert \func{Near}_{\limfunc{below}}\left( f,g\right) \right\vert
=\sum_{A\in \mathcal{A}}\left\vert \left\langle T_{\sigma }^{\alpha }\mathsf{%
P}_{\mathcal{C}_{\mathcal{A}}\left( A\right) }^{\sigma }f,\mathsf{Q}%
_{A}^{\omega }g\right\rangle _{\omega }\right\vert  \label{near porism} \\
&\leq &\sum_{A\in \mathcal{A}}\left\Vert T_{\sigma }^{\alpha }\mathsf{P}_{%
\mathcal{C}_{\mathcal{A}}\left( A\right) }^{\sigma }f\right\Vert
_{L^{2}\left( \omega \right) }\left\Vert \mathsf{Q}_{A}^{\omega
}g\right\Vert _{L^{2}\left( \omega \right) }\lesssim \mathfrak{T}_{T^{\alpha
}}^{IC}\sum_{A\in \mathcal{A}}\alpha _{\mathcal{A}}\left( A\right) \sqrt{%
\left\vert A\right\vert _{\sigma }}\left\Vert \mathsf{Q}_{A}^{\omega
}g\right\Vert _{L^{2}\left( \omega \right) }  \notag \\
&\leq &\mathfrak{T}_{T^{\alpha }}^{IC}\left( \sum_{A\in \mathcal{A}}\alpha _{%
\mathcal{A}}\left( A\right) ^{2}\left\vert A\right\vert _{\sigma }\right) ^{%
\frac{1}{2}}\left( \sum_{A\in \mathcal{A}}\left\Vert \mathsf{Q}_{A}^{\omega
}g\right\Vert _{L^{2}\left( \omega \right) }^{2}\right) ^{\frac{1}{2}%
}\lesssim \mathfrak{T}_{T^{\alpha }}^{IC}\left\Vert f\right\Vert
_{L^{2}\left( \sigma \right) }\left\Vert g\right\Vert _{L^{2}\left( \omega
\right) }\ ,  \notag
\end{eqnarray}%
by quasi-orthogonality and the fact that the projections $\mathsf{Q}%
_{A}^{\omega }$ are mutually orthogonal in the index $A\in \mathcal{A}$.
Note that we have \textbf{not} used comparability of measures\ here since
that is only needed for the Bilinear Carleson Embedding Theorem \ref{2 wt
bil CET}. This will give the first inequality (\ref{NIC}) in Theorem \ref%
{pivotal theorem} after we have removed the weak boundedness constant $%
\mathcal{WBP}_{T^{\alpha }}^{\left( \kappa _{1},\kappa _{2}\right) }\left(
\sigma ,\omega \right) $ in the next subsection.

But we must work harder to obtain control by the Bilinear Indicator/Cube
Testing property and $\kappa $-Cube Testing in the presence of the
comparability assumption on $\sigma $ and $\omega $. For this we proceed
instead as follows. For fixed $A\in \mathcal{A}$ write%
\begin{eqnarray*}
&&\sum_{\substack{ B\in \mathcal{B}:\ \left( A,B\right) \in \func{Near}%
\left( \mathcal{A}\times \mathcal{B}\right)  \\ B\subset A}}\left\langle
T_{\sigma }^{\alpha }\mathsf{P}_{\mathcal{C}_{\mathcal{A}}\left( A\right)
}^{\sigma }f,\mathsf{P}_{\mathcal{C}_{\mathcal{B}}\left( B\right) }^{\omega
}g\right\rangle _{\omega }=\sum_{B\in \mathcal{B}\cap \mathcal{C}_{\mathcal{A%
}}\left( A\right) }\left\langle T_{\sigma }^{\alpha }\mathsf{P}_{\mathcal{C}%
_{\mathcal{A}}\left( A\right) }^{\sigma }f,\mathsf{P}_{\mathcal{C}_{\mathcal{%
B}}\left( B\right) }^{\omega }g\right\rangle _{\omega } \\
&=&\sum_{B,B^{\prime }\in \mathcal{B}\cap \mathcal{C}_{\mathcal{A}}\left(
A\right) }\left\langle T_{\sigma }^{\alpha }\left( \mathsf{P}_{\mathcal{C}_{%
\mathcal{B}}\left( B^{\prime }\right) }^{\sigma }\mathsf{P}_{\mathcal{C}_{%
\mathcal{A}}\left( A\right) }^{\sigma }f\right) ,\mathsf{P}_{\mathcal{C}_{%
\mathcal{B}}\left( B\right) }^{\omega }g\right\rangle _{\omega }+\sum_{B\in 
\mathcal{B}\cap \mathcal{C}_{\mathcal{A}}\left( A\right) }\left\langle
T_{\sigma }^{\alpha }\left( \mathsf{P}_{\mathcal{C}_{\mathcal{B}}\left( \pi
_{\mathcal{B}}A\right) }^{\sigma }\mathsf{P}_{\mathcal{C}_{\mathcal{A}%
}\left( A\right) }^{\sigma }f\right) ,\mathsf{P}_{\mathcal{C}_{\mathcal{B}%
}\left( B\right) }^{\omega }g\right\rangle _{\omega } \\
&=&\left\{ \sum_{\substack{ B,B^{\prime }\in \mathcal{B}\cap \mathcal{C}_{%
\mathcal{A}}\left( A\right)  \\ B\cap B^{\prime }=\emptyset }}+\sum 
_{\substack{ B,B^{\prime }\in \mathcal{B}\cap \mathcal{C}_{\mathcal{A}%
}\left( A\right)  \\ B=B^{\prime }}}+\sum_{\substack{ B,B^{\prime }\in 
\mathcal{B}\cap \mathcal{C}_{\mathcal{A}}\left( A\right)  \\ B\varsubsetneqq
B^{\prime }}}+\sum_{\substack{ B,B^{\prime }\in \mathcal{B}\cap \mathcal{C}_{%
\mathcal{A}}\left( A\right)  \\ B^{\prime }\varsubsetneqq B}}\right\}
\left\langle T_{\sigma }^{\alpha }\left( \mathsf{P}_{\mathcal{C}_{\mathcal{B}%
}\left( B^{\prime }\right) }^{\sigma }\mathsf{P}_{\mathcal{C}_{\mathcal{A}%
}\left( A\right) }^{\sigma }f\right) ,\mathsf{P}_{\mathcal{C}_{\mathcal{B}%
}\left( B\right) }^{\omega }g\right\rangle _{\omega } \\
&&+\sum_{B\in \mathcal{B}\cap \mathcal{C}_{\mathcal{A}}\left( A\right)
}\left\langle T_{\sigma }^{\alpha }\left( \mathsf{P}_{\mathcal{C}_{\mathcal{B%
}}\left( \pi _{\mathcal{B}}A\right) }^{\sigma }\mathsf{P}_{\mathcal{C}_{%
\mathcal{A}}\left( A\right) }^{\sigma }f\right) ,\mathsf{P}_{\mathcal{C}_{%
\mathcal{B}}\left( B\right) }^{\omega }g\right\rangle _{\omega } \\
&\equiv &I^{A}+II^{A}+III^{A}+IV^{A}+V^{A},
\end{eqnarray*}%
where $\pi _{\mathcal{B}}A$ denotes the smallest cube $B\in \mathcal{B}$
that contains $A$\footnote{%
We thank Ignacio Uriarte-Tuero for pointing out that term $V^{A}$ was
missing from the argument.}. Then term $I^{A}$ is handled immediately by
Lemma \ref{delta near} to yield%
\begin{equation*}
\sum_{A\in \mathcal{A}}\left\vert I^{A}\right\vert \lesssim \left( \sqrt{%
A_{2}^{\alpha }}+\mathfrak{T}_{T^{\alpha }}^{\left( \kappa \right) }\left(
\sigma ,\omega \right) +\mathfrak{T}_{T^{\alpha ,\ast }}^{\left( \kappa
\right) }\left( \omega ,\sigma \right) +\mathcal{WBP}_{T^{\alpha }}^{\left(
\kappa _{1},\kappa _{2}\right) }\left( \sigma ,\omega \right) \right)
\left\Vert f\right\Vert _{L^{2}\left( \sigma \right) }\left\Vert
g\right\Vert _{L^{2}\left( \omega \right) }\ .
\end{equation*}%
The sum $\sum_{A\in \mathcal{A}}\left\vert II^{A}\right\vert $ of terms $%
II^{A}$ will be handled by the bilinear Carleson Embedding Theorem \ref{2 wt
bil CET}, using the Bilinear Indicator/Cube Testing property $\mathcal{BICT}%
_{T^{\alpha }}\left( \sigma ,\omega \right) <\infty $ as follows.

Note that for $\sigma $ and $\omega $ doubling measures, we have the
following two properties, 
\begin{equation*}
\left\Vert \mathsf{P}_{\mathcal{C}_{\mathcal{B}}\left( B\right) }^{\sigma }%
\mathsf{P}_{\mathcal{C}_{\mathcal{A}}\left( A\right) }^{\sigma }f\right\Vert
_{L^{\infty }\left( \sigma \right) }\lesssim \alpha _{\mathcal{A}}\left(
A\right) \text{ and }\left\Vert \mathsf{P}_{\mathcal{C}_{\mathcal{B}}\left(
B\right) }^{\omega }g\right\Vert _{L^{\infty }\left( \sigma \right)
}\lesssim \alpha _{\mathcal{B}}\left( B\right) ,
\end{equation*}%
since our coronas are Calder\'{o}n-Zygmund, and thus if $A^{\prime }\in 
\mathfrak{C}_{\mathcal{A}}\left( A\right) $, then%
\begin{equation*}
\frac{1}{\left\vert A^{\prime }\right\vert _{\sigma }}\int_{A^{\prime
}}\left\vert f\right\vert d\sigma \lesssim \frac{1}{\left\vert \pi _{%
\mathcal{D}}A^{\prime }\right\vert _{\sigma }}\int_{\pi _{\mathcal{D}%
}A^{\prime }}\left\vert f\right\vert d\sigma \lesssim \alpha _{\mathcal{A}%
}\left( A\right) ,
\end{equation*}%
and so%
\begin{equation*}
\left\vert \mathsf{P}_{\mathcal{C}_{\mathcal{B}}\left( B\right) \cap 
\mathcal{C}_{\mathcal{A}}\left( A\right) }^{\sigma }f\right\vert \lesssim
\sup_{I\in \left[ \mathcal{C}_{\mathcal{B}}\left( B\right) \cap \mathcal{C}_{%
\mathcal{A}}\left( A\right) \right] \cup \mathfrak{C}_{\mathcal{A}}\left(
A\right) }\frac{1}{\left\vert I\right\vert _{\sigma }}\int_{I}\left\vert
f\right\vert d\sigma \lesssim \alpha _{\mathcal{A}}\left( A\right) .
\end{equation*}%
In the first inequality in the above display we have used the telescoping
identities for Alpert wavelets.

We then have, using the Bilinear Indicator/Cube Testing property, that 
\begin{eqnarray*}
\left\vert II^{A}\right\vert &=&\left\vert \sum_{B\in \mathcal{B}\cap 
\mathcal{C}_{A}}\left\langle T_{\sigma }^{\alpha }\left( \mathsf{P}_{%
\mathcal{C}_{\mathcal{B}}\left( B\right) }^{\sigma }\mathsf{P}_{\mathcal{C}_{%
\mathcal{A}}\left( A\right) }^{\sigma }f\right) ,\mathsf{P}_{\mathcal{C}_{%
\mathcal{B}}\left( B\right) }^{\omega }g_{B}\right\rangle _{\omega
}\right\vert \\
&\lesssim &\alpha _{\mathcal{A}}\left( A\right) \sum_{B\in \mathcal{B}\cap 
\mathcal{C}_{\mathcal{A}}\left( A\right) }\alpha _{\mathcal{B}}\left(
B\right) \left\vert \left\langle T_{\sigma }^{\alpha }\left( \frac{\mathsf{P}%
_{\mathcal{C}_{\mathcal{B}}\left( B\right) }^{\sigma }\mathsf{P}_{\mathcal{C}%
_{\mathcal{A}}\left( A\right) }^{\sigma }f}{\left\Vert \mathsf{P}_{\mathcal{C%
}_{\mathcal{B}}\left( B\right) }^{\sigma }\mathsf{P}_{\mathcal{C}_{\mathcal{A%
}}\left( A\right) }^{\sigma }f\right\Vert _{L^{\infty }\left( \sigma \right)
}}\right) ,\frac{\mathsf{P}_{\mathcal{C}_{\mathcal{B}}\left( B\right)
}^{\omega }g_{B}}{\left\Vert \mathsf{P}_{\mathcal{C}_{\mathcal{B}}\left(
B\right) }^{\omega }g_{B}\right\Vert _{L^{\infty }\left( \sigma \right) }}%
\right\rangle _{\omega }\right\vert \\
&\leq &\alpha _{\mathcal{A}}\left( A\right) \sum_{B\in \mathcal{B}\cap 
\mathcal{C}_{\mathcal{A}}\left( A\right) }\mathcal{BICT}_{T^{\alpha }}\left(
\sigma ,\omega \right) \ \alpha _{\mathcal{B}}\left( B\right) \sqrt{%
\left\vert B\right\vert _{\sigma }}\sqrt{\left\vert B\right\vert _{\omega }}%
\ .
\end{eqnarray*}%
Now we use 
\begin{eqnarray*}
\alpha _{\mathcal{A}}\left( A\right) &\lesssim &\frac{1}{\left\vert
A\right\vert _{\sigma }}\int_{A}\left\vert f\right\vert d\sigma \leq
\sup_{K\in \mathcal{D}:\ K\supset I}\frac{1}{\left\vert K\right\vert
_{\sigma }}\int_{K}\left\vert f\right\vert d\sigma ,\ \ \ \ \ \text{for }%
I\in \mathcal{C}_{\mathcal{A}}\left( A\right) , \\
\alpha _{\mathcal{B}}\left( B\right) &\lesssim &\frac{1}{\left\vert
B\right\vert _{\sigma }}\int_{B}\left\vert g\right\vert d\sigma \leq
\sup_{L\in \mathcal{D}:\ L\supset J}\frac{1}{\left\vert L\right\vert
_{\omega }}\int_{L}\left\vert g\right\vert d\omega ,\ \ \ \ \ \text{for }%
J\in \mathcal{C}_{\mathcal{B}}\left( B\right) ,
\end{eqnarray*}%
and apply the bilinear Carleson Embedding Theorem \ref{2 wt bil CET}, with $%
a_{I}\equiv \left\{ 
\begin{array}{ccc}
\sqrt{\left\vert I\right\vert _{\sigma }\left\vert I\right\vert _{\omega }}
& \text{ if } & I\in \mathcal{C}_{\mathcal{A}}\left( A\right) \cap \mathcal{B%
} \\ 
0 & \text{ if } & I\not\in \mathcal{C}_{\mathcal{A}}\left( A\right) \cap 
\mathcal{B}%
\end{array}%
\right. $, to conclude that 
\begin{eqnarray*}
\sum_{A\in \mathcal{A}}\left\vert II^{A}\right\vert &\lesssim &\mathcal{BICT}%
_{T^{\alpha }}\left( \sigma ,\omega \right) \sum_{A\in \mathcal{A}%
}\sum_{B\in \mathcal{B}\cap \mathcal{C}_{\mathcal{A}}\left( A\right) }\sqrt{%
\left\vert B\right\vert _{\sigma }}\sqrt{\left\vert B\right\vert _{\omega }}%
\left( \sup_{K\in \mathcal{D}:\ K\supset B}\frac{1}{\left\vert K\right\vert
_{\sigma }}\int_{K}\left\vert f\right\vert d\sigma \right) \left( \sup_{L\in 
\mathcal{D}:\ L\supset B}\frac{1}{\left\vert L\right\vert _{\omega }}%
\int_{L}\left\vert g\right\vert d\omega \right) \\
&\lesssim &\mathcal{BICT}_{T^{\alpha }}\left( \sigma ,\omega \right)
\left\Vert f\right\Vert _{L^{2}\left( \sigma \right) }\left\Vert
g\right\Vert _{L^{2}\left( \omega \right) }\ .
\end{eqnarray*}%
Finally, observe that the Carleson condition (\ref{bil Car cond}) holds here
for $J\in \mathcal{B}$ since the geometric decay in the $\omega $-Carleson
condition for $\mathcal{B}$ gives%
\begin{eqnarray*}
\sum_{I\in \mathcal{D}:\ I\subset J}a_{I} &=&\sum_{B\in \mathcal{B}:\
B\subset J}\sqrt{\left\vert B\right\vert _{\sigma }\left\vert B\right\vert
_{\omega }}=\sum_{m=0}^{\infty }\sum_{B\in \mathfrak{C}_{\mathcal{B}%
}^{\left( m\right) }\left( J\right) }\sqrt{\left\vert B\right\vert _{\sigma
}\left\vert B\right\vert _{\omega }} \\
&\leq &\sum_{m=0}^{\infty }\sqrt{\sum_{B\in \mathfrak{C}_{\mathcal{B}%
}^{\left( m\right) }\left( J\right) }\left\vert B\right\vert _{\sigma }}%
\sqrt{\sum_{B\in \mathfrak{C}_{\mathcal{B}}^{\left( m\right) }\left(
J\right) }\left\vert B\right\vert _{\omega }}\leq \sum_{m=0}^{\infty }\sqrt{%
\left\vert J\right\vert _{\sigma }}\sqrt{2^{-m\delta }\left\vert
J\right\vert _{\omega }}\leq C^{\prime }\sqrt{\left\vert J\right\vert
_{\sigma }\left\vert J\right\vert _{\omega }},
\end{eqnarray*}%
and now the case of general $J$ follows as usual.

\begin{remark}
This is the only place in the proof where the Bilinear Indicator/Cube
Testing property (\ref{def ind WBP}) is used, and also the only place
requiring the comparability of the measures (through the use of the bilinear
Carleson Embedding Theorem \ref{2 wt bil CET}). It is the Parallel Corona
that permits this relatively simple application of a bilinear testing
property.
\end{remark}

To handle term $III^{A}$ we decompose it into two terms,%
\begin{eqnarray*}
III^{A} &=&\sum_{\substack{ B,B^{\prime }\in \mathcal{B}\cap \mathcal{C}_{%
\mathcal{A}}\left( A\right)  \\ B\varsubsetneqq B^{\prime }\subset A}}%
\left\langle T_{\sigma }^{\alpha }\left( \mathsf{P}_{\mathcal{C}_{\mathcal{B}%
}\left( B^{\prime }\right) }^{\sigma }\mathsf{P}_{\mathcal{C}_{\mathcal{A}%
}\left( A\right) }^{\sigma }f\right) ,\mathsf{P}_{\mathcal{C}_{\mathcal{B}%
}\left( B\right) }^{\omega }g\right\rangle _{\omega }+\sum_{\substack{ B\in 
\mathcal{B}\cap \mathcal{C}_{\mathcal{A}}\left( A\right) ,\ B^{\prime }\in 
\mathcal{B}  \\ B^{\prime }\supsetneqq A}}\left\langle T_{\sigma }^{\alpha
}\left( \mathsf{P}_{\mathcal{C}_{\mathcal{B}}\left( B^{\prime }\right)
}^{\sigma }\mathsf{P}_{\mathcal{C}_{\mathcal{A}}\left( A\right) }^{\sigma
}f\right) ,\mathsf{P}_{\mathcal{C}_{\mathcal{B}}\left( B\right) }^{\omega
}g\right\rangle _{\omega } \\
&\equiv &III_{1}^{A}+III_{2}^{A}\ .
\end{eqnarray*}%
Then we proceed with%
\begin{eqnarray*}
III_{1}^{A} &=&\sum_{\substack{ B,B^{\prime }\in \mathcal{B}\cap \mathcal{C}%
_{\mathcal{A}}\left( A\right)  \\ B\varsubsetneqq B^{\prime }}}\left\langle
T_{\sigma }^{\alpha }\left( \mathsf{P}_{\mathcal{C}_{\mathcal{B}}\left(
B^{\prime }\right) }^{\sigma }\mathsf{P}_{\mathcal{C}_{\mathcal{A}}\left(
A\right) }^{\sigma }f\right) ,\mathsf{P}_{\mathcal{C}_{\mathcal{B}}\left(
B\right) }^{\omega }g\right\rangle _{\omega }=\sum_{\substack{ B,B^{\prime
}\in \mathcal{B}\cap \mathcal{C}_{\mathcal{A}}\left( A\right)  \\ %
B\varsubsetneqq B^{\prime }}}\left\langle \left( \mathsf{P}_{\mathcal{C}_{%
\mathcal{B}}\left( B^{\prime }\right) }^{\sigma }\mathsf{P}_{\mathcal{C}_{%
\mathcal{A}}\left( A\right) }^{\sigma }f\right) ,T_{\omega }^{\alpha ,\ast
}\left( \mathsf{P}_{\mathcal{C}_{\mathcal{B}}\left( B\right) }^{\omega
}g\right) \right\rangle _{\sigma } \\
&=&\sum_{B\in \mathcal{B}\cap \mathcal{C}_{\mathcal{A}}\left( A\right)
}\left\langle \left( \sum_{\substack{ B^{\prime }\in \mathcal{B}\cap 
\mathcal{C}_{\mathcal{A}}\left( A\right)  \\ B\varsubsetneqq B^{\prime }}}%
\mathsf{P}_{\mathcal{C}_{\mathcal{B}}\left( B^{\prime }\right) }^{\sigma }%
\mathsf{P}_{\mathcal{C}_{\mathcal{A}}\left( A\right) }^{\sigma }f\right)
,T_{\omega }^{\alpha ,\ast }\left( \mathsf{P}_{\mathcal{C}_{\mathcal{B}%
}\left( B\right) }^{\omega }g\right) \right\rangle _{\sigma }\ .
\end{eqnarray*}

As in our treatment of the $\func{Far}_{\limfunc{below}}$ form above, we now
apply an argument analogous to that surrounding (\ref{Far-Shift}), in order
to control the sum $\sum_{A\in \mathcal{A}}III_{1}^{A}$ using Lemma \ref%
{standard delta}, and the dual forms of the Intertwining Proposition \ref%
{strongly adapted}, and the Alternate Intertwining Corollary \ref{alt int
prop}. This results in the estimate%
\begin{equation*}
\left\vert \sum_{A\in \mathcal{A}}III_{1}^{A}\right\vert \lesssim \left( 
\sqrt{A_{2}^{\alpha }}+\mathfrak{T}_{T^{\alpha }}^{\left( \kappa \right)
}\left( \sigma ,\omega \right) +\mathfrak{T}_{T^{\alpha ,\ast }}^{\left(
\kappa \right) }\left( \omega ,\sigma \right) +\mathcal{WBP}_{T^{\alpha
}}^{\left( \kappa _{1},\kappa _{2}\right) }\left( \sigma ,\omega \right)
\right) \left\Vert f\right\Vert _{L^{2}\left( \sigma \right) }\left\Vert
g\right\Vert _{L^{2}\left( \omega \right) }\ .
\end{equation*}%
For the sum of terms $III_{2}^{A}$, we also apply an argument analogous to
that surrounding (\ref{Far-Shift}) using Lemma \ref{standard delta} and the
dual forms of Proposition \ref{strongly adapted} and Corollary \ref{alt int
prop}. This also results in the estimate%
\begin{equation*}
\left\vert \sum_{A\in \mathcal{A}}III_{2}^{A}\right\vert \lesssim \left( 
\sqrt{A_{2}^{\alpha }}+\mathfrak{T}_{T^{\alpha }}^{\left( \kappa \right)
}\left( \sigma ,\omega \right) +\mathfrak{T}_{T^{\alpha ,\ast }}^{\left(
\kappa \right) }\left( \omega ,\sigma \right) +\mathcal{WBP}_{T^{\alpha
}}^{\left( \kappa _{1},\kappa _{2}\right) }\left( \sigma ,\omega \right)
\right) \left\Vert f\right\Vert _{L^{2}\left( \sigma \right) }\left\Vert
g\right\Vert _{L^{2}\left( \omega \right) }\ .
\end{equation*}%
In the same way, for the sum of terms $V^{A}$, we first write%
\begin{equation*}
V^{A}=\sum_{B\in \mathcal{B}\cap \mathcal{C}_{\mathcal{A}}\left( A\right)
}\left\langle \left( \mathsf{P}_{\mathcal{C}_{\mathcal{B}}\left( \pi _{%
\mathcal{B}}A\right) }^{\sigma }\mathsf{P}_{\mathcal{C}_{\mathcal{A}}\left(
A\right) }^{\sigma }f\right) ,T_{\omega }^{\alpha ,\ast }\left( \mathsf{P}_{%
\mathcal{C}_{\mathcal{B}}\left( B\right) }^{\omega }g\right) \right\rangle
_{\sigma }\ ,
\end{equation*}%
and then apply once more an argument analogous to that surrounding (\ref%
{Far-Shift}) using Lemma \ref{standard delta} and the dual forms of
Proposition \ref{strongly adapted} and Corollary \ref{alt int prop}, that
results in the estimate%
\begin{equation*}
\left\vert \sum_{A\in \mathcal{A}}V^{A}\right\vert \lesssim \left( \sqrt{%
A_{2}^{\alpha }}+\mathfrak{T}_{T^{\alpha }}^{\left( \kappa \right) }\left(
\sigma ,\omega \right) +\mathfrak{T}_{T^{\alpha ,\ast }}^{\left( \kappa
\right) }\left( \omega ,\sigma \right) +\mathcal{WBP}_{T^{\alpha }}^{\left(
\kappa _{1},\kappa _{2}\right) }\left( \sigma ,\omega \right) \right)
\left\Vert f\right\Vert _{L^{2}\left( \sigma \right) }\left\Vert
g\right\Vert _{L^{2}\left( \omega \right) }\ .
\end{equation*}

The bound for the sum $\sum_{A\in \mathcal{A}}\left\vert
IV_{2}^{A}\right\vert $ is essentially dual to that for $\sum_{A\in \mathcal{%
A}}\left\vert III_{2}^{A}\right\vert $, and so altogether, since $\mathfrak{T%
}_{T^{\alpha }}\leq \mathfrak{T}_{T^{\alpha }}^{\left( \kappa \right) }$, we
have shown that%
\begin{equation*}
\left\vert \func{Near}_{\limfunc{below}}\left( f,g\right) \right\vert
\lesssim \left( \sqrt{A_{2}^{\alpha }}+\mathfrak{T}_{T^{\alpha }}^{\left(
\kappa \right) }\left( \sigma ,\omega \right) +\mathfrak{T}_{T^{\alpha ,\ast
}}^{\left( \kappa \right) }\left( \omega ,\sigma \right) +\mathcal{WBP}%
_{T^{\alpha }}^{\left( \kappa _{1},\kappa _{2}\right) }\left( \sigma ,\omega
\right) +\mathcal{BICT}_{T^{\alpha }}\left( \sigma ,\omega \right) \right)
\left\Vert f\right\Vert _{L^{2}\left( \sigma \right) }\left\Vert
g\right\Vert _{L^{2}\left( \omega \right) }\ .
\end{equation*}%
By symmetry, we also have that the form%
\begin{equation*}
\func{Near}_{\limfunc{above}}\left( f,g\right) =\sum_{\substack{ \left(
A,B\right) \in \func{Near}\left( \mathcal{A}\times \mathcal{B}\right)  \\ %
A\subset B}}\left\langle T_{\sigma }^{\alpha }\left( \mathsf{P}_{\mathcal{C}%
_{\mathcal{A}}\left( A\right) }^{\sigma }f\right) ,\mathsf{P}_{\mathcal{C}_{%
\mathcal{B}}\left( B\right) }^{\omega }g\right\rangle _{\omega }=\sum 
_{\substack{ \left( A,B\right) \in \func{Near}\left( \mathcal{A}\times 
\mathcal{B}\right)  \\ A\subset B}}\left\langle \mathsf{P}_{\mathcal{C}_{%
\mathcal{A}}\left( A\right) }^{\sigma }f,T_{\omega }^{\alpha ,\ast }\left( 
\mathsf{P}_{\mathcal{C}_{\mathcal{B}}\left( B\right) }^{\omega }g\right)
\right\rangle _{\omega }
\end{equation*}%
satisfies%
\begin{equation*}
\left\vert \func{Near}_{\limfunc{above}}\left( f,g\right) \right\vert
\lesssim \left( \sqrt{A_{2}^{\alpha }}+\mathfrak{T}_{T^{\alpha }}^{\left(
\kappa \right) }\left( \sigma ,\omega \right) +\mathfrak{T}_{T^{\alpha ,\ast
}}^{\left( \kappa \right) }\left( \omega ,\sigma \right) +\mathcal{WBP}%
_{T^{\alpha }}^{\left( \kappa _{1},\kappa _{2}\right) }\left( \sigma ,\omega
\right) +\mathcal{BICT}_{T^{\alpha }}\left( \sigma ,\omega \right) \right)
\left\Vert f\right\Vert _{L^{2}\left( \sigma \right) }\left\Vert
g\right\Vert _{L^{2}\left( \omega \right) }.
\end{equation*}%
Combining these estimates completes our control of the near form $\func{Near}%
\left( f,g\right) $:%
\begin{equation}
\left\vert \func{Near}\left( f,g\right) \right\vert \lesssim \left( \sqrt{%
A_{2}^{\alpha }}+\mathfrak{T}_{T^{\alpha }}^{\left( \kappa \right) }\left(
\sigma ,\omega \right) +\mathfrak{T}_{T^{\alpha ,\ast }}^{\left( \kappa
\right) }\left( \omega ,\sigma \right) +\mathcal{WBP}_{T^{\alpha }}^{\left(
\kappa _{1},\kappa _{2}\right) }+\mathcal{BICT}_{T^{\alpha }}\left( \sigma
,\omega \right) \right) \left\Vert f\right\Vert _{L^{2}\left( \sigma \right)
}\left\Vert g\right\Vert _{L^{2}\left( \omega \right) }.  \label{near est}
\end{equation}

The three inequalities (\ref{disj est}), (\ref{far est}) and (\ref{near est}%
) finish the proof of (\ref{once we show'}), thus yielding the inequality%
\begin{equation}
\mathfrak{N}_{T^{\alpha }}\lesssim C_{\kappa _{1},\left( \beta _{1},\gamma
_{1}\right) ,\kappa _{2},\left( \beta _{2},\gamma _{2}\right) }\left( 
\mathfrak{T}_{T^{\alpha }}^{\left( \kappa _{1}\right) }+\mathfrak{T}%
_{T^{\alpha }}^{\left( \kappa _{2}\right) ,\ast }+\mathcal{BICT}_{T^{\alpha
}}+\sqrt{A_{2}^{\alpha }}+\mathcal{V}_{2}^{\alpha ,\kappa _{1}}+\mathcal{V}%
_{2}^{\alpha ,\kappa _{2},\ast }+\mathcal{WBP}_{T^{\alpha }}^{\left( \kappa
_{1},\kappa _{2}\right) }\right) \ ,  \label{gives ineq}
\end{equation}%
after taking the supremum over $f$ and $g$ in their respective unit balls.
It now remains only to remove $\mathcal{WBP}_{T^{\alpha }}^{\left( \kappa
_{1},\kappa _{2}\right) }$ from the right hand side of (\ref{gives ineq}) in
order to finish the proof of Theorem \ref{pivotal theorem}.

\subsection{Eliminating the weak boundedness property by doubling}

Here we show that the weak boundedness constant $\mathcal{WBP}_{T^{\alpha
}}^{\left( \kappa _{1},\kappa _{2}\right) }\left( \sigma ,\omega \right) $
can be easily eliminated from the right hand side of (\ref{once we show'})
or (\ref{gives ineq}) using the doubling properties of the measures. We
first use Corollary \ref{full Tp control} to obtain the inequality%
\begin{eqnarray}
&&\mathcal{WBP}_{T^{\alpha }}^{\left( \kappa _{1},\kappa _{2}\right) }\left(
\sigma ,\omega \right)  \label{unif bound'} \\
&=&\sup_{\mathcal{D}\in \Omega }\sup_{\substack{ Q,Q^{\prime }\in \mathcal{D}
\\ Q\subset 3Q^{\prime }\setminus Q^{\prime }\text{ or }Q^{\prime }\subset
3Q\setminus Q}}\frac{1}{\sqrt{\left\vert Q\right\vert _{\sigma }\left\vert
Q^{\prime }\right\vert _{\omega }}}\sup_{\substack{ f\in \left( \mathcal{P}%
_{Q}^{\kappa _{1}}\right) _{\limfunc{norm}}\left( \sigma \right)  \\ g\in
\left( \mathcal{P}_{Q^{\prime }}^{\kappa _{2}}\right) _{\limfunc{norm}%
}\left( \omega \right) }}\left\vert \int_{Q^{\prime }}T_{\sigma }^{\alpha
}\left( \mathbf{1}_{Q}f\right) \ gd\omega \right\vert <\infty  \notag \\
&\leq &\sup_{\mathcal{D}\in \Omega }\sup_{\substack{ Q,Q^{\prime }\in 
\mathcal{D}  \\ Q\subset 3Q^{\prime }\setminus Q^{\prime }\text{ or }%
Q^{\prime }\subset 3Q\setminus Q}}\frac{1}{\sqrt{\left\vert Q\right\vert
_{\sigma }\left\vert Q^{\prime }\right\vert _{\omega }}}\sup_{f\in \left( 
\mathcal{P}_{Q}^{\kappa _{1}}\right) _{\limfunc{norm}}\left( \sigma \right)
}\int_{Q^{\prime }}\left( T_{\sigma }^{\alpha }\left( \mathbf{1}_{Q}f\right)
\right) ^{2}d\omega  \notag \\
&\leq &\mathfrak{FT}_{T^{\alpha }}^{\left( \kappa \right) }\left( \sigma
,\omega \right) ^{2}\leq C_{\kappa ,\varepsilon }\mathfrak{T}_{T^{\alpha
}}\left( \sigma ,\omega \right) +C_{\kappa ,\varepsilon }\mathcal{A}%
_{2}^{\alpha }\left( \sigma ,\omega \right) +\varepsilon \mathfrak{N}%
_{T^{\alpha }}\left( \sigma ,\omega \right) ,  \notag
\end{eqnarray}%
valid for doubling measures $\sigma $ and $\omega $.

Now we plug (\ref{unif bound'}) into inequality (\ref{gives ineq}) to obtain%
\begin{eqnarray*}
\mathfrak{N}_{T^{\alpha }} &\lesssim &C_{\kappa _{1},\left( \beta
_{1},\gamma _{1}\right) ,\kappa _{2},\left( \beta _{2},\gamma _{2}\right)
}\left\{ \mathfrak{T}_{T^{\alpha }}^{\left( \kappa _{1}\right) }+\mathfrak{T}%
_{T^{\alpha }}^{\left( \kappa _{2}\right) ,\ast }+\mathcal{BICT}_{T^{\alpha
}}+\sqrt{A_{2}^{\alpha }}+\mathcal{V}_{2}^{\alpha ,\kappa _{1}}+\mathcal{V}%
_{2}^{\alpha ,\kappa _{2},\ast }\right\} \\
&&+C_{\kappa _{1},\left( \beta _{1},\gamma _{1}\right) ,\kappa _{2},\left(
\beta _{2},\gamma _{2}\right) }C_{\kappa ,\varepsilon }\left\{ \mathfrak{T}%
_{T^{\alpha }}\left( \sigma ,\omega \right) +\mathcal{A}_{2}^{\alpha }\left(
\sigma ,\omega \right) \right\} +C_{\kappa _{1},\left( \beta _{1},\gamma
_{1}\right) ,\kappa _{2},\left( \beta _{2},\gamma _{2}\right) }\varepsilon 
\mathfrak{N}_{T^{\alpha }}\left( \sigma ,\omega \right) .
\end{eqnarray*}%
If we now choose $\varepsilon >0$ so small that the term $C_{\kappa
_{1},\left( \beta _{1},\gamma _{1}\right) ,\kappa _{2},\left( \beta
_{2},\gamma _{2}\right) }\varepsilon \mathfrak{N}_{T^{\alpha }}\left( \sigma
,\omega \right) $ can be absorbed into the left hand side, we obtain the
desired inequality (\ref{once we show}). This completes the proof of both
inequalities (\ref{NIC}) and (\ref{NBICT}) in Theorem \ref{pivotal theorem}.

\section{Proof of Theorem \protect\ref{restricted weak type} on restricted
weak type}

Here we prove Theorem \ref{restricted weak type}, which we remind the
reader, does not assume comparability of measures. The proof of the theorem
is a standard application of an idea originating four and a half decades
ago, namely the 1973 $\limfunc{good}-\lambda $ inequality of Burkholder\ 
\cite{Bur}, and specifically the 1974 inequality of R. Coifman and C.
Fefferman \cite{CoFe}, and the related 1974 inequality of B. Muckenhoupt and
R. L, Wheeden \cite{MuWh}. We also introduce an $\alpha $-fractional
analogue $A_{\infty }^{\alpha }$ of the $A_{\infty }$ condition, and use it
to improve the inequality in \cite{MuWh} when $\alpha >0$. We begin by
briefly recalling the inequality of Coifman and Fefferman that relates
maximal truncations of a Calder\'{o}n-Zygmund singular integral to the
maximal operator $M$.

\subsection{Good $\protect\lambda $ inequalities}

Given an $\alpha $-fractional Calder\'{o}n-Zygmund operator $T^{\alpha }$,
define the maximal truncation operator $T_{\flat }^{\alpha }$ by%
\begin{equation*}
T_{\flat }^{\alpha }\left( f\sigma \right) \left( x\right) \equiv
\sup_{0<\varepsilon <R<\infty }\left\vert \int_{\left\{ \varepsilon
<\left\vert y\right\vert <R\right\} }K^{\alpha }\left( x,y\right) f\left(
y\right) d\sigma \left( y\right) \right\vert ,\ \ \ \ \ x\in \mathbb{R}^{n},
\end{equation*}%
for any locally finite positive Borel measure $\sigma $ on $\mathbb{R}^{n}$,
and $f\in L^{2}\left( \sigma \right) $. Define the $\alpha $-fractional
Hardy-Littlewood maximal operator $M^{\alpha }$ by%
\begin{equation*}
M^{\alpha }\left( f\sigma \right) \left( x\right) \equiv \sup_{Q\in \mathcal{%
P}^{n}:\ x\in Q}\frac{1}{\left\vert Q\right\vert ^{1-\frac{\alpha }{n}}}%
\int_{Q}\left\vert f\right\vert d\sigma ,\ \ \ \ \ x\in \mathbb{R}^{n},
\end{equation*}%
where here we may take the cubes $Q$ in the supremum to be closed.

Let $\omega $ be an $A_{\infty }$ weight. Suppose first that $\alpha =0$.
Then the Coifman-Fefferman $\limfunc{good}-\lambda $ inequality in \cite[see
inequality (7) on page 245]{CoFe} is%
\begin{equation}
\left\vert \left\{ x\in Q:\ T_{\flat }\left( f\sigma \right) \left( x\right)
>2\lambda \text{ and }M\left( f\sigma \right) \left( x\right) \leq \beta
\lambda \right\} \right\vert _{\omega }\leq C\beta ^{\varepsilon }\left\vert
\left\{ x\in Q:\ T_{\flat }\left( f\sigma \right) \left( x\right) >\lambda
\right\} \right\vert _{\omega },  \label{CF good}
\end{equation}%
for all $\lambda >0$, where $\varepsilon >0$ is the $A_{\infty }$ exponent
in (\ref{reform}). The kernels considered in \cite{CoFe} are convolution
kernels with order $1$ smoothness and bounded Fourier transform. However,
since we are assuming here that $T$ is bounded on unweighted $L^{2}\left( 
\mathbb{R}^{n}\right) $, standard Calder\'{o}n-Zygmund theory \cite[%
Corollary 2 on page 36]{Ste2} implies that $T_{\flat }$ is weak type $\left(
1,1\right) $ on Lebesgue measure. This estimate is the key to the proof in 
\cite[see pages 245-246 where the the weak type $\left( 1,1\right) $
inequality for $T_{\flat }$ is used]{CoFe}, and this proof shows that the
kernel of the operator $T$ may be taken to be a standard kernel in the sense
used here.

In the case $0<\alpha <n$, this $\limfunc{good}-\lambda $ inequality for an $%
A_{\infty }$ weight $\omega $ was extended in \cite{MuWh} (by essentially
the same proof) when $T_{\flat }$ and $M$ are replaced by $I^{\alpha }$ and $%
M^{\alpha }$ respectively:%
\begin{equation}
\left\vert \left\{ x\in Q:\ I^{\alpha }\left( f\sigma \right) \left(
x\right) >\gamma \lambda \text{ and }M^{\alpha }\left( f\sigma \right)
\left( x\right) \leq \beta \lambda \right\} \right\vert _{\omega }\leq \circ
\left( \frac{1}{\gamma }\right) \left\vert \left\{ x\in Q:\ I^{\alpha
}\left( f\sigma \right) \left( x\right) >\lambda \right\} \right\vert
_{\omega },  \label{CF good alpha}
\end{equation}%
for all $\lambda >0$, for some $\beta >0$ chosen sufficiently small. Here
the fractional integral $I^{\alpha }$ is given by $I^{\alpha }\nu \left(
x\right) \equiv \int_{\mathbb{R}^{n}}\left\vert x-y\right\vert ^{\alpha
-n}d\nu \left( y\right) $, and we will use below the obvious fact that $%
\left\vert T_{\flat }^{\alpha }\nu \left( x\right) \right\vert \leq
CI^{\alpha }\nu \left( x\right) $ for $d\nu \geq 0$. ($I_{\alpha }$ is a
positive operator satisfying the weak type $\left( 1,\frac{n}{n-\alpha }%
\right) $\ inequality on Lebesgue measure, and this is why there is no need
to assume any additional unweighted boundedness of $T^{\alpha }$ when $%
\alpha >0$).

However, it is possible to enlarge the collection of weights that satisfy (%
\ref{CF good alpha}) by using a relative $\alpha $-capacity of $E$ in $Q$ in
place of the ratio $\frac{\left\vert E\right\vert }{\left\vert Q\right\vert }
$ appearing in the definition of the $A_{\infty }$ condition. We introduce
this next.

\subsubsection{A fractional good-$\protect\lambda $ inequality}

Let $\mathcal{D}$ be a dyadic grid on $\mathbb{R}^{n}$. Suppose that $\Omega 
$ is an open subset of $\mathbb{R}^{n}$ with compact closure.\ We define the 
\emph{Whitney collection} $\mathcal{W}_{\Omega }$ to be the set $\left\{
Q_{j}\right\} _{j}$ of maximal dyadic cubes $Q_{j}\in \mathcal{D}$ such that 
$3Q_{j}\subset \Omega $. The following three properties are then immediate:%
\begin{equation*}
\left\{ 
\begin{array}{ll}
\text{(disjoint cover)} & \Omega =\bigcup_{j}Q_{j}\text{ and }Q_{j}\cap
Q_{i}=\emptyset \text{ if }i\neq j \\ 
\text{(Whitney condition)} & 3Q_{j}\subset \Omega \text{ and }9Q_{j}\cap
\Omega ^{c}\neq \emptyset \text{ for all }j \\ 
\text{(bounded overlap)} & \sum_{j}\mathbf{1}_{2Q_{j}}\leq C_{n}\mathbf{1}%
_{\Omega }%
\end{array}%
\right. .
\end{equation*}

\begin{definition}
Define the \emph{Whitney decomposition }$\boldsymbol{W}_{I_{\alpha }f}$ of
the fractional integral $I_{\alpha }f$ of a positive measure $f$ to be the
set whose elements are the Whitney collections $\mathcal{W}_{\Omega _{k}}$
for the open sets $\Omega _{k}\equiv \left\{ x\in \mathbb{R}^{n}:I_{\alpha
}f\left( x\right) >2^{k}\right\} $, $k\in \mathbb{Z}$, i.e.%
\begin{equation*}
\boldsymbol{W}_{I_{\alpha }f}\equiv \left\{ \mathcal{W}_{\Omega
_{k}}\right\} _{k\in \mathbb{Z}}\ ,
\end{equation*}%
which we can identify with $\left\{ Q_{j}^{k}\right\} _{k,j}$ if $\mathcal{W}%
_{\Omega _{k}}=\left\{ Q_{j}^{k}\right\} _{j}$.
\end{definition}

The nested property is immediate,%
\begin{equation*}
\begin{array}{ll}
\text{(nested property)} & Q_{j}^{k}\varsubsetneqq Q_{i}^{\ell }\text{
implies }k>\ell%
\end{array}%
,
\end{equation*}%
and the maximum principle is proved in \cite{Saw5}: there is $N$
sufficiently large that%
\begin{equation*}
\begin{array}{cc}
\text{(maximum principle)} & I_{\alpha }\left( \mathbf{1}_{2Q_{i}^{k-N}}f%
\right) \left( x\right) >2^{k-1}\text{ for }x\in \Omega _{k}\cap
Q_{i}^{k-N},\ \ \ \ \ \text{all }k,j.%
\end{array}%
\end{equation*}

Consider now a positive measure $f$ and the Whitney decomposition of $%
I_{\alpha }f$ where 
\begin{equation*}
\Omega _{k}=\left\{ I_{\alpha }f>2^{k}\right\}
=\dbigcup\limits_{j}Q_{j}^{k}\ \ \ \ \ \text{for }k\in \mathbb{Z}.
\end{equation*}%
We claim that if 
\begin{equation*}
E_{i}^{k-N}\left( \beta \right) \equiv Q_{i}^{k-N}\cap \left\{ I_{\alpha
}f>2^{k}\text{ and }M_{\alpha }f\leq \beta 2^{k-N}\right\} ,
\end{equation*}%
then we have%
\begin{equation}
\mathbf{Cap}_{\alpha }\left( E_{i}^{k-N}\left( \beta \right)
;Q_{i}^{k-N}\right) \leq \beta 2^{1-N},\ \ \ \ \ 0<\beta \leq 1.
\label{Cap ineq}
\end{equation}%
To see this we note that the cubes $Q_{j}^{k}$ and $Q_{i}^{k-N}$ above
satisfy%
\begin{eqnarray*}
\left\vert 9Q_{i}^{k-N}\right\vert ^{\frac{\alpha }{n}-1}%
\int_{9Q_{i}^{k-N}}f &\leq &\beta 2^{k-N}, \\
I_{\alpha }\left( \mathbf{1}_{2Q_{i}^{k-N}}f\right) \left( x\right)
&>&2^{k-1}\text{ for }x\in Q_{j}^{k},
\end{eqnarray*}%
where the first inequality follows from the Whitney condition, and the
second inequality from the maximum principle for fractional integrals. This
then shows that the nonnegative function $h\equiv \frac{1}{2^{k-1}}\mathbf{1}%
_{2Q_{i}^{k-N}}f$ satisfies $I_{\alpha }h\geq 1$ on $E$ and 
\begin{equation*}
\left\vert 2Q_{i}^{k-N}\right\vert ^{\frac{\alpha }{n}-1}%
\int_{2Q_{i}^{k-N}}h\leq \left( \frac{2}{9}\right) ^{n-\alpha }\left\vert
9Q_{i}^{k-N}\right\vert ^{\frac{\alpha }{n}-1}\int_{9Q_{i}^{k-N}}\frac{1}{%
2^{k-1}}f\leq \beta 2^{1-N},
\end{equation*}%
which proves (\ref{Cap ineq}).

Using the relative capacity inequality (\ref{Cap ineq}), we can now prove
the good-$\lambda $ inequality for the pair $\left( I_{\alpha },M_{\alpha
}\right) $\ with respect to an $A_{\infty }^{\alpha }$ measure $\omega $.

\begin{lemma}
\label{good Ialpha Malpha}If $\omega \in A_{\infty }^{\alpha }$, or $\omega
\in C_{q}$ for some $q>2$, then there are constants positive constants $%
C,\varepsilon $ such that 
\begin{equation}
\left\vert \left\{ I_{\alpha }f>\gamma \lambda \text{ and }M_{\alpha }f\leq
\beta \lambda \right\} \right\vert _{\omega }\leq C\left( \frac{\beta }{%
\gamma }\right) ^{\varepsilon }\left\vert \left\{ I_{\alpha }f>\lambda
\right\} \right\vert _{\omega }\ .  \label{good lambda alpha}
\end{equation}
\end{lemma}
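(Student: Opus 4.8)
The plan is to prove the good-$\lambda$ inequality (\ref{good lambda alpha}) by localizing to the Whitney cubes of the level set $\left\{ I_{\alpha }f>\lambda \right\}$ and applying the relative capacity estimate (\ref{Cap ineq}) together with the defining property of the $A_{\infty }^{\alpha }$ (or $C_q$) condition. First I would fix $\lambda >0$, and by homogeneity rescale so that $\lambda =2^{k}$ for an appropriate $k\in \mathbb{Z}$ (after choosing $\gamma$ of the form $2^{N}$ with $N$ the maximum-principle constant). Write $\left\{ I_{\alpha }f>\lambda \right\} =\Omega _{k-N}=\dbigcup _{i}Q_{i}^{k-N}$ as a disjoint union of Whitney cubes, so that the left-hand side of (\ref{good lambda alpha}) decomposes as
\begin{equation*}
\left\vert \left\{ I_{\alpha }f>\gamma \lambda \text{ and }M_{\alpha }f\leq \beta \lambda \right\} \right\vert _{\omega }=\sum_{i}\left\vert E_{i}^{k-N}\left( \beta \right) \right\vert _{\omega }\ ,
\end{equation*}
where $E_{i}^{k-N}\left( \beta \right) =Q_{i}^{k-N}\cap \left\{ I_{\alpha }f>\gamma \lambda \text{ and }M_{\alpha }f\leq \beta \lambda \right\}$ as above (using $\gamma \lambda =2^{k}$, $\beta \lambda =\beta 2^{k-N}\cdot 2^{N}$, after adjusting constants).

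Next I would bound each term $\left\vert E_{i}^{k-N}\left( \beta \right) \right\vert _{\omega }$. Since $E_{i}^{k-N}\left( \beta \right)$ is a compact subset of the cube $Q_{i}^{k-N}$, the defining property of the $A_{\infty }^{\alpha }$ condition gives $\dfrac{\left\vert E_{i}^{k-N}\left( \beta \right) \right\vert _{\omega }}{\left\vert 2Q_{i}^{k-N}\right\vert _{\omega }}\leq \eta \left( \mathbf{Cap}_{\alpha }\left( E_{i}^{k-N}\left( \beta \right) ;Q_{i}^{k-N}\right) \right)$, and by (\ref{Cap ineq}) the capacity is at most $\beta 2^{1-N}$, so $\left\vert E_{i}^{k-N}\left( \beta \right) \right\vert _{\omega }\leq \eta \left( \beta 2^{1-N}\right) \left\vert 2Q_{i}^{k-N}\right\vert _{\omega }$. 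For the $C_q$ alternative one argues similarly: $\left\vert E_{i}^{k-N}\left( \beta \right) \right\vert _{\omega }\lesssim \beta ^{\varepsilon }\int_{\mathbb{R}^n}\left\vert M\mathbf{1}_{Q_{i}^{k-N}}\right\vert ^{q}d\omega$, and since $q>2$ the quantity $\int \left\vert M\mathbf{1}_{Q_{i}^{k-N}}\right\vert ^{q}d\omega$ is comparable to $\left\vert Q_{i}^{k-N}\right\vert _{\omega }$ up to the finite overlap of the dilates $2Q_{i}^{k-N}$, which is the place where the strict inequality $q>2$ (really $q>1$ with summability of the maximal tails) is used. To convert the power $\beta 2^{1-N}$ of the capacity into a genuine power $\left( \beta /\gamma \right) ^{\varepsilon }$, I would first reduce to the case where $\eta$ is itself a power: replacing $\eta$ by an upper bound of the form $\eta \left( t\right) \leq C t^{\varepsilon }$ is not automatic from $\lim _{t\searrow 0}\eta \left( t\right) =0$, but iterating the good-$\lambda$ inequality over a chain of $N$-fold dilations of $\gamma$ (i.e. the usual Coifman-Fefferman iteration turning a small-constant estimate into a power estimate) produces the claimed form. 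Concretely, one proves the one-step estimate $\left\vert \left\{ I_{\alpha }f>2^{N}\lambda ,\ M_{\alpha }f\leq \beta \lambda \right\} \right\vert _{\omega }\leq \eta \left( C\beta \right) \left\vert \left\{ I_{\alpha }f>\lambda \right\} \right\vert _{\omega }$, and then chains it.

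Summing over $i$ and using the bounded overlap of the Whitney dilates $\left\{ 2Q_{i}^{k-N}\right\} _{i}$ together with doubling of $\omega$ (which holds since $A_{\infty }^{\alpha }$ measures, and $C_q$ measures, are doubling) gives
\begin{equation*}
\sum_{i}\left\vert E_{i}^{k-N}\left( \beta \right) \right\vert _{\omega }\leq \eta \left( C\beta \right) \sum_{i}\left\vert 2Q_{i}^{k-N}\right\vert _{\omega }\lesssim \eta \left( C\beta \right) \sum_{i}\left\vert Q_{i}^{k-N}\right\vert _{\omega }=\eta \left( C\beta \right) \left\vert \Omega _{k-N}\right\vert _{\omega }\ ,
\end{equation*}
which is the desired one-step inequality, and iteration finishes the proof with a power $\left( \beta /\gamma \right) ^{\varepsilon }$ after adjusting $C$. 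The main obstacle I anticipate is the passage from the abstract modulus $\eta$ with $\eta \left( 0^{+}\right) =0$ to an honest power bound: this requires the self-improving iteration argument, and one must be careful that the constants $C$ accumulated at each of the $\log _{2}\gamma /N$ steps stay under control — this is exactly the classical mechanism in \cite{CoFe}, so the adaptation should be routine but deserves care. A secondary technical point is verifying the maximum principle and Whitney covering facts cited from \cite{Saw5} in the fractional setting, but those are quoted as known.
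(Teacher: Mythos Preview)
Your approach is essentially the paper's: localize to Whitney cubes of $\Omega_{k-N}$, apply the capacity bound (\ref{Cap ineq}), invoke the $A_\infty^\alpha$ definition, and sum. Two corrections are in order. First, your appeal to doubling of $\omega$ is both unnecessary and unjustified: $A_\infty^\alpha$ measures (with the $\left\vert 2Q\right\vert_\omega$ denominator in the definition) need not be doubling, nor need $C_q$ weights be. The paper avoids this entirely by using the bounded-overlap property $\sum_i \mathbf{1}_{2Q_i^{k-N}} \leq C_n \mathbf{1}_{\Omega_{k-N}}$ directly, which gives $\sum_i \left\vert 2Q_i^{k-N}\right\vert_\omega \leq C_n \left\vert \Omega_{k-N}\right\vert_\omega$ without ever passing to $\left\vert Q_i^{k-N}\right\vert_\omega$; you should do the same. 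Second, for the $C_q$ case the paper simply cites \cite[Remark 6]{CeLiPeRi} and gives no argument; your sketch that $\int \left\vert M\mathbf{1}_{Q_i^{k-N}}\right\vert^q d\omega$ is comparable to $\left\vert Q_i^{k-N}\right\vert_\omega$ via finite overlap is not correct as stated (the maximal-function tails extend well beyond the Whitney cubes), so either defer to the reference or supply the full $C_q$ machinery. On the other hand, your observation that the abstract modulus $\eta$ must be upgraded to a power via iteration is a point the paper glosses over --- it writes $\mathbf{Cap}_\alpha(\cdot)^\varepsilon$ directly as though a power-law reformulation of $A_\infty^\alpha$ were already in hand --- so your care there is warranted.
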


\begin{proof}
The case where $\omega \in C_{q}$ for some $q>2$ is in \cite[see Remark 6 on
page 13]{CeLiPeRi} with an even smaller constant on the right, so we turn to
the case $\omega \in A_{\infty }^{\alpha }$. It clearly suffices to consider
the special cases where $\lambda =2^{k-N}$, $\gamma =2^{N}$ and $0<\beta
\leq 1$ for all $k\in \mathbb{Z}$ and all sufficiently large $N\in \mathbb{N}
$. Now with $\left\{ I_{\alpha }f>2^{k}\right\}
=\dbigcup\limits_{j}Q_{j}^{k} $ as above we have from the $A_{\infty
}^{\alpha }$ condition, 
\begin{eqnarray*}
\left\vert \left\{ I_{\alpha }f>2^{k}\text{ and }M_{\alpha }f\leq \beta
2^{k-N}\right\} \right\vert _{\omega } &\leq &\sum_{i}\left\vert
\dbigcup\limits_{\substack{ j:\ Q_{j}^{k}\subset Q_{i}^{k-N}  \\ %
Q_{j}^{k}\cap \left\{ M_{\alpha }f\leq \beta 2^{k-N}\right\} \neq \emptyset 
}}Q_{j}^{k}\right\vert _{\omega } \\
&\leq &\sum_{i}\left\vert 2Q_{i}^{k-N}\right\vert _{\omega }C\ \mathbf{Cap}%
_{\alpha }\left( E_{i}^{k-N}\left( \beta \right) ;Q_{i}^{k-N}\right)
^{\varepsilon } \\
&\leq &\sum_{i}C\left( \beta 2^{1-N}\right) ^{\varepsilon }\left\vert
Q_{i}^{k-N}\right\vert _{\omega }=C_{n}C\left( \beta 2^{1-N}\right)
^{\varepsilon }\left\vert \left\{ I_{\alpha }f>2^{k-N}\right\} \right\vert
_{\omega }\ ,
\end{eqnarray*}%
where $C_{n}$ is the bounded overlap constant in the Whitney collection.
This completes the proof of (\ref{good lambda alpha}).
\end{proof}

\subsection{Control of restricted weak type}

From such $\limfunc{good}-\lambda $ inequalities for $A_{\infty }$, $%
A_{\infty }^{\alpha }$ and $C_{q}$ weights $\omega $, standard arguments in 
\cite{CoFe}, \cite{MuWh}, \cite{Saw1} and \cite{CeLiPeRi} show that $%
\left\Vert T_{\flat }^{\alpha }\left( f\sigma \right) \right\Vert
_{L^{2}\left( \omega \right) }\lesssim $ $\left\Vert M^{\alpha }\left(
f\sigma \right) \right\Vert _{L^{2}\left( \omega \right) }$ for $0\leq
\alpha <n$ and $f\in L^{2}\left( \sigma \right) $. We will use a weak type
variant of this latter inequality, together with the equivalence of $%
\mathfrak{N}_{M^{\alpha }}^{\limfunc{weak}}\left( \sigma ,\omega \right) $
and $A_{2}^{\alpha }\left( \sigma ,\omega \right) $, to prove the theorem.

\begin{proof}[Proof of Theorem \protect\ref{restricted weak type}]
Since the restricted weak type inequality is self-dual, we can assume
without loss of generality that $\omega $ is an $A_{\infty }^{\alpha }$ or $%
C_{2+\varepsilon }$ weight. We begin by assuming that $\omega \in A_{\infty
}^{\alpha }$ and showing that the $\limfunc{good}-\lambda $ inequalities for 
$A_{\infty }^{\alpha }$ weights $\omega $ imply \emph{weak type} control,
exercising care in absorbing terms. Indeed, for $t>0$, we obtain from (\ref%
{CF good}) and (\ref{good lambda alpha}) that%
\begin{eqnarray*}
\sup_{0<\lambda \leq t}\lambda ^{2}\left\vert \left\{ T_{\flat }^{\alpha
}\left( f\sigma \right) >\lambda \right\} \right\vert _{\omega }
&=&4\sup_{0<\lambda \leq \frac{t}{2}}\lambda ^{2}\left\vert \left\{ T_{\flat
}^{\alpha }\left( f\sigma \right) >2\lambda \right\} \right\vert _{\omega }
\\
&\leq &4\sup_{0<\lambda \leq \frac{t}{2}}\lambda ^{2}\left\{ \left\vert
\left\{ M^{\alpha }\left( f\sigma \right) >\beta \lambda \right\}
\right\vert _{\omega }+\frac{C}{\beta }\left\vert \left\{ T_{\flat }^{\alpha
}\left( f\sigma \right) >\lambda \right\} \right\vert _{\omega }\right\} \\
&=&\frac{4}{\beta ^{2}}\sup_{0<\lambda \leq \frac{\beta }{2}t}\lambda
^{2}\left\vert \left\{ M^{\alpha }\left( f\sigma \right) >\beta \lambda
\right\} \right\vert _{\omega }+4\sup_{0<\lambda \leq \frac{t}{2}}\frac{C}{%
\beta }\left\vert \left\{ T_{\flat }^{\alpha }\left( f\sigma \right)
>\lambda \right\} \right\vert _{\omega } \\
&\leq &\frac{4}{\beta ^{2}}\left\Vert M^{\alpha }\left( f\sigma \right)
\right\Vert _{L^{2,\infty }\left( \omega \right) }^{2}+\frac{4C}{\beta }%
\sup_{0<\lambda \leq t}\lambda ^{2}\left\vert \left\{ T_{\flat }^{\alpha
}\left( f\sigma \right) >\lambda \right\} \right\vert _{\omega }\ .
\end{eqnarray*}%
Now choose $\beta $ so that $\frac{4C}{\beta }=\frac{1}{2}$. Provided that $%
\sup_{0<\lambda \leq t}\lambda ^{2}\left\vert \left\{ T_{\flat }^{\alpha
}\left( f\sigma \right) >\lambda \right\} \right\vert _{\omega }$ is finite
for each $t>0$, we can absorb the final term on the right into the left hand
side to obtain%
\begin{equation*}
\sup_{0<\lambda \leq t}\lambda ^{2}\left\vert \left\{ T_{\flat }^{\alpha
}\left( f\sigma \right) >\lambda \right\} \right\vert _{\omega }\leq \frac{8%
}{\beta ^{2}}\left\Vert M^{\alpha }\nu \right\Vert _{L^{2,\infty }\left(
\omega \right) }^{2},\ \ \ \ \ t>0,
\end{equation*}%
which gives%
\begin{equation*}
\left\Vert T_{\flat }^{\alpha }\left( f\sigma \right) \right\Vert
_{L^{2,\infty }\left( \omega \right) }^{2}=\sup_{0<\lambda <\infty }\lambda
^{2}\left\vert \left\{ T_{\flat }^{\alpha }\left( f\sigma \right) >\lambda
\right\} \right\vert _{\omega }\leq \frac{8}{\beta ^{2}}\left\Vert M^{\alpha
}\left( f\sigma \right) \right\Vert _{L^{2,\infty }\left( \omega \right)
}^{2}\ .
\end{equation*}

Suppose first that $\alpha =0$. In order to obtain finiteness of the
supremum over $0<\lambda \leq t$, we take $f\in L^{2}\left( \sigma \right) $
wtih $\left\vert f\right\vert \leq 1$ and $\limfunc{supp}f\subset B\left(
0,r\right) $ with $1\leq r<\infty $ and $\left\vert B\left( 0,r\right)
\right\vert _{\sigma }>0$. Then if $x\notin B\left( 0,2r\right) $, we have $%
\left\vert K\left( x,y\right) \right\vert \leq C_{CZ}r^{-n}$ and hence%
\begin{equation*}
T_{\flat }\left( f\sigma \right) \left( x\right) =\sup_{0<\varepsilon
<R<\infty }\left\vert \int_{\left\{ \varepsilon <\left\vert y\right\vert
<R\right\} \cap B\left( 0,r\right) }K\left( x,y\right) f\left( y\right)
d\sigma \left( y\right) \right\vert \leq C_{CZ}\left( \frac{2}{\left\vert
x\right\vert }\right) ^{n}\left\vert B\left( 0,r\right) \right\vert _{\sigma
}\ .
\end{equation*}%
This shows that 
\begin{eqnarray*}
&&\sup_{0<\lambda \leq t}\lambda ^{2}\left\vert \left\{ T_{\flat }\nu
>\lambda \right\} \right\vert _{\omega }\leq t^{2}\left\vert B\left(
0,2r\right) \right\vert _{\omega }+\sup_{0<\lambda <C_{CZ}r^{-n}\left\vert
B\left( 0,r\right) \right\vert _{\sigma }}\lambda ^{2}\left\vert \left\{
T_{\flat }\nu >\lambda \right\} \setminus B\left( 0,2r\right) \right\vert
_{\omega } \\
&\leq &t^{2}\left\vert B\left( 0,2r\right) \right\vert _{\omega
}+\sup_{0<\lambda \leq t}\lambda ^{2}\left\vert \left\{ C_{CZ}\left( \frac{2%
}{\left\vert x\right\vert }\right) ^{n}\left\vert B\left( 0,r\right)
\right\vert >\lambda \right\} \right\vert _{\omega } \\
&=&t^{2}\left\vert B\left( 0,2r\right) \right\vert _{\omega
}+\sup_{0<\lambda \leq t}\lambda ^{2}\left\vert B\left( 0,\gamma _{\lambda
}r\right) \right\vert _{\omega }\ ,
\end{eqnarray*}%
with $\gamma _{\lambda }\equiv 2\sqrt[n]{\frac{C_{CZ}c}{\lambda }}$, since $%
\left\{ C_{CZ}\left( \frac{2}{\left\vert x\right\vert }\right)
^{n}\left\vert B\left( 0,r\right) \right\vert >\lambda \right\} =B\left( 0,2%
\sqrt[n]{\frac{C_{CZ}c}{\lambda }}r\right) $ where $\left\vert B\left(
0,r\right) \right\vert =cr^{n}$.

On the other hand, the $A_{2}$ condition implies that for $\lambda \leq
\lambda _{0}\equiv C_{CZ}c$, we have $\gamma _{\lambda }\geq \gamma
_{\lambda _{0}}=2$ so that 
\begin{equation*}
\left\vert B\left( 0,\gamma _{\lambda }r\right) \right\vert _{\omega
}\lesssim A_{2}\left( \sigma ,\omega \right) \frac{\left\vert B\left(
0,\gamma _{\lambda }r\right) \right\vert ^{2}}{\left\vert B\left( 0,\gamma
_{\lambda }r\right) \right\vert _{\sigma }}\leq A_{2}\left( \sigma ,\omega
\right) \frac{\left( \gamma _{\lambda }r\right) ^{2n}}{\left\vert B\left(
0,2r\right) \right\vert _{\sigma }}A_{2}\left( \sigma ,\omega \right) =\frac{%
4^{n}\left( \frac{C_{CZ}c}{\lambda }\right) ^{2}}{\left\vert B\left(
0,2r\right) \right\vert _{\sigma }},
\end{equation*}%
and hence%
\begin{equation*}
\lambda ^{2}\left\vert B\left( 0,\gamma _{\lambda }r\right) \right\vert
_{\omega }\leq \lambda ^{2}\frac{4^{n}\left( \frac{C_{CZ}c}{\lambda }\right)
^{2}}{\left\vert B\left( 0,2r\right) \right\vert _{\sigma }}=\frac{%
4^{n}C_{CZ}c^{2}}{\left\vert B\left( 0,2r\right) \right\vert _{\sigma }},\ \
\ \ \ \text{for }\lambda \leq \lambda _{0}\ .
\end{equation*}

Finally we have%
\begin{equation*}
\sup_{\lambda _{0}<\lambda \leq t}\lambda ^{2}\left\vert B\left( 0,\gamma
_{\lambda }r\right) \right\vert _{\omega }\leq t^{2}\left\vert B\left(
0,\gamma _{\lambda _{0}}r\right) \right\vert _{\omega }=t^{2}\left\vert
B\left( 0,2r\right) \right\vert _{\omega }\ ,
\end{equation*}%
and altogether then%
\begin{equation*}
\sup_{0<\lambda \leq t}\lambda ^{2}\left\vert \left\{ T_{\flat }\nu >\lambda
\right\} \right\vert _{\omega }\leq t^{2}\left\vert B\left( 0,2r\right)
\right\vert _{\omega }+\frac{4^{n}C_{CZ}c^{2}}{\left\vert B\left(
0,2r\right) \right\vert _{\sigma }}+t^{2}\left\vert B\left( 0,2r\right)
\right\vert _{\omega }
\end{equation*}%
which is finite for $0<t<\infty $.

Thus we conclude that 
\begin{equation*}
\mathfrak{N}_{T}^{\limfunc{restricted}\limfunc{weak}}\left( \sigma ,\omega
\right) \leq \mathfrak{N}_{T}^{\limfunc{weak}}\left( \sigma ,\omega \right)
\leq \mathfrak{N}_{T_{\flat }}^{\limfunc{weak}}\left( \sigma ,\omega \right)
\lesssim \mathfrak{N}_{M}^{\limfunc{weak}}\left( \sigma ,\omega \right)
\approx A_{2}\left( \sigma ,\omega \right) ,
\end{equation*}%
where the final equivalence is well known, and can be obtained by averaging
over dyadic grids $\mathcal{D}$ the inequality $\mathfrak{N}_{M_{\mathcal{D}%
}^{\alpha }}^{\limfunc{weak}}\left( \sigma ,\omega \right) \lesssim
A_{2}\left( \sigma ,\omega \right) $ for dyadic operators 
\begin{equation*}
M_{\mathcal{D}}^{\alpha }f\left( x\right) \equiv \sup_{Q\in \mathcal{P}%
^{n}:\ x\in Q}\frac{1}{\left\vert Q\right\vert ^{1-\frac{\alpha }{n}}}%
\int_{Q}\left\vert f\right\vert d\sigma .
\end{equation*}%
The dyadic inequality is in turn an immediate consequence of the dyadic
covering lemma. Conversely, if $T^{\alpha }$ is elliptic, then $A_{2}\left(
\sigma ,\omega \right) \lesssim \mathfrak{N}_{T}^{\limfunc{restricted}%
\limfunc{weak}}\left( \sigma ,\omega \right) $ (see \cite{LiTr} and \cite%
{SaShUr7}).

The same sort of arguments give the analogous inequality when $0<\alpha <n$, 
\begin{equation*}
\mathfrak{N}_{T^{\alpha }}^{\limfunc{restricted}\limfunc{weak}}\left( \sigma
,\omega \right) \leq \mathfrak{N}_{I^{\alpha }}^{\limfunc{restricted}%
\limfunc{weak}}\left( \sigma ,\omega \right) \leq \mathfrak{N}_{I^{\alpha
}}^{\limfunc{weak}}\left( \sigma ,\omega \right) \lesssim \mathfrak{N}%
_{M^{\alpha }}^{\limfunc{weak}}\left( \sigma ,\omega \right) \approx
A_{2}^{\alpha }\left( \sigma ,\omega \right) ,
\end{equation*}%
and conversely, $A_{2}^{\alpha }\left( \sigma ,\omega \right) \lesssim 
\mathfrak{N}_{T^{\alpha }}^{\limfunc{restricted}\limfunc{weak}}\left( \sigma
,\omega \right) $ if $T^{\alpha }$ is elliptic.

Finally, when $\omega \in C_{2+\varepsilon }$, the proof for strong type
norms of $T_{\flat }f$ and $Mf$ in \cite{Saw1} is easily adapted to weak
type norms, while the proof for strong type norms of $I_{\alpha }f$ and $%
M_{\alpha }f$ in \cite[See Subsubsection 7.2.1 Lemmata on pages 33-35.]%
{CeLiPeRi} - which follows closely the arguments in \cite{Saw1} - is easily
adapted to weak type norms as was just done above. This completes the proof
of Theorem \ref{restricted weak type}.
\end{proof}

\section{Proof of Theorem \protect\ref{A infinity theorem}, a $T1$ theorem}

Inequality (\ref{more'}) in Theorem \ref{A infinity theorem} follows
immediately from Theorems \ref{pivotal theorem} and \ref{restricted weak
type}. On the other hand, if $T^{\alpha }$ is strongly elliptic, then 
\begin{equation*}
\sqrt{\mathcal{A}_{2}^{\alpha }\left( \sigma ,\omega \right) +\mathcal{A}%
_{2}^{\alpha }\left( \omega ,\sigma \right) }\lesssim \mathfrak{N}%
_{T^{\alpha }}\left( \sigma ,\omega \right) ,
\end{equation*}%
by \cite[Lemma 4.1 on page 92.]{SaShUr7}. This completes the proof of
Theorem \ref{A infinity theorem}.

\begin{remark}
If we drop the assumption that one of the weights is $A_{\infty }^{\alpha }$
or $C_{2+\varepsilon }$, then inequality (\ref{more'}) remains true if we
include on the right hand side the Bilinear Indicator Cube Testing constant $%
\mathcal{BICT}_{T^{\alpha }}\left( \sigma ,\omega \right) $ defined in (\ref%
{def ind WBP}) above.
\end{remark}

\section{Proof of Theorem \protect\ref{Stein extension} on optimal
cancellation conditions}

Here we follow very closely the treatment in \cite[Section 3 of Chapter VII]%
{Ste2} to show how Theorem \ref{Stein extension} follows from Theorem \ref{A
infinity theorem}. The argument we follow in \cite[Section 3 of Chapter VII]%
{Ste2} uses balls instead of cubes, and we thus adapt the argument to cubes
by using the distance function%
\begin{equation*}
\left\Vert y\right\Vert \equiv \max_{1\leq k\leq n}\left\vert
y_{k}\right\vert 
\end{equation*}%
instead of the Euclidean distance $\left\vert y\right\vert =\sqrt{%
\sum_{1\leq k\leq n}\left\vert y_{k}\right\vert ^{2}}$, which results in
minimal cosmetic changes. Thus the corresponding balls $B\left( x,r\right)
=\left\{ y\in \mathbb{R}^{n}:\left\Vert x-y\right\Vert <r\right\} $ are
familiar cubes with sides parallel to the axes, centered at $x$ with side
length $2r$. In order to free up superscripts for other uses, we will drop
the fractional superscript $\alpha $ from both the kernel $K^{\alpha }$ and
its associated operator $T^{\alpha }$. Finally, we will need the following
result on truncations, which extends the case $q=2$ of Proposition 1 in
Stein \cite[page 31]{Ste2} to a pair of doubling measures $\sigma $ and $%
\omega $.

\subsection{Boundedness of truncations}

For $\varepsilon >0$, and a smooth $\alpha $-fractional Calder\'{o}n-Zygmund
kernel $K\left( x,y\right) $, define the truncated kernels%
\begin{equation*}
K_{\varepsilon }\left( x,y\right) \equiv \left\{ 
\begin{array}{ccc}
K\left( x,y\right)  & \text{ if } & \varepsilon <\left\Vert x-y\right\Vert 
\\ 
0 & \text{ if } & not%
\end{array}%
\right. ,
\end{equation*}%
and set%
\begin{equation*}
T_{\varepsilon }\left( x\right) \equiv \int K_{\varepsilon }\left(
x,y\right) f\left( y\right) d\sigma \left( y\right) ,\ \ \ \ \ \text{for }%
x\in \mathbb{R}^{n}\text{ and }f\in L^{2}\left( \sigma \right) .
\end{equation*}

\begin{proposition}
Suppose that $\sigma $ and $\omega $ are positive locally finite Borel
measures on $\mathbb{R}^{n}$ satisfying the classical $A_{2}^{\alpha }$
condition, and that $K\left( x,y\right) $ is a smooth $\alpha $-fractional
Calder\'{o}n-Zygmund kernel on $\mathbb{R}^{n}$. Suppose moreover that there
is a bounded operator $T:L^{2}\left( \sigma \right) \rightarrow L^{2}\left(
\omega \right) $, i.e.%
\begin{equation*}
\left\Vert T\left( f\sigma \right) \right\Vert _{L^{2}\left( \omega \right)
}\leq A\left\Vert f\right\Vert _{L^{2}\left( \sigma \right) },\ \ \ \ \ 
\text{for all }f\in L^{2}\left( \sigma \right) ,
\end{equation*}%
associated with the kernel $K\left( x,y\right) $ in the sense that (\ref%
{identify}) holds. Then there is a positive constant $A^{\prime }$ such that
the truncations $T_{\varepsilon }$ satisfy%
\begin{equation}
\left\Vert T_{\varepsilon }\left( f\sigma \right) \right\Vert _{L^{2}\left(
\omega \right) }\leq A^{\prime }\left\Vert f\right\Vert _{L^{2}\left( \sigma
\right) },\ \ \ \ \ \text{for all }f\in L^{2}\left( \sigma \right) \text{
and }\varepsilon >0\text{.}  \label{A'}
\end{equation}%
Moreover, $A^{\prime }\approx A+\sqrt{A_{2}^{\alpha }}$.
\end{proposition}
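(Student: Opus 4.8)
The plan is to follow Stein's argument (Proposition 1 on page 31 of \cite{Ste2}) almost verbatim, replacing Lebesgue measure on the domain side by $\sigma$ and on the range side by $\omega$, and using the classical Muckenhoupt condition $A_2^\alpha$ exactly where Stein uses the unweighted $L^q$ boundedness of the maximal function. First I would fix $\varepsilon>0$ and write $K_\varepsilon = K - K^\varepsilon$, where $K^\varepsilon(x,y) = \phi_\varepsilon(|x-y|)K(x,y)$ with $\phi_\varepsilon$ a fixed smooth function, equal to $1$ on $[0,\tfrac12]$ and $0$ on $[1,\infty)$, rescaled to $\varepsilon$; so $K^\varepsilon$ is supported in $\{|x-y|<\varepsilon\}$ and agrees with $K$ on $\{|x-y|<\varepsilon/2\}$. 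Let $S^\varepsilon$ denote the integral operator with kernel $K^\varepsilon$. Then $T_\varepsilon(f\sigma) = T(f\sigma) - S^\varepsilon(f\sigma) + R^\varepsilon(f\sigma)$ where $R^\varepsilon$ has kernel $K_{\varepsilon}-K+K^{\varepsilon}$ supported in the annulus $\{\varepsilon/2\le|x-y|<\varepsilon\}$ with the same size bound $|x-y|^{\alpha-n}\lesssim\varepsilon^{\alpha-n}$; this remainder is handled by the same pointwise estimate as $S^\varepsilon$, so it suffices to bound $T$ (given) and $S^\varepsilon$.

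The key step is the pointwise bound $|S^\varepsilon(f\sigma)(x)| \lesssim C_{CZ}\, M^\alpha(f\sigma)(x)$ for all $x$, which follows immediately from the size estimate $|K^\varepsilon(x,y)|\le C_{CZ}|x-y|^{\alpha-n}$ together with the dyadic annular decomposition of the ball $B(x,\varepsilon)$: indeed
\begin{equation*}
|S^\varepsilon(f\sigma)(x)| \le C_{CZ}\sum_{k=0}^{\infty} (2^{-k}\varepsilon)^{\alpha-n}\int_{2^{-k-1}\varepsilon\le|x-y|<2^{-k}\varepsilon}|f(y)|\,d\sigma(y) \lesssim C_{CZ}\, M^\alpha(f\sigma)(x).
\end{equation*}
Then I would invoke the equivalence, recorded in the proof of Theorem \ref{restricted weak type}, between the weak-type norm $\mathfrak N_{M^\alpha}^{\operatorname{weak}}(\sigma,\omega)$ and $A_2^\alpha(\sigma,\omega)$; but here we need the \emph{strong} $L^2(\sigma)\to L^2(\omega)$ bound for $M^\alpha$, which is Sawyer's two-weight maximal function theorem and is known to be equivalent to the testing/$S_2^\alpha$ condition, and in particular is controlled by $\sqrt{A_2^\alpha}$ once one also has $A_2^\alpha<\infty$ — actually the cleanest route is to note that $M^\alpha : L^2(\sigma)\to L^{2,\infty}(\omega)$ with norm $\approx \sqrt{A_2^\alpha}$ and then upgrade: since $S^\varepsilon$ and $R^\varepsilon$ are dominated by $M^\alpha$ and $T$ is already strongly bounded with norm $A$, we get $\|T_\varepsilon(f\sigma)\|_{L^{2,\infty}(\omega)} \lesssim (A+\sqrt{A_2^\alpha})\|f\|_{L^2(\sigma)}$ \emph{uniformly in $\varepsilon$}, and a Marcinkiewicz-type interpolation argument — or more simply the observation that the family $\{T_\varepsilon\}$ is, again by the $M^\alpha$ domination, equi-bounded from $L^2(\sigma)$ to $L^2(\omega)$ once one controls the difference $T-T_\varepsilon$ in strong $L^2(\omega)$ norm — yields \eqref{A'}. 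I will follow Stein's precise device: write $T_\varepsilon(f\sigma)(x) = T(f\sigma)(x) - \big(S^\varepsilon + R^\varepsilon\big)(f\sigma)(x)$ and estimate the second group in $L^2(\omega)$ directly by $C_{CZ}\|M^\alpha(f\sigma)\|_{L^2(\omega)} \lesssim C_{CZ}\sqrt{A_2^\alpha}\,\|f\|_{L^2(\sigma)}$, where the last inequality is the strong-type two-weight maximal inequality valid under the (here assumed) bump/$A_2^\alpha$ hypothesis — for $\alpha=0$ this is the classical $A_2$ maximal bound, and for $0<\alpha<n$ it is the fractional analogue; both are standard.

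The main obstacle is the justification that the tempered-distribution operator $T$, defined a priori only through \eqref{identify} off the support of $f$, can actually be \emph{subtracted pointwise} from the honest integral operator $S^\varepsilon$ to produce $T_\varepsilon$, i.e. that $T_\varepsilon(f\sigma)(x) = \int K_\varepsilon(x,y)f(y)\,d\sigma(y)$ really equals $T(f\sigma)(x) - (S^\varepsilon+R^\varepsilon)(f\sigma)(x)$ in the sense of $L^2(\omega)$ functions. This is exactly the delicate point in Stein's proof and is handled there by a limiting/regularization argument: one approximates $f$ by compactly supported functions, uses that $K_\varepsilon$ is a bounded compactly supported kernel so $T_\varepsilon(f\sigma)$ is a genuine function, and checks that $S^\varepsilon(f\sigma)$ and $R^\varepsilon(f\sigma)$ are genuine functions too (since their kernels are bounded with the annular support), so the identity holds off a $\sigma\otimes\omega$-null set and hence everywhere relevant. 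I would reproduce that argument with $dx$ replaced by $d\sigma$, $d\omega$, noting that nothing in it uses translation invariance — only the size bound on $K$ and the local finiteness of the measures — and that the $A_2^\alpha$ hypothesis guarantees all the intermediate quantities are finite. The constant bookkeeping then gives $A' \lesssim A + C_{CZ}\sqrt{A_2^\alpha}$, and the reverse $A \le A'$ is trivial by letting $\varepsilon\to 0$ and using Fatou, so $A'\approx A+\sqrt{A_2^\alpha}$ as claimed.
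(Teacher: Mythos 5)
Your overall strategy---transplant Stein's Proposition 1 from \cite[page 31]{Ste2} with $dx$ replaced by the pair $(\sigma,\omega)$ and use $A_{2}^{\alpha }$ where Stein uses unweighted estimates---is the right one, and it is what the paper does. But your execution breaks down at the step you call the key step: the pointwise bound $\left\vert S^{\varepsilon }\left( f\sigma \right) \left( x\right) \right\vert \lesssim C_{CZ}\,M^{\alpha }\left( f\sigma \right) \left( x\right) $ for the near part is false. In your dyadic decomposition
\begin{equation*}
\sum_{k\geq 0}\left( 2^{-k}\varepsilon \right) ^{\alpha -n}\int_{B\left( x,2^{-k}\varepsilon \right) }\left\vert f\right\vert \,d\sigma ,
\end{equation*}
each single term is indeed $\lesssim M^{\alpha }\left( f\sigma \right) \left( x\right) $, but there are infinitely many terms with no geometric decay relative to $M^{\alpha }$: for $0<\alpha <n$ the sum is controlled by $\varepsilon ^{\alpha }M^{0}\left( f\sigma \right) \left( x\right) $ rather than $M^{\alpha }\left( f\sigma \right) \left( x\right) $, and for $\alpha =0$ it diverges, since $\left\vert x-y\right\vert ^{-n}$ is not $\sigma $-integrable near the diagonal. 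The near part of a Calder\'{o}n-Zygmund operator cannot be estimated by putting absolute values inside the integral; that is precisely the point of Stein's argument. A second, independent problem: even granting such a pointwise bound, you would need the \emph{strong} two-weight inequality $\left\Vert M^{\alpha }\left( f\sigma \right) \right\Vert _{L^{2}\left( \omega \right) }\lesssim \sqrt{A_{2}^{\alpha }}\left\Vert f\right\Vert _{L^{2}\left( \sigma \right) }$, which fails in general ($A_{2}^{\alpha }$ yields only the weak type; the strong type is characterized by a testing condition), and your proposed upgrade from a single weak-type endpoint by a ``Marcinkiewicz-type interpolation'' does not produce a strong-type conclusion.

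The paper avoids both difficulties by localizing \emph{before} estimating. Fix $\overline{x}$, set $\widetilde{T}_{\varepsilon }=T-T_{\varepsilon }$, and note that $\mathbf{1}_{B\left( \overline{x},a\varepsilon \right) }\widetilde{T}_{\varepsilon }\left( f\sigma \right) =\mathbf{1}_{B\left( \overline{x},a\varepsilon \right) }\widetilde{T}_{\varepsilon }\left( \mathbf{1}_{B\left( \overline{x},\left( a+1\right) \varepsilon \right) }f\sigma \right) $ for $0<a<\frac{1}{3}$. One then splits the input into $\mathbf{1}_{B\left( \overline{x},d\varepsilon \right) }f$ with $2a<d<1-a$, on which $\widetilde{T}_{\varepsilon }$ coincides with $T$ itself (that piece lies entirely inside $B\left( x,\varepsilon \right) $ for every $x$ in the output ball), so the hypothesis supplies the factor $A$ with no absolute values inside the integral; and an annular piece supported in $B\left( \overline{x},\left( a+1\right) \varepsilon \right) \setminus B\left( \overline{x},d\varepsilon \right) $, which stays at distance at least $\left( d-a\right) \varepsilon \geq a\varepsilon $ from the output ball, so there the kernel is uniformly bounded and one application of Cauchy--Schwarz together with $A_{2}^{\alpha }$ suffices. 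Summing over a cover $\left\{ B\left( \overline{x}^{k},a\varepsilon \right) \right\} _{k}$ whose dilates $B\left( \overline{x}^{k},\left( a+1\right) \varepsilon \right) $ have bounded overlap gives (\ref{A'}) with $A^{\prime }\lesssim A+\sqrt{A_{2}^{\alpha }}$; the reverse inequality and the identification issue you raise at the end are then handled as you describe.
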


\begin{proof}
The proof is virtually identical to that of Stein in \cite[page 31]{Ste2}
(which treated a doubling measure $\mu $ in place of Lebesgue measure $dx$)
upon including appropriate use of the classical $A_{2}^{\alpha }\left(
\sigma ,\omega \right) $ condition to handle the extension to two otherwise
arbitrary weights, and we now sketch the details.

For each $x$, the function $K_{\varepsilon }\left( x,\cdot \right) $ is in $%
L^{2}\left( \sigma \right) $ and so $T_{\varepsilon }$ is well-defined on $%
L^{2}\left( \sigma \right) $ by Cauchy-Schwarz. Let $\widetilde{T}%
_{\varepsilon }\equiv T-T_{\varepsilon }$ be the `near' part of $T$. Fix $%
\overline{x}\in \mathbb{R}^{n}$ and $f\in L^{2}\left( \sigma \right) $. All
estimates in what follows are independent of $\varepsilon $, $\overline{x}$
and $f$. The crux of the proof is then to show that there are positive
numbers $C$ and $0<a<\frac{1}{3}$ so that%
\begin{equation}
\left\Vert \mathbf{1}_{B\left( \overline{x},a\varepsilon \right) }\widetilde{%
T}_{\varepsilon }\left( f\sigma \right) \right\Vert _{L^{2}\left( \omega
\right) }\leq \left( A+C\left( 1+\frac{1}{a}\right) ^{n}\sqrt{A_{2}^{\alpha }%
}\right) \left\Vert \mathbf{1}_{B\left( \overline{x},\left( a+1\right)
\varepsilon \right) }f\right\Vert _{L^{2}\left( \sigma \right) }\ ,
\label{crux}
\end{equation}%
where the balls $B\left( \overline{x},r\right) $ are actually cubes in the
new distance function, and we will often refer to them as cubical balls.

Note that $\widetilde{T}_{\varepsilon }\left( f\sigma \right) \left(
x\right) =0$ if $\func{Supp}f\subset B\left( x,\varepsilon \right) ^{c}$ and
that $\widetilde{T}_{\varepsilon }\left( f\sigma \right) \left( x\right)
=T\left( f\sigma \right) \left( x\right) $ if $\func{Supp}f\subset B\left(
x,\varepsilon \right) $, so that%
\begin{equation*}
\mathbf{1}_{B\left( \overline{x},a\varepsilon \right) }\widetilde{T}%
_{\varepsilon }\left( f\sigma \right) =\mathbf{1}_{B\left( \overline{x}%
,a\varepsilon \right) }\widetilde{T}_{\varepsilon }\left( \mathbf{1}%
_{B\left( \overline{x},\left( a+1\right) \varepsilon \right) }f\sigma
\right) .
\end{equation*}%
Next we split the right hand side into two pieces:%
\begin{equation}
\mathbf{1}_{B\left( \overline{x},a\varepsilon \right) }\widetilde{T}%
_{\varepsilon }\left( \mathbf{1}_{B\left( \overline{x},\left( a+1\right)
\varepsilon \right) }f\sigma \right) =\mathbf{1}_{B\left( \overline{x}%
,a\varepsilon \right) }\widetilde{T}_{\varepsilon }\left( \mathbf{1}%
_{B\left( \overline{x},d\varepsilon \right) }f\sigma \right) +\mathbf{1}%
_{B\left( \overline{x},a\varepsilon \right) }\widetilde{T}_{\varepsilon
}\left( \left[ \mathbf{1}_{B\left( \overline{x},\left( a+1\right)
\varepsilon \right) }-\mathbf{1}_{B\left( \overline{x},d\varepsilon \right) }%
\right] f\sigma \right) ,  \label{two pieces}
\end{equation}%
where we choose $2a<d<1-a$. In particular, $B\left( \overline{x}%
,d\varepsilon \right) \subset B\left( x,\varepsilon \right) $ whenever $x\in
B\left( \overline{x},a\varepsilon \right) $. This gives%
\begin{equation*}
\mathbf{1}_{B\left( \overline{x},a\varepsilon \right) }\widetilde{T}%
_{\varepsilon }\left( \mathbf{1}_{B\left( \overline{x},d\varepsilon \right)
}f\sigma \right) =\mathbf{1}_{B\left( \overline{x},a\varepsilon \right)
}T\left( \mathbf{1}_{B\left( \overline{x},d\varepsilon \right) }f\sigma
\right) ,
\end{equation*}%
and%
\begin{eqnarray*}
\left\Vert \mathbf{1}_{B\left( \overline{x},a\varepsilon \right) }\widetilde{%
T}_{\varepsilon }\left( \mathbf{1}_{B\left( \overline{x},d\varepsilon
\right) }f\sigma \right) \right\Vert _{L^{2}\left( \omega \right) }
&=&\left\Vert \mathbf{1}_{B\left( \overline{x},a\varepsilon \right) }T\left( 
\mathbf{1}_{B\left( \overline{x},d\varepsilon \right) }f\sigma \right)
\right\Vert _{L^{2}\left( \omega \right) } \\
&\leq &A\left\Vert \mathbf{1}_{B\left( \overline{x},d\varepsilon \right)
}f\right\Vert _{L^{2}\left( \sigma \right) }\leq A\left\Vert \mathbf{1}%
_{B\left( \overline{x},\left( a+1\right) \varepsilon \right) }f\right\Vert
_{L^{2}\left( \sigma \right) }\ .
\end{eqnarray*}

To estimate the second term on the right hand side of (\ref{two pieces}), we
use $a<d$ and the association of $T$ with $K$ given in (\ref{identify}) to
obtain%
\begin{eqnarray*}
\mathbf{1}_{B\left( \overline{x},a\varepsilon \right) }\left( x\right) 
\widetilde{T}_{\varepsilon }\left( \left[ \mathbf{1}_{B\left( \overline{x}%
,\left( a+1\right) \varepsilon \right) }-\mathbf{1}_{B\left( \overline{x}%
,d\varepsilon \right) }\right] f\sigma \right) \left( x\right) 
&=&\int_{B\left( x,\varepsilon \right) \cap \left\{ B\left( \overline{x}%
,\left( a+1\right) \varepsilon \right) \setminus B\left( \overline{x}%
,d\varepsilon \right) \right\} }K\left( x,y\right) f\left( y\right) d\sigma
\left( y\right) , \\
\text{for }\sigma \text{-a.e. }x &\in &B\left( \overline{x},a\varepsilon
\right) ,
\end{eqnarray*}%
since the cubical annulus $B\left( \overline{x},\left( a+1\right)
\varepsilon \right) \setminus B\left( \overline{x},d\varepsilon \right) $ is
disjoint from the cubical ball $B\left( \overline{x},a\varepsilon \right) $.
For $y$ in the above range of integration, we have%
\begin{equation*}
d\varepsilon <\left\Vert \overline{x}-y\right\Vert \leq \left\Vert \overline{%
x}-x\right\Vert +\left\Vert x-y\right\Vert \leq a\varepsilon +\left\Vert
x-y\right\Vert ,
\end{equation*}%
and using $2a<d$, we conclude that $\left\Vert x-y\right\Vert \geq \left(
d-a\right) \varepsilon \geq a\varepsilon $. Thus $\left\vert K\left(
x,y\right) \right\vert \leq \frac{C}{\left( a\varepsilon \right) ^{n}}%
=C\left( 1+\frac{1}{a}\right) ^{n}\frac{1}{\left\vert B\left( \overline{x}%
,\left( a+1\right) \varepsilon \right) \right\vert }$, and so%
\begin{eqnarray*}
&&\left\Vert \mathbf{1}_{B\left( \overline{x},a\varepsilon \right) }%
\widetilde{T}_{\varepsilon }\left( \left[ \mathbf{1}_{B\left( \overline{x}%
,\left( a+1\right) \varepsilon \right) }-\mathbf{1}_{B\left( \overline{x}%
,d\varepsilon \right) }\right] f\sigma \right) \right\Vert _{L^{2}\left(
\omega \right) } \\
&=&\left\Vert \int_{B\left( x,\varepsilon \right) \cap \left\{ B\left( 
\overline{x},\left( a+1\right) \varepsilon \right) \setminus B\left( 
\overline{x},d\varepsilon \right) \right\} }K\left( x,y\right) f\left(
y\right) d\sigma \left( y\right) \right\Vert _{L^{2}\left( \omega \right) }
\\
&\leq &C\left( 1+\frac{1}{a}\right) ^{n}\frac{1}{\left\vert B\left( 
\overline{x},\left( a+1\right) \varepsilon \right) \right\vert }\sqrt{%
\left\vert B\left( \overline{x},\left( a+1\right) \varepsilon \right)
\right\vert _{\omega }}\int_{B\left( \overline{x},\left( a+1\right)
\varepsilon \right) }\left\vert f\left( y\right) \right\vert d\sigma \left(
y\right)  \\
&\leq &C\left( 1+\frac{1}{a}\right) ^{n}\frac{\sqrt{\left\vert B\left( 
\overline{x},\left( a+1\right) \varepsilon \right) \right\vert _{\omega }}%
\sqrt{\left\vert B\left( \overline{x},\left( a+1\right) \varepsilon \right)
\right\vert _{\sigma }}}{\left\vert B\left( \overline{x},\left( a+1\right)
\varepsilon \right) \right\vert }\left\Vert \mathbf{1}_{B\left( \overline{x}%
,\left( a+1\right) \varepsilon \right) }f\right\Vert _{L^{2}\left( \sigma
\right) } \\
&\leq &C\left( 1+\frac{1}{a}\right) ^{n}\sqrt{A_{2}^{\alpha }\left( \sigma
,\omega \right) }\left\Vert \mathbf{1}_{B\left( \overline{x},\left(
a+1\right) \varepsilon \right) }f\right\Vert _{L^{2}\left( \sigma \right) }.
\end{eqnarray*}%
Plugging our two estimates into (\ref{two pieces}), we obtain (\ref{crux}).

As in \cite[page 31]{Ste2}, we now add up the inequalities in (\ref{crux})
for a suitable collection of cubical balls covering $\mathbb{R}^{n}$ to
obtain (\ref{A'}) with $A^{\prime }=2^{n}\left( 1+\frac{1}{a}\right)
^{n}\left( A+C\left( 1+\frac{1}{a}\right) ^{n}\sqrt{A_{2}^{\alpha }}\right)
^{2}$. Indeed, we have%
\begin{eqnarray*}
\int_{\mathbb{R}^{n}}\left\vert \widetilde{T}_{\varepsilon }\left( f\sigma
\right) \right\vert ^{2}d\omega  &\leq &\sum_{k=1}^{\infty }\int_{B\left( 
\overline{x},a\varepsilon \right) }\left\vert \widetilde{T}_{\varepsilon
}\left( f\sigma \right) \right\vert ^{2}d\omega  \\
&\leq &\left( A+C\sqrt{A_{2}^{\alpha }}\right) ^{2}\sum_{k=1}^{\infty
}\int_{B\left( \overline{x},\left( a+1\right) \varepsilon \right)
}\left\vert f\right\vert ^{2}d\sigma \leq \left( A+C\sqrt{A_{2}^{\alpha }}%
\right) ^{2}N\int_{\mathbb{R}^{n}}\left\vert f\right\vert ^{2}d\sigma 
\end{eqnarray*}%
provided $\dbigcup\limits_{k}B\left( \overline{x}^{k},a\varepsilon \right) =%
\mathbb{R}^{n}$ and $\sum_{k}\mathbf{1}_{B\left( \overline{x}^{k},\left(
a+1\right) \varepsilon \right) }\leq N$. But these two properties are
achieved for any $N>2^{n}\left( 1+\frac{1}{a}\right) ^{n}-1$ by letting $%
\left\{ B\left( \overline{x}^{k},\frac{a}{2}\varepsilon \right) \right\}
_{k=1}^{\infty }$ be a maximal pairwise disjoint collection:

\begin{enumerate}
\item If $z\in \mathbb{R}^{n}\setminus \dbigcup\limits_{k}B\left( \overline{x%
}^{k},a\varepsilon \right) $, then $B\left( z,\frac{a}{2}\varepsilon \right)
\cap \left[ \dbigcup\limits_{k}B\left( \overline{x}^{k},\frac{a}{2}%
\varepsilon \right) \right] =\emptyset $ since if there is $w$ in $B\left( z,%
\frac{a}{2}\varepsilon \right) \cap B\left( \overline{x}^{k},\frac{a}{2}%
\varepsilon \right) $, then $\left\Vert z-\overline{x}^{k}\right\Vert \leq
\left\Vert z-w\right\Vert +\left\Vert w-\overline{x}^{k}\right\Vert
<a\varepsilon $, contradicting pairwise disjointedness of the collection $%
\left\{ B\left( \overline{x}^{k},\frac{a}{\sqrt{n}}\varepsilon \right)
\right\} _{k=1}^{\infty }$. But then $B\left( z,\frac{a}{2}\varepsilon
\right) $ could be included in the collection $\left\{ B\left( \overline{x}%
^{k},\frac{a}{2}\varepsilon \right) \right\} _{k=1}^{\infty }$,
contradicting its maximality.

\item If $z\in \dbigcap\limits_{j=1}^{N+1}B\left( \overline{x}%
^{k_{j}},\left( a+1\right) \varepsilon \right) $, then $B\left( \overline{x}%
^{k_{j}},a\varepsilon \right) \subset B\left( z,2\left( a+1\right)
\varepsilon \right) $ and so%
\begin{equation*}
c\left( 2\left( a+1\right) \varepsilon \right) ^{n}=\left\vert B\left(
z,2a\varepsilon \right) \right\vert \geq \left\vert
\dbigcup\limits_{j=1}^{N+1}B\left( \overline{x}^{k_{j}},a\varepsilon \right)
\right\vert =\sum_{j=1}^{N+1}\left\vert B\left( \overline{x}%
^{k_{j}},a\varepsilon \right) \right\vert =\left( N+1\right) c\left(
a\varepsilon \right) ^{n},
\end{equation*}%
which is a contradiction if $N+1>2^{n}\left( 1+\frac{1}{a}\right) ^{n}$.
\end{enumerate}
\end{proof}

\subsection{The cancellation theorem}

Now we turn to the proof of Theorem \ref{Stein extension}, where we follow
Stein \cite[Section 3 of Chapter VII]{Ste2}, but subtracting a higher order
Taylor polynomial to control estimates for \emph{doubling} measures.

\begin{proof}[Proof of Theorem \protect\ref{Stein extension}]
Recall the cancellation condition (\ref{can cond}), 
\begin{equation}
\int_{\left\Vert x-x_{0}\right\Vert <N}\left\vert \int_{\varepsilon
<\left\Vert x-y\right\Vert <N}K\left( x,y\right) d\sigma \left( y\right)
\right\vert ^{2}d\omega \left( x\right) \leq \mathfrak{A}_{K}\left( \sigma
,\omega \right) \left\vert B\left( x_{0},N\right) \right\vert _{\sigma },
\label{can cond'}
\end{equation}%
for all $\varepsilon ,N,x_{0}$. By the previous proposition, together with
the Independence of Truncations at the end of Subsubsection \ref%
{independence}, the roughly truncated operators $T_{\varepsilon ,N}$, with
kernel $K_{\varepsilon ,N}\left( x,y\right) =K\left( x,y\right) \mathbf{1}%
_{\left\{ \varepsilon <\left\vert x-y\right\vert <N\right\} }$, are bounded
from $L^{2}\left( \sigma \right) $ to $L^{2}\left( \omega \right) $ by a
multiple of $\left\Vert T\right\Vert _{L^{2}\left( \sigma \right)
\rightarrow L^{2}\left( \omega \right) }$ uniformly in $0<\varepsilon
<N<\infty $. Thus we have the following Cube Testing condition for $%
T_{\varepsilon ,N}$ uniformly in $0<\varepsilon <N<\infty $, i.e. 
\begin{equation}
\int_{B\left( x_{0},N\right) }\left\vert \int_{\varepsilon <\left\Vert
x-y\right\Vert <N}K\left( x,y\right) \mathbf{1}_{B\left( x_{0},N\right)
}\left( y\right) d\sigma \left( y\right) \right\vert ^{2}d\omega \left(
x\right) \leq \left\Vert T\right\Vert _{L^{2}\left( \sigma \right)
\rightarrow L^{2}\left( \omega \right) }^{2}\left\vert B\left(
x_{0},N\right) \right\vert _{\sigma }\ ,  \label{IB test}
\end{equation}%
for all cubical balls $B\left( x_{0},N\right) $. However, the inner
integrals with respect to $\sigma $ in (\ref{can cond'}) and (\ref{IB test})
don't match up. On the other hand, their difference is an integral in $%
\sigma $ supported \emph{outside} the cubical ball $B\left( x_{0},N\right) $
where $\omega $ is supported. This fact is exploited in the following
argument of Stein \cite[Section 3 of Chapter VII]{Ste2}.

We begin by proving the necessity of (\ref{can cond}) for the norm
inequality, i.e. 
\begin{equation*}
\mathfrak{A}_{K}\left( \sigma ,\omega \right) \lesssim \left\Vert
T\right\Vert _{L^{2}\left( \sigma \right) \rightarrow L^{2}\left( \omega
\right) }^{2}+A_{2}^{\alpha }\left( \sigma ,\omega \right) .
\end{equation*}%
Set%
\begin{equation*}
I_{\varepsilon ,N}\left( x\right) \equiv \int_{\varepsilon <\left\Vert
x-y\right\Vert <N}K\left( x,y\right) d\sigma \left( y\right) .
\end{equation*}%
First observe that it suffices to show%
\begin{equation}
\int_{\left\Vert x-x_{0}\right\Vert <\frac{N}{2}}\left\vert I_{\varepsilon
,N}\left( x\right) \right\vert ^{2}d\omega \left( x\right) \leq \left\Vert
T\right\Vert _{L^{2}\left( \sigma \right) \rightarrow L^{2}\left( \omega
\right) }^{2}\left\vert B\left( x,N\right) \right\vert _{\sigma },
\label{it suff}
\end{equation}%
since every cubical ball $B\left( x_{0},N\right) $ of radius $N$ can be
covered by a bounded number $J$ of cubical balls of radius $\frac{N}{2}$ ($%
2^{n}$ such cubical balls suffice). Indeed if $B\left( x_{0},N\right)
\subset \overset{\cdot }{\dbigcup }_{j=1}^{J}B\left( x_{j},\frac{N}{2}%
\right) $, then%
\begin{eqnarray*}
&&\int_{\left\Vert x-x_{0}\right\Vert <N}\left\vert \int_{\varepsilon
<\left\Vert x-y\right\Vert <N}K\left( x,y\right) d\sigma \left( y\right)
\right\vert ^{2}d\omega \left( x\right)  \\
&\leq &\sum_{j=1}^{J}\int_{\left\Vert x-x_{j}\right\Vert <\frac{N}{2}%
}\left\vert \int_{\varepsilon <\left\Vert x-y\right\Vert <N}K\left(
x,y\right) d\sigma \left( y\right) \right\vert ^{2}d\omega \left( x\right) 
\\
&\leq &\sum_{j=1}^{J}\left\Vert T\right\Vert _{L^{2}\left( \sigma \right)
\rightarrow L^{2}\left( \omega \right) }^{2}\left\vert B\left( x_{j},\frac{N%
}{2}\right) \right\vert _{\sigma }\lesssim \left\Vert T\right\Vert
_{L^{2}\left( \sigma \right) \rightarrow L^{2}\left( \omega \right)
}^{2}\left\vert B\left( x_{0},N\right) \right\vert _{\sigma },
\end{eqnarray*}%
since $\sigma $ is doubling.

As before, define the truncated kernels%
\begin{equation*}
K_{\varepsilon }\left( x,y\right) \equiv \left\{ 
\begin{array}{ccc}
K\left( x,y\right)  & \text{ if } & \varepsilon <\left\Vert x-y\right\Vert 
\\ 
0 & \text{ if } & not%
\end{array}%
\right. ,
\end{equation*}%
and set%
\begin{equation*}
T_{\varepsilon }\left( x\right) \equiv \int K_{\varepsilon }\left(
x,y\right) f\left( y\right) d\sigma \left( y\right) ,\ \ \ \ \ \text{for }%
x\in \mathbb{R}^{n}\text{ and }f\in L^{2}\left( \sigma \right) .
\end{equation*}%
By the previous proposition, the operators $T_{\varepsilon }^{\alpha }$ are
uniformly bounded from $L^{2}\left( \sigma \right) $ to $L^{2}\left( \omega
\right) $.

Continuing to follow Stein \cite[Section 3 of Chapter VII]{Ste2}, we compare 
$I_{\varepsilon ,N}\left( x\right) $ with $T_{\varepsilon }\left( \mathbf{1}%
_{B\left( x_{0},N\right) }\right) \left( x\right) $. Since 
\begin{equation*}
\left\{ B\left( x,N\right) \setminus B\left( x_{0},N\right) \right\}
\dbigcup \left\{ B\left( x_{0},N\right) \setminus B\left( x,N\right)
\right\} \subset B\left( x,\frac{3N}{2}\right) \setminus B\left( x,\frac{N}{2%
}\right) ,
\end{equation*}%
provided $\left\Vert x-x_{0}\right\Vert <\frac{N}{2}$, and since%
\begin{equation*}
I_{\varepsilon ,N}^{E}\left( x\right) -T_{\varepsilon }\left( \mathbf{1}%
_{B\left( x_{0},N\right) }\sigma \right) \left( x\right) =\int_{B\left(
x,N\right) \setminus B\left( x,\varepsilon \right) }K\left( x,y\right)
d\sigma \left( y\right) -\int_{B\left( x_{0},N\right) \setminus B\left(
x,\varepsilon \right) }K\left( x,y\right) d\sigma \left( y\right) ,
\end{equation*}%
it follows that%
\begin{equation*}
\left\vert I_{\varepsilon ,N}^{E}\left( x\right) -T_{\varepsilon }\left( 
\mathbf{1}_{B\left( x_{0},N\right) }\right) \left( x\right) \right\vert \leq
\int_{B\left( x,\frac{3N}{2}\right) \setminus B\left( x,\frac{N}{2}\right)
}\left\vert K\left( x,y\right) \right\vert d\sigma \left( y\right) \lesssim 
\frac{1}{N^{n}}\left\vert B\left( x,\frac{3N}{2}\right) \right\vert _{\sigma
},
\end{equation*}%
when $\left\Vert x-x_{0}\right\Vert <\frac{N}{2}$. Then%
\begin{eqnarray*}
&&\int_{\left\Vert x-x_{0}\right\Vert <\frac{N}{2}}\left\vert I_{\varepsilon
,N}\left( x\right) \right\vert ^{2}d\omega \left( x\right)  \\
&\lesssim &\int_{B\left( x_{0},\frac{N}{2}\right) }\left\vert T_{\varepsilon
}\left( \mathbf{1}_{B\left( x_{0},N\right) }\sigma \right) \left( x\right)
\right\vert ^{2}d\omega \left( x\right) +\int_{B\left( x_{0},\frac{N}{2}%
\right) }\left\vert I_{\varepsilon ,N}\left( x\right) -T_{\varepsilon
}^{\alpha }\left( \mathbf{1}_{B\left( x_{0},N\right) }\right) \left(
x\right) \right\vert ^{2}d\omega \left( x\right)  \\
&\lesssim &\sup_{\varepsilon >0}\left\Vert T_{\varepsilon }\right\Vert
_{L^{2}\left( \sigma \right) \rightarrow L^{2}\left( \omega \right)
}^{2}\left\vert B\left( x_{0},N\right) \right\vert _{\sigma }+\left\vert
B\left( x_{0},\frac{N}{2}\right) \right\vert _{\omega }\frac{1}{N^{2n}}%
\left\vert B\left( x_{0},\frac{3N}{2}\right) \right\vert _{\sigma }^{2} \\
&\lesssim &\left\{ \left\Vert T\right\Vert _{L^{2}\left( \sigma \right)
\rightarrow L^{2}\left( \omega \right) }^{2}+A_{2}^{\alpha }\left( \sigma
,\omega \right) \right\} \ \left\vert B\left( x_{0},\frac{3N}{2}\right)
\right\vert _{\sigma }\lesssim \left\{ \left\Vert T\right\Vert _{L^{2}\left(
\sigma \right) \rightarrow L^{2}\left( \omega \right) }^{2}+A_{2}^{\alpha
}\left( \sigma ,\omega \right) \right\} \ \left\vert B\left( x_{0},N\right)
\right\vert _{\sigma },
\end{eqnarray*}%
since $\sigma $ is doubling. This proves (\ref{it suff}), and hence the
necessity of (\ref{can cond}) with $\mathfrak{A}_{K}\left( \sigma ,\omega
\right) \lesssim \left\Vert T\right\Vert _{L^{2}\left( \sigma \right)
\rightarrow L^{2}\left( \omega \right) }^{2}+A_{2}^{\alpha }\left( \sigma
,\omega \right) $. The proof of necessity of the dual condition to (\ref{can
cond}) is similar using that $\omega $ is doubling.

Conversely, as in Stein \cite[Section 3 of Chapter VII]{Ste2}, let $%
K^{\varepsilon }\left( x,y\right) $ be a smooth truncation of $K$ given by%
\begin{equation*}
K^{\varepsilon }\left( x,y\right) \equiv \eta \left( \frac{x-y}{\varepsilon }%
\right) K\left( x,y\right) ,
\end{equation*}%
where $\eta \left( x\right) $ is smooth, vanishes if $\left\Vert
x\right\Vert \leq \frac{1}{2}$ and equals $1$ if $\left\Vert x\right\Vert
\geq 1$. Note that the kernels $K^{\varepsilon }\left( x,y\right) $ satisfy (%
\ref{diff ineq}) uniformly in $\varepsilon >0$, and can be used as
truncations in defining the weighted norm inequality as in Subsubsection \ref%
{Subsubsection norm}\ - see Independence of Truncations \ref{independence}.
We will show that the operators $T^{\varepsilon }$ corresponding to $%
K^{\varepsilon }$ satisfy the $\kappa $-Cube Testing conditions, also
uniformly in $\varepsilon >0$. For this we begin by controlling the full $%
\kappa $-Cube Testing condition for $T^{\varepsilon }$ by the following
polynomial variant of (\ref{can cond'}),%
\begin{eqnarray}
&&\int_{\left\Vert x-x_{0}\right\Vert <N}\left\vert \int_{\varepsilon
<\left\Vert x-y\right\Vert <N}K^{\alpha }\left( x,y\right) \frac{p\left(
y\right) }{\left\Vert \mathbf{1}_{B\left( x_{0},N\right) }p\right\Vert
_{\infty }}d\sigma \left( y\right) \right\vert ^{2}d\omega \left( x\right)
\leq \mathfrak{A}_{K^{\alpha }}^{\left( \kappa \right) }\left( \sigma
,\omega \right) \ \int_{\left\Vert x_{0}-y\right\Vert <N}d\sigma \left(
y\right) ,  \label{can cond''} \\
&&\text{for all polynomials }p\text{ of degree less than }\kappa \text{, all 
}0<\varepsilon <N\text{ and }x_{0}\in \mathbb{R}^{n},  \notag
\end{eqnarray}%
where $\mathfrak{A}_{K^{\alpha }}^{\left( \kappa \right) }\left( \sigma
,\omega \right) $ denotes the smallest constant for which (\ref{can cond''})
holds, and where $\kappa \in \mathbb{N}$.

To see this, fix a positive integer $\kappa >n-\alpha $, and define 
\begin{equation*}
I_{\varepsilon ,R}\left( x\right) \equiv \int_{\varepsilon <\left\Vert
x-y\right\Vert <R}K\left( x,y\right) \mathbf{1}_{B\left( x_{0},R\right)
}\left( y\right) d\sigma \left( y\right) .
\end{equation*}%
If $\phi _{\kappa }^{R,x_{0}}$ is a $B\left( x_{0},R\right) $-normalized
polynomial of degree less than $\kappa $ as in Definition \ref{def Q norm},
and $\left\Vert x-x_{0}\right\Vert <2R$, and if we denote by $\limfunc{Tay}%
f\left( x\right) $ the $\left( \kappa -1\right) ^{st}$ degree Taylor
polynomial of $f$ at $x$, then 
\begin{eqnarray*}
T^{\varepsilon }\left( \phi _{\kappa }^{R,x_{0}}\mathbf{1}_{B\left(
x_{0},R\right) }\sigma \right) \left( x\right)  &=&\int K^{\varepsilon
}\left( x,y\right) \phi _{\kappa }^{R,x_{0}}\left( y\right) \mathbf{1}%
_{B\left( x_{0},R\right) }\left( y\right) d\sigma \left( y\right)  \\
&=&\int K^{\varepsilon }\left( x,y\right) \left[ \phi _{\kappa
}^{R,x_{0}}\left( y\right) -\limfunc{Tay}\phi _{\kappa }^{R,x_{0}}\left(
x\right) \right] \mathbf{1}_{B\left( x,3R\right) }\left( y\right) \mathbf{1}%
_{B\left( x_{0},R\right) }\left( y\right) d\sigma \left( y\right)  \\
&&+\phi _{\kappa }^{R,x_{0}}\left( x\right) \int_{B\left( x,3R\right) }%
\limfunc{Tay}K^{\varepsilon }\left( x,y\right) \mathbf{1}_{B\left(
x_{0},R\right) }\left( y\right) d\sigma \left( y\right) .
\end{eqnarray*}%
The first integral is estimated by%
\begin{equation*}
A\int_{B\left( x,3R\right) }\left\vert x-y\right\vert ^{\alpha -n}\left( 
\frac{\left\vert x-y\right\vert }{R}\right) ^{\kappa }\mathbf{1}_{B\left(
x_{0},R\right) }\left( y\right) d\sigma \left( y\right) \lesssim A\frac{1}{%
R^{n}}\left\vert B\left( x,3R\right) \right\vert _{\sigma }\ ,
\end{equation*}%
since we chose $\kappa >n-\alpha $. On the other hand the integral $%
\int_{B\left( x,3R\right) }\limfunc{Tay}K^{\varepsilon }\left( x,y\right) 
\mathbf{1}_{B\left( x_{0},R\right) }\left( y\right) d\sigma \left( y\right) $
differs from $I_{\varepsilon ,R}\left( x\right) $ by%
\begin{equation*}
\int_{B\left( x,3R\right) \setminus B\left( x,\varepsilon \right) }\left\{ 
\limfunc{Tay}K^{\varepsilon }\left( x,y\right) -K\left( x,y\right) \right\} 
\mathbf{1}_{B\left( x_{0},R\right) }\left( y\right) d\sigma \left( y\right) ,
\end{equation*}%
whose modulus is again at most 
\begin{equation*}
\int_{B\left( x,3R\right) \setminus B\left( x,\varepsilon \right)
}\left\vert x-y\right\vert ^{\alpha -n}\left( \frac{\left\vert
x-y\right\vert }{R}\right) ^{\kappa }\mathbf{1}_{B\left( x_{0},R\right)
}\left( y\right) d\sigma \left( y\right) \lesssim A\frac{1}{R^{n}}\left\vert
B\left( x,3R\right) \right\vert _{\sigma }\ .
\end{equation*}%
Thus (\ref{it suff}) implies that%
\begin{eqnarray*}
\int_{B\left( x_{0},2R\right) }\left\vert T^{\varepsilon }\left( \phi
_{\kappa }^{R,x_{0}}\mathbf{1}_{B\left( x_{0},R\right) }\sigma \right)
\right\vert ^{2}d\omega \left( x\right)  &\lesssim &\left\{ A_{2}^{\alpha }+%
\mathfrak{A}_{K}^{\left( \kappa \right) }\left( \sigma ,\omega \right)
\right\} \ \left\vert B\left( x_{0},5R\right) \right\vert _{\sigma } \\
&\lesssim &\left\{ A_{2}^{\alpha }+\mathfrak{A}_{K}^{\left( \kappa \right)
}\left( \sigma ,\omega \right) \right\} \left\vert B\left( x_{0},R\right)
\right\vert _{\sigma }\ ,
\end{eqnarray*}%
since $\sigma $ is doubling. Taking the supremum over cubical balls $B\left(
x_{0},R\right) $ yields%
\begin{equation*}
\mathcal{FT}_{T}^{\left( \kappa \right) }\left( \sigma ,\omega \right)
\lesssim \sqrt{A_{2}^{\alpha }}+\mathfrak{A}_{K}^{\left( \kappa \right)
}\left( \sigma ,\omega \right) .
\end{equation*}%
Similarly we have $\mathcal{FT}_{T^{\ast }}^{\left( \kappa \right) }\left(
\sigma ,\omega \right) \lesssim \sqrt{A_{2}^{\alpha }}+\mathfrak{A}_{K^{\ast
}}^{\left( \kappa \right) }\left( \omega ,\sigma \right) $.

At this point we need the following $\mathfrak{A}_{K}^{\left( \kappa \right)
}$-variant of Corollary \ref{full Tp control}: For every $\kappa \in \mathbb{%
N}$ and $0<\delta <1$, there is a positive constant $C_{\kappa ,\delta }$
such that 
\begin{equation*}
\mathfrak{A}_{K^{\alpha }}^{\left( \kappa \right) }\left( \sigma ,\omega
\right) \leq C_{\kappa ,\delta }\mathfrak{A}_{K^{\alpha }}\left( \sigma
,\omega \right) +\delta \mathfrak{N}_{T}\left( \sigma ,\omega \right) \ ,
\end{equation*}%
and where the constants $C_{\kappa ,\delta }$ depend only on $\kappa $ and $%
\delta $, and not on the operator norm $\mathfrak{N}_{T}\left( \sigma
,\omega \right) $. The proof of this variant is similar to that of Theorem %
\ref{Tp control by T1}, Proposition \ref{full control for doub} and
Corollary \ref{full Tp control}, and is left to the reader. With this
variant in hand, we now have%
\begin{equation*}
\mathcal{FT}_{T}^{\left( \kappa \right) }\left( \sigma ,\omega \right)
\lesssim C_{\kappa ,\delta }\left[ \sqrt{A_{2}^{\alpha }}+\mathfrak{A}%
_{K}\left( \sigma ,\omega \right) \right] +\delta \mathfrak{N}_{T}\left(
\sigma ,\omega \right) ,
\end{equation*}%
for arbitrarily small $\delta >0$.

In view of Theorem \ref{A infinity theorem}, and absorbing the term $\delta 
\mathfrak{N}_{T}\left( \sigma ,\omega \right) $ for $\delta >0$ sufficiently
small, the operator norms of the truncated operators $T^{\varepsilon }$ are
now bounded uniformly in $\varepsilon >0$. Thus there is a sequence $\left\{
\varepsilon _{k}\right\} _{k=1}^{\infty }$ with $\lim_{k\rightarrow \infty
}\varepsilon _{k}=0$ such that the operators $T^{\varepsilon _{k}}$ converge
weakly to a bounded operator $T$ from $L^{2}\left( \sigma \right) $ to $%
L^{2}\left( \omega \right) $. Since the truncated kernels $K^{\varepsilon
_{k}}\left( x,y\right) $ converge pointwise and dominatedly to $K\left(
x,y\right) $, Lebesgue's Dominated Convergence theorem applies to show that
for $x\notin \func{Supp}\left( f\sigma \right) $, and where the doubling
measure $\sigma $ has no atoms and the function $f$ has compact support, we
have%
\begin{equation*}
T\left( f\sigma \right) \left( x\right) =\lim_{k\rightarrow \infty
}T^{\varepsilon _{k}}\left( f\sigma \right) \left( x\right)
=\lim_{k\rightarrow \infty }\int K^{\varepsilon _{k}}\left( x,y\right)
f\left( y\right) d\sigma \left( y\right) =\int K\left( x,y\right) f\left(
y\right) d\sigma \left( y\right) ,
\end{equation*}%
which is the representation (\ref{identify}). This completes the proof of
Theorem \ref{Stein extension}.
\end{proof}

\section{Concluding remarks}

The problem investigated in this paper is that of \emph{fixing a measure pair%
} $\left( \sigma ,\omega \right) $, and then asking for a characterization
of the $\alpha $-fractional Calder\'{o}n-Zygmund operators $T^{\alpha }$
that are bounded from $L^{2}\left( \sigma \right) $ to $L^{2}\left( \omega
\right) $ - the first solution being the one weight case of Lebesgue measure
with $\alpha =0$ in \cite{DaJo}. This problem of fixing a measure pair is in
a sense `orthogonal' to other recent investigations of two weight norm
inequalities, in which one \emph{fixes the elliptic operator }$T^{\alpha }$,
and asks for a characterization of the weight pairs $\left( \sigma ,\omega
\right) $ for which $T^{\alpha }$ is bounded.

This latter investigation for a fixed operator is extraordinarily difficult,
with essentially just \textbf{one} Calder\'{o}n-Zygmund operator $T^{\alpha
} $ known to have a characterization of the weight pairs $\left( \sigma
,\omega \right) $, namely the Hilbert transform on the line, see the two
part paper \cite{LaSaShUr3};\cite{Lac}, and also \cite{SaShUr10} for an
extension to gradient elliptic operators on the line. In particular, matters
appear to be very bleak in higher dimensions due to the example in \cite{Saw}
which shows that the energy side condition, used in virtually all attempted
characterizations, fails to be necessary for even the most basic elliptic
operators - the stronger pivotal condition is however shown in \cite{LaLi}
to be necessary for boundedness of the $g$-function, a \emph{Hilbert space }%
valued Calder\'{o}n-Zygmund operator with a strong gradient positivity
property, and the weight pairs were then characterized in \cite{LaLi} by a
single testing condition\footnote{%
The testing condition (1.3) in \cite{LaLi} implies the weights share no
common point masses, and then an argument in \cite{LaSaUr1} using the
asymmetric $A_{2}$ condition of Stein shows that the $A_{2}$ condition is
implied by the testing condition. Thus (1.3) can be dropped from the
statement of Theorem 1.2.}.

On the other hand, the problem for a fixed measure pair has proved somewhat
more tractable. However, the techniques required for these results are taken
largely from investigations of the problem where the operator is fixed. In
particular, an adaptation of the `pivotal' argument in \cite{NTV4} to the
weighted Alpert wavelets in \cite{RaSaWi}, and a Parallel Corona
decomposition from \cite{LaSaShUr4} are used.

The question of relaxing the side conditions of doubling, comparability of
measures, and $A_{\infty }^{\alpha }$ or $C_{q}$ on the weights remains
open, with the main stumbling blocks being (\textbf{1}) the limitations of
weighted Alpert wavelets which require doubling, and (\textbf{2}) our
bilinear Carleson Embedding Theorem which requires comparability of
measures. There is in fact no known example of a $\alpha $-fractional Calder%
\'{o}n-Zygmund operator for which the $T1$ theorem fails.

For $0<\alpha <n$ there ought to be a larger class $C_{q}^{\alpha }$ of
measures that includes both $A_{\infty }^{\alpha }$ and $C_{q}$, $q>2$, and
for which a weighted norm of the fractional integral $I_{\alpha }$ is
controlled by that of the fractional maximal function $M_{\alpha }$. One
possibility for the definition of such a class $C_{p}^{\alpha }$ of measures
for $0<\alpha <n$ and $1<p<\infty $ is%
\begin{eqnarray*}
&&\frac{\left\vert E\right\vert _{\omega }}{\int_{Q}\left\vert M\mathbf{1}%
_{Q}\right\vert ^{p}\omega }\leq \eta \left( \mathbf{Cap}_{\alpha
}^{Q}\left( E\right) \right) ,\ \ \ \ \ \ \text{for all compact subsets }E%
\text{ of a cube }Q, \\
&&\ \ \ \ \ \ \ \ \ \ \text{for some function }\eta :\left[ 0,1\right]
\rightarrow \left[ 0,1\right] \text{ with }\lim_{t\searrow 0}\eta \left(
t\right) =0.
\end{eqnarray*}

In the case $\alpha =0$, there is the problem analogous to the `$A_{2}$
conjecture' solved in general in \cite{Hyt}, of determining the optimal
dependence of the above estimates on the $A_{2}$ characteristic. In
particular the dependence for the restricted weak type inequality should
follow using the pigeonholing and corona construction introduced in \cite%
{LaPeRe} and used in \cite{Hyt}.

We end by summarizing the drawbacks in the methodology used here. The $T1$
theorem here is proved for general Calder\'{o}n-Zygmund operators, and thus
in the absence of any special positivity properties of the Calder\'{o}%
n-Zygmund kernels $K^{\alpha }$. As a consequence there is no \emph{catalyst}
available to enable control of the difficult `far below' and `stopping'
terms by `goodness' of cubes in the NTV bilinear Haar decomposition (see
e.g. \cite{NTV4}). In the case of the aforementioned Hilbert transform, the
positivity of the derivative of the convolution kernel $\frac{1}{x}$
permitted the derivation of a strong catalyst, namely the energy condition,
from the testing and Muckenhoupt conditions (see e.g. \cite{LaSaShUr3}), and
in the case of Riesz transforms there is a partial reversal energy that
yields the energy condition when the measures are both doubling (see e.g. 
\cite{LaWi} and \cite{SaShUr9}). But the lack of a suitable catalyst for
general Calder\'{o}n-Zygmund operators, see \cite{SaShUr11} and \cite{Saw}
for negative results, limits us to using the weighted Alpert wavelets in 
\cite{RaSaWi}. The weighted Alpert wavelets in turn have two defects%
\footnote{%
Weighted $L^{2}$ projections fail to satisfy $L^{\infty }$ bounds in
general, and the size of an extension of a nonconstant polynomial is
uncontrolled.} that limit their use to doubling measures, and to situations
that avoid the paraproduct / neighbour / stopping form decomposition of NTV
in \cite{NTV4}. This forces us to use the parallel corona, and ultimately to
invoke comparability of measures and the $A_{\infty }^{\alpha }$ or $C_{q}$, 
$q>2$, assumption on one of the measures.

\section{Reference List of conditions}

For the reader's convenience we assemble here a Reference List of the
conditions on weights and weight pairs arising in this paper in roughly the
order of their appearance.

\subsection{Conditions on a single measure}

\begin{enumerate}
\item $\mu $ is doubling if $\int_{2Q}d\mu \lesssim \int_{Q}d\mu ,\ $for$\ $%
all cubes $Q\subset \mathbb{R}^{n}$.

\item $\omega $ is an $A_{\infty }$ weight if $\frac{\left\vert E\right\vert
_{\omega }}{\left\vert Q\right\vert _{\omega }}\leq C\left( \frac{\left\vert
E\right\vert }{\left\vert Q\right\vert }\right) ^{\varepsilon }\ $for all
compact subsets $E$ of a cube $Q$.

\item $\sigma $ is a $C_{p}$ weight if $\frac{\left\vert E\right\vert
_{\sigma }}{\int_{\mathbb{R}^{n}}\left\vert M\mathbf{1}_{Q}\right\vert
^{p}d\sigma }\leq C\left( \frac{\left\vert E\right\vert }{\left\vert
Q\right\vert }\right) ^{\varepsilon }$ whenever $E$ compact $\subset Q$ a
cube.

\item $\omega $ is an $A_{\infty }^{\alpha }$ measure if $\frac{\left\vert
E\right\vert _{\omega }}{\left\vert Q\right\vert _{\omega }}\leq \eta \left( 
\mathbf{Cap}_{\alpha }^{Q}\left( E\right) \right) ,\ $for all compact
subsets $E$ of a cube $Q$.
\end{enumerate}

\subsection{Conditions on a pair of measures}

\begin{enumerate}
\item $\left( \sigma ,\omega \right) $ \emph{comparable} if $\frac{%
\left\vert E\right\vert _{\sigma }}{\left\vert Q\right\vert _{\sigma }}<\eta 
$ whenever $E$ compact $\subset Q$ a cube with $\frac{\left\vert
E\right\vert _{\omega }}{\left\vert Q\right\vert _{\omega }}<\varepsilon $.

\item For $0\leq \alpha <n$, the $\alpha $-fractional Muckenhoupt conditions
for the weight pair $\left( \sigma ,\omega \right) $ are%
\begin{eqnarray*}
A_{2}^{\alpha }\left( \sigma ,\omega \right) &\equiv &\sup_{Q\in \mathcal{P}%
^{n}}\frac{\left\vert Q\right\vert _{\sigma }}{\left\vert Q\right\vert ^{1-%
\frac{\alpha }{n}}}\frac{\left\vert Q\right\vert _{\omega }}{\left\vert
Q\right\vert ^{1-\frac{\alpha }{n}}}<\infty , \\
\mathcal{A}_{2}^{\alpha }\left( \sigma ,\omega \right) &\equiv &\sup_{Q\in 
\mathcal{Q}^{n}}\mathcal{P}^{\alpha }\left( Q,\sigma \right) \frac{%
\left\vert Q\right\vert _{\omega }}{\left\vert Q\right\vert ^{1-\frac{\alpha 
}{n}}}<\infty , \\
\mathcal{A}_{2}^{\alpha ,\ast }\left( \sigma ,\omega \right) &\equiv
&\sup_{Q\in \mathcal{Q}^{n}}\frac{\left\vert Q\right\vert _{\sigma }}{%
\left\vert Q\right\vert ^{1-\frac{\alpha }{n}}}\mathcal{P}^{\alpha }\left(
Q,\omega \right) <\infty , \\
\mathcal{P}^{\alpha }\left( Q,\mu \right) &\equiv &\int_{\mathbb{R}%
^{n}}\left( \frac{\left\vert Q\right\vert ^{\frac{1}{n}}}{\left( \left\vert
Q\right\vert ^{\frac{1}{n}}+\left\vert x-x_{Q}\right\vert \right) ^{2}}%
\right) ^{n-\alpha }d\mu \left( x\right) .
\end{eqnarray*}

\item The $\kappa $\emph{-cube testing conditions} for $T^{\alpha }$ are%
\begin{eqnarray*}
\left( \mathfrak{T}_{T^{\alpha }}^{\left( \kappa \right) }\left( \sigma
,\omega \right) \right) ^{2} &\equiv &\sup_{Q\in \mathcal{P}^{n}}\max_{0\leq
\left\vert \beta \right\vert <\kappa }\frac{1}{\left\vert Q\right\vert
_{\sigma }}\int_{Q}\left\vert T_{\sigma }^{\alpha }\left( \mathbf{1}%
_{Q}m_{Q}^{\beta }\right) \right\vert ^{2}\omega <\infty , \\
\left( \mathfrak{T}_{\left( T^{\alpha }\right) ^{\ast }}^{\left( \kappa
\right) }\left( \omega ,\sigma \right) \right) ^{2} &\equiv &\sup_{Q\in 
\mathcal{P}^{n}}\max_{0\leq \left\vert \beta \right\vert <\kappa }\frac{1}{%
\left\vert Q\right\vert _{\omega }}\int_{Q}\left\vert \left( T_{\sigma
}^{\alpha }\right) ^{\ast }\left( \mathbf{1}_{Q}m_{Q}^{\beta }\right)
\right\vert ^{2}\sigma <\infty ,
\end{eqnarray*}%
with $m_{Q}^{\beta }\left( x\right) \equiv \left( \frac{x-c_{Q}}{\ell \left(
Q\right) }\right) ^{\beta }$ for any cube $Q$ and multiindex $\beta $, where 
$c_{Q}$ is the center of the cube $Q$.

\item The \emph{full }$\kappa $\emph{-cube testing conditions} for $%
T^{\alpha }$ are%
\begin{eqnarray*}
\left( \mathfrak{FT}_{T^{\alpha }}^{\left( \kappa \right) }\left( \sigma
,\omega \right) \right) ^{2} &\equiv &\sup_{Q\in \mathcal{P}^{n}}\max_{0\leq
\left\vert \beta \right\vert <\kappa }\frac{1}{\left\vert Q\right\vert
_{\sigma }}\int_{\mathbb{R}^{n}}\left\vert T_{\sigma }^{\alpha }\left( 
\mathbf{1}_{Q}m_{Q}^{\beta }\right) \right\vert ^{2}\omega <\infty , \\
\left( \mathfrak{FT}_{\left( T^{\alpha }\right) ^{\ast }}^{\left( \kappa
\right) }\left( \omega ,\sigma \right) \right) ^{2} &\equiv &\sup_{Q\in 
\mathcal{P}^{n}}\max_{0\leq \left\vert \beta \right\vert <\kappa }\frac{1}{%
\left\vert Q\right\vert _{\omega }}\int_{\mathbb{R}^{n}}\left\vert \left(
T^{\alpha }\right) _{\omega }^{\ast }\left( \mathbf{1}_{Q}m_{Q}^{\beta
}\right) \right\vert ^{2}\sigma <\infty .
\end{eqnarray*}

\item The weak boundedness constant is%
\begin{equation*}
\mathcal{WBP}_{T^{\alpha }}^{\left( \kappa _{1},\kappa _{2}\right) }\left(
\sigma ,\omega \right) =\sup_{\mathcal{D}\in \Omega }\sup_{\substack{ %
Q,Q^{\prime }\in \mathcal{D}  \\ Q\subset 3Q^{\prime }\setminus Q^{\prime }%
\text{ or }Q^{\prime }\subset 3Q\setminus Q}}\frac{1}{\sqrt{\left\vert
Q\right\vert _{\sigma }\left\vert Q^{\prime }\right\vert _{\omega }}}\sup 
_{\substack{ f\in \left( \mathcal{P}_{Q}^{\kappa _{1}}\right) _{\limfunc{norm%
}}  \\ g\in \left( \mathcal{P}_{Q}^{\kappa _{2}}\right) _{\limfunc{norm}}}}%
\left\vert \int_{Q^{\prime }}T_{\sigma }^{\alpha }\left( \mathbf{1}%
_{Q}f\right) \ gd\omega \right\vert <\infty ,
\end{equation*}%
where $\left( \mathcal{P}_{\kappa }^{Q}\right) _{\limfunc{norm}}$ is the
space of $Q$-normalized polynomials of degree less than $\kappa $
(Definition \ref{def Q norm}).

\item The Bilinear Indicator/Cube Testing property is%
\begin{equation*}
\mathcal{BICT}_{T^{\alpha }}\left( \sigma ,\omega \right) \equiv \sup_{Q\in 
\mathcal{P}^{n}}\sup_{E,F\subset Q}\frac{1}{\sqrt{\left\vert Q\right\vert
_{\sigma }\left\vert Q\right\vert _{\omega }}}\left\vert \int_{F}T_{\sigma
}^{\alpha }\left( \mathbf{1}_{E}\right) \omega \right\vert <\infty ,
\end{equation*}%
where the second supremum is taken over all compact sets $E$ and $F$
contained in a cube $Q$.

\item The $\kappa ^{th}$-order fractional Pivotal Conditions $\mathcal{V}%
_{2}^{\alpha ,\kappa },\mathcal{V}_{2}^{\alpha ,\kappa ,\ast }<\infty $, $%
\kappa \geq 1$, are%
\begin{eqnarray*}
\left( \mathcal{V}_{2}^{\alpha ,\kappa }\right) ^{2} &=&\sup_{Q\supset \dot{%
\cup}Q_{r}}\frac{1}{\left\vert Q\right\vert _{\sigma }}\sum_{r=1}^{\infty }%
\mathrm{P}_{\kappa }^{\alpha }\left( Q_{r},\mathbf{1}_{Q}\sigma \right)
^{2}\left\vert Q_{r}\right\vert _{\omega }\ , \\
\left( \mathcal{V}_{2}^{\alpha ,\kappa ,\ast }\right) ^{2} &=&\sup_{Q\supset 
\dot{\cup}Q_{r}}\frac{1}{\left\vert Q\right\vert _{\omega }}%
\sum_{r=1}^{\infty }\mathrm{P}_{\kappa }^{\alpha }\left( Q_{r},\mathbf{1}%
_{Q}\omega \right) ^{2}\left\vert Q_{r}\right\vert _{\sigma }\ , \\
\mathrm{P}_{\kappa }^{\alpha }\left( Q,\mu \right) &=&\int_{\mathbb{R}^{n}}%
\frac{\ell \left( Q\right) ^{\kappa }}{\left( \ell \left( Q\right)
+\left\vert y-c_{Q}\right\vert \right) ^{n+\kappa -\alpha }}d\mu \left(
y\right) ,\ \ \ \ \ \kappa \geq 1,
\end{eqnarray*}%
where the supremum is taken over all subdecompositions of a cube $Q\in 
\mathcal{P}^{n}$ into pairwise disjoint subcubes $Q_{r}$.
\end{enumerate}

\end{document}